\pgfplotsset{compat=1.18}
\newtheorem{theorem}{Theorem}[section]
\newtheorem{lemma}[theorem]{Lemma}
\newtheorem{hypothesis}[theorem]{Hypothesis}
\newtheorem{proposition}[theorem]{Proposition}
\numberwithin{equation}{section}
\providecommand{\keywords}[1]
{
  \small	
  \textbf{\textit{Keywords: }} #1
}
\providecommand{\MSC}[1]
{
  \small	
  \textbf{\textit{Mathematics Subject Classification: }} #1
}
\crefname{fig}{Figure}{Figure}
\crefname{lemma}{Lemma}{Lemmas}
\def\D{{\mathcal{D}}}
\def\R{{\mathbb{R}}}
\def\T{{\Delta}}
\title{Acceleration or finite speed propagation in integro-differential equations with logarithmic Allee effect}
\author{Emeric Bouin\footnote{CEREMADE - Universit\'e Paris-Dauphine, PSL Research University, UMR CNRS 7534, Place du Mar\'echal de Lattre de Tassigny, 75775 Paris Cedex 16, France. E-mail: \texttt{bouin@ceremade.dauphine.fr}}
	\and J\'er\^ome Coville\footnote{UR 546 Biostatistique et Processus Spatiaux, INRAE, Domaine St Paul Site Agroparc, F-84000 Avignon, France. E-mail: \texttt{jerome.coville@inrae.fr}}{ $^,$}\footnote{INRAE, CNRS, Ecole Centrale de Lyon, INSA Lyon, Universite Claude Bernard Lyon 1, Université Jean Monnet, ICJ
UMR5208, 69603 Villeurbanne, France.}
	\and Xi Zhang\footnote{School of Mathematics and Statistics, HNP-LAMA, Central South University, Changsha, Hunan 410083, People's Republic of China. E-mail:  \texttt{xizhangmath@gmail.com}}}
\begin{document}

	\maketitle
	\begin{abstract}
This paper is devoted to studying propagation phenomena in  integro-differential equations with a weakly degenerate non-linearity. The reaction term can be seen as an intermediate between the classical logistic (or Fisher-KPP) non-linearity and the standard weak Allee effect one. We study the effect of the tails of the dispersal kernel on the rate of expansion. When the tail of the kernel is sub-exponential, the exact separation between existence and non-existence of travelling waves is exhibited. This, in turn, provides the exact separation between finite speed propagation and acceleration in the Cauchy problem. Moreover, the exact rates of acceleration for dispersal kernels with sub-exponential and algebraic tails are provided. Our approach is generic and covers a large variety of dispersal kernels including those leading to convolution and fractional Laplace operators. Numerical simulations are provided to illustrate our results.

 \end{abstract}
 
\keywords{travelling wave, acceleration, singular integral operator, fractional Laplacian, weakly degenerate equation}	
\par
\MSC{35B40, 35C07, 35K57, 35R09, 35R11, 45G05, 47G10, 92D25}

\tableofcontents

      	\section{Introduction}\label{sec1}

This paper sheds light on propagation phenomena arising in semilinear equations of the form
	\begin{equation}\label{oeq1}
	 \frac{\partial u}{\partial t}(t,x)=\mathcal{D}[u](t,x)+f(u(t,x)),\quad (t,x)\in \mathbb{R}^+\times \mathbb{R},
	\end{equation}
    where  $f\in C^1([0,1],\R^{+})$ is a monostable function such that $f(0)=f(1)=0, f'(1)<0$ and $\D$ a nonlocal operator defined as\footnote{The notation P.V. indicates that the integral is understood in the sense of the Cauchy principal value.}
\[
   \mathcal{D}[u](t,x) := \text{P.V.} \int_{\mathbb{R}} \Big(u(t,x - y) - u(t,x)\Big) J(y) dy,
\]
with $J$ a non-negative kernel. 

 This type of equation  arises naturally in population dynamics when long-distance dispersal events are considered. In such a context, the value $u(t,x)$ represents an adimensional density of individuals of a species at time $t$ and spatial location $x$. The operator $\D$ corresponds to the description of the movement of the population modelled by a jump process of probability density $J$, while the nonlinearity $f$ describes the demography.
   \par
In recent decades,  propagation phenomena in reaction-diffusion and integro-differential equations have been the focus of considerable research efforts, starting from the seminal contributions of Kolmogorov, Petrovskii and Piskunov \cite{kolmogorov1937etude} and Fisher \cite{fisher1937wave}.

When the growth rate per capita is maximal at small densities, i.e. $f$ satisfies  the so-called Fisher-KPP condition,
\begin{equation}  \label{kppcon} 
 0<f(z)\le f'(0)z \quad\text{for all }z\in(0,1),
\end{equation}
 a solution to \eqref{oeq1} exhibits some propagation phenomena. That is, starting with some initial data with nonnegative nontrivial compact support, the corresponding solution $u$ converges to $1$, its stable steady state, for large times, locally uniformly in space. This is referred as the \textit{hair trigger effect} \cite{aronson1978multidimensional}.
 
 \par
 A substantial body of research has been dedicated to classifying this invasion phenomenon according to the properties of $J$ and $f$, specifically investigating the propagation speed of solutions to \eqref{oeq1} when it arises. The dispersal kernel plays a significant role in influencing the propagation speed. 
 When it is exponentially bounded, in the sense,
\begin{equation}\label{deb}
    \exists \varepsilon>0 , \quad \int_\mathbb{R}J(y)e^{\varepsilon|y|}dy<\infty,
\end{equation}
travelling wave solutions exist and the solution to the corresponding Cauchy problem propagates at finite speed \cite{carr2004uniqueness,coville2007travelling,coville2008nonlocal,coville2007non,Kot2003,Schumacher19805470,weinberger1982long,yagisita2010existence}. 
Having an exponential moment for $J$, i.e. condition \eqref{deb}, is actually a necessary condition for the existence of travelling wave solution when $f$ is such that $f'(0)>0$, see \cite{yagisita2010existence}.

\par 
When the  kernel $J$ is exponentially unbounded, that is \eqref{deb} is not met, and $f$ is monostable with $f'(0)>0$, Garnier \cite{garnier2011accelerating} proved the existence of accelerating solutions and gave the first estimate of rates of propagation. 
From his results, we learn that if the kernel $J$ is sub-exponential, that is, $J\propto e^{-\mu |\cdot|^\beta}$ with $\beta\in(0,1)$ and $\mu>0$, then the propagation is algebraic, while if the kernel $J$ is algebraic, that is $J\propto |\cdot|^{-(1+2s)}$ for $s> \frac{1}{2}$, then it is exponential. At the same time, Cabr\'e and Roquejoffre \cite{cabre2013influence} obtain similar results for the fractional Laplacian: $\ln |X_\lambda (t)|\sim_{t\to \infty}\frac{f'(0)}{1+2s}t$, whenever $u(t,X_\lambda(t))=\lambda$, for all $t>0$.
\par
Later, for integrable kernels $J$, the bounds obtained by Garnier were improved by  Finkelshtein and collaborators \cite{finkelshtein2019multid,finkelshtein2019} and  by the first author with Garnier, Henderson and Patout \cite{MR3817762}.   The latter, by constructing a pair of  precise sub- and super- solutions,  obtained the following  sharp estimate for $t$ large enough: $X_\lambda(t)\asymp_\lambda J^{-1}(e^{-t})$. 
Although this estimate was obtained for an integrable kernel $J$,  the arguments used do not require such  a specific assumption and  can be adapted to the fractional Laplacian,  thus extending the scope of validity of  this estimate. 
For such a monostable nonlinearity $f$, a clear dichotomy in the propagation behaviour can then  be  observed: if the dispersal kernel $J$ is exponentially bounded, finite speed propagation occurs; otherwise, it leads to acceleration.

\smallskip
\par

When  the growth per capita does not imply an exponential growth at low density, \textit{i.e.} $f'(0)= 0$, propagation still occurs but its study is more subtle. When \eqref{oeq1} is set with a monostable nonlinearity $f$ satisfying: $f(z)\sim _{z\to 0^+}rz^{\alpha+1}$ with $r>0$, Gui and Huan \cite{gui2015traveling} proved existence  of travelling front solutions for \eqref{oeq1} when $\alpha>1$ and $\D$ is the fractional Laplace operator with $2>2s \ge 1+\frac{1}{\alpha}$. They also showed that this condition is sharp, in the sense that no front solution can exist when $2s<\min\left\{1+\tfrac{1}{\alpha},2\right\}$. Later, for an integrable kernel $J$ such that $J\propto |\cdot|^{-(1+2s)}$  the second author with Alfaro \cite{alfaro2017propagation} showed that for any $\alpha>0$, the value $2s = 1+ \frac{1}{\alpha}$ is the sharp threshold for the existence of travelling fronts. More precisely, for $J$ having  a bounded first moment and that satisfies
\begin{equation}
    \label{thde}
  J(z)\lesssim |z|^{-\left(2+\frac{1}{\alpha}\right)}\quad \text{for all } z\ge 1,
\end{equation}
 they proved that travelling waves exist and that the solution to the corresponding Cauchy problem propagates at a constant speed, whereas if the kernel $J$ does not enjoy such a decay no travelling waves can exist. In addition, they provided first estimates of the invasion rate  for solutions to the associated Cauchy problem  with front-like initial data $u_0$, showing a polynomial invasion rate\footnote{Explicitly, their result say that for any $\lambda \in (0,1)$ and any $t \in \R^+$ large enough, $t^{1/2s\alpha}\le_\lambda X_\lambda(t)\le_\lambda t^{1/2s\alpha+1/2s}$, where $u(t,X_\lambda(t))=\lambda$.}. 
  Subsequently, the second author, with Gui and Zhao, considered in \cite{coville2021propagation} the case of $\D$ being a fractional Laplacian and derived similar acceleration estimates of polynomial type when $2s<\min\left\{1+\tfrac{1}{\alpha},2\right\}$. 
  More recently, the first two authors with Legendre \cite{bouin2021sharp} established the following sharp estimate for generic algebraically decaying kernel $J$: 
  $$X_\lambda(t)\asymp_\lambda t^\frac{\alpha+1}{2s\alpha},$$
with $1+\frac{1}{\alpha}>2s$. It is worth mentioning that a parallel work by Zhang and Zlato\v s \cite{zhang2023optimal} obtains similar estimates when $\D$ is the fractional Laplacian and $2s<\min\left\{1+\tfrac{1}{\alpha},2\right\}$, using a different construction.

It is worth mentioning that an acceleration phenomenon also occurs in classical reaction-diffusion equations, especially with fat-tailed initial data. We refer to \cite{alfaro2017slowing,Alfaro2019,Hamel2010} and references therein for more details.

\par
These two dichotomies show that the description of propagation phenomena in \eqref{oeq1} is governed by a strong interplay between the property of the nonlinearity $f$ near zero and the decay of the dispersal kernel $J$ at infinity. The behaviour of $f$ near zero seems critical in determining the right decay of $J$ for which we can observe propagation at a finite speed. While these two classical types of monostable non-linearities encompass a wide range of growth models, there are scenarios where they do not model growth adequately. This raises the question of how sensitive the above results are to the degeneracy of the nonlinearity $f$.
\par
The primary goal of this paper is to complement the knowledge on how the degeneracy of $f$ at zero affects different transitions, particularly aiming to gain fresh insights on the dichotomy between accelerated and finite speed regimes, depending on the behaviour of the nonlinearity 
$f$ near zero. To do so, we examine a class of monostable non-linearities that we call "weakly degenerate", which lie between $f(z)\sim_{z\to 0^+} rz$ and $f(z)\sim_{z\to 0^+}rz^{\alpha+1}$ for $\alpha>0$. More precisely, the non-linearities $f$ satisfy the following conditions:
 
\begin{hypothesis}\label{fu}
		The function $f$ is of class $C^1([0,1],\mathbb{R})$, with
		\[
			f(0)=f(1)=0>f'(1),\quad   f(z)>0  \quad\text{for any }z\in(0,1),
		\]
 and there exist $\xi_0\in (0,1)$, $K\ge0$, $\alpha>0$ and $r>0$ such that 
		\[
			f(z)\le \frac{rz}{(1+\lvert\ln z\rvert)^\alpha} \qquad \text{for all } z\in(0,1),
		\]
  and 
		\[
		f(z)\ge \frac{rz(1-Kz)}{(1+\lvert\ln z\rvert)^\alpha} \qquad \text{for all } z\in(0,\xi_0],
		\]
 (See Fig.\ref{figcom}).
\end{hypothesis}

\begin{figure}
    \centering
\begin{tikzpicture} 
\begin{axis}[
 axis lines = left,
xlabel = $z$,
ylabel = $\left.\frac{f(z)}{z}\right|_{z \approx 0}$, 
xmin=-0.0005,
xmax=0.1,
ymin=0,
ymax=1.2,
xtick = {0.00},
xticklabels = {0},
ytick =\empty,
]
 \addplot[
 color=blue!50!black,
 samples = 100, 
 domain= 1e-10:0.1,
 line width =1.5,
 dotted]{x/x};
\addlegendentry{$r$};
    \addplot[
    color = red,
    line width = 2.0,
    samples = 600,
    domain = 1e-100:0.1,
    ]{1/(-log10(x))};
    \addlegendentry{$\tfrac{r}{\lvert\ln z\rvert^\alpha}$};  
    \addplot[color = green!50!black,
    samples = 100, 
    domain= 0:0.1,
    line width =1.5, 
    dashed
    ]{x^0.5};
        \addlegendentry{$r z^\alpha$};
    \end{axis}
\end{tikzpicture}
\begin{tikzpicture} 
\begin{axis}[
axis lines = left,
xlabel = $z$,
ylabel = $f(z)$,
xmin=0,
xmax=1.2,
ymin=0,
ymax=0.4,
xtick = {0,1},
xticklabels = {0,1},
ytick =\empty,
]
 \addplot[
 color=blue!50!black,
 samples = 100, 
 domain= 1e-10:1,
 line width =1.5,
 dotted]{x*(1-x)};
\addlegendentry{$rz(1-z)$};
    \addplot[
    color = red,
    line width = 2.0,
    samples = 600,
    domain = 1e-100:1,
    ]{x*(1-x)/(1-log10(x))};
    \addlegendentry{$r\frac{z(1-z)}{(1+\lvert\ln z\rvert)^\alpha}$};  
    \addplot[color = green!50!black,
    samples = 100, 
    domain= 0:1, 
    line width =1.5,
    dashed
    ]{x^2*(1-x)};
        \addlegendentry{$r z^{\alpha+1}(1-z)$};
    \end{axis}
\end{tikzpicture}
  \caption{Comparison of the size of three types of non-linearities with the positive parameters $r$ and $\alpha$.}
 \label{figcom}
\end{figure}
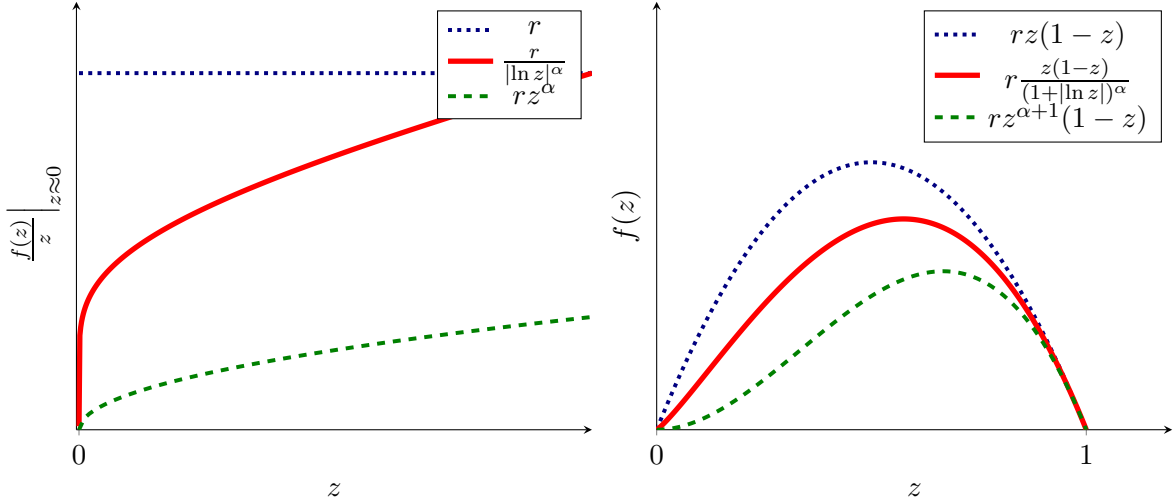
The weakly degenerate non-linearity serves as a framework to examine the transition between degeneracy and non-degeneracy.
The introduction of logarithmic corrections is not only mathematically interesting but also supported by extensive literature modelling specific effects, such as the Gompertz growth rate in population dynamics \cite{Gompertz1825} and in quantum physics \cite{Bialynicki1976}. Here, the logarithmic correction in growth can be interpreted as an Allee effect induced by non-trivial factors, in line with Dennis's work \cite{Dennis1989}. As a motivating curiosity, non-linearities with logarithmic terms naturally emerge from non-local Fisher-KPP equations, and phenomena with non-trivial propagation features have been studied \cite{MR3817762, BOUIN2021}. In a previous paper \cite{BOUIN2024113557}, we have investigated rates of propagation with a local dispersal operator (a laplacian) under a wide class of initial data. For sub-exponential initial data, finite speed propagation may happen or not, while for sub-exponentially unbounded initial data, acceleration necessarily occurs. Some precise rates of invasion were given. One may connect the present contribution to the former one by having in mind that a nonlocal Cauchy problem such as \eqref{oeq1} starting for a Heaviside initial data gives immediately  (that is, at $t>0$) to $u(t,\cdot)$ tails that are comparable to the decay of $J$ at infinity. It is thus natural that the local version studied in  \cite{BOUIN2024113557} also yielded discussions related to sub-exponential or algebraic tails of $u_0$.

\par
Let us now make precise the dispersal kernels $J$ we will consider all along this paper. We will always assume that $J$ satisfies:

\begin{hypothesis}\label{ker}
    The kernel $J$ is nonnegative, symmetric and satisfies the following conditions: there exist positive constants $\mathcal{J}_0$, $\mathcal{J}_1$ and $R_0>1$, along with a decaying function $\mathcal{Z}\in L^1 ([1,+\infty);\mathbb{R}^+)$, such that
    \[
   \int_{|z|\le1}z^2J(z)dz\le 2\mathcal{J}_1 \qquad\text{and}\qquad
  \mathcal{J}_0^{-1}\mathcal{Z}(\cdot)\mathds{1}_{\{|z|\ge R_0\}}(\cdot)  \le J(\cdot) \le \mathcal{J}_0\mathcal{Z}(\cdot)\quad \text{in }[1,+\infty). 
    \]
\end{hypothesis}
There are two common cases covered by the above hypothesis: the standard convolution operators $J*u-u$ with integrable kernels and the fractional Laplacian $(-\triangle)^s$.
\par 
The tails of the dispersal kernel play a key role on the propagation phenomena in equation \eqref{oeq1}. Our first result is about existence or non-existence of travelling wave solutions.

	\begin{theorem}\label{lin1}
		Let $\alpha>0$ and $\beta>0$ be such that $\beta \ge \frac{1}{\alpha+1}.$ Assume that the dispersal kernel $J$ satisfies Hypothesis \ref{ker} with a sub-exponentially bounded tail, that is,
\begin{equation}\label{twa}
	\mathcal{Z}(z)\le e^{-z^\beta} \quad\text{for all } z\ge 1,
\end{equation}
    and that the non-linearity $f$ satisfies Hypothesis \ref{fu}.
Then there exists $c^*>0$ such that for any $c\ge c^*$ equation \eqref{oeq1} admits travelling wave solutions $(c,u)$, whereas, for all $c<c^*$ equation \eqref{oeq1} does not admit travelling wave solutions.
	\end{theorem}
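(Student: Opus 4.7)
The plan is to treat existence (for $c\ge c^\ast$) and non-existence (for $c<c^\ast$) separately, after first identifying the critical speed $c^\ast$ through a formal linearisation at the leading edge of the wave. Looking for a travelling wave $u(t,x)=U(\xi)$ with $\xi=x-ct$, $U(-\infty)=1$, $U(+\infty)=0$, the profile must solve
\[
 c U'(\xi)+\mathcal{D}[U](\xi)+f(U(\xi))=0.
\]
Inserting the ansatz $U(\xi)\sim e^{-\mu \xi^{\beta}}$ as $\xi\to+\infty$ and using Hypothesis \ref{fu} gives $f(U)/U \sim r/(\mu\xi^{\beta})^{\alpha}$, while the drift contributes $-c\mu\beta\xi^{\beta-1}$. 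The assumption $\beta\ge\tfrac{1}{\alpha+1}$ is exactly the condition under which the drift term can dominate (or balance) the weakly degenerate source at the scale dictated by the kernel's sub-exponential tail $e^{-\xi^{\beta}}$. This matching will produce a one-parameter family of admissible decay rates and, after minimising in $\mu$, the critical speed $c^\ast>0$.

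For existence when $c\ge c^\ast$, I would follow the standard route of building a wave by approximation on bounded intervals $[-a,a]$, obtaining monotone profiles there via topological degree or a monotone iteration scheme, then passing to the limit $a\to\infty$. The two key ingredients are:
\begin{itemize}
\item a \emph{super-solution} of the form $\overline{U}(\xi)=\min\{1,e^{-\phi(\xi)}\}$ with $\phi(\xi)=\mu \xi^{\beta}+\text{(logarithmic corrections)}$, chosen so that the corrections exactly absorb the $(1+|\ln U|)^{\alpha}$ factor in Hypothesis \ref{fu};
\item a \emph{sub-solution}, which one can take compactly supported and small, exploiting the hair-trigger-type behaviour induced by the integrable tail of $J$.
\end{itemize}
Monotonicity and non-triviality of the limiting profile are then obtained by a sliding argument together with a strong maximum principle for the non-local operator $\mathcal{D}$, while the correct limits at $\pm\infty$ are checked using the a priori tail estimate provided by $\overline{U}$.

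For non-existence when $c<c^\ast$, I would argue by contradiction: suppose a profile $U$ exists with such a speed. First, using Hypothesis \ref{ker} and the fact that $U$ is bounded and monotone, one shows that $U$ must decay at infinity at least as fast as the kernel, namely $U(\xi)\lesssim \mathcal{Z}(\xi)$ for large $\xi$; in the sub-exponential regime this is done by a tail-testing argument using the convolution structure of $\mathcal{D}$. Second, inserting this decay estimate back into the travelling-wave equation and performing the linearisation around $U=0$ (where $f(U)\ge \tfrac{rU}{(1+|\ln U|)^{\alpha}}(1-KU)$), one obtains that the compatibility relation between $c$, $\mu$ and the kernel forces $c\ge c^\ast$, a contradiction.

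The main obstacle I expect is the delicate bookkeeping in the super-solution step: the ansatz $\phi(\xi)=\mu\xi^{\beta}+\text{correction}$ must simultaneously (i) reproduce the leading sub-exponential matching dictated by $J$, (ii) absorb the logarithmic factor $(1+|\ln U|)^\alpha$ coming from $f$, and (iii) control the non-local error $\mathcal{D}[\overline{U}]$ uniformly, both for $|y|$ small (using the second-moment bound on $J$ near the origin) and for $|y|$ large (using the sub-exponential bound \eqref{twa}). The critical case $\beta=\tfrac{1}{\alpha+1}$ is the most stringent since drift and reaction balance at the same order, leaving no margin in the inequality, and a careful choice of the sub-dominant terms of $\phi$ will be required. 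Non-existence, although conceptually easier, also relies on sharp tail estimates for $U$ that exploit the same matching.
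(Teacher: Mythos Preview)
Your proposal has the right intuition about the super-solution shape (sub-exponential with a correction), but the overall architecture is off in two places.

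First, the plan to ``identify $c^\ast$ through a formal linearisation at the leading edge'' and then obtain it ``after minimising in $\mu$'' does not work here. That route is specific to exponentially bounded kernels, where the ansatz $U\sim e^{-\mu\xi}$ turns $\mathcal{D}[U]$ into a finite moment-generating function and produces a genuine dispersion relation $c=c(\mu)$. With a sub-exponential tail \eqref{twa}, no such relation exists: inserting $U\sim e^{-\mu\xi^\beta}$ into $\mathcal{D}[U]$ does not yield an algebraic equation in $\mu$ that one can minimise. The paper makes no attempt to compute $c^\ast$. Instead, it constructs a super-solution $w$ for \emph{one} sufficiently large speed $c_0$ (Lemma~\ref{utr}) and then invokes the vanishing-viscosity framework of \cite{coville2008nonlocal,bouin2024}, namely the perturbed problem $\varepsilon\phi''+\mathcal{D}[\phi]+c\phi'+f(\phi)=0$, which automatically delivers a closed half-line $[c^\ast,\infty)$ of admissible speeds with $c^\ast\le c_0$. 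Positivity of $c^\ast$ is a separate short lemma borrowed from \cite{alfaro2017propagation}.

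Second, and relatedly, your non-existence argument for $c<c^\ast$ is misdirected. In the paper, non-existence below $c^\ast$ is not proved by a tail-estimate contradiction; it is \emph{by definition} of $c^\ast$ as the infimum of admissible speeds, together with the fact (from the cited framework) that this infimum is attained and the admissible set is exactly $[c^\ast,\infty)$. Your proposed argument --- show any wave decays at least like $\mathcal{Z}$, then deduce $c\ge c^\ast$ --- would require precisely the dispersion-type identity that is unavailable here.

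On the super-solution itself you are close: the paper takes $w(x)=C_L\,x^{p}e^{-x^{\beta}}$ for $x\ge L$ (equivalently $\phi(\xi)=\xi^\beta-p\ln\xi$ in your notation), with $p\ge 2-2\beta$, and verifies $\varepsilon w''+\mathcal{D}[w]+c_0 w'+f(w)\le 0$ by splitting $\mathcal{D}[w]$ into five integrals. The polynomial prefactor $x^p$ is what makes the critical case $\beta=\tfrac{1}{\alpha+1}$ close. The sub-solution and the passage to the limit on bounded intervals are not done by hand in Section~\ref{s2}; they are absorbed into the cited $\varepsilon$-regularisation machinery.
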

Sub-exponential kernels turn out to be critical for the existence of travelling waves, in the following sense,
	\begin{theorem}
	    \label{net}
	 Let $\alpha>0$ and $\beta>0$ be such that $\beta<\frac{1}{\alpha+1}.$ Assume that the dispersal kernel $J$ satisfies Hypothesis \ref{ker} with
	\[
		 \mathcal{Z}(z)\ge e^{-z^\beta} \quad \text{for all } z\ge 1,
	\]
 and that the non-linearity $f$ satisfies Hypothesis \ref{fu}.
Then there is no travelling wave solution to \eqref{oeq1}.
	\end{theorem}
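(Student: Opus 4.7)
My plan is to argue by contradiction: assume a travelling wave $(c,\phi)$ for \eqref{oeq1} exists, with $\phi(-\infty)=1$, $\phi(+\infty)=0$, and $\phi$ non-increasing (as is standard for monostable problems). I will trap $\phi$ at $+\infty$ between a kernel-driven lower bound $\phi(\xi)\gtrsim e^{-c_1\xi^\beta}$ and a nonlinearity-driven upper bound $\phi(\xi)\lesssim e^{-c_2\xi^{1-\alpha\beta}}$, and observe that these are incompatible for large $\xi$ exactly when $1-\alpha\beta>\beta$, i.e.\ when $\beta<\tfrac{1}{\alpha+1}$. Note first that $c>0$: integrating $-c\phi'=\mathcal{D}[\phi]+f(\phi)$ over $\mathbb{R}$ and using Fubini (so that $\int_{\mathbb{R}}\mathcal{D}[\phi]\,ds=0$) yields $c=\int_{\mathbb{R}}f(\phi(s))\,ds>0$.

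For the lower bound, I would integrate the wave equation from $\xi$ to $+\infty$; since $f\geq 0$ and $\phi(+\infty)=0$, this gives $c\phi(\xi)\geq\int_\xi^{+\infty}\mathcal{D}[\phi](s)\,ds$, which by Fubini and the symmetry of $J$ rewrites as $\int_0^{+\infty}J(y)\int_0^y[\phi(\xi-t)-\phi(\xi+t)]\,dt\,dy$. The inner integrand is nonnegative by monotonicity of $\phi$, and picking $\xi_{1/2}$ with $\phi(\xi_{1/2})=\tfrac12$ and restricting to $y\geq \xi-\xi_{1/2}$ bounds the inner integral below by $\tfrac14(y-\xi)_+$. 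Combining with the assumption $\mathcal{Z}(y)\geq e^{-y^\beta}$ and a Laplace-type asymptotic for $\int_\xi^{+\infty}(y-\xi)e^{-y^\beta}\,dy$ produces $\phi(\xi)\gtrsim \xi^{2(1-\beta)}e^{-\xi^\beta}$ for $\xi$ large, and in particular $(1+|\ln\phi(\xi)|)^\alpha\leq C\xi^{\alpha\beta}$.

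For the upper bound, the same integrated equation yields $c\phi(\xi)\geq\int_\xi^{+\infty}f(\phi(s))\,ds$. Feeding the just-obtained control on $|\ln\phi|$ into the lower estimate of Hypothesis \ref{fu} gives $f(\phi(s))\geq c_\ast\phi(s)s^{-\alpha\beta}$ for $s$ large, so with $h(\xi):=\int_\xi^{+\infty}\phi(s)s^{-\alpha\beta}\,ds$ we have $c\phi(\xi)\geq c_\ast h(\xi)$. Using $-h'(\xi)=\phi(\xi)\xi^{-\alpha\beta}$, this translates into the Gr\"onwall-type differential inequality $(\ln h)'(\xi)\leq -(c_\ast/c)\xi^{-\alpha\beta}$. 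Since $\beta<\tfrac{1}{\alpha+1}$ implies $\alpha\beta<\tfrac{\alpha}{\alpha+1}<1$, the primitive $\int \xi^{-\alpha\beta}\,d\xi$ diverges like $\xi^{1-\alpha\beta}$; integration therefore gives $h(\xi)\leq e^{-C\xi^{1-\alpha\beta}}$, and the monotone lower bound $h(\xi)\gtrsim \phi(2\xi)\xi^{1-\alpha\beta}$ transfers this into $\phi(\xi)\leq e^{-c_2\xi^{1-\alpha\beta}}$ after absorbing the polynomial factor into the exponential.

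Combining both estimates gives $c_1 e^{-c_1'\xi^\beta}\leq e^{-c_2\xi^{1-\alpha\beta}}$ for $\xi$ large, which fails as soon as $1-\alpha\beta>\beta$, yielding the desired contradiction. The main obstacle is expected in the lower-bound step: the Fubini swap and the estimate on $\mathcal{D}[\phi]$ must be carried out uniformly across Hypothesis \ref{ker}, in particular accommodating singular kernels such as the fractional Laplacian, where the small-$y$ principal-value contribution must be treated separately using the second-moment bound $\int_{|z|\leq 1}z^2 J(z)\,dz\leq 2\mathcal{J}_1$ together with some regularity of $\phi$. The monotonicity of $\phi$ itself, if not automatic from the construction one is working with, must also be justified at the outset or bypassed by arguing with the minimal (or maximal) wave.
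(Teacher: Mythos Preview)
Your argument is correct and follows a genuinely different, more direct route than the paper. The paper does not prove Theorem~\ref{net} independently: it notes in the introduction that the result is a direct corollary of Theorem~\ref{ei1}, the logic being that a travelling wave $(c,\phi)$ would cap the Cauchy problem by a linearly moving profile, forcing $X_\lambda(t)=O(t)$, whereas Theorem~\ref{ei1} gives $X_\lambda(t)\asymp t^{1/\beta(\alpha+1)}$ with $1/\beta(\alpha+1)>1$. Your approach instead works entirely at the level of the profile equation, extracting the two incompatible tail bounds $\phi(\xi)\gtrsim e^{-c_1\xi^\beta}$ (from the kernel, via the integrated identity $c\phi(\xi)=\int_\xi^\infty\mathcal{D}[\phi]+\int_\xi^\infty f(\phi)$ and the Fubini rewriting) and $\phi(\xi)\lesssim e^{-c_2\xi^{1-\alpha\beta}}$ (from the reaction, via the Gr\"onwall argument on $h$). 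This is in the spirit of the nonexistence arguments in \cite{alfaro2017propagation,gui2015traveling}, adapted to the logarithmic degeneracy. The advantage of your route is that it is self-contained and avoids the heavy sub-/super-solution machinery of Section~\ref{s3}; the advantage of the paper's route is that once Theorem~\ref{ei1} is in hand, nonexistence is immediate and requires no separate profile analysis. The caveats you flag (monotonicity of $\phi$, and care with the principal value when $J$ is singular near zero in the Fubini step) are genuine but standard: monotonicity is known for monostable fronts of \eqref{oeq1} (see e.g.\ \cite{coville2008nonlocal,bouin2024}), and the small-$y$ contribution in the Fubini identity is controlled by the second-moment bound in Hypothesis~\ref{ker} together with the $C^2$ regularity of bounded solutions of the profile equation.
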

We shall now consider the Cauchy problem associated to \eqref{oeq1} with the following initial condition:
\begin{hypothesis}\label{ic1}
The initial condition $u_0:\mathbb{R}\to [0,1]$ is of class $C^1$ and satisfies
	\[
		0\le u_0\le 1 \ \text{on} \ \mathbb{R}, \quad \liminf_{x\to-\infty}u_0>0\text{ and } u_0\equiv0 \text{ on }[a,+\infty) \text{ for some }a.
	\]
\end{hypothesis}
Usually, existence of travelling waves leads to finite speed propagation. When the kernel of the integral operator $\mathcal{D}[\cdot]$ has sub-exponential tails with $\beta<\frac{1}{\alpha+1}$, an acceleration phenomenon occurs, and we are able to provide the rate of invasion. In order to estimate this accelerated rate of invasion, we introduce some notations.

For any $\lambda\in(0,1)$, the (lower) level set of $u(t,\cdot)$ at time $t$ is defined by
\[
	E_\lambda(t):=\{x\in\mathbb{R}:u(t,x)\le \lambda\}.
\]
Let $X_\lambda(t)$ be the smallest element of the level set of $u(t,\cdot)$ defined by\footnote{When the initial data $u_0$ is non-increasing, this element can also be defined by
$X_\lambda(t) = \sup\left\{ x \in \R, \, u(t,x) \geq \lambda \right\}.$}
\[
	X_\lambda(t):=\inf E_\lambda(t).
\]

\par
Our first result closes the case of sub-exponential tails.
 \begin{theorem}[Algebraic propagation]\label{ei1}
		Assume $0<\beta<\frac{1}{\alpha+1}$ and that $J$ satisfies Hypothesis \ref{ker} with a sub-exponential tail, that is,
\begin{equation}\label{J2}
		 \mathcal{Z}(z)= e^{-z^\beta} \quad \text{for all } z\ge 1,
\end{equation}
and that the non-linearity $f$ and the initial data $u_0$ satisfy Hypothesis \ref{fu} and Hypothesis \ref{ic1}, respectively. Then, for any $\lambda\in(0,1)$, we have\footnote{The notation $X\asymp_{\Lambda_1, \Lambda_2,\dots} Y$ means that there exists a positive constant $C_{\Lambda_1, \Lambda_2,\dots}$, depending only on some parameters $\Lambda_1$, $\Lambda_2$,\dots, such that $C_{\Lambda_1, \Lambda_2,\dots} Y\le X \le C^{-1}_{\Lambda_1, \Lambda_2,\dots} Y$.}
		\[
			X_\lambda(t)\asymp_\lambda t^{\frac{1}{\beta(\alpha+1)}}.
		\]
	\end{theorem}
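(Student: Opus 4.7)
Plan. The target rate $t^{1/(\beta(\alpha+1))}$ arises from the composition of two effects. Within order-one time the nonlocal operator $\D$ creates, at position $x$, a tail of amplitude $\asymp J(x) \asymp e^{-|x|^\beta}$, thanks to a single-jump contribution from the bulk where $u$ is bounded below by Hypothesis \ref{ic1}. Once a mass $\eta$ is deposited at $x$, the weakly degenerate reaction $f(u) \asymp ru/|\ln u|^\alpha$ lifts $u$ up to a fixed level $\lambda$ in time $T(\eta) \asymp |\ln \eta|^{\alpha+1}/(r(\alpha+1))$, as seen by the separation of variables $|\ln w|^\alpha\, d\ln w = r\, dt$, which integrates to $|\ln w|^{\alpha+1}/(\alpha+1) = |\ln \eta|^{\alpha+1}/(\alpha+1) - rt$. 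Setting $\eta = e^{-|x|^\beta}$ yields $t \asymp |x|^{\beta(\alpha+1)}$, matching the claimed scaling. Making this heuristic rigorous requires matching sub- and super-solutions.

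Lower bound. Hypothesis \ref{ic1} and the Duhamel representation for \eqref{oeq1} provide constants $t_0, \eta_0 > 0$ and $R > 0$ with $u(t_0, x) \ge \eta_0 J(x) \ge \eta_0 e^{-|x|^\beta}$ for $x \ge R$. For each target position $x^* \gg 1$, I build a sub-solution $\underline u$ on the time interval $[t_0, t_0 + T(x^*)]$, localized on a fixed-width spatial tube around $x^*$, whose principal dynamics is that of the ODE $\dot w = f(w)/2$ starting from $\eta_0 e^{-(x^*)^\beta}/2$. The factor $1/2$ absorbs the nonlocal leakage $\|J\|_{L^1(\{|y|\ge 1\})}\underline u$ out of the tube, which is of lower order than the reaction for $x^*$ large. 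Integrating the ODE yields the hitting time $T(x^*) \asymp (x^*)^{\beta(\alpha+1)}/r$, and the comparison principle for \eqref{oeq1} furnishes $u(t_0 + T(x^*), x^*) \ge \lambda$. Inverting $T$ gives the desired lower bound $X_\lambda(t) \ge C^{-1}_\lambda t^{1/(\beta(\alpha+1))}$.

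Upper bound. I construct a super-solution $\bar u(t, x) = \min\{1, V(t, x)\}$ whose level set $\{\bar u = \lambda\}$ lies at $|x| \asymp t^{1/(\beta(\alpha+1))}$. The naive KPP ansatz $V = \exp(g(t) - |x|^\beta)$ with $g$ linear in $t$ only produces the suboptimal rate $t^{1/\beta}$, since at the level set it forces $g'(t)$ to be bounded below by $r/|\ln\lambda|^\alpha$. To reach the sharp rate one must let the spatial gradient of $V$ steepen at the level set as time grows, so that a moderate $\partial_t V$ translates, through the implicit-function relation $\partial_t V + \partial_x V \cdot \dot x^* = 0$, into the slower, sub-KPP motion $\dot x^* \asymp t^{1/(\beta(\alpha+1))-1}$. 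The resulting $V$ is a multi-scale profile interpolating between a saturation region $V \equiv 1$, a transition zone around the level set, and the sub-exponential tail. The super-solution inequality reduces in the region $V < 1$ to the pointwise estimate $\partial_t V \ge \D[V] + f(V)$, in which $\D[V]/V$ is controlled by splitting the integral into a near-zone $\{|y|\le 1\}$ (Taylor expansion against $\int_{|y|\le 1} y^2 J(y)\, dy$) and a far-zone $\{|y|>1\}$ ($L^1$ convolution of $V$ against $J$), both closing under the condition $\beta < 1/(\alpha+1)$. Comparison with $u$, after picking initial data dominating $u_0$, yields the upper bound $X_\lambda(t) \le C_\lambda t^{1/(\beta(\alpha+1))}$.

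Main obstacle. The technical heart of the proof is the calibration of the super-solution so that $\partial_t V$, $\D[V]/V$ and $f(V)/V$ balance simultaneously across every spatial scale, from the saturation region through the level-set tube down to the sub-exponential tail. The logarithmic degeneracy of $f$, which is precisely what yields the extra factor $\alpha+1$ in the exponent, must be quantitatively exploited at the level set itself, not merely in the tail; this is the point at which the KPP argument must be replaced by a genuine nonlinear computation, and the condition $\beta < 1/(\alpha+1)$ is exactly what allows the reaction timescale to dominate the nonlocal correction in this balance.
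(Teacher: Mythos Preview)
Your lower bound argument has a genuine gap. You claim that in a fixed-width tube around $x^*$ the nonlocal leakage $\|J\|_{L^1(\{|y|\ge 1\})}\,\underline u$ is of lower order than the reaction for $x^*$ large. This is backwards: at level $\underline u\approx\eta_0 e^{-(x^*)^\beta}$ the reaction is $f(\underline u)\asymp r\underline u/|\ln\underline u|^\alpha\asymp r\underline u/(x^*)^{\alpha\beta}$, while the leakage out of any fixed-width tube is $C\underline u$ with $C>0$ an order-one constant. The leakage therefore dominates the reaction by a factor $(x^*)^{\alpha\beta}\to\infty$, and no multiplicative $1/2$ can absorb it. The degeneracy $f'(0)=0$ is precisely why a localized bump decays nonlocally faster than it grows by reaction when it is small; a bump-style sub-solution cannot close without a tube of width at least $(\ln x^*)^{1/\beta}$, and then the second-order near-zone contribution from the singularity of $J$ becomes the next obstruction.

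The paper avoids this by building a \emph{global} front-like sub-solution $v(t,x)=g_\varepsilon(w(t,x))$, where $w$ solves the simplified ODE $\partial_t w=\rho\, w/(1-\ln w)^\alpha$ with $\rho\le r/12$ and sub-exponential initial profile. The mechanism is the opposite of localization: the front \emph{flattens} as $t\to\infty$ (Lemma~\ref{apubl1}(iii), $\partial_x w(t,x_\Lambda(t))\to 0$), so that $\D[v]$ is controlled by $(\partial_x w)^2$ near the front and by the first-moment bound $\D[v]\ge\mathcal C_4\,\partial_x v$ ahead of it (Proposition~\ref{aplbed}); both are $o(f(v))$ exactly because $\alpha\beta+\beta-1<0$. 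Fitting this sub-solution below $u$ additionally requires a flattening estimate for $u$ itself (Proposition~\ref{aplbf}), which is stronger than a one-time Duhamel bound.

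For the upper bound your qualitative picture is also off: the correct super-solution again flattens at the level set (it does not steepen), since $|\partial_x m|\asymp x_\lambda^{\alpha\beta+\beta-1}\to 0$. The paper's construction is $m=\min\{1,w\}$ with the same ODE form but now $\rho\ge r$ a large free parameter, and crucially with a polynomial prefactor in the initial profile $v_0=C_L x^p e^{-x^\beta}$. That prefactor, with $p$ chosen sufficiently large, is what makes the far-field estimate of $\D[m]$ close (Lemma~\ref{apld}, the terms $I_1$ and $I_5$); without it the tail integral $\int_{x-x_1(t)}^{+\infty} J(y)\,dy$ is too large relative to $f(w)$, and your splitting into near- and far-zones alone does not suffice.
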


To have a more clear picture of our results, we summarize them  in the  diagram below (\Cref{diagram}).

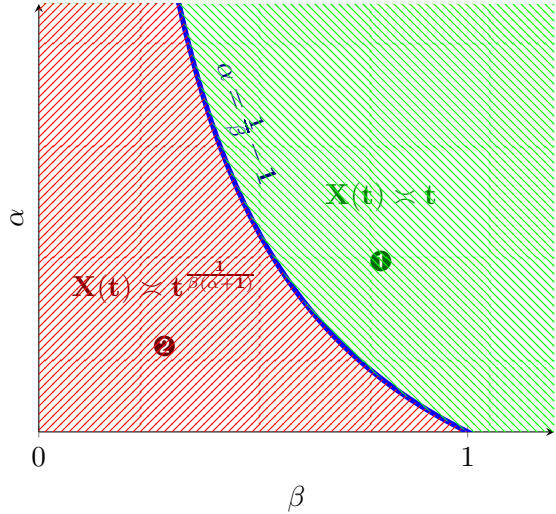
\begin{figure}[H]
     \centering
\begin{tikzpicture}
    \begin{axis}[
    axis lines = left,
    xlabel = {$\beta$},
    ylabel = {$\alpha$},
    xmin = 0.01,
    xmax = 1.2,
    ymin = 0,
    ymax = 2,
    xtick = {0.01,1},
    xticklabels = {0,1},
    ytick = \empty,
    ]
    \addplot[ 
    color = blue,
    domain = 0.1:2,
    samples = 100,
    line width =2,
    ] {1/x-1}; 
\node[anchor= center] at (axis cs:0.3,0.7) {$\textcolor{black!50!red}{\bf X(t)\asymp t^\frac{1}{\beta(\alpha+1)}}$};
\node[anchor= center] at (axis cs:0.3,0.4) {\textcolor{black!60!red}{\ding{203}}};

\node[anchor=center] at (axis cs:0.8,1.1) {$\textcolor{black!50!green}{\bf X(t)\asymp t}$};
\node[anchor=center] at (axis cs:0.8,0.8) {\textcolor{black!60!green}{\ding{202}}};

\node[anchor=north east] at (axis cs:0.6,1.8) {\rotatebox{-70}{$\textcolor{black!50!blue}{\bf \alpha=\frac{1}{\beta}-1}$}};

\addplot [
        domain=0.1:1.2, 
        samples=20,
        pattern color=green,
        pattern=north west lines,
        fill opacity=0.3,
        draw=none,
    ] {1/x-1} -- (axis cs:1.2,2) -- (axis cs:0,2) -- cycle;
 
    \addplot [
        domain=0.01:1, 
        samples=20,
        pattern color=red,
        pattern=north east lines,
        fill opacity=0.3,
        draw=none,
    ] {1/x-1} -- (axis cs:1,0) -- (axis cs:0.01,0) -- cycle;
\end{axis}
\end{tikzpicture}
    \caption{The rate of propagation in the $(\beta,\alpha)$ plane for sub-exponential kernels $J$, that is, $J(z)\asymp e^{-|z|^\beta}$ for $z
    \ge 1$. In the green zone \textcolor{black!60!green}{\ding{202}}, including the critical curve, the solution to \eqref{oeq1} propagates at a finite speed, as shown in Theorem \ref{lin1}. In the red zone \textcolor{black!60!red}{\ding{203}}, acceleration occurs: $X_\lambda(t)\asymp t^\frac{1}{\beta(\alpha+1)}$, see Theorem \ref{ei1}.}
    \label{diagram}
 \end{figure}

Let us now consider  kernels with algebraic tails. 
\begin{theorem}[Exponential propagation]\label{al}
 Assume $s>0$. Suppose that $J$ satisfies Hypothesis \ref{ker} with an algebraic tail, that is, 
 \begin{equation}\label{Je}
     \mathcal{Z}(z)=\frac{1}{z^{1+2s}} \quad\text{for all }z\ge 1,
 \end{equation}
 and that the non-linearity $f$ and the initial data $u_0$ satisfy Hypothesis \ref{fu}  and Hypothesis \ref{ic1}, respectively.
	Then, for any $\lambda\in(0,1)$, we have
 \[
 \ln X_\lambda(t)\sim \frac{1}{2s}[r(\alpha+1)t]^\frac{1}{\alpha+1}\quad \text{as }t\to \infty.
 \]
\end{theorem}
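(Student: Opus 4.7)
The plan is to prove the asymptotic through matching sharp lower and upper bounds on $\ln X_\lambda(t)$ via sub- and super-solutions. The guiding heuristic is a two-step mechanism at each point $x$ far ahead of the bulk: first, the algebraic tail \eqref{Je} of $J$ combined with Hypothesis \ref{ic1} seeds $u(t_0, x) \gtrsim t_0/x^{2s}$ after an arbitrarily short time $t_0 > 0$, obtained from the Duhamel formula applied to \eqref{oeq1} (the semigroup kernel inherits the algebraic decay of $J$); then the pointwise reaction $\dot v = f(v)$ drives the growth, slowed by the logarithmic degeneracy. Writing $W = -\ln v$, Hypothesis \ref{fu} yields $\dot W = -r(1+W)^{-\alpha}(1 + o(1))$ as $v \to 0^+$, so $(1 + W)^{\alpha+1}$ decreases linearly with slope $-r(\alpha+1)$. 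The value $v$ therefore reaches $\lambda$ at a time $t$ satisfying $r(\alpha+1)t \sim (1 + W(t_0))^{\alpha+1} \sim (2s\ln x)^{\alpha+1}$, which inverts to the claimed relation $\ln x \sim \tfrac{1}{2s}[r(\alpha+1)t]^{1/(\alpha+1)}$.

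For the lower bound on $X_\lambda(t)$, I would first use Hypothesis \ref{ic1} and the comparison principle to reduce to a non-increasing initial datum, so that $u(t,\cdot)$ remains non-increasing for all $t$. I would then build a sub-solution $\underline u$ on the tail region $\{u \leq \delta_0\}$ (where $u$ is small and, being bounded above by a profile of order $x^{-2s}$, is locally convex in $x$), choosing $\underline u(t,x) = V(t-t_0;\, c_0/x^{2s})$ where $V(\cdot;v_0)$ denotes the ODE solution to $\dot v = f(v)$ starting from $v_0$, and $c_0, t_0$ come from the seeding estimate. On this region $\mathcal{D}[\underline u] \geq 0$ by convexity, so $\underline u$ is a genuine sub-solution; glueing with $\underline u \equiv 0$ outside and applying the comparison principle yields $u \geq \underline u$. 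Integrating the ODE as in the heuristic gives $\ln X_\lambda(t) \geq \tfrac{1}{2s}[r(\alpha+1)t]^{1/(\alpha+1)}(1 - o(1))$.

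For the upper bound on $X_\lambda(t)$, I would construct a super-solution of the form $\bar u(t,x) = \min\{1,\, g(t)\psi(x)\}$ where $\psi(x) \sim x^{-2s}$ for $x \gg 1$ and $g$ is to be chosen. On the region $\{g\psi < 1\}$, the super-solution inequality reduces to $\dot g/g \geq \mathcal{D}[\psi]/\psi + f(g\psi)/(g\psi)$; the first term is controlled by the scaling action of $\mathcal{D}$ on algebraic profiles (Hypothesis \ref{ker} combined with \eqref{Je}), while the second is bounded above by $r/(1 + |\ln(g\psi)|)^\alpha$ from Hypothesis \ref{fu}. Choosing $g$ so that $\ln g(t) = \tfrac{1}{2s}[r(\alpha+1)t]^{1/(\alpha+1)}(1 + o(1))$ solves the resulting differential inequality and yields the matching upper bound.

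The main obstacle will be extracting \emph{sharp} constants rather than order-of-magnitude estimates. For the super-solution this demands showing that $\mathcal{D}[\psi]/\psi$ is not merely $O(1)$ but asymptotically negligible against the reaction growth, specifically $o((\ln x)^{-\alpha})$, so that dispersal does not perturb the leading-order asymptotic extracted from $f$. For the sub-solution, the difficulty is combining the convexity needed to ensure $\mathcal{D}[\underline u] \geq 0$ with covering a region in $x$ large enough to capture the sharp constant, which may require smoothing the profile $x^{-2s}$ near the bulk-tail transition and tracking these corrections through the ODE integration. A secondary technical point is controlling the constant $c_0$ in the seeding estimate, which disappears to leading order but must be handled consistently when one power-raises the $W$-variable to the $(\alpha+1)$-st power.
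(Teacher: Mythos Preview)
Your heuristic and the overall sub/super-solution scheme are right and match the paper, but both constructions have real gaps that block the sharp constant.

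For the upper bound, the product ansatz $\bar u=\min\{1,g(t)\psi(x)\}$ cannot deliver the sharp rate. On $\{g\psi<1\}$ the inequality reads $g'/g\ge\mathcal{D}[\bar u]/\bar u+f(\bar u)/\bar u$; by Hypothesis~\ref{fu} the second term is bounded by $r/(1-\ln\bar u)^{\alpha}$, which tends to~$r$ as $x\searrow x_1(t)$, while $g'/g$ is independent of~$x$. This forces $g'/g\ge r$, hence $g(t)\ge e^{rt}$ and only the crude bound $\ln X_\lambda(t)\le rt/(2s)$. Equivalently, your hope that $\mathcal{D}[\psi]/\psi=o((\ln x)^{-\alpha})$ fails for any front-like $\psi$: the algebraic tail of~$J$ pulls in the plateau of~$\psi$ and contributes $\int_x^\infty J\sim x^{-2s}\sim\psi(x)$, so the ratio is $O(1)$. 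The paper avoids this by taking $\psi$ in ODE-solution form,
\[
\psi(t,x)=\exp\Bigl\{1-\bigl[(1-\ln[(1+\kappa t^{p})v_0(x)])^{\alpha+1}-r(\alpha+1)t\bigr]^{1/(\alpha+1)}\Bigr\},
\]
so that $\partial_t\psi$ matches $r\psi/(1-\ln\psi)^{\alpha}$ pointwise, with a buffer of order $t^{-1/(\alpha+1)}\psi/(1-\ln\psi)^{\alpha}$ coming from the factor $(1+\kappa t^{p})$. Even then the bulk contribution $\sim(x-x_1(t))^{-2s}$ to $\mathcal{D}[\Psi]$ dominates the buffer far ahead of the front unless one also inserts a logarithmic correction $v_0(x)\propto\ln^{q}x/x^{2s}$ with $q=2\alpha+1$; the footnote in the proof of Lemma~\ref{epubed} records explicitly that this factor is necessary.

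For the lower bound, the claim ``$\mathcal{D}[\underline u]\ge 0$ by convexity on the tail'' is false. The operator is nonlocal, a bounded non-increasing function is never globally convex, and the symmetrised integrand $\underline u(x-y)+\underline u(x+y)-2\underline u(x)$ changes sign however you extend $\underline u$ to the bulk (zero, a constant, or the true solution). Lemma~\ref{eplbde} shows that in the far tail one only obtains $\mathcal{D}[\Phi]\ge C\,\partial_x\varphi<0$, and this dispersal loss must be absorbed by the reaction. Since the sharp constant pins the growth rate at~$r$ (unlike Theorem~\ref{ei1}, where taking $\rho<r$ creates slack), the paper replaces $t$ by $t-h(t,x)$ with $h=\kappa t^{\alpha/(\alpha+1)}e^{p[r(\alpha+1)t]^{1/(\alpha+1)}}x^{-2sp}$ inside~$\varphi$, which yields $\partial_t\varphi\le r\varphi(1-\ln\varphi)^{-\alpha}(1-c\varphi^{p})$ and leaves just enough room to absorb the negative dispersal term.
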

Interestingly, when $\alpha=0$ and $0<s<1$, one recovers the rate of the Fisher-KPP equation with fractional diffusion \cite{cabre2013influence},  and Theorem \ref{al} also fills a (small) gap for $\alpha=0$ and  $s\ge 1$.  
\par
Let us comment on our strategies to prove Theorem \ref{lin1}, Theorem \ref{net}, Theorem \ref{ei1} and Theorem \ref{al}. 
\cite{coville2007non}.
In the proof of Theorem \ref{lin1}, that is, existence of travelling wave solutions, we rely on methods designed in \cite{alfaro2017propagation,bouin2024,coville2008nonlocal}.
We identify a clever form for a super-solution, a sub-exponential with a polynomial prefactor, the latter appearing to be of crucial importance in the proof of the critical case $\beta=\frac{1}{\alpha+1}$. 
Theorem \ref{net} can be regarded as a direct corollary of Theorem \ref{ei1}; therefore, we will not provide a special proof in this paper.
The proofs of Theorem \ref{ei1} and Theorem \ref{al} are 
mainly based on the sub- and super-solutions technique. 
We build on and extend a paper by the two first authors and Legendre \cite{bouin2021sharp}.
For the algebraic propagation, that is, Theorem \ref{ei1}, we construct a super-solution by using the solution to the ordinary differential equation $u_t=\rho \frac{u}{(1-\ln u)^\alpha}$ with a free parameter $\rho$. To obtain a precise estimate of the invasion rate, it is necessary to incorporate an algebraic correction into the (initial) profile, as the weakly degenerate nonlinearity introduces a logarithmic correction that significantly impacts the asymptotic behaviour.
In the exponential propagation, much more precise sub- and super- solutions have to be developed to get sharp estimates on the propagation speed. 
On the one hand, we incorporate a logarithmic correction into the profiles.
On the other hand, since the accelerating solution to the Cauchy problem of \eqref{oeq1} flattens with time \cite{bouin2021sharp,flatteningeffect}, it is essential to take the flattening effect into consideration to get exact rates of invasion.
 For the construction of sub-solutions in the exponential propagation regime (Theorem \ref{al}), the key to get a precise sub-solution is the solution to the full ordinary differential equation $u_t=f(u)$, which causes a lot of technical difficulties since its solution is not explicit. We point out that some steps of both algebraic and exponential proofs are similar but with so significantly different computations that redoing some of them entirely is justified. It is worth mentioning that we prove a flattening estimate of independent interest, which plays a crucial role in proving the existence of sub-solutions at later times.
 \par
The rest of this paper is organized as follows. In Section \ref{s2}, we shall prove the existence of travelling waves, that is, Theorem \ref{lin1}.  In Section \ref{s3}, we prove Theorem \ref{ei1}. In Section \ref{s4}, we prove Theorem \ref{al}. In Section \ref{s5}, we provide the discretisation of the singular integral operator $\mathcal{D}[\cdot]$ that lead to the numerical simulations.
\section{Existence of travelling waves: proof of Theorem \ref{lin1}}\label{s2}	
		\par 
		In this section, we construct travelling wave solutions to \eqref{oeq1} in the regime $\beta\ge \frac{1}{\alpha+1}$, by using the strategy developed in \cite{coville2008nonlocal} for integrable kernel $J$ and in \cite{bouin2024} for non-integrable kernel $J$. We introduce the following perturbation of the travelling wave equation,
  \begin{equation}\label{ptwe} 
	\left\{
\begin{aligned}
&\varepsilon\phi''+\mathcal{D}[\phi]+c\phi'+f(\phi)=0 &\text{in }\mathbb{R},\\
&\phi(-\infty)=1,\quad \phi(\infty)=0.
\end{aligned}
\right.
  \end{equation}
The key step in these two approaches is to build a non trivial super-solution independent of $\varepsilon$ of the above problem \eqref{ptwe}. To construct our super-solution we proceed as follows. 
\par	 Let $w$ be a decreasing smooth function, at least $C^2$, defined on $\mathbb{R}$ by
\par
\medskip
\noindent
 \begin{minipage}{0.45\linewidth}
\[
		w(x):=
		\begin{cases}
			1-e^x, &x\le -1,\\
		C_Lx^pe^{-x^\beta}, &x\ge L,
		\end{cases}
 \]
 \end{minipage}
 \ 
 \begin{minipage}{0.45\linewidth}
    \centering
    \includegraphics[width=0.7\linewidth]{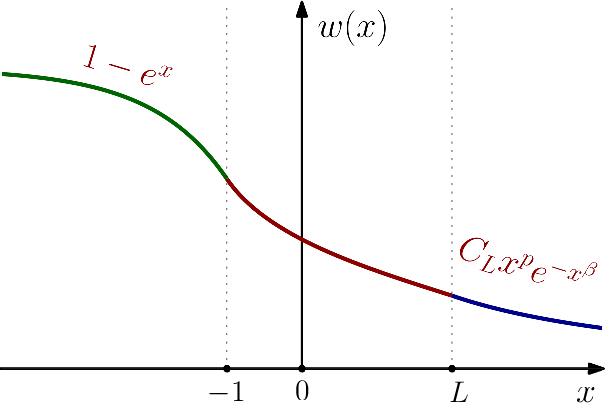}
    \captionsetup{type=figure}
   \caption{Plot of the function $w$.}
    \label{wfig}
\end{minipage}
\par\noindent
where $C_L=\frac{1}{2}\left(1-e^{-1}\right)L^{-p} e^{L^\beta}$ and $L>L_0:=\max\left\{1,\left(\frac{p}{\beta}\right)^{\frac{1}{\beta}}\right\}$, so that $w$ is decreasing on $[L,+\infty)$, and $p>0$ has to be determined, see Figure \ref{wfig}.
\par
  Some direct calculations show that for $x\ge L$,
  \[
      w'(x)=C_L\left(px^{-\beta}-\beta \right) x^{p+\beta-1}e^{-x^\beta},
  \]
  and 
  \[
      w''(x)=C_L\left(p(p-1)x^{-2\beta}-\beta(2p+\beta-1)x^{-\beta}+\beta^2\right)x^{p+2\beta -2}e^{-x^\beta}.
  \]
  Thus, there exists $L_1>L_0$ such that for any $x\ge L$ and $L\ge L_1$, we have 
  \begin{equation}\label{trw'w''}
    w'(x)\asymp -x^{p+\beta-1}e^{-x^\beta}\quad \text{and}\quad  0\le w''(x)\lesssim x^{p+2\beta -2}e^{-x^\beta}.
  \end{equation}
		\par  We shall now prove that $w$ is a super-solution to the travelling wave equation \eqref{ptwe} when selecting an appropriate parameter $p$.
		\begin{lemma}\label{utr}
  Let $\alpha>0$ and $\beta>0$ be such that $\frac{1}{\alpha+1}\le \beta< 1.$ Assume that Hypothesis \ref{ker} holds, where the function $\mathcal{Z}$ is sub-exponentially bounded, that is,
\[
	\mathcal{Z}(z)\le e^{-z^\beta}\quad \text{ for all }z\ge 1,
\]
and that the non-linearity $f$ satisfies Hypothesis \ref{fu}. If $p\ge 2-2\beta$, then, for any $0\le\varepsilon\le1$, there is $c_0>0$ such that for all $x\in\mathbb{R}$,
			\begin{equation}\label{eti}
				\varepsilon w''(x)+\mathcal{D}[w](x)+c_0w'(x)+ f(w(x))\le 0.
			\end{equation}
		\end{lemma}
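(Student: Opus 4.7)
The plan is to split $\R$ into three regions and verify \eqref{eti} in each: (i) a far-left region $x\le -X_-$ where $w(x)=1-e^x$ is close to $1$; (ii) a bounded intermediate region $[-X_-,X_+]$ with $X_+\ge L$, where $w$ is smooth, decreasing and bounded away from $0$ and $1$; and (iii) a far-right region $x\ge X_+$ where $w(x)=C_L x^{p} e^{-x^\beta}$ is given by the sub-exponential formula. The thresholds $X_\pm$ are fixed first, and then $c_0$ is chosen large enough, uniformly in $\varepsilon\in[0,1]$, to close the inequality simultaneously in all three regions.

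On the far-left region each of the four terms is of order $e^x$: $w'(x)=w''(x)=-e^x$; $f(w(x))\lesssim e^x$ by Lipschitz continuity of $f$ at $1$; the local part of $\mathcal{D}[w]$ is $\lesssim \mathcal{J}_1\sup|w''|$ via a second-order Taylor expansion and the moment bound in Hypothesis~\ref{ker}, while the non-local part is bounded by $(1-w(x))\int_{|y|>1}J(y)\,dy\lesssim e^x$ (finite since $J$ has a sub-exponential tail). The transport term $c_0 w'(x)=-c_0 e^x$ then absorbs everything for $c_0$ large. On the bounded intermediate region, all four terms are uniformly bounded and $-w'\ge\delta>0$, so a sufficiently large $c_0$ again suffices.

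The main obstacle lies in the far-right region. Using the asymptotic formulas preceding the lemma we have $|w'(x)|\asymp x^{p+\beta-1}e^{-x^\beta}$, $|w''(x)|\lesssim x^{p+2\beta-2}e^{-x^\beta}$, and, since $|\ln w(x)|\sim x^{\beta}$, Hypothesis~\ref{fu} gives $f(w(x))\lesssim r\,x^{p-\alpha\beta}e^{-x^\beta}$. The delicate step is a sharp estimate of $\mathcal{D}[w](x)$. The local piece $\int_{|y|\le 1}$ is bounded by $\mathcal{J}_1\sup|w''|\lesssim x^{p+2\beta-2}e^{-x^\beta}$. For the non-local piece, the symmetry of $J$ allows us to work with the even combination $\int_1^{\infty}\bigl(w(x-y)+w(x+y)-2w(x)\bigr)J(y)\,dy$, and the monotonicity $w(x+y)\le w(x)$ further bounds it by $\int_1^{\infty}(w(x-y)-w(x))J(y)\,dy$. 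Splitting at $y=x/2$: on $y\in[1,x/2]$, a second-order Taylor expansion combined with the key inequality
\[
(x-y)^\beta+y^\beta-x^\beta\ \ge\ (2-2^\beta)\,y^\beta,
\]
which follows from the concavity/sub-additivity of $t\mapsto t^\beta$ applied to $g(t)=1+(t-1)^\beta-t^\beta$, and with the assumption $p+2\beta-2\ge 0$, yields a contribution of order $x^{p+2\beta-2}e^{-x^\beta}$; on $y\in[x/2,x-L]$ the excess $(x-y)^\beta+y^\beta-x^\beta$ is of order $x^\beta$, rendering the integrand exponentially negligible; finally on $y\ge x-L$, the bound $w\le 1$ and the asymptotic $\int_{x-L}^{\infty}J(y)\,dy\lesssim x^{1-\beta}e^{-x^\beta}$ (from the sub-exponential tail of $J$) produce a residual term of order $x^{1-\beta}e^{-x^\beta}$. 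Altogether, $\mathcal{D}[w](x)\lesssim \bigl(x^{p+2\beta-2}+x^{1-\beta}\bigr)e^{-x^\beta}$.

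Plugging these estimates into \eqref{eti} in the far-right region and factoring out $e^{-x^\beta}$, the inequality reduces to the requirement
\[
c_0\,\beta\ \gtrsim\ x^{\beta-1}+x^{2-2\beta-p}+x^{1-\beta(\alpha+1)} \quad \text{uniformly on } [X_+,+\infty).
\]
The three exponents are non-positive precisely under $\beta<1$ (trivial), $p\ge 2-2\beta$, and $\beta\ge\tfrac{1}{\alpha+1}$, which are exactly the hypotheses of the lemma; the right-hand side is therefore bounded on $[X_+,+\infty)$, and a sufficiently large choice of $c_0$ closes \eqref{eti} there. The critical regime $\beta=\tfrac{1}{\alpha+1}$ with $p=2-2\beta$ saturates all three constraints simultaneously and is exactly where the polynomial prefactor $x^p$ plays its decisive role.
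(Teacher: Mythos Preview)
Your three-zone strategy and your treatment of the far-left and middle zones are exactly as in the paper. In the far-right zone, your use of the sub-additivity inequality
\[
(x-y)^\beta+y^\beta-x^\beta\ \ge\ (2-2^\beta)\,\min(y,x-y)^\beta
\]
is a genuine simplification: the paper instead handles the pieces $\int_1^{x/2}$ and $\int_{x/2}^{x-L}$ via a further splitting at $y\sim (\ln x^q)^{1/\beta}$ and a somewhat delicate analysis, obtaining $I_3\lesssim x^{p+\beta-1}e^{-x^\beta}$ and $I_4\lesssim e^{-x^\beta}$. Your sub-additivity bound achieves the same (or better) orders with a one-line computation in each case.

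Two small slips to fix. First, once you have reduced to the asymmetric quantity $w(x-y)-w(x)$, a second-order Taylor expansion retains the first-order term $-yw'(x)$, which does not vanish; the resulting bound on $[1,x/2]$ is then $O(x^{p+\beta-1}e^{-x^\beta})$ (the same order as $|w'|$, hence still absorbed by $c_0$), not the $O(x^{p+2\beta-2}e^{-x^\beta})$ you state. The sharper bound is available, but only if you keep the \emph{symmetric} combination $w(x-y)+w(x+y)-2w(x)$ for this piece and bound it by $y^2\sup_{[x-y,x+y]}w''$ before applying sub-additivity. Either route closes the argument, so this is cosmetic. Second, on $[x/2,x-L]$ the excess $(x-y)^\beta+y^\beta-x^\beta$ is not of order $x^\beta$ near $y=x-L$: there it is only $\approx L^\beta$. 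The correct lower bound (by symmetry of your inequality, swapping $y\leftrightarrow x-y$) is $(2-2^\beta)(x-y)^\beta$; after the substitution $s=x-y$ one still gets
\[
\int_{x/2}^{x-L}w(x-y)J(y)\,dy\ \lesssim\ e^{-x^\beta}\int_L^{x/2}s^{p}e^{-(2-2^\beta)s^\beta}\,ds\ \lesssim\ e^{-x^\beta},
\]
so the conclusion is unaffected.
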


\par
Before proving the lemma, let us explain how to use $w$. Equipped with the super-solution $w$ to \eqref{ptwe}, similarly to the proof of Theorem 1.3 in \cite{coville2008nonlocal}, one may obtain existence of travelling waves, that is, there is a minimum speed $c^*\le c_0$ such that, for all $c\ge c_*$, equation \eqref{oeq1} admits travelling waves $(c,u)$, whereas, for all $c<c^*$, equation \eqref{oeq1} does not admit any travelling wave solutions.
\par
In view of \cite[Lemma 3.2]{alfaro2017propagation}, one may get that the minimum speed $c^*$ is positive. More precisely, we have the following lemma, which proof is standard: we omit it here and send the reader to \cite{alfaro2017propagation}.
\begin{lemma}
  Assume Hypothesis \ref{fu} and Hypothesis \ref{twa} hold. Let $(c,u)$ be a travelling wave. Then the speed $c$ is positive. Moreover, so is the minimum speed $c^*$.
\end{lemma}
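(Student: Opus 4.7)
The plan is to establish the identity
\[
c = \int_{\R} f(u(x))\,dx,
\]
which immediately yields $c>0$ since $f>0$ on $(0,1)$ and a travelling wave $u$ is continuous with $u(-\infty)=1$ and $u(+\infty)=0$, hence takes every intermediate value on a set of positive measure. To obtain this identity, I would integrate the travelling wave equation $\mathcal{D}[u]+cu'+f(u)=0$ over $[-R,R]$:
\[
c\bigl(u(R)-u(-R)\bigr)+\int_{-R}^R \mathcal{D}[u](x)\,dx+\int_{-R}^R f(u(x))\,dx=0,
\]
and let $R\to\infty$. The first boundary term tends to $-c$ by the asymptotic conditions on $u$.

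The heart of the computation is to show that the integral of $\mathcal{D}[u]$ vanishes in the limit. A careful application of Fubini---splitting $\mathcal{D}[u]$ into a small-$y$ piece handled via Taylor expansion and the second-moment bound of Hypothesis~\ref{ker}, and a large-$y$ piece handled via the sub-exponential decay of Hypothesis~\ref{twa}---reduces the question to analysing $\int_{-R}^R (u(x-y)-u(x))\,dx = \int_{-R-y}^{-R} u - \int_{R-y}^{R} u$. This quantity is uniformly bounded by $2|y|$ and converges pointwise to $y$ as $R\to\infty$. Dominated convergence and the symmetry of $J$ therefore force
\[
\lim_{R\to\infty}\int_{-R}^R \mathcal{D}[u](x)\,dx = 0.
\]
Passing to the limit in the integrated equation produces $c=\int_\R f(u)\,dx$ (the monotonicity of $R\mapsto\int_{-R}^R f(u)$ and the boundedness of the right-hand side automatically ensure that this limit is finite), and the strict positivity of $f$ on $(0,1)$ gives $c>0$.

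For the minimum speed $c^*$, the strategy is to show that $c^*$ is itself attained as the speed of some travelling wave, so that the first part of the argument applies again. I would take a sequence $(c_n,u_n)$ of travelling waves with $c_n\downarrow c^*$, translate so that $u_n(0)=\tfrac12$, and extract a locally uniformly convergent subsequence via standard equi-continuity estimates for the nonlocal operator. The main obstacle here is to prevent the limit profile $u_*$ from degenerating to a constant: this is where the $c$-independent tail bounds supplied by the explicit super-solution $w$ of Lemma~\ref{utr} become crucial, forcing $u_*(-\infty)=1$ and $u_*(+\infty)=0$. Applying the first part of the lemma to $(c^*,u_*)$ then yields $c^*>0$.
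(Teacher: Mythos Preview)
Your integration argument for the first assertion, yielding $c=\int_{\R}f(u)\,dx>0$, is exactly the standard proof the paper invokes by citing \cite[Lemma~3.2]{alfaro2017propagation} and omitting details. One technical caveat: for kernels allowed by Hypothesis~\ref{ker} with a non-integrable singularity at the origin (e.g.\ the fractional Laplacian), the bound $2|y|$ on the boundary term is \emph{not} an adequate dominating function near $y=0$, since $|y|J(y)$ need not be locally integrable there. On $|y|\le1$ you must symmetrise and use the regularity $u'\in L^\infty$ of the wave: writing $\tilde G_R(y):=\int_{-R}^R\bigl(u(x+y)+u(x-y)-2u(x)\bigr)\,dx$, one checks $\tilde G_R(0)=\tilde G_R'(0)=0$ and $|\tilde G_R''|\le 4\|u'\|_\infty$, whence $|\tilde G_R(y)|\le 2\|u'\|_\infty\,y^2$ uniformly in $R$, and the second-moment condition of Hypothesis~\ref{ker} then justifies dominated convergence. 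This is presumably what your ``Taylor expansion and second-moment bound'' step is meant to cover, but note it requires only $u'\in L^\infty$, not $u\in C^2$.

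Your second part is correct in principle but superfluous. The existence result established immediately before this lemma (via the super-solution of Lemma~\ref{utr} and the methods of \cite{coville2008nonlocal,bouin2024}) already furnishes a travelling wave at the minimal speed $c^*$ itself, so $c^*>0$ follows at once by applying the first part to that wave; no fresh compactness argument or uniform-in-$c$ tail control is required.
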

Let us complete the proof of Theorem \ref{lin1} by proving Lemma \ref{utr}.
\begin{proof}[Proof of Lemma \ref{utr}.]
 We prove the lemma by dividing it into three zones of positions $x$: $x\le -2$, $x\ge 2L$ and $-2<x<2L$.
 \medskip
\par
\noindent
 \textbf{Zone 1:} $x\le -2$. Now, let us briefly estimate $\mathcal{D}[w]$ for any $x\le -2$. Since $w$ is decreasing, by the definition of $\mathcal{D}[w]$, we have 
\begin{equation}\label{etd2}
    \mathcal{D}[w]\le  P.V. \int_{-1}^1J(y)\Big(w(x-y)-w(x)\Big)dy+\int_{1}^{+\infty}J(y)\Big(w(x-y)-w(x)\Big)dy.
\end{equation}
Repeating the same steps as for the derivation of \eqref{trI2}, we have
\[
     P.V. \int_{-1}^1J(y)\Big(w(x-y)-w(x)\Big)dy \le -\mathcal{C}_7e^x,
\]
for some positive constant $\mathcal{C}_7$. For the second integral of \eqref{etd2}, since $w\le 1$ and $y\mapsto J(y)\in L^1([1,+\infty))$, we obtain
\[
    \int_{1}^{+\infty}J(y)\Big(w(x-y)-w(x)\Big)dy=\int_{1}^{+\infty}J(y)\Big(w(x-y)-1\Big)dy+e^x\int_{1}^{+\infty}J(y)dy\le e^x\int_{1}^{+\infty}J(y)dy.
\]
As a result, for $x\le-2$, we have
\[
   \mathcal{D}[w]\le- \mathcal{C}_7e^x+e^x\int_{1}^{+\infty}J(y)dy.
\]
Therefore, for $x\le-2$, since $f(w)\le \frac{rw}{(1-\ln w)^\alpha}(1-w)\le rw(1-w)$ for $w\in(0,1)$, we have
\[
\begin{aligned}	
\varepsilon w''+\mathcal{D}[w] +c_0w' +f(w)&\le -\varepsilon e^x-\mathcal{C}_7e^x+e^x\int_{1}^{+\infty}J(y)dy-c_0 e^x+r(1-e^x)e^x\\
&\le -\left(c_0-\int_{1}^{+\infty}J(y)dy-r\right)e^x\le 0,
\end{aligned}
\]
as long as $c_0>c_{left}:=\int_{1}^{+\infty}J(y)dy+r$.

\medskip
\par
\noindent
 \textbf{Zone 2:} $-2< x< 2L$. By the definition of $\mathcal{D}[w]$, since $w$ is decreasing and $0<w< 1$, we write
\[
\mathcal{D}[w]\le P.V.\int_{-1}^1\Big(w(x-y)-w(x)\Big)J(y)dy +\int_1^{+\infty}J(y)dy.
\]
 Repeating the same steps as for the derivation of \eqref{trI2}, since $w\in C^2$ and $y\mapsto J(y)\in L^1([1,+\infty))$, we have, for all $x\in(-2,2L)$,
    \[\mathcal{D}[w](x)\le \mathcal{C}_8,\]
  for some positive constant $\mathcal{C}_8$.
It follows from $\varepsilon\le1$ and $f(w)\le r$ that, for any $x\in(-2,2L)$, we have
\[
\varepsilon w''(x)+ \mathcal{D}[w](x) +c_0 w'(x) +f(w(x))\le \varepsilon w''(x)+\mathcal{C}_8+c_0 w'(x)+r\le 0,
\]
as soon as $c_0>c_{middle}:=\frac{\left(\max_{x\in[-2,2L]}w''(x)\right)^++\mathcal{C}_8+r}{\min_{x\in[-2,2L]}|w'(x)|}$.

 \medskip
 \par
 \noindent
 \textbf{Zone 3: $x\ge 2L$}. Let us first check \eqref{eti} when $x\ge 2L$. By the definition of $\mathcal{D}[w]$, we write 
\[
\begin{aligned}
\mathcal{D}[w]=&\int_{-\infty}^{-1}J(y)\Big(w(x-y)-w(x)\Big)dy+ P.V. \int_{-1}^{1}J(y)\Big(w(x-y)-w(x)\Big)dy\\
&\qquad+\int_{1}^{\frac{x}{2}}J(y)\Big(w(x-y)-w(x)\Big)dy+ \int_{\frac{x}{2}}^{x-L}J(y)\Big(w(x-y)-w(x)\Big)dy \\
&\qquad \qquad +\int_{x-L}^{+\infty}J(y)\Big(w(x-y)-w(x)\Big)dy
:=I_1+I_2+I_3+I_4+I_5.
\end{aligned}
\]
\par
\# \textbf{For $I_1$,} it follows from the fact that $w$ is decreasing for $x\ge L$ that
\begin{equation}\label{trI1}
    I_1\le 0.
\end{equation}
\par 
\# \textbf{Let us now estimate $I_2$.} Since $J$ is symmetric, we write
\[
    I_2 = \frac{1}{2}\int_{-1}^{1}J(y)\Big(w(x+y)+w(x-y)-2w(x)\Big)dy.
\]
It follows from Taylor's theorem and \eqref{trw'w''} that for all $y\in[-1,1]$,
\[
    w(x+y)+w(x-y)-2w(x)\le y^2\sup_{|z|\le 1} w''(x+z)\lesssim y^2 \sup_{|z|\le 1} \left[(x+z)^{p+2(\beta-1)}e^{-(x+z)^\beta}\right].
\]
Thus, since $0<\beta<1$, by Hypothesis \ref{ker}, we get 
\begin{equation}\label{trI2}
    I_2 \le  \mathcal{C}_1 x^{p+2\beta-2}e^{-x^\beta},
\end{equation}
for some positive constant $\mathcal{C}_1$.

\par
\# \textbf{Let us estimate $I_3$.} By the Taylor theorem, we get
\[
    I_3= -\int_{1}^{\frac{x}{2}}\int_0^1 yJ(y)w'(x-\tau y)d\tau dy.
\]
Since $w$ is convex for $x\ge 2L$ and $L\ge L_1$, by \eqref{trw'w''} and Hypothesis \ref{ker}, we get
\[
  I_3\le - \int_{1}^{\frac{x}{2}} yJ(y)w'(x-y)dy\lesssim\int_{1}^{\frac{x}{2}} y(x-y)^{p+\beta-1}e^{-y^\beta-(x-y)^\beta}dy.  
\]
By using the change of variables $z=\frac{y}{x}$ and assuming $p\ge 1-\beta$, we obtain, for 
all $x\ge 2L$, 
\begin{equation}\label{trI4f}
\begin{aligned}
    I_3\lesssim  x^{p+\beta+1}\int_{\frac{1}{x}}^{\frac{1}{2}} z(1-z)^{p+\beta-1}e^{-x^\beta \left[z^\beta+(1-z)^\beta\right]}dz\lesssim  x^{p+\beta+1}e^{-x^\beta}\int_{\frac{1}{x}}^{\frac{1}{2}} z e^{-x^\beta \left[z^\beta+(1-z)^\beta-1\right]}dz.
\end{aligned}
\end{equation}
There is $L_2>\max\{L_0,L_1\}$ such that for any $L\ge L_2$,
\[
    \frac{\ln^\frac{1}{\beta} (x^3)}{x}\in \left(\frac{1}{x},\frac{1}{2}\right)\quad \text{for all }x\ge 2L.
\]
Then, we split the above integral into two parts and obtain
\begin{equation}\label{twsi3}
\begin{aligned}
  \int_{\frac{1}{x}}^{\frac{1}{2}} z e^{-x^\beta \left[z^\beta+(1-z)^\beta-1\right]}dz=&\int_{\frac{\ln^\frac{1}{\beta} (x^3)}{x}}^{\frac{1}{2}} z e^{-x^\beta \left[z^\beta+(1-z)^\beta-1\right]}dz+\int_{\frac{1}{x}}^{\frac{\ln^\frac{1}{\beta} (x^3)}{x}} z e^{-x^\beta \left[z^\beta+(1-z)^\beta-1\right]}dz.
\end{aligned}
\end{equation}
Notice that since $\beta\in(0,1)$ and $z\mapsto z^\beta+(1-z)^\beta$ is increasing on $\left[0,\frac{1}{2}\right]$, one has for all $z\in\left[\frac{\ln^\frac{1}{\beta} (x^3)}{x},\frac{1}{2}\right]$,  
\[
    z^\beta+(1-z)^\beta\ge \frac{\ln (x^3)}{x^\beta}+\left(1-\frac{\ln^\frac{1}{\beta} (x^3)}{x}\right)^\beta \ge \frac{\ln (x^3)}{x^\beta}+1 -\frac{\ln^\frac{1}{\beta} (x^3)}{x}.
\]
Thus, since $\beta\in(0,1)$, for the first integral in \eqref{twsi3}, we have 
\[
    \int_{\frac{\ln^\frac{1}{\beta} (x^3)}{x}}^{\frac{1}{2}} z e^{-x^\beta \left[z^\beta+(1-z)^\beta-1\right]}dz\le  e^{- \ln (x^3)+\frac{\ln^\frac{1}{\beta}( x^3)}{x^{1-\beta}}}=o_{x\to+\infty}\left(\frac{1}{x^2}\right).
\]
For the second integral in \eqref{twsi3}, we have
\[
\int_{\frac{1}{x}}^{\frac{\ln^\frac{1}{\beta} (x^3)}{x}} z e^{-x^\beta \left[z^\beta+(1-z)^\beta-1\right]}dz
    \le \exp\left\{x^\beta\left[1-\left(1-\frac{\ln^\frac{1}{\beta} (x^3)}{x}\right)^\beta\right] \right\} \int_{\frac{1}{x}}^{\frac{\ln^\frac{1}{\beta} (x^3)}{x}} z e^{-x^\beta z^\beta}dz. 
\]
Since $0<\beta<1$, by using the inequality $1-(1-y)^\beta\le y$ for all $y\in(0,1)$,  we have for all $x\ge 2L$ and $L\ge L_2$, up to enlarging $L_2$ if necessary,
\[
   \exp\left\{x^\beta\left[1-\left(1-\frac{\ln^\frac{1}{\beta} (x^3)}{x}\right)^\beta\right] \right\} =\exp\left\{\frac{\ln^\frac{1}{\beta} (x^3)}{x^{1-\beta}}\right\}\le 2.
\]
On the other hand, since $y\mapsto y e^{-y^\beta}\in L^1([1,+\infty))$, by using the change of variables $y=zx$, we obtain
\[
     \int_{\frac{1}{x}}^{\frac{\ln^\frac{1}{\beta} (x^3)}{x}} z e^{-x^\beta z^\beta}dz = \frac{1}{x^2}\int_1^{\ln^\frac{1}{\beta} (x^3)}y e^{-y^\beta}dy \le \frac{1}{x^2}\int_1^{+\infty}y e^{-y^\beta}dy.
\]
 We then arrive at
\[
\int_{\frac{1}{x}}^{\frac{1}{2}} z e^{-x^\beta \left[z^\beta+(1-z)^\beta-1\right]}dz
      \le  \frac{2}{x^2}\int_1^{+\infty}y e^{-y^\beta}dy+o_{x\to+\infty}\left(\frac{1}{x^2}\right).    
\]
As a result, for all $x\ge 2L$ and $L>L_2$, up to enlarging $L_2$ if necessary, we achieve, recalling \eqref{trI4f}, 
\begin{equation}\label{trI4}
   I_3\le  \mathcal{C}_3 x^{p+\beta-1}e^{-x^\beta} ,    
\end{equation}
for some positive constant $\mathcal{C}_3$.
\par 
\# \textbf{Now, let us estimate $I_4$.} Since $w\ge 0$, one may directly get 
\begin{equation*}
    I_4\le \int_{\frac{x}{2}}^{x-L}J(y)w(x-y)dy.
\end{equation*}
By using the change of variables $z=1-\frac{y}{x}$, the definition of $w$ and Hypothesis \ref{ker}, we have
\[
I_4\le \mathcal{J}_0 x^{p+1}e^{-x^\beta}\int^{\frac{1}{2}}_{\frac{1}{x}}z^p e^{-x^\beta\left[z^\beta+(1-z)^\beta-1\right] }dz.
\]
There is $L_3\ge\max\{L_0,L_1\}$ such that for $L\ge L_3$,
\[
\frac{1}{x}\le \frac{\ln^\frac{1}{\beta}(x^{p+2})}{x}\le \frac{1}{2}\quad \text{for all }x\ge 2L.
\]
As for the estimate of $I_4$, the above integral can be split into two parts:
\[
\begin{aligned}
    \int^{\frac{1}{2}}_{\frac{1}{x}}z^p e^{-x^\beta\left[z^\beta+(1-z)^\beta-1\right] }dz&=\int^{\frac{1}{2}}_{\frac{\ln^\frac{1}{\beta}(x^{p+2}) }{x}}z^p e^{-x^\beta\left[z^\beta+(1-z)^\beta-1\right] }dz+\int^{\frac{\ln^\frac{1}{\beta}(x^{p+2}) }{x}}_{\frac{1}{x}}z^p e^{-x^\beta\left[z^\beta+(1-z)^\beta-1\right] }dz.
\end{aligned}
\]
By repeating the steps of the estimate of $I_4$, we get
\[
\int^{\frac{1}{2}}_{\frac{\ln^\frac{1}{\beta}(x^{p+2}) }{x}}z^p e^{-x^\beta\left[z^\beta+(1-z)^\beta-1\right] }dz=O_{x\to+\infty}\left(\frac{1}{x^{p+2}}\right),
\]
and 
\[
\int^{\frac{\ln^\frac{1}{\beta}(x^{p+2}) }{x}}_{\frac{1}{x}}z^p e^{-x^\beta\left[z^\beta+(1-z)^\beta-1\right] }dz\le \frac{1}{x^{p+1}}\int_1^{+\infty}y^p e^{-y^\beta}dy.
\]
As a result, for all $L\ge L_3$ up to enlarging $L_3$ if necessary, we have
\begin{equation}\label{trI5}
    I_4\le \mathcal{J}_0\left[\int_1^{+\infty}y^p e^{-y^\beta}dy+O_{x\to+\infty}\left(\frac{1}{x}\right)\right]e^{-x^\beta}\le \mathcal{C}_4 e^{-x^\beta}\quad \text{for any }x\ge 2L,
\end{equation}
for some positive constant $\mathcal{C}_4$.
\par 
\# \textbf{Let us now estimate $I_5$.} Since $0<w\le 1$, by Hypothesis \ref{ker}, we have
\[
	I_5=\int_{x-L}^{+\infty}J(y)\Big(w(x-y)-w(x)\Big)dy
	\le \mathcal{J}_0\int_{x-L}^{+\infty} e^{-y^\beta}dy.
\]
It follows from L'H\^opital's rule that 
\[
\lim_{x\to +\infty}\frac{\int_{x-L}^{+\infty} e^{-y^\beta}dy}{\frac{1}{\beta}x^{1-\beta}e^{-x^\beta}}=1,
\]
whence there exists some constant $\mathcal{C}_2>0$ such that for all $x>2L$,  we have
\begin{equation}\label{trI3}
I_5\le\mathcal{J}_0\int_{x-L}^{+\infty} e^{-y^\beta}dy \le \mathcal{C}_2x^{1-\beta}e^{-x^\beta}.
\end{equation}
\par 
Collecting \eqref{trI1}, \eqref{trI2}, \eqref{trI4}, \eqref{trI5} and \eqref{trI3}, for all $x\ge 2L$ and $L\ge \tilde L:=\max\{L_2,L_3\}$, we have
\[
\mathcal{D}[w](x)\le x^{p+\beta-1}e^{-x^\beta}\left(  \mathcal{C}_1 x^{\beta-1}+\mathcal{C}_2 x^{2(1-\beta)-p}+\mathcal{C}_3 +\mathcal{C}_4x^{1-\beta-p}\right).
\]
In view of Hypothesis \ref{fu}, by the definition of $w$, there is a positive constant $\mathcal{C}_5$ such that we have for all $x\ge 2L$,
\[
f(w)\le \frac{rw}{(1-\ln w)^\alpha}= \frac{rC_Lx^p e^{-x^\beta}}{(1-\ln C_L+x^\beta- p\ln x)^\alpha}\le \mathcal{C}_5 x^{p-\alpha\beta}e^{-x^\beta}.
\]
By \eqref{trw'w''}, there are two positive constants $\omega_1$ and $\omega_2$ such that
\[
w'(x)\le -\omega_1 x^{p+\beta-1}e^{-x^\beta} \quad\text{and}\quad w''(x)\le \omega_2 x^{p+2\beta-2}e^{-x^\beta}.
\]
Therefore, for any $0\le\varepsilon\le1$, $x\ge 2L$ and $L\ge \tilde L$, we have
\[
\begin{aligned}
&\varepsilon w''+\mathcal{D}[w]+c_0w'+f(w)\\
&\le x^{p+\beta-1}e^{-x^\beta}\Big((\varepsilon \omega_2+\mathcal{C}_1)x^{\beta-1}+\mathcal{C}_2 x^{2-2\beta-p}+ \mathcal{C}_3+\mathcal{C}_4 x^{1-\beta-p}-c_0\omega_1 +\mathcal{C}_5x^{1-\beta-\alpha\beta}\Big).
\end{aligned}
\]
Since $\frac{1}{\alpha+1}\le\beta<1$, by selecting $p\ge 2-2\beta$,  
\[
\varepsilon w''+\mathcal{D}[w]+c_0w'+f(w)
\le (-c_0\omega_1 + \omega_2+\mathcal{C}_1+\mathcal{C}_2+ \mathcal{C}_3+\mathcal{C}_4 +\mathcal{C}_5)x^{p+\beta-1}e^{-x^\beta},
\]
and then, by choosing $c_0>c_{right}:=\frac{1}{\omega_1 }(\omega_2+\mathcal{C}_1+\mathcal{C}_2+\mathcal{C}_3+\mathcal{C}_4+\mathcal{C}_6),$
one achieves, for any $0\le\varepsilon\le1$ and $L\ge \tilde L:=\max\{L_2,L_3\}$,
\[
\varepsilon w''+\mathcal{D}[w]+c_0w'+f(w)\le 0 \quad \text{for all }x\ge 2L.
\]
\par
Let us finish the proof of the lemma. By choosing a speed 
$c_0>\max\left\{ c_{left}, c_{middle},c_{right}\right\}$,
it follows from the above computations that, for any $\varepsilon\in[0,1]$, we have
\[
\forall x\in\mathbb{R},\quad \varepsilon w''+\mathcal{D}[w] +c_0w' +f(w)\le 0 .
\]
			This completes the proof.
		\end{proof}

 \section{Algebraic propagation: proof of Theorem \ref{ei1}}\label{s3}
In this section, we present the sub- and super- solution process that leads to the proof of Theorem \ref{ei1}, the latter being obtained at the end of the section.
	\subsection{The upper bound}
		Here, we prove the upper bound in Theorem \ref{ei1}. To do so, we shall construct an adequate super-solution.
\subsubsection{Definition of the super-solution and preliminary computations}
		Let us define 
	\begin{equation}\label{v0x}
 v_0(x):=\left\{
		\begin{aligned}
			&1,&x<L,\\
			&C_L x^pe^{-x^\beta},& x\ge L,
		\end{aligned}\right.
	\end{equation}
where $p>0$ has to be determined, $C_L:=L^{-p}e^{L^\beta}$  and $L\ge\max\{(\frac{p}{\beta})^{\frac{1}{\beta}},1, a\}$, so that $v_0$ is continuous and decreasing, $v_0\le 1$ and $C_L$ is greater than $e$. Here, $a$ comes from Hypothesis \ref{ic1}, and thus, one may notice that 	
\begin{equation}
    \label{apubv0u0}
    v_0\ge u_0.
\end{equation}
Solving the Cauchy problem
\[
    \left\{
        \begin{aligned}
				&\frac{\partial w}{\partial t}(t,x)=\rho\frac{w(t,x)}{(1-\ln w(t,x))^\alpha},&t>0,x\in\mathbb{R},\\
				&w(0,\cdot)=v_0\ge 0, 
        \end{aligned}
   \right.
\]
leads to defining 
		\[
			w(t,x):=\exp{\left\{1-\left[(1-\ln v_0(x))^{\alpha+1}-\rho(\alpha+1)t\right]^{\frac{1}{\alpha+1}}\right\}},
		\]
  where $\rho\ge r$ has to be chosen later. Observe though that $w$ is not defined for all times: for any $x\in\mathbb{R}$, $w(t,x)$ is defined only for $t\in \left[0,\frac{(1-\ln v_0(x))^{\alpha+1}}{\rho(\alpha+1)}\right)$. For later use, we  state below several properties related to $v_0$ and $w$, the proofs of which can be found in Appendix \ref{appendix-alg-acc-upper}.
\begin{lemma}\label{apub2l}
$(i).$    For any $y\in(0,1)$, denoting by $v_0^{-1}$ the inverse of $v_0$ on $[L,+\infty)$,
\begin{equation}
    \label{aiv0}
 \left[\ln \frac{C_L}{y}+\frac{p}{\beta}\ln\ln\frac{e^{L^\beta}}{y} \right]^\frac{1}{\beta} \le v_0^{-1}(y) \le \left[\ln \frac{C_L}{y}+\left(\frac{p}{\beta}+\gamma\right)\ln\ln \frac{e^{L^\beta}}{y}\right],
\end{equation}
for some positive constant $\gamma>0$, and
\begin{equation}
    \label{alubvv}
   (v_0'\circ v_0^{-1})(y)\gtrsim- \lvert\ln y\rvert^{\frac{\beta-1}{\beta}}y.
\end{equation}
\par
$(ii).$ For any $t\ge t^\star:=\frac{1}{r(\alpha+1)}$, define
  \[      x_\Lambda(t):=v_0^{-1}\left\{\exp\left\{1-\left[\rho(\alpha+1)t+(1-\ln\Lambda)^{\alpha+1}\right]^{\frac{1}{\alpha+1}}\right\}\right\}\quad \text{for all }\Lambda\in(0,2].
  \]
 Then, we have $w(t,x_\Lambda(t))=\Lambda$,
\begin{equation}\label{Xte}
  x_\Lambda(t)\asymp_\Lambda t^\frac{1}{\beta(\alpha+1)}\quad \text{for all }t\in [t^\star,\infty) \text{ and }\Lambda\in (0,2],
\end{equation}
 and for any $0<\Lambda_1< \Lambda_2\le 2$,
 \begin{equation}\label{apdx}
    x_{\Lambda_1}(t)-x_{\Lambda_2}(t)\gtrsim_{\Lambda_1,\Lambda_2}t^{\frac{1}{\beta(\alpha+1)}-1}\quad \text{for all }t\in [t^\star,\infty) .
 \end{equation}
\end{lemma}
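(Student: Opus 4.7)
My plan is to treat part (i) as a direct asymptotic analysis of the explicit profile $v_0$ and then inject the resulting pointwise estimates into the formulas defining $x_\Lambda(t)$ to obtain the asymptotics in part (ii). For part (i), I set $u = v_0^{-1}(y)$ for $y \in (0,1)$; using $C_L = L^{-p} e^{L^\beta}$, the identity $y = C_L u^p e^{-u^\beta}$ rewrites as
\[
u^\beta = \ln\left(\frac{e^{L^\beta}}{y}\right) + p \ln(u/L).
\]
The lower bound follows by one iteration: $u \ge L$ forces $\ln(u/L) \ge 0$, so $u \ge [\ln(e^{L^\beta}/y)]^{1/\beta}$, and reinjecting this into the identity together with $\ln(e^{L^\beta}/y) - p\ln L = \ln(C_L/y)$ yields the claim. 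The upper bound requires a short bootstrap: for $y$ small enough one has $p \ln u \le u^\beta/2$, hence $u \lesssim [\ln(C_L/y)]^{1/\beta}$; plugging back gives $u^\beta \le \ln(C_L/y) + (p/\beta)\ln\ln(C_L/y) + O(1)$, and the $O(1)$ is absorbed by choosing $\gamma$ large enough (the same $\gamma$ also covering the trivial regime where $y$ stays bounded away from $0$). The derivative bound comes from $v_0'(x) = v_0(x)\bigl(p/x - \beta x^{\beta-1}\bigr)$, which for $x \ge L \ge (p/\beta)^{1/\beta}$ is dominated by $-\beta x^{\beta-1} v_0(x)$; evaluating at $x = v_0^{-1}(y)$ and using $v_0^{-1}(y) \asymp |\ln y|^{1/\beta}$ delivers $(v_0' \circ v_0^{-1})(y) \gtrsim -|\ln y|^{(\beta-1)/\beta} y$.

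For part (ii), the identity $w(t, x_\Lambda(t)) = \Lambda$ follows by direct substitution: setting $Z_\Lambda(t) := [\rho(\alpha+1) t + (1-\ln\Lambda)^{\alpha+1}]^{1/(\alpha+1)}$, the definition of $x_\Lambda(t)$ gives $v_0(x_\Lambda(t)) = e^{1 - Z_\Lambda(t)}$, so $1 - \ln v_0(x_\Lambda(t)) = Z_\Lambda(t)$ and the nested exponents in the definition of $w$ collapse to $\exp(\ln\Lambda)$. Writing $y_t := v_0(x_\Lambda(t))$, for $t \ge t^\star$ we have $Z_\Lambda(t) > 1$ (ensured by $\rho(\alpha+1)t \ge 1$ and $(1-\ln\Lambda)^{\alpha+1} > 0$ for $\Lambda \le 2$) and $|\ln y_t| = Z_\Lambda(t) - 1 \asymp_\Lambda t^{1/(\alpha+1)}$, so the two-sided bound of part (i) yields $x_\Lambda(t) \asymp_\Lambda t^{1/(\beta(\alpha+1))}$. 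For the difference bound I differentiate in $\Lambda$: $\partial_\Lambda x_\Lambda(t) = (v_0^{-1})'(y_t)\,\partial_\Lambda y_t$. A direct calculation gives $\partial_\Lambda y_t = y_t (1-\ln\Lambda)^\alpha/(\Lambda Z_\Lambda(t)^\alpha)$, and the derivative estimate in part (i) yields $|(v_0^{-1})'(y_t)| \gtrsim |\ln y_t|^{(1-\beta)/\beta}/y_t$. Multiplying, the factors $y_t$ cancel and the remaining power of $Z_\Lambda(t)$ is $(1-\beta)/\beta - \alpha = (1-\beta(\alpha+1))/\beta$; combined with $Z_\Lambda(t) \asymp t^{1/(\alpha+1)}$, this produces $|\partial_\Lambda x_\Lambda(t)| \gtrsim_\Lambda t^{1/(\beta(\alpha+1)) - 1}$ uniformly for $\Lambda$ in any compact subinterval of $(0,2]$, and integrating over $[\Lambda_1, \Lambda_2]$ closes the argument.

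The main technical point will be the careful bookkeeping of the logarithmic corrections in the upper bound of part (i); once this bootstrap is in place, every remaining estimate reduces to substituting values into explicit formulas and counting powers of $t$.
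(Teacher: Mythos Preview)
Your proposal is correct and follows essentially the same route as the paper: for part (i) you invert the relation $u^\beta = \ln(C_L/y) + p\ln u$ by iteration while the paper phrases the same computation as a ratio asymptotic, and both then feed the resulting two-sided bound and the derivative estimate $(v_0'\circ v_0^{-1})(y)\gtrsim -|\ln y|^{(\beta-1)/\beta}y$ into the chain-rule formula for $\partial_\Lambda x_\Lambda(t)$ to obtain the $t^{1/(\beta(\alpha+1))-1}$ lower bound on the difference via the mean value theorem. The only cosmetic discrepancy is that the upper bound in \eqref{aiv0} as stated in the paper is missing the exponent $1/\beta$ (a typo reproduced in the appendix), and your bootstrap argument in fact establishes the intended bound $v_0^{-1}(y)\le\big[\ln(C_L/y)+(p/\beta+\gamma)\ln\ln(e^{L^\beta}/y)\big]^{1/\beta}$, which is what is actually used downstream.
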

\par
\medskip
For all $t\in [t^\star,\infty)$, let us now define the following function $m$ as our prototype of a super-solution to \eqref{oeq1}: 
\begin{equation}\label{um}
m(t,x):=\left\{
\begin{aligned}
&1,&x\le x_1(t),\\
&w(t,x),&x>x_1(t),.
\end{aligned}\right.
		\end{equation}
\par 

We will see that for the right choice of $p$ and $\rho$, the function $m$ is indeed a super-solution to \eqref{oeq1}. In order to prove that, we first collect in the next two lemmas some additional useful properties of $w$. We first have
\begin{lemma}\label{lewxx}
Let $\Lambda\in(0,2]$. For all $(t,x)\in[t^\star,\infty)\times [x_\Lambda(t),+\infty)$, the function $w$ is well-defined and non-increasing in $x$, and for any $t\in[t^\star,\infty)$, the function $w(t,\cdot)$ is convex and log-convex on $[x_\Lambda(t),+\infty)$.  Moreover, setting $\varphi_0=-\ln v_0$, for all $(t,x)\in[t^\star,\infty)\times [x_\Lambda(t),+\infty)$, we have
\begin{equation}
 \label{apubwv0}   
 w(t,x)\ge v_0(x),
\end{equation} 
\begin{equation}\label{wx0}
    \frac{\partial w}{\partial x}=-\frac{w}{(1-\ln w)^\alpha}\varphi_0'(1+\varphi_0)^\alpha,
\end{equation}
and
\begin{equation}
\label{apubwxxe} 
       0 \le \frac{\partial^2 w}{\partial x^2}(t,x)\le \mathcal{C}_0 \frac{w(t,x)}{(1-\ln w(t,x))^\alpha},
\end{equation}
    for some positive constant $\mathcal{C}_0$.
\end{lemma}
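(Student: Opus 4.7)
The plan is to set $\Phi := 1+\varphi_0$ and $\psi := 1-\ln w$, so that the definition of $w$ is equivalent to the algebraic identity $\psi^{\alpha+1} + c = \Phi^{\alpha+1}$, where $c := \rho(\alpha+1)t$. The easier assertions follow directly. Since $v_0$ is decreasing on $[L,+\infty)$, the function $\Phi$ is increasing there, and by construction $\Phi(x_\Lambda(t))^{\alpha+1} = (1-\ln\Lambda)^{\alpha+1} + c \ge c$ (using $\Lambda\le 2<e$). Monotonicity of $\Phi$ then gives $\Phi(x)^{\alpha+1}\ge c$ on $[x_\Lambda(t),+\infty)$, so $w$ is well-defined there. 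Applying the chain rule to $w=e^{1-\psi}$ together with the identity $\psi^\alpha \psi_x = \Phi^\alpha\Phi'$ (from differentiating the algebraic identity) yields the formula \eqref{wx0} and, since $\varphi_0'>0$ on $[L,+\infty)$, the strict monotonicity $\partial_x w<0$. Direct computation gives $\partial_t w = \rho w/(1-\ln w)^\alpha > 0$ on the domain, so comparison with $w(0,\cdot)=v_0$ yields \eqref{apubwv0}.

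For log-convexity (which implies convexity since $w>0$), I note that $(\ln w)_{xx}=-\psi_{xx}$, so I just need $\psi_{xx}\le 0$. Differentiating the identity $\psi^{\alpha+1}+c=\Phi^{\alpha+1}$ twice and simplifying reduces this to
\[
\Phi\,\Phi''\,\psi^{\alpha+1} \;\le\; \alpha c\,(\Phi')^2.
\]
A direct computation from the definition of $v_0$ gives $\Phi''(x)=\varphi_0''(x)=x^{\beta-2}\bigl[p x^{-\beta}-\beta(1-\beta)\bigr]$, which is nonpositive as soon as $x\ge\bigl(p/[\beta(1-\beta)]\bigr)^{1/\beta}$. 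By enlarging $L$ beyond this threshold (compatibly with the previous constraints on $L$), I ensure $\Phi''\le 0$ on $[L,+\infty)\supseteq[x_\Lambda(t),+\infty)$, making the LHS above nonpositive and the inequality immediate.

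The main work lies in the upper bound \eqref{apubwxxe}. Starting from $w_{xx}/w=\psi_x^2-\psi_{xx}$ and substituting the explicit expressions yields
\[
\psi^\alpha\bigl(\psi_x^2-\psi_{xx}\bigr)\;=\;\frac{\Phi^{2\alpha}(\Phi')^2}{\psi^\alpha}\;+\;\frac{\alpha c (\Phi')^2\Phi^{\alpha-1}}{\psi^{\alpha+1}}\;+\;\Phi^\alpha|\Phi''|,
\]
and the target is a uniform bound by $\mathcal{C}_0$ on $[t^\star,\infty)\times[x_\Lambda(t),+\infty)$. The plan is to control each term separately using $\Phi\sim x^\beta$, $\Phi'\sim\beta x^{\beta-1}$, $|\Phi''|\lesssim x^{\beta-2}$, the lower bound $\psi\ge 1-\ln\Lambda$ at $x=x_\Lambda(t)$, and the scaling $x_\Lambda(t)\asymp_\Lambda t^{1/[\beta(\alpha+1)]}$ from Lemma \ref{apub2l}. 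The first and third terms yield exponents in $x$ of the form $\beta(\alpha+2)-2$ and $\beta(\alpha+1)-2$, both strictly negative in the acceleration regime $\beta(\alpha+1)<1$, giving immediate decay in $x$ and boundedness overall.

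The main obstacle is the middle term $\alpha c (\Phi')^2\Phi^{\alpha-1}/\psi^{\alpha+1}$: when $t$ is large, $c=\rho(\alpha+1)t$ is large, yet when $x$ is close to $x_\Lambda(t)$, $\psi$ is only bounded below, so crude bounds are insufficient. A careful interplay between the growth of $c$ and the decay of $(\Phi')^2\Phi^{\alpha-1}$ evaluated at the relevant scale $x\ge x_\Lambda(t)\asymp t^{1/[\beta(\alpha+1)]}$ is needed; the exponent arithmetic ultimately collapses to $2-2/[\beta(\alpha+1)]<0$, which is precisely the acceleration condition $\beta(\alpha+1)<1$, yielding the desired uniform constant $\mathcal{C}_0$.
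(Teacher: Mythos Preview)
Your proof is correct and follows the same structure as the paper's: express the second derivative via the identity $\psi^{\alpha+1}+c=\Phi^{\alpha+1}$ and bound the resulting terms using $\beta(\alpha+1)<1$. The one place you diverge is the middle term, where you invoke the time--space scaling $x_\Lambda(t)\asymp t^{1/[\beta(\alpha+1)]}$ to control $c/\psi^{\alpha+1}$. This works but is unnecessary: since $c=\Phi^{\alpha+1}-\psi^{\alpha+1}\le\Phi^{\alpha+1}$, the middle term is dominated by $\alpha(\Phi')^2\Phi^{2\alpha}/\psi^{\alpha+1}$, and the uniform bound $\psi\ge 1-\ln 2$ (from $w\le\Lambda\le 2$) reduces it to a multiple of $(\Phi')^2\Phi^{2\alpha}\sim x^{2\beta(\alpha+1)-2}$, which is bounded for exactly the same reason as your first term. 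This is precisely the paper's route: it keeps the factors $\psi^{-\alpha}+\alpha\psi^{-(\alpha+1)}-\alpha\Phi^{-(\alpha+1)}$ together as a single bounded bracket, multiplies by $(\varphi_0')^2(1+\varphi_0)^{2\alpha}$, and obtains $\mathcal{C}_0$ with no $t$-dependence and no reference to $x_\Lambda(t)$.
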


Last, in order to estimate $\mathcal{D}[w]$ later, we establish the following.
\begin{lemma}
    \label{asuble1}
    Let $\alpha>0$ and $0<\beta<1$. For all
    $z\in\left(\frac{1}{x}, 1-\frac{x_1(t)}{x}\right)$ and $(t,x)\in[t^\star,\infty)\times[x_1(t)+1,+\infty)$, one has
    \[
       \frac{w(t,x(1-z))}{w(t,x)}
       \le \exp\left\{\beta x^{\alpha\beta+\beta}z+ C_{\alpha,\beta} x^{2\alpha\beta+2\beta} z^2\right\},
    \]
    and
    \[
        \frac{1-\ln w(t,x(1-z))}{1-\ln w(t,x)}\ge   1-\beta x^{\alpha\beta+\beta}  z-  C_{\alpha,\beta}  x^{2\alpha\beta+2\beta} z^2,
    \]
    where $C_{\alpha,\beta}:=1-\beta+\alpha(\alpha+1)>0$.
\end{lemma}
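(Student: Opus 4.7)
The plan is to reduce both bounds to an estimate on the single quantity $a - b$, where $a := 1 - \ln w(t,x)$ and $b := 1 - \ln w(t, x(1-z))$. Introducing $\phi(y) := 1 - \ln v_0(y) = 1 + y^\beta - p\ln y - \ln C_L$ for $y \geq L$, the definition of $w$ gives $1 - \ln w(t,y) = \bigl[\phi(y)^{\alpha+1} - \rho(\alpha+1)t\bigr]^{1/(\alpha+1)}$, so
\[
a^{\alpha+1} - b^{\alpha+1} = \phi(x)^{\alpha+1} - \phi(x(1-z))^{\alpha+1}.
\]
The constraint $z < 1 - x_1(t)/x$ ensures $x(1-z) > x_1(t)$, so Lemma \ref{lewxx} yields $w(t,x), w(t,x(1-z)) \in (0,1]$ together with $w(t,x(1-z)) \geq w(t,x)$, giving $a \geq b \geq 1$.

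The core step is to show $a - b \leq \beta x^{\alpha\beta+\beta} z + C_{\alpha,\beta} x^{2\alpha\beta+2\beta} z^2$. By the mean value theorem applied to $s \mapsto s^{\alpha+1}$ on $[b, a]$, combined with $b \geq 1$,
\[
a - b \leq \frac{\phi(x)^{\alpha+1} - \phi(x(1-z))^{\alpha+1}}{\alpha+1},
\]
and I will estimate this numerator via a second-order Taylor expansion of $u(y) := \phi(y)^{\alpha+1}$ about $y = x$. The first-order term $u'(x)\cdot xz = (\alpha+1)\phi(x)^\alpha \phi'(x) xz$ is bounded by $\beta(\alpha+1) x^{\beta(\alpha+1)} z$ thanks to the uniform inequalities $\phi(x) \leq x^\beta$ and $\phi'(x) x = \beta x^\beta - p \leq \beta x^\beta$ (both valid on $[L, +\infty)$ by the choice of $C_L$); dividing by $\alpha + 1$ produces exactly the coefficient $\beta x^{\alpha\beta + \beta}$ of the $z$-term. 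The second-order remainder $\tfrac{1}{2}|u''(\eta)|(xz)^2$, with
\[
u''(y) = (\alpha+1)\bigl[\alpha \phi(y)^{\alpha-1}\phi'(y)^2 + \phi(y)^\alpha \phi''(y)\bigr], \qquad \phi''(y) = \beta(\beta-1) y^{\beta-2} + \frac{p}{y^2},
\]
is controlled on the interval $[x(1-z), x]$ for $z \leq 1/2$ by a separate bound on each summand; after multiplication by $(xz)^2$ it produces a remainder of size $C(\alpha,\beta) x^{2\beta(\alpha+1)} z^2$, where the inequality $x \geq 1$ is used to absorb the slightly smaller exponents arising when $\alpha < 1$ (in which case $\phi(y)^{\alpha-1} \leq 1$ dominates). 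The complementary range $z \in (1/2, 1 - x_1(t)/x)$ is handled by the direct bound $a - b \leq a \leq \phi(x) \leq x^\beta$, which is absorbed into $C_{\alpha,\beta} x^{2\beta(\alpha+1)} z^2$ since $z^2 \geq 1/4$ and $x$ is large.

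The two conclusions then follow at once: the first by exponentiating $\ln[w(t,x(1-z))/w(t,x)] = a - b$, yielding
\[
\frac{w(t, x(1-z))}{w(t, x)} \leq \exp\bigl\{\beta x^{\alpha\beta + \beta} z + C_{\alpha,\beta} x^{2\alpha\beta + 2\beta} z^2\bigr\};
\]
and the second from
\[
\frac{b}{a} = 1 - \frac{a-b}{a} \geq 1 - (a - b),
\]
where the last inequality uses $a \geq 1$. The main obstacle is the Taylor-remainder estimate: since $\phi''$ can change sign and $u''$ is the sum of two terms whose scales with $x$ depend on whether $\alpha \geq 1$ or $\alpha < 1$, a clean uniform bound requires a case-by-case analysis; the slightly larger exponent $2\beta(\alpha+1)$ in the lemma's $z^2$-term is exactly what provides the slack needed to absorb all the constants and case distinctions into a single $C_{\alpha,\beta}$.
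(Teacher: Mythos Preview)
Your approach is sound and proves the lemma with \emph{some} constant depending only on $\alpha,\beta$, which is all the downstream applications in the paper require; but your Taylor--remainder analysis will not recover the specific value $C_{\alpha,\beta}=1-\beta+\alpha(\alpha+1)$ announced in the statement (your treatment of the range $z>\tfrac12$ alone already forces $C_{\alpha,\beta}\gtrsim 4$). The paper proceeds differently: instead of bounding $a-b$ via a second-order Taylor expansion of $\phi^{\alpha+1}$, it manipulates the ratio $\frac{1-\ln w(t,x(1-z))}{1-\ln w(t,x)}$ algebraically, writing it in the nested form
\[
\Bigl\{1-\tfrac{(1-\ln v_0(x))^{\alpha+1}}{(1-\ln w(t,x))^{\alpha+1}}\Bigl[1-\Bigl(1-\tfrac{x^\beta-x^\beta(1-z)^\beta+p\ln(1-z)}{1-\ln v_0(x)}\Bigr)^{\alpha+1}\Bigr]\Bigr\}^{1/(\alpha+1)},
\]
and then applying the elementary inequality $(1-y)^n\ge 1-ny-(1-n)y^2$ (valid for $y\in[0,1]$, $n\in(0,1)$) twice---once with $n=\tfrac{1}{\alpha+1}$ on the outer power, once with $n=\beta$ on $(1-z)^\beta$---together with $(1-y)^{\alpha+1}\ge 1-(\alpha+1)y$ on the middle bracket. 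This route tracks all constants exactly and covers the full range of $z$ uniformly, without your case split at $z=\tfrac12$. Your calculus-based argument is more transparent and already delivers the sharp linear coefficient $\beta x^{\beta(\alpha+1)}$; the paper's algebraic route additionally yields the sharp quadratic constant.
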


With the above lemmas at hand, we are now ready to show that $m$ is a super-solution to \eqref{oeq1} when the constants $\rho>0$ and $p>0$ are chosen adequately. Unless otherwise specified, throughout the next subsections, we assume that $t\in[t^\star,\infty)$.
We start by proving an estimate for $\mathcal{D}[m]$.
\subsubsection{Estimation of \texorpdfstring{$\mathcal{D}[m](t,\cdot)$ on $(x_1(t),+\infty)$}{D[m](t,.) on [x1(t),+infty)}}
\begin{lemma}\label{fdm}
There exists a time $t_0\ge t^\star$ such that, for all $t\ge t_0$ and $x> x_1(t)$, we have
    \[
        \mathcal{D}[m](t,x)\le \mathcal{C}_1,
    \]
where $\mathcal{C}_1:=\max\left\{\frac{2\mathcal{J}_1\mathcal{C}_0}{(1-\ln 2)^\alpha}, 2\int_{|y|\ge 1} J(y)dy\right\}>0$.
\end{lemma}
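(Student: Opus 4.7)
The plan is to exploit the symmetry of $J$ together with the convexity and $C^2$-regularity of $w$ on $(x_1(t), \infty)$ from Lemma \ref{lewxx}, while carefully handling the corner of $m$ at $x = x_1(t)$. Writing
\[
\mathcal{D}[m](t,x) = \frac{1}{2}\int_{\mathbb{R}}\bigl(m(t,x+y)+m(t,x-y) - 2m(t,x)\bigr)J(y)\,dy,
\]
I would split the integral according to whether $|y|\le 1$ (near-field) or $|y|>1$ (far-field), and, within the near-field, according to whether $|y|$ is smaller or larger than $\delta := x - x_1(t) > 0$.

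\textbf{Far-field.} Since $0 \le m \le 1$, the integrand is bounded by $2(1 - m(t,x)) \le 2$, giving a contribution at most $\int_{|y|>1} J(y)\,dy$, which is absorbed by the second term in the definition of $\mathcal{C}_1$.

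\textbf{Near-field with $|y|<\delta$.} The interval $[x-|y|,x+|y|]$ lies entirely in $(x_1(t), \infty)$, so $m = w$ is $C^2$ there. Taylor's theorem together with the bound on $w_{xx}$ from Lemma \ref{lewxx} yields
\[
m(t,x+y) + m(t,x-y) - 2m(t,x) \le y^2 \mathcal{C}_0 \sup_{|z|\le 1}\frac{w(t,x+z)}{(1-\ln w(t,x+z))^\alpha}.
\]
To control the supremum and recover the factor $1/(1-\ln 2)^\alpha$ appearing in $\mathcal{C}_1$, I would invoke a flattening estimate (the one alluded to in the introduction) ensuring $w(t,x-1) \le 2 w(t,x)$ for all $t \ge t_0$ and $x > x_1(t)$; combined with $w \le 1$ and the monotonicity of $u \mapsto u/(1-\ln u)^\alpha$, this bounds the supremum by $2/(1-\ln 2)^\alpha$. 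Integrating $y^2 J$ against this then produces a contribution at most $2\mathcal{J}_1\mathcal{C}_0/(1-\ln 2)^\alpha$, matching the first term in $\mathcal{C}_1$.

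\textbf{Near-field with $\delta < |y| \le 1$.} This range is non-empty only when $x \in (x_1(t), x_1(t)+1)$ and is the main obstacle: the corner of $m$ is now inside the integration window. For $y > 0$ in this range, $m(t,x-y) = 1$ while $m(t,x+y) = w(t,x+y)$, so the integrand equals $w(t,x+y) + 1 - 2w(t,x)$. A crude bound based on $(1 - w(t,x)) \le |w'(t,x_1(t))|\delta$ combined with $\int_\delta^1 J(y)\,dy$ fails to be uniform as $\delta \to 0$ when $J$ is singular at zero. The key is a second-order Taylor expansion of $w$ about $x_1(t)$ on both sides of the identity: convexity (Lemma \ref{lewxx}) yields the tangent-line lower bound $w(t,x) \ge 1 - |w'(t,x_1(t))|\delta$, while the $C^2$-bound on $w_{xx}$ gives $w(t,x+y) \le 1 - |w'(t,x_1(t))|(\delta+y) + \tfrac{1}{2}\mathcal{C}_0(\delta+y)^2$. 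The leading $|w'(t,x_1(t))|$-terms cancel, leaving $w(t,x+y) + 1 - 2w(t,x) \le |w'(t,x_1(t))|(\delta - y) + \tfrac{1}{2}\mathcal{C}_0(\delta+y)^2 \le 2\mathcal{C}_0 y^2$ for $y \ge \delta$. Integrating against $J(y)$ contributes at most a constant times $\mathcal{J}_1$, and assembling the three pieces (while choosing $t_0 \ge t^\star$ large enough for the flattening estimate to hold) yields $\mathcal{D}[m](t,x) \le \mathcal{C}_1$.
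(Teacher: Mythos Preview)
Your decomposition works, but you are making the corner at $x_1(t)$ much harder than it needs to be and, in doing so, leaning on an unjustified step. The paper sidesteps the corner entirely with one observation: by Lemma~\ref{lewxx} with $\Lambda=2$, the function $w$ is well defined, $C^2$, non-increasing and bounded by $2$ on the larger interval $[x_2(t),\infty)$, and on that interval $m\le w$ (since $m\equiv1\le w$ on $[x_2(t),x_1(t)]$) while $m(t,x)=w(t,x)$ at the centre point $x>x_1(t)$. Hence, once $t_0$ is chosen via \eqref{apdx} so that $x_1(t)-x_2(t)\ge1$, the second difference of $m$ over $|y|\le1$ is dominated by that of $w$, and a single Taylor expansion on $[x_2(t),\infty)$ gives $I\le \mathcal{J}_1\sup_{|z|\le1}\partial_{xx}w(t,x+z)$. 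The bound \eqref{apubwxxe} combined with $w(t,x+z)\le2$ then yields the first term of $\mathcal{C}_1$ directly---no split at $\delta$, no expansion about $x_1(t)$.

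Your ``flattening estimate'' $w(t,x-1)\le 2w(t,x)$ is a red herring: the flattening result in the paper (Proposition~\ref{aplbf}) concerns the solution $\underline u$ of the linear equation, not $w$, and the inequality you quote is nowhere stated or proved. You do not need it anyway: in your $|y|<\delta$ step the Taylor remainder actually lives on $|z|<\delta$, where $w\le1$ and the supremum is trivially $\le 1\le 2/(1-\ln2)^\alpha$; or, if you keep the range $|z|\le1$, the bound follows at once from $w\le2$ on $[x_2(t),\infty)$ once $x_1(t)-x_2(t)\ge1$. Your third step (Taylor about $x_1(t)$) is correct, but it adds an extra $2\mathcal{C}_0\mathcal{J}_1$ to the near-field contribution, so you end with a constant strictly larger than the stated $\mathcal{C}_1$---harmless for the downstream use in Proposition~\ref{apubp}, but not the lemma as written.
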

\begin{proof}[Proof of Lemma \ref{fdm}] 
Let us denote $\tilde J(y)= J(y)\mathds{1}_{|y|\ge 1}(y)$. Since $J$ is symmetric, we write 
\[
    \mathcal{D}[m](t,x)= \frac{1}{2}\int_{-1}^{1}J(y)\Big(m(t,x+y)+m(t,x-y)-2m(t,x)\Big)dy+ \int_\mathbb{R}\tilde J(y)\Big(m(t,x-y)-m(t,x)\Big)dy:=I+II.
\]
\par \# \textbf{Let us estimate the integral $I$}. In view of \eqref{apdx},  there is a time $t_0\ge t^\star$ such that $x_1(t)-x_2(t)\ge 1$. By definition of $m$ for all $t>t^\star,x\ge x_2(t)$ we have $m(t,x)\le w(t,x)$. 
Therefore, for all $t>0$ and $x>x_1(t)$, we have

\[
\begin{aligned}
     I\le\frac{1}{2} \int_{-1}^{1}J(y)\Big(w(t,x+y)+w(t,x-y)-2w(t,x)\Big)dy.
\end{aligned}
\]
Recall that  for $t\ge t_0$ we have $x_1(t)>x_2(t)+1$. From this observation and since $w(t,\cdot)\in C^2$ on $[x_2(t),+\infty)$, by Taylor's theorem, we get  for all $(t,x)\in [t_0,\infty)\times(x_1(t),+\infty)$,
\[
I\le \frac{1}{2}\int_{-1}^1 \int_0^1\int_0^1\frac{\partial^2 w}{\partial x^2}(t,x+\tau \theta y) \theta y^2 d\tau d \theta dy\le \frac{1}{2}\sup_{|z|\le 1} \frac{\partial^2 w}{\partial x^2}(t,x+ z)\int_{-1}^1 y^2J(y)dy.
\]
Thus, by Hypothesis \ref{ker} and \eqref{apubwxxe}, since $w(t,\cdot)$ is non-increasing and less than $2$ on $[x_2(t),+\infty)$, for all $(t,x)\in [t_0,\infty)\times(x_1(t),+\infty)$, we have
\[
I\le\mathcal{J}_1\mathcal{C}_0\sup_{|z|\le 1} \frac{w(t,x+ z)}{(1-\ln w(t,x+ z))^\alpha} \le \mathcal{J}_1\mathcal{C}_0 \frac{w(t,x-1)}{(1-\ln w(t,x-1))^\alpha}\le  \frac{2\mathcal{J}_1\mathcal{C}_0}{(1-\ln 2)^\alpha}.
\]

\par \#
\textbf{Let us estimate the integral $II$}. Since $0< m\le 1$ for all $x\in\mathbb{R}$ and $\tilde J\in L^1(\mathbb{R})$, we get
\[
    \int_\mathbb{R}\tilde J(y)\Big(m(t,x-y)-m(t,x)\Big)dy\le 2\int_\mathbb{R}\tilde J(y)dy.
\]
Therefore, combining the estimates of $I$ and $II$, for all $(t,x)\in [t_0,\infty)\times(x_1(t),+\infty)$, we achieve
\[
    \mathcal{D}[m](t,x)\le \mathcal{C}_1.
\]
where $\mathcal{C}_1=\max\left\{\frac{2\mathcal{J}_1\mathcal{C}_0}{(1-\ln 2)^\alpha}, 2\int_{|y|\ge 1} J(y)dy\right\}$.   
\end{proof}

\begin{lemma}\label{apld} 
Let $\tilde p:= \frac{2}{\beta(\alpha+1)-1}$ and $\varrho\in (0,1)$. 
Then there is a time $t^\ddagger\ge t^\star$ such that for all $t\ge t^\ddagger$ and $x\ge x_\varrho(t)$, we have
\[
	\mathcal{D}[m](t,x)
\le \mathcal{C}_2\frac{w(t,x)}{(1-\ln w(t,x))^{\frac{p+\beta-1}{\beta}}}+\mathcal{C}_3\frac{w(t,x)}{(1-\ln w(t,x))^\alpha}+\mathcal{C}_4\frac{w(t,x)}{(1-\ln w(t,x))^{\frac{p-\tilde p}{\beta}}}.
\]
for some positive constants $\mathcal{C}_2$, $\mathcal{C}_3$ and $\mathcal{C}_4$.
\end{lemma}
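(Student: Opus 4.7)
The plan is to decompose $\mathcal{D}[m](t,x)$ into five integrals based on the position of the jump variable $y$, adapting the zone splitting used in the proof of Lemma \ref{utr} to the time-dependent profile $w$. Concretely, I would write $\mathcal{D}[m](t,x) = I_1 + I_2 + I_3 + I_4 + I_5$, where $I_1$ integrates over $(-\infty,-1]$, $I_2$ over $[-1,1]$ (in the principal value sense), $I_3$ over $[1,x/2]$, $I_4$ over $[x/2, x - x_1(t)]$, and $I_5$ over $[x - x_1(t),+\infty)$. First, since $w(t,\cdot)$ is non-increasing on $[x_1(t),+\infty)$ by Lemma \ref{lewxx}, and $x - y > x \geq x_\varrho(t) > x_1(t)$ for all $y < -1$, the integrand is non-positive on $(-\infty,-1]$, so $I_1 \leq 0$. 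For $I_2$, I would pick $t^\ddagger \geq t^\star$ large enough that $x - 1 \geq x_2(t)$ (using \eqref{apdx}), symmetrize in $y$, Taylor-expand $w(t,\cdot)$ around $x$, and apply the bound $\partial_x^2 w \leq \mathcal{C}_0 w/(1-\ln w)^\alpha$ from \eqref{apubwxxe} together with Hypothesis \ref{ker}. The monotonicity of $z \mapsto z/(1-\ln z)^\alpha$ on $(0,1)$, combined with the estimate $w(t,x-1)/w(t,x) \leq C$ obtained from Lemma \ref{asuble1} at $z = 1/x$, would then yield a contribution of the form $\mathcal{C}_3 w(t,x)/(1-\ln w(t,x))^\alpha$.

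The core of the argument is the estimation of $I_3$. Using the sub-exponential tail bound $J(y) \leq \mathcal{J}_0 e^{-y^\beta}$ from Hypothesis \ref{ker} and \eqref{J2}, changing variables via $y = xz$, and invoking Lemma \ref{asuble1} to control the ratio $w(t,x(1-z))/w(t,x)$, I would be led to an integral of the shape
\[
I_3 \lesssim w(t,x) \int_{1/x}^{1/2} x \exp\!\left\{ -x^\beta\bigl[z^\beta + (1-z)^\beta - 1\bigr] + \beta x^{\alpha\beta+\beta} z + C_{\alpha,\beta} x^{2\alpha\beta+2\beta} z^2 \right\} dz.
\]
Mirroring the saddle-point-type analysis in the proof of Lemma \ref{utr}, I would split this integral at a threshold $z_\star = \ln^{1/\beta}(x^q)/x$ for a suitable exponent $q$: the tail $z \geq z_\star$ contributes a negligible term, while the inner part $z \leq z_\star$ reduces, via the change of variables $y = zx$, to a convergent integral of the form $\int_1^{+\infty} y^k e^{-y^\beta} dy$, leaving a polynomial prefactor in $x$. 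For $I_4$, the inequality $m(t,x-y) = w(t,x-y)$ together with the explicit profile of $v_0$ and \eqref{apubwv0} would give an exponentially small contribution controlled by $v_0(x-y)$ and the kernel tail. For $I_5$, the fact that $m(t,x-y) = 1$ reduces the integral to $\mathcal{J}_0 \int_{x - x_1(t)}^{+\infty} e^{-y^\beta} dy$, which by L'Hopital behaves like $(x - x_1(t))^{1-\beta} e^{-(x - x_1(t))^\beta}$.

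The final and most delicate step will be converting all polynomial-in-$x$ prefactors obtained above into powers of $1 - \ln w(t,x)$. From the explicit expression of $w$, one has $(1 - \ln v_0(x))^{\alpha+1} = (1 - \ln w(t,x))^{\alpha+1} + \rho(\alpha+1)t$, and since $1 - \ln v_0(x) \asymp x^\beta$ for large $x$, this gives $x^\beta \asymp (1 - \ln w(t,x))^{1/\beta}\cdot \beta^{-1} + (\text{time-dependent correction})$. The $I_3$ contribution will then collapse to $\mathcal{C}_2 w(t,x)/(1 - \ln w(t,x))^{(p+\beta-1)/\beta}$, and the combined $I_4 + I_5$ contribution to $\mathcal{C}_4 w(t,x)/(1 - \ln w(t,x))^{(p - \tilde p)/\beta}$, with $\tilde p = 2/(\beta(\alpha+1) - 1)$ emerging precisely from matching the exponential factor $e^{-(x-x_1(t))^\beta}$ in $I_5$ against the self-similar scaling $x_1(t) \asymp t^{1/(\beta(\alpha+1))}$. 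The main obstacle I foresee is this last translation: carefully comparing $x$ with $x - x_1(t)$ in both the far-tail regime ($w \ll 1$) and the near-front regime ($w \sim \varrho$), and verifying that the time-dependent correction does not destroy the target bound. The appearance of $\tilde p$, in particular, is not incidental and requires threading between the sub-exponential weight of the kernel and the acceleration exponent on $x_1(t)$.
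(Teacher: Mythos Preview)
Your decomposition mirrors that of Lemma~\ref{utr} too closely, and this causes a real problem in the near-front regime. For $x$ near $x_\varrho(t)$ one has $x - x_1(t) \asymp t^{\frac{1}{\beta(\alpha+1)}-1}$ by \eqref{apdx}, while $x/2 \asymp t^{\frac{1}{\beta(\alpha+1)}}$, so $x/2$ lies well \emph{beyond} $x - x_1(t)$. Your interval for $I_4$ is then reversed, and your $I_3$ over $[1,x/2]$ includes the range $y\in[x-x_1(t),x/2]$ where $m(t,x-y)=1$ rather than $w(t,x-y)$; Lemma~\ref{asuble1} (which requires $z<1-x_1(t)/x$) does not apply there, so your Taylor-and-ratio argument breaks down. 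The paper avoids this by taking the middle interval as $[1,\,x-x_1(t)]$ from the outset and sub-splitting at the small threshold $\ln^{1/\beta}(x^q)$ rather than at $x/2$. Similarly, in your $I_4$ you plan to control $w(t,x-y)$ via $v_0(x-y)$ through \eqref{apubwv0}, but that inequality reads $w\ge v_0$ and goes the wrong way for an upper bound.

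More seriously, you have not identified the mechanism that converts the far-tail factor $e^{-(x-x_1(t))^\beta}$ into $w(t,x)$ times a power of $1-\ln w(t,x)$. You correctly flag this as ``the main obstacle'', but it does not follow from the identity $(1-\ln v_0(x))^{\alpha+1}=(1-\ln w)^{\alpha+1}+\rho(\alpha+1)t$ alone, since that controls $x$, not $x-x_1(t)$. The paper's key observation is that $V_0(x):=(1-\ln v_0(x))^{\alpha+1}-1$ is \emph{concave} for large $x$ (this uses $\alpha\beta+\beta-1<0$), which yields the subadditivity $V_0(x-x_1(t))+V_0(x_1(t))\ge V_0(x)$ and hence, since $V_0(x_1(t))=\rho(\alpha+1)t$, the crucial bound $x-x_1(t)\ge v_0^{-1}(w(t,x))$. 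Combined with \eqref{aiv0}, this immediately gives $(x-x_1(t))^{1-\beta}e^{-(x-x_1(t))^\beta}\lesssim w\,(1-\ln w)^{-(p+\beta-1)/\beta}$, producing the first term. The third term is obtained by the same device together with the pointwise inequality $x\le (x-x_1(t))^{\tilde p}$ for $x\ge x_\varrho(t)$, which follows from \eqref{Xte}--\eqref{apdx}; this is where $\tilde p$ genuinely enters, rather than through a vague ``matching'' argument.
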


\begin{proof}[Proof of Lemma \ref{apld}]
 By \eqref{apdx}, we get 
\begin{equation}\label{xrx1}
    x_\varrho(t)-x_1(t)\to +\infty\quad \text{as }t\to+\infty.
    \end{equation}
Thus, there exists $t_1\ge t^\star$, such that for all $t\ge t_1$, we have
\[
    x_\varrho(t)-x_1(t)\ge\max\{L,2\},
\]
where the constant $L$ is defined in \eqref{v0x}. 
Since $0< m \le 1$, $m(t,x)=w(t,x)$ on $[t^\star,+\infty)\times [x_1(t),+\infty)$ and $m(t,\cdot)$ is non-increasing on $\mathbb{R}$, for all $t\ge t_1$ and $x\ge x_\varrho(t)$, we write  
\[
\begin{aligned}
\mathcal{D}[m](t,x)&\le\int^{+\infty}_{x-x_1(t)}J(y)dy +\int^{x-x_1(t)}_{1}J(y)\Big(w(t,x-y)-w(t,x)\Big)dy\\
&+ P.V. \int_{-1}^{1}J(y)\Big(w(t,x-y)-w(t,x)\Big)dy:=I_1+I_2+I_3.
\end{aligned}
\]
\par
$\#$ \textbf{Let us estimate $I_1$.} By Hypothesis \ref{ker} and L'H\^opital's rule, we have
\[
I_1\le  \mathcal{J}_0 \int_{x-x_1(t)}^{+\infty}e^{-y^\beta}dy\lesssim (x-x_1(t))^{1-\beta}e^{-(x-x_1(t))^\beta}.
\]
Let us denote $V_0(x):=(1-\ln v_0(x))^{\alpha+1}-1$ on $ (L,+\infty)$. By a direct calculation, for $x\in (L,+\infty)$, we get
\[
V_0'(x)= -(\alpha+1)(1-\ln v_0(x))^\alpha \frac{v_0'}{v_0}(x),
\]
and 
\[
V_0''(x)=(\alpha+1)(1-\ln v_0(x))^{\alpha-1}\left[ \alpha \left(\frac{v_0'}{v_0}\right)^2(x)-(1-\ln v_0(x))\left(\frac{v_0'}{v_0}\right)'(x)\right].
\]
Since $\frac{v_0'}{v_0}=- \frac{\beta}{x^{1-\beta}}+\frac{p}{x}$ and $\left(\frac{v_0'}{v_0}\right)'=\frac{\beta(1-\beta)}{x^{2-\beta}}- \frac{p}{x^2}$ for $x\in (L,+\infty)$, it follows that for $x\in (L,+\infty)$,
\[
V_0'(x) >0,
\]
and, by $\alpha\beta+\beta-1<0$, there is $\tilde L\ge L$ such that for all $x\ge \tilde L$, we have 
\[
\begin{aligned}
&\alpha \left(\frac{v_0'}{v_0}\right)^2(x)-(1-\ln v_0(x))\left(\frac{v_0'}{v_0}\right)'(x)\\
&=\frac{\beta}{x^{2-2\beta}}\left[\alpha\beta\left(1-\frac{p}{\beta}\frac{1}{x^\beta}\right)^2-(1-\beta)\left(1+\frac{1-\ln C_L-p\ln x}{x^\beta}\right)\left(1-\frac{p}{\beta(1-\beta)}\frac{1}{x^\beta}\right)\right]\\
&= \frac{\beta}{x^{2-2\beta}}\left(\alpha\beta+\beta-1+O_{x\to +\infty}\left(\frac{\ln x}{x^\beta}\right)\right)<0,
\end{aligned}
\]
whence, for all $x\ge \tilde L$, we have
\[
V_0''(x)\le 0.
\]
By \eqref{xrx1}, there is $\tilde t\ge t^\star$ such that $x_\varrho(t)-x_1(t)\ge \tilde L$ and $x_1(t)\ge \tilde L$ for all $t\ge \tilde t$,
and thus, for all $t\ge \tilde t$ and $x\ge x_\varrho(t)$, we have
\[
    V_0(x-x_1(t))+V_0(x_1(t))\ge V_0(x).
\]
By the definition of $x_1$, we have 
\[
    V_0(x_1(t))=[1-\ln v_0(x_1(t))]^{\alpha+1}-1=\rho (\alpha+1)t.
\]
Thus, it follows from $V_0$ being strictly increasing on $(L,+\infty)$ that, for all $t\ge\tilde t$ and $x\ge x_\varrho(t)$, we have
\begin{equation}\label{asx-x_1}
\begin{aligned}
    x-x_1(t)\ge V_0^{-1}\{V_0(x)-V_0(x_1(t))\}
    &=V_0^{-1}\{(1-\ln v_0(x))^{\alpha+1}-\rho(\alpha+1)t-1\}    \\
    &=V_0^{-1}\{(1-\ln w(t,x))^{\alpha+1}-1\} \\
    &=v_0^{-1}\{w(t,x)\}\\
    &\ge \left(\ln \frac{C_L}{w(t,x)}+\frac{p}{\beta}\ln \ln \frac{e^{L^\beta}}{w(t,x)}\right)^\frac{1}{\beta},
\end{aligned}
\end{equation}
thanks to \eqref{aiv0}.
By \eqref{xrx1}, one has $x-x_1(t)\ge x_\varrho(t)-x_1(t)\ge \left(\frac{1}{\beta}-1\right)^\frac{1}{\beta}$ for all $t\ge \tilde t$ and $x\ge x_\varrho(t)$, up to enlarge $\tilde t$ if necessary. It follows from the fact that the function $y\mapsto y^{1-\beta}e^{-y^\beta}$ is decreasing for $y\ge \left(\frac{1}{\beta}-1\right)^\frac{1}{\beta}$ and $w(t,\cdot)< 1$ on $[x_\varrho(t),+\infty)$ that for all $t\ge \max\{\tilde t, t_1\}$ and $x\ge x_\varrho(t)$,
\begin{equation}\label{alubi1}
   I_1\le \mathcal{C}_2(1-\ln w(t,x))^{\frac{1-\beta-p}{\beta}}w(t,x),
    \end{equation}
for some constant $\mathcal{C}_2>0$.
\par
$\#$ \textbf{Let us estimate $I_2$.} By using the change of variables $z=\frac{y}{x}$,  $I_2$ can be written as 
\[
    I_2=x\int^{\frac{x-x_1(t)}{x}}_{\frac{1}{x}}J(xz)\Big(w(t,x(1-z))-w(t,x)\Big) dz.
\]
We claim that there is a time $t_2>0$ such that for all $t\ge t_2$ and $x\ge x_\varrho(t)$,
\[
\frac{1}{x}\le \frac{\ln^\frac{1}{\beta}(x^q) }{x}\le \frac{x-x_1(t)}{x},
\]
for $q>1$ to be determined. Indeed, since $w(t,\cdot)$ is non-increasing and convex on $[x_1(t),+\infty)$, we have
\[
1=w(t,x_1(t))\ge w\left(t,x_1(t)+2\ln^\frac{1}{\beta}(x_1^q(t)) \right)\ge 1+2 \left[\ln^\frac{1}{\beta}(x_1^q(t)) \right]\frac{\partial w}{\partial x}(t,x_1(t)).
\]
By \eqref{wx0} and the facts that $\alpha\beta+\beta-1<0$ and $\lim_{t\to \infty}x_1(t)= +\infty$, we notice that
\[
\left[\ln^\frac{1}{\beta}(x_1^q(t)) \right]\frac{\partial w}{\partial x}(t,x_1(t))=-\left[\ln^\frac{1}{\beta}(x_1^q(t)) \right] x_1^{\alpha\beta+\beta-1}(t)\left[1+O_{t\to +\infty}\left(\frac{\ln x_1(t)}{x_1^\beta(t)}\right)\right]\to 0\quad \text{as }t\to \infty.
\]
It follows that 
\[
w\left(t,x_1(t)+2\ln^\frac{1}{\beta}(x_1^q(t)) \right)\to 1\quad \text{as }t\to \infty.
\]
Thus, since $0<\varrho<1$, there is $t_2>t^\star$ such that for all $t\ge t_2$,
\[
w\left(t,x_1(t)+2\ln^\frac{1}{\beta}(x_1^q(t)) \right)\ge\varrho= w(t,x_\varrho(t)),
\]
which implies that for $t\ge t_2$,
\[
x_\varrho(t)\ge x_1(t)+2\ln^\frac{1}{\beta}(x_1^q(t)),
\]
since $w(t,\cdot)$ is non-increasing on $[x_1(t),+\infty)$. Thus, for all $t\ge t_2$ and $x\ge x_\varrho(t)$, up to enlarging $t_2$ if necessary, we have
\[
\frac{1}{x}\le \frac{\ln^\frac{1}{\beta}(x^q) }{x}\le \frac{x-x_1(t)}{x}.
\]
Therefore, since $w>0$, denoting $B=\frac{\ln^\frac{1}{\beta} (x^q)}{x}$, for all $t\ge t_2$ and $x\ge x_\varrho(t)$, we write
\[
\begin{aligned}
    I_2\le  x\int^{B}_{\frac{1}{x}}J(xz)\Big(w(t,x(1-z))  -w(t,x)\Big)dz+x\int^{\frac{x-x_1(t)}{x}}_{B}J(xz)w(t,x(1-z)) dz:=I_4+I_5.
\end{aligned}
\]

$\#\#$ \textbf{Let us estimate $I_4$.} 
Before estimating $I_4$, we introduce a bounded function $H_{t,x}$. Recall $C_{\alpha\beta}=1-\beta+\alpha(\alpha+1)>0$ in Lemma \ref{asuble1}. Since $\alpha\beta+\beta-1<0$, one may notice that there is $t_3>t_2$ such that 
\[1-\beta x^{\alpha\beta+\beta-1}\ln^\frac{1}{\beta}(x^q) - C_{\alpha,\beta}  x^{2\alpha\beta+2\beta-2}\ln^\frac{2}{\beta}(x^q)>\frac{1}{2},\]
for all $t\ge t_3$ and $x\ge x_\varrho(t)$. 
Let $H_{t,x}$ be defined on $\left[0,\frac{\ln^\frac{1}{\beta} (x^q)}{x}\right]$ as 
\[
  H_{t,x}(z) :=\frac{\exp\left\{ \beta x^{\alpha\beta+\beta}   z+ C_{\alpha,\beta} x^{2\alpha\beta+2\beta} z^2\right\}}{ \left(  1-\beta x^{\alpha\beta+\beta}   z-  C_{\alpha,\beta}  x^{2\alpha\beta+2\beta} z^2\right)^{\alpha}},
\]
where $t\ge t_3$ and $x\ge x_\varrho(t)$. Then, for all $t\ge t_3$ and $x\ge x_\varrho(t)$, since $H_{t,x}$ is increasing on $\left[0,\frac{\ln^\frac{1}{\beta} (x^q)}{x}\right]$, we get, for $z\in \left[0,\frac{\ln^\frac{1}{\beta} (x^q)}{x}\right]$,
\begin{equation*}
   0 < H_{t,x}(z)\le \frac{\exp\left\{ \beta \frac{\ln^\frac{1}{\beta} (x^q)}{x^{1-\alpha\beta-\beta}}+ C_{\alpha,\beta} \frac{\ln^\frac{2}{\beta} (x^q)}{x^{2-2\alpha\beta-2\beta}}\right\}}{ \left[  1-\left(\beta  \frac{\ln^\frac{1}{\beta} (x^q)}{x^{1-\alpha\beta-\beta}}+ C_{\alpha,\beta} \frac{\ln^\frac{2}{\beta} (x^q)}{x^{2-2\alpha\beta-2\beta}}\right)\right]^{\alpha}}\to 1 \quad \text{as }x\to +\infty,
\end{equation*}
and it follows that $H_{t,x}$ is bounded uniformly on $\left[0,\frac{\ln^\frac{1}{\beta} (x^q)}{x}\right]$ for all $t\ge t_3$ and $x\ge x_\varrho(t)$, up to enlarging $t_3$ if necessary.
\par
In view of $I_4$, by Taylor's theorem and the convexity of $w(t,\cdot)$ on $[x_1(t),+\infty)$, $I_4$ can be written as 
\[
\begin{aligned}
    I_4\le-x^2\int^{B}_{\frac{1}{x}}\int_0^1 zJ(xz)\frac{\partial w}{\partial x}\Big(t,x(1-\tau z)\Big) d\tau dz\le -x^2\int^{B}_{\frac{1}{x}}zJ(xz)\frac{\partial w}{\partial x}\Big(t,x(1-z)\Big)  dz.
\end{aligned}
\]
By \eqref{J2} and \eqref{wx0}, we write
    \[
        -zJ(xz)\frac{\partial w}{\partial x}\Big(t,x(1-z)\Big) 
       \le  \mathcal{J}_0 ze^{-x^\beta z^\beta}\frac{w(t,x(1-z))}{[1-\ln w(t,x(1-z))]^\alpha}\varphi_0'(x(1-z))[1+\varphi_0(x(1-z))]^\alpha.
    \]
    Since $L\ge \left(\frac{e}{C_L}\right)^\frac{1}{p}$ and $\varphi_0(x)=-\ln v_0(x)=x^\beta-p\ln x-\ln C_L$ for all $x> L$, one has $\varphi_0'(x)=\beta x^{\beta-1}-px^{-1}\le \beta x^{\beta-1}$ and $[1+\varphi_0(x)]^\alpha\le x^{\alpha\beta}$   for all $x> L$, and it follows from the definition of $w$ that
    \[
    \begin{aligned}
        -zJ(xz)\frac{\partial w}{\partial x}\Big(t,x(1-z)\Big)\le  \mathcal{J}_0\beta x^{\alpha\beta+\beta-1} \frac{w(t,x)}{(1-\ln w(t,x))^\alpha}ze^{-x^\beta z^\beta}\frac{w(t,x(1-z))}{w(t,x)}\left(\frac{1-\ln w(t,x)}{1-\ln w(t,x(1-z))}\right)^\alpha .
    \end{aligned}
    \]
By Lemma \ref{asuble1}, we have
\[
    \begin{aligned}
        -zJ(xz)\frac{\partial w}{\partial x}\Big(t,x(1-z)\Big)
        \le  \mathcal{J}_0\beta  \frac{x^{\alpha\beta+\beta-1} w(t,x)}{(1-\ln w(t,x))^\alpha}ze^{-x^\beta z^\beta}
        H_{t,x}(z).
    \end{aligned}
    \]
Thus, since $H_{t,x}$ is bounded uniformly on $\left[0,B\right]$ for all $t\ge t_3$ and $x\ge x_\varrho(t)$, we get for all $t\ge t_3$ and $x\ge x_\varrho(t)$,
\[
\begin{aligned}
    I_4\le \mathcal{J}_0\beta \frac{ x^{\alpha\beta+\beta+1}w(t,x)}{(1-\ln w(t,x))^\alpha}\int_{\frac{1}{x}}^{B} z e^{-x^\beta z^\beta}H_{t,x}(z)dz
    &\lesssim   \frac{x^{\alpha\beta+\beta+1}w(t,x)}{(1-\ln w(t,x))^\alpha}\int_{\frac{1}{x}}^{B} ze^{-x^\beta z^\beta}dz.
    \end{aligned}
\]
Then, since $y\mapsto ye^{-y^\beta}\in L^1([1,+\infty))$, by using the change of variables $y=xz$, for all $t\ge t_3$ and $x\ge x_\varrho(t)$, we have 
\begin{equation}\label{asubi5}
    I_4\le C_4 x^{\alpha\beta+\beta-1} \frac{w(t,x)}{(1-\ln w(t,x))^\alpha},
\end{equation}
for some positive constant $C_4$.

\par
$\#\#$ \textbf{Let us estimate $I_5$.} By \eqref{J2} and the definition of $w$, we write 
\[
\begin{aligned}
    I_5=x\int_{B}^{\frac{x-x_1(t)}{x}}J(xz)w(t,x(1-z))dz\le \mathcal{J}_0x\int_{B}^{\frac{x-x_1(t)}{x}}\exp\left\{ -x^\beta z^\beta+\ln w(t,x(1-z))\right\}dz.
\end{aligned}
\]
By $\beta\in(0,1)$ and Lemma \ref{lewxx}, for all $x\ge x_1(t)$ and $z\in\left(\frac{1}{x},\frac{x-x_1(t)}{x}\right)$, one may get 
\[
\beta(1-\beta) x^\beta z^{\beta-2}+ x^2\frac{\partial^2}{\partial x}[ \ln w(t,x(1-z))]\ge 0,
\]   
it follows that, for fixed $(t,x)\in [t^\star,\infty)\times[x_1(t),+\infty)$, $z\mapsto -x^\beta z^\beta+\ln w(t,x(1-z))$ is convex on $\left(\frac{1}{x},\frac{x-x_1(t)}{x}\right)$. Thus, for all $z\in \left[B,\frac{x-x_1(t)}{x}\right]$, it follows that
\[
 -x^\beta z^\beta+\ln w(t,x(1-z)) \le \max\left\{ -x^\beta B^\beta+\ln w(t,x(1-B)),-(x-x_1(t))^\beta \right\}.
\]
By Lemma \ref{asuble1} and $B=\frac{\ln^\frac{1}{\beta}(x^q) }{x}$, we obtain
\[
-x^\beta  B^\beta+\ln  w(t,x(1-B))\le -\ln (x^q)+\ln w(t,x)+ \beta x^{\alpha\beta +\beta-1}\ln^\frac{1}{\beta}(x^q)+ C_{\alpha,\beta}  x^{2\alpha\beta+2\beta-2} \ln^\frac{2}{\beta}(x^q),
\]
and thus, since $\alpha\beta+\beta-1<0$ and $q>1$, there is $t_4>t_2$ such that for all $x\ge x_\varrho(t)$ and $t\ge t_4$, we have
\begin{equation}\label{asubi61}
x\exp\left\{ -x^\beta B^\beta+\ln w(t,x(1-B))\right\}\lesssim  x^{1-q} w(t,x)\lesssim \frac{w(t,x)}{(1-\ln w(t,x))^\frac{q-1}{\beta}}.
\end{equation}
On the other hand, for $t$ large enough and $x\ge x_\varrho(t)$, we have
\[
x\le (x-x_1(t))^{\tilde p},
\]
where $\tilde p=\frac{2}{1-\beta-\alpha\beta}>2$.
Indeed, since $x\mapsto x-(x-x_1(t))^{\tilde p}$ is decreasing for $t$ large enough and $x\ge x_\varrho(t)$, by \eqref{Xte} and \eqref{apdx}, there is $t_5>t_2$ such that for all $t\ge t_5$ and $x\ge x_\varrho(t)$,
\[
x-(x-x_1(t))^{\tilde p}\le x_\varrho(t)-(x_\varrho(t)-x_1(t))^{\tilde p}\le C' t^{\frac{1}{\beta(\alpha+1)}}-C'' t^{\frac{2}{\beta(\alpha+1)}}\le 0,
\]
for some positive constants $C'$ and $C''$.
It follows from \eqref{asx-x_1} that for all $t\ge t_5$ and $x\ge x_\varrho(t)$, up to enlarge $t_5$ if necessary,
\begin{equation}\label{asubi62}
x\exp\left\{ -x^\beta B^\beta+\ln w(t,x(1-B))\right\}\le (x-x_1(t))^{\tilde p } e^{-(x-x_1(t))^\beta}\lesssim \frac{w(t,x)}{(1-\ln w(t,x))^{\frac{p-\tilde p}{\beta}}}.
\end{equation}
As a result, collecting \eqref{asubi61} and \eqref{asubi62} and taking $q=\alpha\beta+1$, for all $t\ge \max\{t_4,t_5\}$ and $x\ge x_\varrho(t)$, we achieve 
\begin{equation}\label{asubi6}
    I_5\le  C_5\frac{w(t,x)}{(1-\ln w(t,x))^\alpha}+C_6\frac{w(t,x)}{(1-\ln w(t,x))^{\frac{p-\tilde p}{\beta}}},
\end{equation}
for some positive constant $C_5$ and $C_6$.
\par
Therefore, collecting \eqref{asubi5} and \eqref{asubi6}, since $\alpha\beta +\beta-1<0$, for all $t\ge \max\{t_3,t_4,t_5\}$ and $x\ge x_\varrho(t)$, we arrive at
\begin{equation}\label{alubi2}
    I_2\le  (C_4+C_5)\frac{w(t,x)}{(1-\ln w(t,x))^\alpha}+C_6\frac{w(t,x)}{(1-\ln w(t,x))^{\frac{p-\tilde p}{\beta}}}.
\end{equation}
\par 
$\#$ \textbf{Let us estimate $I_3$.} Since $J$ is symmetric and $w(t,\cdot)$ is $C^2$ on $[x_1(t),+\infty)$, we have
\[
\begin{aligned} 
    I_3=\frac{1}{2}\int_{-1}^{1}J(y)\Big(w(t,x+y)+w(t,x-y)-2w(t,x)\Big)dy
    &=\frac{1}{2}\int_{-1}^{1}\int_0^1\int_0^1\frac{\partial^2 w}{\partial x^2}(t,x+\tau \theta y)\theta y^2J(y)d\tau d\theta dy\\
    &\le \frac{1}{2}\sup_{|z|\le 1}\frac{\partial^2 w}{\partial x^2}(t,x+z)\int_{-1}^1 y^2 J(y)dy,
\end{aligned}
\]
and it follows from Hypothesis \ref{ker}, Lemma \ref{asuble1} and \eqref{apubwxxe} that, for all $t\ge t^\star$ and $x\ge x_\varrho(t)$, 
\begin{equation}\label{alubi3}
    I_3\le \mathcal{J}_1\mathcal{C}_0\sup_{|z|\le 1}\frac{w(t,x+z)}{(1-\ln w(t,x+z))^\alpha}\le \mathcal{J}_1\mathcal{C}_0\frac{w(t,x-1)}{(1-\ln w(t,x-1))^\alpha} \le C_3 \frac{w(t,x)}{(1-\ln w(t,x))^\alpha},
\end{equation}
 for some constant $C_3>0$,
\par 
Let us denote $t^\ddagger:=\max\{t^\star, t_1,t_2,t_3,t_4,t_5,\tilde t\}$. Therefore, owing to \eqref{alubi1}, \eqref{alubi2} and \eqref{alubi3}, for all $t\ge t^\ddagger$ and $x\ge x_\varrho(t)$, we achieve
\[
	\mathcal{D}[m](t,x)
\le \mathcal{C}_2\frac{w(t,x)}{(1-\ln w(t,x))^{\frac{p+\beta-1}{\beta}}}+\mathcal{C}_3\frac{w(t,x)}{(1-\ln w(t,x))^\alpha}+\mathcal{C}_4\frac{w(t,x)}{(1-\ln w(t,x))^{\frac{p-\tilde p}{\beta}}}.
\]
where $\mathcal{C}_3:= C_3+C_4+C_5$ and $\mathcal{C}_4:=C_6$.
\end{proof}

\subsubsection{Proof of the upper bound in Theorem \ref{ei1}}
\begin{proposition}\label{apubp}
Assume $p\ge\tilde p +\alpha\beta$. There is $\bar \rho\ge r$ such that, for all $\rho\ge\bar \rho$, we have
\begin{equation}
    \label{apubp60}
\frac{\partial m}{\partial t}(t,x)-\mathcal{D}[m](t,x)-f(m(t,x))\ge 0 \quad \text{for all } t\ge \bar t:=\max\{t_0,t^\ddagger\}\text{ and } x> x_1(t).
\end{equation}

\end{proposition}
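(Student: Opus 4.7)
The plan is to exploit the fact that on the region $\{x > x_1(t)\}$ we have $m(t,x) = w(t,x)$, and $w$ was built to solve the ODE $\partial_t w = \rho w/(1-\ln w)^\alpha$ pointwise in $x$. Hence the proposition reduces to the pointwise inequality
\[
\rho\,\frac{w(t,x)}{(1-\ln w(t,x))^\alpha} - \mathcal{D}[m](t,x) - f(w(t,x)) \ \ge\ 0.
\]
Since $w(t,x)<1$ strictly for $x>x_1(t)$ (because $w(t,x_1(t))=1$ and $w(t,\cdot)$ is strictly decreasing on $[x_1(t),+\infty)$ by Lemma \ref{lewxx}), Hypothesis \ref{fu} gives $f(w)\le rw/(1-\ln w)^\alpha$, and it suffices to control $\mathcal{D}[m]$ by $(\rho-r)\,w/(1-\ln w)^\alpha$.

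I would split the region $\{x > x_1(t)\}$ according to where the two previously proved bounds on $\mathcal{D}[m]$ apply. Fix any $\varrho\in(0,1)$ (for concreteness, $\varrho=1/2$) and treat separately Zone~A $:=\{x_1(t)<x<x_\varrho(t)\}$ and Zone~B $:=\{x\ge x_\varrho(t)\}$. In Zone~A one has $\varrho<w<1$; since the map $w\mapsto w/(1-\ln w)^\alpha$ has derivative $[(1-\ln w)+\alpha]/(1-\ln w)^{\alpha+1}>0$ on $(0,1]$, it is bounded below by $\varrho/(1-\ln\varrho)^\alpha$. Combined with Lemma \ref{fdm}, requiring $\rho\ge r+\mathcal{C}_1(1-\ln\varrho)^\alpha/\varrho$ closes Zone~A.

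In Zone~B, $w\le\varrho<1$ so $1-\ln w\ge 1-\ln\varrho>1$. Using Lemma \ref{apld}, the inequality to prove becomes, after multiplying by $(1-\ln w)^\alpha/w$,
\[
\rho-r \ \ge\ \mathcal{C}_3 + \mathcal{C}_2\,(1-\ln w)^{\alpha-\frac{p+\beta-1}{\beta}} + \mathcal{C}_4\,(1-\ln w)^{\alpha-\frac{p-\tilde p}{\beta}}.
\]
The hypothesis $p\ge\tilde p+\alpha\beta$ makes the second exponent $\le 0$. Moreover, since $\tilde p>2$ (as recalled in the proof of Lemma \ref{apld}) and $\beta>0$, we have $\tilde p+\alpha\beta > 1-\beta+\alpha\beta$, so the first exponent is also $\le 0$. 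Because $1-\ln w\ge 1$, both powers are bounded by $1$, and it suffices to take $\rho\ge r+\mathcal{C}_2+\mathcal{C}_3+\mathcal{C}_4$.

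Putting the two zones together, I would define
\[
\bar\rho \ :=\ \max\!\left\{\,r+\mathcal{C}_1\frac{(1-\ln\varrho)^\alpha}{\varrho},\ r+\mathcal{C}_2+\mathcal{C}_3+\mathcal{C}_4\,\right\},
\]
and any $\rho\ge\bar\rho$ yields \eqref{apubp60} for $t\ge\bar t$. The main subtlety, hence the step deserving the most care, is verifying that the condition $p\ge\tilde p+\alpha\beta$ simultaneously dominates the two problematic denominators arising from Lemma \ref{apld}; everything else is a matter of combining the already-proved estimates with the fact that $w$ satisfies the generating ODE exactly.
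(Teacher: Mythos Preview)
Your proposal is correct and follows essentially the same approach as the paper: split $\{x>x_1(t)\}$ at $x_\varrho(t)$ (the paper also takes $\varrho=1/2$), use Lemma~\ref{fdm} together with monotonicity of $w\mapsto w/(1-\ln w)^\alpha$ on the inner zone, and use Lemma~\ref{apld} together with the exponent check $p\ge\tilde p+\alpha\beta\ge 1-\beta+\alpha\beta$ on the outer zone. Your threshold $r+\mathcal{C}_1(1-\ln\varrho)^\alpha/\varrho$ with $\varrho=1/2$ is exactly the paper's $\rho_0=r+2(1+\ln 2)^\alpha\mathcal{C}_1$, so the two arguments coincide up to notation.
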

\begin{proof}[Proof of Proposition \ref{apubp}]
     By Hypothesis \ref{fu} and the definition of $m$, for $x> x_1(t)$, we have 
     \begin{equation}
         \label{apubmf}
\frac{\partial m}{\partial t}(t,x)= \frac{\rho w(t,x)}{(1-\ln w(t,x))^\alpha}\quad \text{and}\quad f(m(t,x))\le \frac{rw(t,x)}{(1-\ln w(t,x))^\alpha}.  
     \end{equation}

Choosing $\rho>\rho_0:=r+2(1+\ln 2)^\alpha\mathcal{C}_1$ and $\varrho=\frac{1}{2}$, we have
\[
    \frac{\varrho}{(1-\ln\varrho)^\alpha}=\frac{1}{2(1+\ln 2)^\alpha}\ge \frac{\mathcal{C}_1}{\rho -r},
\]
where $\mathcal{C}_1$ is defined in Lemma \ref{fdm}. By definition of $x_\varrho$, we obtain $w(t,x_{\varrho}(t))=\varrho$ and $\varrho\le w(t,x)\le 1$ for all $x_1(t)\le x\le x_{\varrho}(t)$.  Thus, since the function $y\mapsto\frac{y}{(1-\ln y)^\alpha}$ is increasing on $y\in(0,1) $, we have, for $x_1(t)\le x\le x_{\varrho}(t)$,
\[
\frac{w(t,x)}{(1-\ln w(t,x))^\alpha}\ge\frac{w(t,x_{\varrho}(t))}{(1-\ln w(t,x_{\varrho}(t)))^\alpha}=\frac{\varrho}{(1-\ln\varrho)^\alpha}\ge\frac{\mathcal{C}_1}{\rho -r}.
\]
As a result, by \eqref{apubmf} and Lemma \ref{fdm}, for all $t\ge t_0$ and $x_1(t)< x\le x_{\varrho}(t)$, we have
\begin{equation}
    \label{apubp62}
\frac{\partial m}{\partial t}(t,x)-\mathcal{D}[m](t,x)-f(m(t,x))\ge(\rho-r)\frac{w(t,x)}{(1-\ln w(t,x))^\alpha}-\mathcal{C}_1\ge(\rho-r)\frac{\mathcal{C}_1}{\rho-r} -\mathcal{C}_1= 0.
\end{equation}
\par
On the other hand, in view of Lemma \ref{apld}, by \eqref{apubmf}, for all $t\ge t^\ddagger$ and $x\ge x_{\varrho}(t)$, we have
\[
	\begin{aligned}
&\frac{\partial m}{\partial t}(t,x)-\mathcal{D}[m](t,x)-f(m(t,x))\\
&\ge  \frac{w(t,x)}{(1-\ln w(t,x))^\alpha}\left(
\rho-r-\mathcal{C}_2(1-\ln w(t,x))^{\frac{\alpha\beta-\beta+1-p}{\beta}
}-\mathcal{C}_3-\mathcal{C}_4(1-\ln w(t,x))^{\frac{\tilde p +\alpha\beta-p}{\beta}}\right).
\end{aligned}
\]
As a result, recalling that $\tilde p=\frac{2}{1-\beta-\alpha\beta}$, by assuming $p\ge\tilde p +\alpha\beta\ge \alpha\beta -\beta+1$ and choosing $\rho\ge r+ \mathcal{C}_2+\mathcal{C}_3+\mathcal{C}_4$,   we have for all $t\ge t^\ddagger$ and $x\ge x_{\varrho}(t)$,
\begin{equation}
    \label{apubp63}
\frac{\partial m}{\partial t}(t,x)-\mathcal{D}[m](t,x)-f(m(t,x))\ge0.
\end{equation}
\par 
Collecting \eqref{apubp62} and \eqref{apubp63}, for all $\rho\ge \bar \rho:=\max\{\rho_0,r+ \mathcal{C}_2+\mathcal{C}_3+\mathcal{C}_4\}$, $m$ is then a super-solution  to \eqref{oeq1} for all $t\ge \bar t=\max\{t_0,t^\ddagger\}$ and $x> x_1(t)$.

\end{proof}

Let us denote 
\[
    \tilde m(t,x)= m(t+\bar t, x)\quad \text{for all }(t,x)\in\mathbb{R}^+\times \mathbb{R}.
\]
Thus, when $p=\tilde p +\alpha\beta$ and $\rho=\bar \rho$, for all $t\in\mathbb{R}^+$ and $x\in(x_1(t+\bar t),+\infty)$, by Proposition \ref{apubp}, the function $\tilde m$ is a super-solution to \eqref{oeq1}.
Now, we are ready to prove the upper bound in Theorem \ref{ei1}.

\par
In view of Hypothesis \ref{ic1}, by \eqref{apubv0u0}, \eqref{apubwv0} and the definition of $m$, one has that $u_0\le v_0\le m(\bar t, \cdot )=\tilde m(0,\cdot)$. 
Thus, since $u(t,x)\le 1=\tilde m(t,x)$ for all $t>0$ and $x\le x_1(t+\bar t)$, and $\tilde m$ is a super-solution to \eqref{oeq1} in $(x_1(t+\bar t),+\infty)$, the comparison principle yields that $u(t,x)< \tilde m(t,x)$ for all $(t,x)\in \mathbb{R}^+\times \mathbb{R}$. Thus, for any $\lambda\in(0,1)$, we have
\[
u(t,x_{\lambda}(t+\bar t))<\tilde m(t,x_\lambda(t+\bar t))=\lambda, \quad \forall t>0.
\]
Therefore, for $t>0$, we have
$
X_\lambda(t)\le x_\lambda(t+\bar t)\lesssim_{\lambda} t^{\frac{1}{\beta(\alpha+1)}}, 
$
which completes the proof of the upper bound of the level set.

\subsection{The lower bound}
Here, we prove the lower bound in Theorem \ref{ei1}. This is achieved by deriving a smooth sub-solution to \eqref{oeq1}.

\subsubsection{Definition of the sub-solution and basic computations}
Let us define
\begin{equation}\label{v0}
v_0(x):=\left\{
\begin{aligned}
&1,&x<0,\\
&e^{-x^\beta},& x\ge 0.
\end{aligned}\right.
\end{equation}
As in the previous subsection, let us define 
	\[
	w(t,x):=\exp{\left\{1-\left[(1-\ln v_0(x))^{\alpha+1}-\rho(\alpha+1)t\right]^{\frac{1}{\alpha+1}}\right\}},
\]		
which solves the Cauchy problem
			\[\
			\left\{
   \begin{aligned}
& \frac{\partial w}{\partial t}(t,x)=\rho\frac{w(t,x)}{(1-\ln w(t,x))^\alpha},&(t,x)\in\mathbb{R}^+\times\mathbb{R},\\
& w(0,x)=v_0(x),&x\in\mathbb{R},\qquad\qquad
			\end{aligned}
   \right.
		\]
 where $\rho>0$ is to be determined. 
Again, $w$ is not defined for all times: for any $x\in\mathbb{R}$, $w(t,x)$ is only defined in $t\in\left[0,\frac{(1-\ln v_0(x))^{\alpha+1}}{\rho(\alpha+1)}\right)$. As for the construction of the super-solution, we first collect some useful technical results.   We summarize everything we need in the following Lemma, whose proof can be found in Appendix \ref{appendix-alg-acc-low}.
\begin{lemma}\label{apubl1}
For all $t\ge 0$, define 
\begin{equation}\label{aplbdx} 
x_\Lambda(t):=\left\{-1+\left[(1-\ln \Lambda)^{\alpha+1}+\rho(\alpha+1)t\right]^{\frac{1}{\alpha+1}}\right\}^{\frac{1}{\beta}} \quad\text{for any } \Lambda\in(0,1).
\end{equation}
\begin{itemize}
    \item[(i).]
 For any $\Lambda\in\left(0,e^{-1}\right)$,
 \[
     x_\Lambda(t)>1\quad \text{for all }t\ge 0.
 \]
 \item[(ii).] For any $0<\Lambda_1<\Lambda_2<1$, we have
 \[
 x_{\Lambda_1}(t)-x_{\Lambda_2}(t)\to +\infty\quad \text{as }t\to \infty.
 \]
 \item[(iii).] 
 For any $\Lambda\in (0,1)$ and $t\ge 0 $, the function $w(t,\cdot)$ is well-defined, non-increasing and convex on $[x_\Lambda(t),+\infty)$. Moreover, denoting $\varphi_0=-\ln v_0$, we have 

 \begin{equation}\label{alwx0}
    \frac{\partial w}{\partial x}=-\frac{w}{(1-\ln w)^\alpha}\varphi_0'(1+\varphi_0)^\alpha,
\end{equation}
and
 \begin{equation}
     \label{alwxe}
 \lim_{t\to \infty}\frac{\partial w}{\partial x}(t,x_\Lambda(t))=0,\quad \text{uniformly on }\Lambda\in(0,1).
 \end{equation}
  \item[(iv).] For any $\Lambda\in (0,1)$, we have the following estimate for $w$:
 \begin{equation}\label{allbwu}
    w(t,x)\le e^{-x^\beta+[\rho(\alpha+1)t]^\frac{1}{\alpha+1}} \quad\text{for all }(t,x)\in \mathbb{R}^+\times [x_\Lambda(t),+\infty).
\end{equation}
\end{itemize}
\end{lemma}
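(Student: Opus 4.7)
The plan is to verify the four items separately, each being essentially a direct computation on the explicit formula
\[
  w(t,x)=\exp\Bigl\{1-\bigl[(1+x^\beta)^{\alpha+1}-\rho(\alpha+1)t\bigr]^{1/(\alpha+1)}\Bigr\}\quad\text{for }x\ge 0,
\]
combined with the standing regime assumption $\beta(\alpha+1)<1$.

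For (i), observe that $x_\Lambda(t)>1$ is equivalent to $(1-\ln\Lambda)^{\alpha+1}+\rho(\alpha+1)t>2^{\alpha+1}$, and $\Lambda<e^{-1}$ already gives $1-\ln\Lambda>2$, so the inequality holds at $t=0$ and a fortiori for all $t\ge0$. For (ii), set $A_i:=(1-\ln\Lambda_i)^{\alpha+1}$ and $T:=\rho(\alpha+1)t$, so $1+x_{\Lambda_i}(t)^\beta=(A_i+T)^{1/(\alpha+1)}$. Taylor-expanding as $T\to\infty$ gives $(A_i+T)^{1/(\alpha+1)}-1=T^{1/(\alpha+1)}-1+\frac{A_i}{\alpha+1}T^{-\alpha/(\alpha+1)}+o(\cdot)$, and then applying a mean-value estimate to $y\mapsto y^{1/\beta}$ between $(A_1+T)^{1/(\alpha+1)}-1$ and $(A_2+T)^{1/(\alpha+1)}-1$ yields $x_{\Lambda_1}(t)-x_{\Lambda_2}(t)\asymp_{\Lambda_1,\Lambda_2}t^{[1-\beta(\alpha+1)]/[\beta(\alpha+1)]}$, whose exponent is positive precisely because $\beta(\alpha+1)<1$.

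For (iii), note that $x_\Lambda(t)\ge 0$ for every $\Lambda\in(0,1)$ and $t\ge 0$ (so only the branch $v_0(x)=e^{-x^\beta}$ is used) and check directly that $w(t,x_\Lambda(t))=\Lambda$ and that $w$ is well-defined and decreasing in $x$ on $[x_\Lambda(t),+\infty)$ by inspection of the exponent. Formula \eqref{alwx0} comes from differentiating the identity $(1-\ln w)^{\alpha+1}=(1+\varphi_0)^{\alpha+1}-\rho(\alpha+1)t$ once in $x$. For convexity, it is cleanest to differentiate this same identity twice in $x$ and use the convexity of $x\mapsto(1+x^\beta)^{\alpha+1}$ on $[x_\Lambda(t),+\infty)$ together with the concavity of $y\mapsto y^{1/(\alpha+1)}$, which guarantees $\partial_x^2 w\ge 0$ on that half-line. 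For \eqref{alwxe}, use $\varphi_0'(x)=\beta x^{\beta-1}$ to reduce the statement to showing that
\[
  \beta\,\Lambda\,(1-\ln\Lambda)^{-\alpha}\,\bigl(1+x_\Lambda(t)^\beta\bigr)^\alpha\,x_\Lambda(t)^{\beta-1}\longrightarrow 0
\]
as $t\to\infty$, uniformly in $\Lambda\in(0,1)$. Setting $B=B(t,\Lambda):=1+x_\Lambda(t)^\beta=((1-\ln\Lambda)^{\alpha+1}+\rho(\alpha+1)t)^{1/(\alpha+1)}$ and using the crude bound $\Lambda(1-\ln\Lambda)^{-\alpha}\le 1$, this quantity is controlled by $\beta\, B^\alpha(B-1)^{(\beta-1)/\beta}$, which for $B\ge 2$ is of order $B^{[\beta(\alpha+1)-1]/\beta}$ and therefore tends to $0$ as $B\to\infty$. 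Since $B\ge(\rho(\alpha+1)t)^{1/(\alpha+1)}$ uniformly in $\Lambda$, the limit is uniform.

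For (iv), the inequality $w(t,x)\le e^{-x^\beta+[\rho(\alpha+1)t]^{1/(\alpha+1)}}$ is equivalent, after taking logarithms and setting $u=(1+x^\beta)^{\alpha+1}$, $v=\rho(\alpha+1)t$, to $u^{1/(\alpha+1)}\le (u-v)^{1/(\alpha+1)}+v^{1/(\alpha+1)}$, which itself rewrites as the elementary superadditivity $(b+c)^{\alpha+1}\ge b^{\alpha+1}+c^{\alpha+1}$ for $b,c\ge 0$ and $\alpha+1\ge 1$. The only mildly delicate point in the entire Lemma is the uniformity in $\Lambda$ in \eqref{alwxe}: one must ensure that neither $\Lambda\to 1^-$ (where the prefactor $\Lambda(1-\ln\Lambda)^{-\alpha}$ is near $1$) nor $\Lambda\to 0^+$ (where $x_\Lambda(t)$ itself depends on $\Lambda$) spoils the decay, and this is handled by the explicit lower bound $B\ge(\rho(\alpha+1)t)^{1/(\alpha+1)}$. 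All remaining steps are routine calculus on the explicit formula.
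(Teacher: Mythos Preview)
Your argument is essentially the same as the paper's: parts (i), (ii), and (iv) match the paper's reasoning almost exactly (the paper uses the mean value theorem in the variable $\Lambda$ for (ii) rather than a Taylor expansion in $T$, but the resulting rate $t^{\frac{1}{\beta(\alpha+1)}-1}$ is identical), and your treatment of the uniform limit \eqref{alwxe} via the lower bound $B\ge(\rho(\alpha+1)t)^{1/(\alpha+1)}$ is precisely the paper's device (their $Y(t)$).

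There is, however, one factual slip in your convexity argument for (iii). You claim that $x\mapsto(1+x^\beta)^{\alpha+1}$ is convex on $[x_\Lambda(t),+\infty)$, but in the regime $\beta(\alpha+1)<1$ it is \emph{concave}: writing $h(x)=(1+x^\beta)^{\alpha+1}$, one computes
\[
  h''(x)=(\alpha+1)\beta\,(1+x^\beta)^{\alpha-1}x^{\beta-2}\bigl[(\beta(\alpha+1)-1)x^\beta+(\beta-1)\bigr]<0,
\]
since both terms in the bracket are negative. Fortunately, concavity of $F(x):=h(x)-\rho(\alpha+1)t$ is exactly what you need: with $G(u)=u^{1/(\alpha+1)}$ and $w=e^{1-G(F)}$, one has
\[
  w_{xx}=w\bigl[(G'F')^2-G''(F')^2-G'F''\bigr],
\]
and all three terms are nonnegative because $G''\le 0$, $G'\ge 0$, and $F''\le 0$. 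So your conclusion is correct once the sign is fixed. The paper handles this point differently, by writing out the explicit formula for $w_{xx}$ in terms of $\varphi_0=x^\beta$ (as in its Lemma~\ref{lewxx}) and checking term by term that $(1-\ln w)^{-(\alpha+1)}\ge(1+\varphi_0)^{-(\alpha+1)}$ and $\varphi_0''\le 0$.
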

\par
To construct a sub-solution to equation \eqref{oeq1}, one may want to cut the function $w$ at the value $\varepsilon$. However, to handle a possible singularity of $J$ at zero, one requires the sub-solution to \eqref{oeq1} to be at least of class $C^2$ in $x$. 
As in \cite{bouin2021sharp}, we introduce the function 
\[
    g_\varepsilon:y\mapsto 3\left(1-\frac{y}{\varepsilon}+\frac{y^2}{3\varepsilon^2}\right)y,\quad \text{on }[0,\varepsilon],
\]
for some $\varepsilon>0$. Some direct calculations show that 
\[
    g_\varepsilon'(y)=3\left(1-\frac{y}{\varepsilon}\right)^2\quad \text{and}\quad g_\varepsilon''(y)=-\frac{6}{\varepsilon}\left(1-\frac{y}{\varepsilon}\right), \quad \text{for }y\in [0,\varepsilon].
\]
Notice that for $y\in[0,\varepsilon]$, we have
\[
    y\le g_\varepsilon(y)\le 3y.
\]
\par 
Equipped with the above preparations, for any fixed $\varepsilon\in\left(0,\varepsilon_0:=\min\left\{e^{-1},\xi_0\right\}\right)$ and $t\ge 0$, let us define 
\[
v(t,x):=\left\{
\begin{aligned}
&\varepsilon,& x< x_\varepsilon(t),\\
&g_\varepsilon(w(t,x)),& x\ge x_\varepsilon(t).
\end{aligned}
\right.
\] 
Notice that the function $v$ has a good regularity, that is, $v(\cdot,x)$ is $C^{1,1}(\mathbb{R}^+)$ for all $x\in \mathbb{R}$ and $v(t,\cdot)$ is $C^{2}(\mathbb{R})$ for all $t>0$. Unless otherwise specified, throughout the subsection, we assume that $t\in \mathbb{R}^+$. 
\par 
Some direct calculations show that for all $x<x_\varepsilon(t)$, we have
\begin{equation}\label{alvdd}
    \frac{\partial v}{\partial t}(t,x)=\frac{\partial v}{\partial x}(t,x)=\frac{\partial^2 v}{\partial x^2}(t,x)=0,
\end{equation}
while for all $x\ge x_\varepsilon(t)$, 
\begin{equation}\label{alvt}
    \frac{\partial v}{\partial t}(t,x)=g_\varepsilon'(w)\frac{\partial w}{\partial t}=3\rho \left(1-\frac{w(t,x)}{\varepsilon}\right)^2\frac{w(t,x)}{(1-\ln w(t,x))^\alpha}\le 3\rho\frac{w(t,x)}{(1-\ln w(t,x))^\alpha},
\end{equation}
\begin{equation}\label{alvx}
\begin{aligned}
    \frac{\partial v}{\partial x}(t,x)=g_\varepsilon'(w)\frac{\partial w}{\partial x}&=-3 \left(1-\frac{w(t,x)}{\varepsilon}\right)^2\frac{w(t,x)}{(1-\ln w(t,x))^\alpha}(1-\varphi_0(x))^\alpha\varphi_0'(x)\\
    &\ge-3 \frac{w(t,x)}{(1-\ln w(t,x))^\alpha}(1-\varphi_0(x))^\alpha\varphi_0'(x),
\end{aligned}
\end{equation}
and 
\begin{equation}\label{alvxx}
\begin{aligned}
    \frac{\partial^2 v}{\partial x^2}=g_\varepsilon''(w)\left(\frac{\partial w}{\partial x}\right)^2+g'(w)\frac{\partial^2 w}{\partial x^2}&=3\left(1-\frac{w}{\varepsilon}\right)\left[-\frac{2}{\varepsilon}\left(\frac{\partial w}{\partial x}\right)^2+\left(1-\frac{w}{\varepsilon}\right)\frac{\partial^2 w}{\partial x^2}\right]\ge-\frac{6}{\varepsilon}\left(\frac{\partial w}{\partial x}\right)^2. 
\end{aligned}
\end{equation}
Let us investigate the convexity of the function $v$. 
\begin{lemma}\label{aplblxc}
For all $t>0$, $v(t,\cdot)$ is convex on $\left(x_{\frac{\varepsilon}{3}}(t),+\infty\right)$.
\end{lemma}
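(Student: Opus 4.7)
The strategy is to verify that $\partial_x^2 v(t,\cdot) \geq 0$ pointwise on $(x_{\varepsilon/3}(t),+\infty)$. I would first observe that the map $\Lambda \mapsto x_\Lambda(t)$ is strictly decreasing, since $w(t,x_\Lambda(t)) = \Lambda$ and $w(t,\cdot)$ is non-increasing by Lemma \ref{apubl1}(iii); therefore $x_\varepsilon(t) < x_{\varepsilon/3}(t)$. Consequently, on the interval $(x_{\varepsilon/3}(t),+\infty)$ the definition $v(t,x) = g_\varepsilon(w(t,x))$ applies, and moreover $w(t,x) \leq \varepsilon/3$ throughout.

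Starting from \eqref{alvxx}, I would factor the second derivative as
\[
\frac{\partial^2 v}{\partial x^2}(t,x) = 3\left(1-\frac{w}{\varepsilon}\right)\left[\left(1-\frac{w}{\varepsilon}\right)\frac{\partial^2 w}{\partial x^2} - \frac{2}{\varepsilon}\left(\frac{\partial w}{\partial x}\right)^{2}\right].
\]
Since $w < \varepsilon$ on the region of interest, the prefactor $(1-w/\varepsilon)$ is positive, so it only remains to check that the bracket is non-negative. The crucial input is the log-convexity of $w(t,\cdot)$, i.e.\ $w\,\partial_x^2 w \geq (\partial_x w)^2$. Plugging this inequality into the bracket yields the lower bound
\[
\frac{(\partial_x w)^2}{\varepsilon\, w}\bigl[(\varepsilon-w) - 2w\bigr] = \frac{(\partial_x w)^2(\varepsilon - 3w)}{\varepsilon\, w},
\]
which is non-negative precisely when $w \leq \varepsilon/3$. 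This matches exactly the threshold fixed in the statement and explains why one must restrict to $(x_{\varepsilon/3}(t),+\infty)$ rather than to the larger set $(x_\varepsilon(t),+\infty)$ where $v = g_\varepsilon(w)$.

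The main technical obstacle is thus to establish the log-convexity of $w(t,\cdot)$, which is not explicitly recorded in Lemma \ref{apubl1} (only ordinary convexity is listed there). Writing $w = \exp(1-\psi)$ with $\psi(t,x) = [(1+x^\beta)^{\alpha+1} - \rho(\alpha+1)t]^{1/(\alpha+1)}$, log-convexity amounts to $\partial_x^2 \psi \leq 0$. Differentiating the identity $\psi^{\alpha+1} = (1+x^\beta)^{\alpha+1} - \rho(\alpha+1)t$ twice yields an explicit expression for $\partial_x^2 \psi$, whose sign is controlled by the concavity of $x \mapsto x^\beta$ on $(0,+\infty)$ (since $0<\beta<1$) combined with the lower bound $x \geq x_{\varepsilon/3}(t) > 1$ provided by Lemma \ref{apubl1}(i). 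This computation is elementary but slightly tedious, and is completely analogous to the log-convexity verification performed for the super-solution in Lemma \ref{lewxx}.
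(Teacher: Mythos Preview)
Your proposal is correct and follows essentially the same route as the paper. The paper substitutes the explicit formulas \eqref{alwx0} for $\partial_x w$ and the corresponding expression for $\partial_x^2 w$ directly into \eqref{alvxx}, drops the non-negative terms $\alpha[(1-\ln w)^{-(\alpha+1)}-(1+\varphi_0)^{-(\alpha+1)}]$ and $-\varphi_0''(1+\varphi_0)^\alpha$, and arrives at the lower bound
\[
\frac{\partial^2 v}{\partial x^2}\ge 3\Bigl(1-\frac{w}{\varepsilon}\Bigr)(\varphi_0')^2(1+\varphi_0)^{2\alpha}\frac{w}{(1-\ln w)^{2\alpha}}\Bigl(1-\frac{3w}{\varepsilon}\Bigr),
\]
which is exactly your bound $(\partial_x w)^2(\varepsilon-3w)/(\varepsilon w)$ once one expands $(\partial_x w)^2$ via \eqref{alwx0}. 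Your intermediate step $w\,\partial_x^2 w\ge(\partial_x w)^2$ is precisely the inequality obtained by dropping those same two non-negative terms, so the two arguments are identical in substance; you have just given the key inequality a name (log-convexity) and deferred its verification, whereas the paper performs the substitution inline. Your sketch of the log-convexity proof is also correct: with $\psi^{\alpha+1}=(1+x^\beta)^{\alpha+1}-\rho(\alpha+1)t$, the sign of $\psi''$ is governed by $\beta(\beta-1)x^{\beta-2}<0$ together with $\psi\le 1+x^\beta$, and the restriction $x>1$ is in fact not needed for this step.
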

 \begin{proof}[Proof of Lemma \ref{aplblxc}]
 By \eqref{alwx0}, for all $t>0$ and $x>x_{\frac{\varepsilon}{3}}(t)$, we get
 \begin{equation*}
		\frac{\partial^2 w}{\partial x^2}=\frac{w}{(1-\ln w)^{\alpha}}\Big\{(\varphi_0')^2 (1+\varphi_0)^{2\alpha}\Big((1-\ln w)^{-\alpha}
		+\alpha (1-\ln w)^{-(\alpha+1)}-\alpha(1+\varphi_0)^{-(\alpha+1)} \Big)-\varphi_0''(1+\varphi_0)^\alpha \Big\}.
\end{equation*}
     It then follows from \eqref{alwx0} and \eqref{alvxx} that 
     \[
     \begin{aligned}
         \frac{\partial^2 v}{\partial x^2}&\ge 3\left(1-\frac{w}{\varepsilon}\right)(\varphi'_0)^2(1+\varphi_0)^{2\alpha}\frac{w}{(1-\ln w)^{2\alpha}}\left(1-\frac{3}{\varepsilon}w\right)\ge 0
     \end{aligned}
     \]
     as soon as $0<w\le \frac{\varepsilon}{3}$. Therefore, for all $x> x_{\frac{\varepsilon}{3}}(t)$, we have $\frac{\partial^2 v}{\partial x^2}\ge 0$. 
 \end{proof}

\subsubsection{Estimation of \texorpdfstring{$\mathcal{D}[v](t,\cdot)$ on $\mathbb{R}$}{D[v](t,.) on R}}
Now, let us estimate the dispersal term $\mathcal{D}[v]$. 
We split space into three space sub-zones, as follows.
\begin{proposition}\label{aplbed}
Assume $\varepsilon\in (0,\varepsilon_0)$. For any $t>0$, we have the following estimate for $\mathcal{D}[v](t,\cdot)$.
\begin{itemize}
   \item[I.]  When $x\in(-\infty, x_\varepsilon(t))$, for all $B_1>1$, we have
    \begin{equation} \label{allbl1}
        \mathcal{D}[v]\ge-\varepsilon \mathcal{C}_1 B_1^{1-\beta}e^{-B_1^\beta}-\frac{\mathcal{C}_2}{\varepsilon}\left(\frac{\partial w}{\partial x}\right)^2(t,x_\varepsilon(t)),
    \end{equation}
     where $\mathcal{C}_1>0$ and $\mathcal{C}_2:=6\left(\mathcal{J}_1+\mathcal{J}_0\int_1^{+\infty}z^2e^{-z^\beta}dz\right)$.
  \item[II.]   When $x\in \left[x_\varepsilon(t), x_{\frac{\varepsilon}{3}}(t)+1\right]$, for all $B_2>1$, we have
    \begin{equation}\label{allbl2}
    \mathcal{D}[v]\ge-\frac{\mathcal{C}_2}{\varepsilon} \left(\frac{\partial w}{\partial x}\right)^2(t,x_\varepsilon(t)) -\mathcal{C}_3 \varepsilon B_2^{1-\beta}e^{-B_2^\beta},
    \end{equation}
where $\mathcal{C}_3>0$.
\item[III.] When $x\in\left( x_{\frac{\varepsilon}{3}}(t)+1,+\infty\right)$, we have
  \begin{equation}  \label{allbl3}
        \mathcal{D}[v]\ge \mathcal{C}_4 \frac{\partial v}{\partial x}(t,x),
\end{equation}
    where $\mathcal{C}_4:=\int_1^{+\infty}z J(z)dz$.
    \end{itemize}
\end{proposition}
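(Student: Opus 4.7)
The plan is to bound $\mathcal{D}[v]$ separately on each of the three zones, relying on two common ingredients. The first is the uniform-in-$z$ estimate
\[
v_{xx}(t,z) \,\geq\, -\tfrac{6}{\varepsilon}\bigl(\partial_x w(t,x_\varepsilon(t))\bigr)^2, \qquad z\in\mathbb{R},
\]
which combines the pointwise formulas \eqref{alvdd} and \eqref{alvxx} with the fact that $w(t,\cdot)$ is convex on $[x_\varepsilon(t),+\infty)$ by Lemma \ref{apubl1}(iii), so $\partial_x w(t,\cdot)<0$ is non-decreasing there and $|\partial_x w(t,\cdot)|$ attains its supremum at $z=x_\varepsilon(t)$. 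The second ingredient is the tail estimate $\int_B^{+\infty} e^{-y^\beta}\,dy \lesssim B^{1-\beta} e^{-B^\beta}$ as $B\to+\infty$ (via L'H\^opital's rule), together with the truncated second-moment bound $\int_{|y|\le B} y^2 J(y)\,dy \le 2\mathcal{J}_1 + 2\mathcal{J}_0 \int_1^{+\infty} y^2 e^{-y^\beta}\,dy$ coming from Hypothesis \ref{ker} and \eqref{J2}.

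For Zones I and II (that is, $x \le x_{\varepsilon/3}(t)+1$), I would symmetrize $\mathcal{D}[v]$ with respect to $J$ and split the integration domain into $|y|\le B_i$ and $|y|>B_i$, with $i=1,2$. On the inner piece, the Taylor identity
\[
v(t,x+y)+v(t,x-y)-2v(t,x) \,=\, \int_0^y (y-s)\bigl[v_{xx}(t,x+s)+v_{xx}(t,x-s)\bigr]\,ds,
\]
combined with the uniform $v_{xx}$ bound above and the second-moment estimate, yields a contribution of order $-\tfrac{\mathcal{C}_2}{\varepsilon}\bigl(\partial_x w(t,x_\varepsilon(t))\bigr)^2$. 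On the outer piece, the crude inequality $0\le v\le \varepsilon$ gives $v(t,x+y)+v(t,x-y)-2v(t,x)\ge -2\varepsilon$, and the tail estimate of $J$ produces the $-\varepsilon\,B_i^{1-\beta} e^{-B_i^\beta}$ term. Combining proves \eqref{allbl1} and \eqref{allbl2}. A mild subtlety in Zone I is that $v$ transitions from the constant $\varepsilon$ to the non-constant profile $g_\varepsilon(w(t,\cdot))$ inside the integration window $[x-B_1,x+B_1]$; the relations $g_\varepsilon(\varepsilon)=\varepsilon$, $g_\varepsilon'(\varepsilon)=0$, $g_\varepsilon''(\varepsilon)=0$ ensure $v(t,\cdot)\in C^2(\mathbb{R})$, so that the Taylor identity applies seamlessly across $x_\varepsilon(t)$.

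For Zone III (that is, $x>x_{\varepsilon/3}(t)+1$), I would instead exploit the convexity of $v(t,\cdot)$ on $[x_{\varepsilon/3}(t),+\infty)$ furnished by Lemma \ref{aplblxc}. Splitting $|y|\le 1$ from $|y|\ge 1$: on the inner piece, the strict inequality $x-1>x_{\varepsilon/3}(t)$ places both $x\pm y$ in the convex region, so after symmetrization $v(t,x+y)+v(t,x-y)-2v(t,x)\ge 0$ and that contribution is non-negative. On the outer piece, for $y\ge 1$ the monotonicity of $v(t,\cdot)$ forces $v(t,x-y)-v(t,x)\ge 0$, whereas for $y\le -1$ the convexity tangent inequality yields $v(t,x-y)-v(t,x)\ge -y\,v_x(t,x)$; using $J(-y)=J(y)$ to fold the $y\le -1$ half into $y\ge 1$, one obtains exactly $v_x(t,x)\int_1^{+\infty} z\,J(z)\,dz = \mathcal{C}_4\,v_x(t,x)$, which is \eqref{allbl3}. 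I expect the main difficulty to lie in the careful bookkeeping of constants for the Taylor argument of Zones I and II: the parameters $B_i$ will later be tuned as functions of $t$ so that the $B_i$-tail terms become negligible against the reaction term, and any imprecision in $\mathcal{C}_2$ could spoil that balance. Zone III, by contrast, is comparatively direct once Lemma \ref{aplblxc} and the global monotonicity of $v(t,\cdot)$ are in hand.
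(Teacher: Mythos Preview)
Your proposal is correct and follows essentially the same approach as the paper: Taylor expansion combined with the uniform lower bound on $v_{xx}$ (via \eqref{alvxx} and convexity of $w$) plus the tail estimate of $J$ for Zones I--II, and convexity of $v$ from Lemma \ref{aplblxc} together with monotonicity for Zone III. The only minor difference is that in Zone I the paper first reduces $\mathcal{D}[v]$ to a one-sided integral (using $v\equiv\varepsilon$ on $(-\infty,x_\varepsilon(t))$) and then splits according to whether $x\le x_\varepsilon(t)-B_1$ or not, whereas your unified symmetrized treatment handles both sub-cases at once; this is a harmless simplification.
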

\begin{proof}[Proof of Proposition \ref{aplbed}]
~~
\medskip
\par
\noindent
\# \textbf{Start with I: $x< x_\varepsilon(t)$.} Since $v(t,\cdot)=\varepsilon$ on $(-\infty, x_\varepsilon(t))$, we write
\[
    \mathcal{D}[v]=\int_{x_\varepsilon(t)-x}^{+\infty}J(z)\Big(v(t,x+z)-\varepsilon\Big)dz.
\]
In view of Hypothesis \ref{ker}, one may estimate the above integral on $x\in(-\infty,x_\varepsilon(t)-B_1]$ and $x\in(x_\varepsilon(t)-B_1,x_\varepsilon(t))$ for some $B_1>1$, respectively.
\medskip
\par
\#\# \textbf{Part 1: $x\in(-\infty,x_\varepsilon(t)-B_1]$.}
By the hypothesis on $J$ and L'H\^opital's rule, we get
\begin{equation} \label{aslbed1}
    \mathcal{D}[v]
    \ge-\varepsilon\mathcal{J}_0\int_{B_1}^{+\infty}e^{-z^\beta}dz
    \ge-\varepsilon\mathcal{C}_1B_1^{1-\beta}e^{-B_1^\beta},
\end{equation}
for some $\mathcal{C}_1>0$.
\medskip
\par\#\# \textbf{Part 2: $x\in(x_\varepsilon(t)-B_1,x_\varepsilon(t))$.}
We write 
\[
    \mathcal{D}[v]=\int_{x_\varepsilon(t)-x+B_1}^{+\infty}J(z)\Big(v(t,x+z)-\varepsilon\Big)dz+\int_{x_\varepsilon(t)-x}^{x_\varepsilon(t)-x+B_1}J(z)\Big(v(t,x+z)-\varepsilon\Big)dz:=I_1+I_2.
\]
\par 
\#\#\# \textbf{Let us estimate $I_1$.} Similarly to calculations in \eqref{aslbed1}, one may get
\[
    I_1\ge -\varepsilon \mathcal{J}_0\int_{x_\varepsilon(t)-x+B_1}^{+\infty}e^{-z^\beta}dz\ge -\varepsilon \mathcal{J}_0\int_{B_1}^{+\infty}e^{-z^\beta}dz\ge -\varepsilon \mathcal{C}_1 B_1^{1-\beta}e^{-B_1^\beta},
\]
where $\mathcal{C}_1>0$. 
\par\#\#\# \textbf{Let us estimate $I_2$.} By the fundamental theorem of calculus, since $v(t,x)=\varepsilon$ for $x< x_\varepsilon(t)$, we have
\[
    I_2=\int_{0}^{x_\varepsilon(t)-x+B_1}J(z)\Big(v(t,x+z)-v(t,x)\Big)dz=\int_{0}^{x_\varepsilon(t)-x+B_1}\int_0^1zJ(z)\frac{\partial v}{\partial x}(t,x+\tau z)dz.
\]
Since $\frac{\partial v}{\partial x}(t,x)=0$ for $x< x_\varepsilon(t)$, by \eqref{alvxx} and the convexity of $w$,  we obtain
\[
\begin{aligned}
     I_2&=\int_{0}^{x_\varepsilon(t)-x+B_1}\int_0^1zJ(z)\Big(\frac{\partial v}{\partial x}(t,x+\tau z)-\frac{\partial v}{\partial x}(t,x)\Big)dz&\\
     &=\int_{0}^{x_\varepsilon(t)-x+B_1}\int_0^1\int_0^1\tau z^2J(z)\frac{\partial^2 v}{\partial x^2}(t,x+\sigma\tau z)d\sigma d\tau dz\\
     &\ge-\frac{6}{\varepsilon}\left(\frac{\partial w}{\partial x}\right)^2(t,x_\varepsilon(t))\int_{0}^{x_\varepsilon(t)-x+B_1} z^2J(z) dz.  
\end{aligned}
\]
It follows from Hypothesis \ref{ker} and $z\mapsto z^2J(z)\in L^1([1,+\infty))$ that 
\[
    \int_{0}^{x_\varepsilon(t)-x+B_1} z^2J(z) dz\le\int_{0}^{1} z^2J(z) dz+\int_{1}^{+\infty} z^2J(z) dz\le \mathcal{J}_1+\mathcal{J}_0\int_1^{+\infty}z^2e^{-z^\beta}dz,
\]
and thus, we have
\[
    I_2\ge-\frac{6}{\varepsilon}\left(\frac{\partial w}{\partial x}\right)^2(t,x_\varepsilon(t))\left(\mathcal{J}_1+\mathcal{J}_0\int_1^{+\infty}z^2e^{-z^\beta}dz\right).
\]
Therefore, collecting the estimates of $I_1$ and $I_2$, one obtains
\[
    \mathcal{D}[v]\ge-\varepsilon \mathcal{C}_1 B_1^{1-\beta}e^{-B_1^\beta}-\frac{\mathcal{C}_2}{\varepsilon}\left(\frac{\partial w}{\partial x}\right)^2(t,x_\varepsilon(t)),
\]
where $\mathcal{C}_2:=6\left(\mathcal{J}_1+\mathcal{J}_0\int_1^{+\infty}z^2e^{-z^\beta}dz\right)$.
\medskip
\par
\noindent
    \# \textbf{Continue with II: $x_\varepsilon(t)\le x\le x_{\frac{\varepsilon}{3}}(t)+1$.}
By the definition of $\mathcal{D}[v]$, since $v(t,\cdot)$ is non-increasing on $\mathbb{R}$, for some $B_2>1$, we write
    \[
        \mathcal{D}[v](t,x)\ge P.V. \int_{-B_2}^{B_2} J(z)\Big(v(t,x+z)-v(t,x)\Big)dz+\int_{B_2}^{+\infty} J(z)\Big(v(t,x+z)-v(t,x)\Big)dz:=II_1+II_2. 
    \]
    \par
\#\# \textbf{Let us estimate $II_1$.} Since $v(t,\cdot)$ is $C^2$ on $\mathbb{R}$ and $\frac{\partial^2 v}{\partial x^2}(t,\cdot)=0$ on $(-\infty, x_\varepsilon(t))$, by using the symmetry of $J$ and Taylor's theorem, we get
\[
\begin{aligned}
    II_1=\frac{1}{2}\int_{-B_2}^{B_2} J(z)\Big(v(t,x+z)+v(t,x-z)-2v(t,x)\Big)dz&=\frac{1}{2}\int_{-B_2}^{B_2} \int_0^1\int_{-1}^1 \tau z^2J(z) \frac{\partial^2 v}{\partial x^2}(t,x+\sigma\tau z)d\sigma d\tau dz\\
  &\ge \frac{1}{2}\min_{\substack{|\xi|\le B_2 \\ x+\xi \ge x_\varepsilon(t)}} \frac{\partial^2 v}{\partial x^2}(t,x+\xi)  \int_{-B_2}^{B_2} \int_0^1\tau z^2J(z) d\tau dz.
\end{aligned}
\]
By \eqref{alvxx} and the convexity of $w(t,\cdot)$ on $(x_\varepsilon(t),+\infty)$, for $x+\xi\ge x_\varepsilon(t)$, one gets
\[
    \frac{\partial^2 v}{\partial x^2}(t,x+\xi)\ge -\frac{3}{\varepsilon}\left(\frac{\partial w}{\partial x}\right)^2(t,x+\xi)\ge -\frac{3}{\varepsilon}\left(\frac{\partial w}{\partial x}\right)^2(t,x_\varepsilon(t)).
\]
It then follows from Hypothesis \ref{ker} and \eqref{J2} that 
\[
\begin{aligned}
    II_1&\ge -\frac{3}{\varepsilon}\left(\frac{\partial w}{\partial x}\right)^2(t,x_\varepsilon(t))\left(\int_{|z|\le 1}z^2J(z)dz+\int_{1\le|z|\le B_2}z^2J(z)dz\right)\\
    &\ge -\frac{6}{\varepsilon}\left(\frac{\partial w}{\partial x}\right)^2(t,x_\varepsilon(t))\left(\mathcal{J}_1+\mathcal{J}_0\int_{1}^{+\infty}z^2e^{-z^\beta}dz\right).
\end{aligned}
\]
\par 
\#\# \textbf{Now, let us estimate $II_2$.} By \eqref{J2}, L'H\^opital's rule and $v
\le \varepsilon$, we have
\[
    II_2\ge -\mathcal{J}_0 v(t,x)\int_{B_2}^{+\infty} e^{-z^\beta}dz\ge -\mathcal{C}_3 \varepsilon B_2^{1-\beta}e^{-B_2^\beta}.
\]
for some constant $\mathcal{C}_3>0$.
Therefore, collecting the above estimates, we arrive at 
\[
    \mathcal{D}[v]\ge-\frac{\mathcal{C}_2}{\varepsilon} \left(\frac{\partial w}{\partial x}\right)^2(t,x_\varepsilon(t)) -\mathcal{C}_3 \varepsilon B_2^{1-\beta}e^{-B_2^\beta},
\]
where $\mathcal{C}_2=6\left(\mathcal{J}_1+\mathcal{J}_0\int_{1}^{+\infty}z^2e^{-z^\beta}dz\right)$.
\medskip
\par
\noindent
\# \textbf{Finally focus on III: $x> x_{\frac{\varepsilon}{3}}(t)+1$.}
    By the definition of $\mathcal{D}[v]$, since $v(t,\cdot)$ is non-increasing on $\mathbb{R}$, we write 
\[
    \mathcal{D}[v](t,x)\ge P.V.\int_{-1}^1J(z)\Big(v(t,x+z)-v(t,x)\Big)dz+\int_1^{+\infty}J(z)\Big(v(t,x+z)-v(t,x)\Big)dz:=III_1+III_2.
\]
\par 
\#\# \textbf{Let us estimate $III_1$}. Since $J$ is symmetric, by Taylor's theorem, we write
\[
    III_1=\frac{1}{2}\int_{-1}^1J(z)\Big(v(t,x+z)+v(t,x-z)-2v(t,x)\Big)dz=\frac{1}{2}\int_{-1}^1
\int_{0}^1\int_{-1}^{1} \tau z^2J(z) \frac{\partial^2 v}{\partial x^2}(t,x+\sigma \tau z)d\sigma d\tau dz.
\]
It follows from the convexity of $v(t,\cdot)$ on $\left(x_{\frac{\varepsilon}{3}}(t),+\infty\right)$ that for all $x> x_{\frac{\varepsilon}{3}}(t)+1$,
\begin{equation}
    III_1\ge 0.
\end{equation}

\par \#\# \textbf{Let us estimate $III_2$.} By the fundamental theorem of calculus, we get
\[
    III_2=\int_1^{+\infty}\int_0^1 zJ(z)\frac{\partial v}{\partial x}(t,x+\tau z)d\tau dz.
\]
Since $v(t,\cdot)$ is convex on $\left(x_{\frac{\varepsilon}{3}}(t),+\infty\right)$ and $\tau z>0$ for $z>0$ and $\tau>0$, one has $\frac{\partial v}{\partial x}(t,x+\tau z)>\frac{\partial v}{\partial x}(t,x)$. Thus, 
\[
    III_2\ge \frac{\partial v}{\partial x}(t,x)\int_1^{+\infty}z J(z)dz.
\]
Therefore, collecting the above estimates, for all $x>x_\frac{\varepsilon}{3}(t)+1$, we obtain
\[
    \mathcal{D}[v]\ge \mathcal{C}_4 \frac{\partial v}{\partial x}(t,x),
\]
where $\mathcal{C}_4:=\int_1^{+\infty}z J(z)dz$.

\end{proof}

\subsubsection{The function \texorpdfstring{$v$}{v} is a sub-solution to \texorpdfstring{\eqref{oeq1}}{eq.1} at later times}
\begin{proposition}\label{aplbps}
For all $0<\varepsilon\le \underline\varepsilon:=\min\{\frac{1}{2K},\varepsilon_0\}$ and $0<\rho\le\frac{r}{12}$, there is a time $\tilde t>0$ such that for all $t\ge \tilde t$, we have
   \begin{equation}\label{alprop}
        \frac{\partial v}{\partial t}(t,x)-\mathcal{D}[v](t,x)-f(v(t,x))\le 0\quad \text{for all }x\in\mathbb{R}.
   \end{equation}
\end{proposition}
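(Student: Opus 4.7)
The plan is to establish \eqref{alprop} on each of the three spatial regions appearing in Proposition \ref{aplbed}, combining the dispersal lower bounds there with the pointwise formulas \eqref{alvdd}--\eqref{alvxx} and the logarithmic lower bound on $f$ from Hypothesis \ref{fu}. The common mechanism throughout is that $\rho\le r/12$, together with $v=g_\varepsilon(w)\ge w$ and $v\le\varepsilon\le 1/(2K)$, will produce a reaction surplus of order $\tfrac{r}{4}w/(1-\ln w)^\alpha$ that must absorb each residual error term coming from Proposition \ref{aplbed}. In Zone I, where $v\equiv\varepsilon$ and hence $\partial_t v=0$ and $f(v)=f(\varepsilon)>0$ is a fixed positive constant, I would choose $B_1>1$ so large that $\varepsilon\mathcal{C}_1 B_1^{1-\beta}e^{-B_1^\beta}\le f(\varepsilon)/2$, and then invoke the flattening estimate \eqref{alwxe} of Lemma \ref{apubl1} to guarantee $\mathcal{C}_2(\partial_x w)^2(t,x_\varepsilon(t))/\varepsilon\le f(\varepsilon)/2$ for all $t\ge t_{I}$.

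For Zone II, $v=g_\varepsilon(w)$ with $w(t,\cdot)$ ranging in $[w(t,x_{\varepsilon/3}(t)+1),\varepsilon]$. Using $g_\varepsilon(w)\ge w$ (which forces $|\ln v|\le|\ln w|$) and $Kv\le K\varepsilon\le 1/2$, Hypothesis \ref{fu} gives $f(v)\ge rv/(2(1+|\ln v|)^\alpha)\ge rw/(2(1-\ln w)^\alpha)$, so that \eqref{alvt} together with $\rho\le r/12$ yields $\partial_t v-f(v)\le -\tfrac{r}{4}w/(1-\ln w)^\alpha$. The convexity of $w(t,\cdot)$ on $[x_\varepsilon(t),+\infty)$ combined with \eqref{alwxe} ensures that $w(t,x_{\varepsilon/3}(t)+1)\ge\varepsilon/6$ for $t$ large, whence $w/(1-\ln w)^\alpha\ge c_\varepsilon>0$ uniformly on the zone. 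It then remains to pick $B_2$ large so that $\mathcal{C}_3\varepsilon B_2^{1-\beta}e^{-B_2^\beta}\le rc_\varepsilon/8$ and, via \eqref{alwxe} once more, to pick $t_{II}$ such that $\mathcal{C}_2(\partial_x w)^2(t,x_\varepsilon(t))/\varepsilon\le rc_\varepsilon/8$ for $t\ge t_{II}$; the two errors from \eqref{allbl2} are then absorbed by the surplus.

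In Zone III, \eqref{allbl3} reduces $\mathcal{D}[v]$ to $\mathcal{C}_4\partial_x v$, so plugging \eqref{alvx} with $\varphi_0(x)=x^\beta$ and $g_\varepsilon'(w)\le 3$ yields $|\mathcal{D}[v]|\lesssim x^{\alpha\beta+\beta-1}w/(1-\ln w)^\alpha$. The decisive point is that $\beta<1/(\alpha+1)$ forces $\alpha\beta+\beta-1<0$, so this prefactor is decreasing in $x$ and, since $x\ge x_{\varepsilon/3}(t)+1\to+\infty$ by Lemma \ref{apubl1}(ii), becomes arbitrarily small uniformly on the zone once $t\ge t_{III}$; the reaction surplus derived in Zone II then dominates it. Setting $\tilde t:=\max\{t_{I},t_{II},t_{III}\}$ closes the argument. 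I expect Zone II to be the main obstacle, since one must simultaneously control the boundary dispersal error $\mathcal{C}_2(\partial_x w)^2(t,x_\varepsilon(t))/\varepsilon$, the exponential cutoff term $\mathcal{C}_3\varepsilon B_2^{1-\beta}e^{-B_2^\beta}$, and produce a uniformly positive surplus; this last point requires combining the convexity of $w(t,\cdot)$ with the flattening \eqref{alwxe} to ensure $w$ cannot drop by more than a fixed factor between $x_{\varepsilon/3}(t)$ and $x_{\varepsilon/3}(t)+1$, a delicate quantitative control of the moving profile.
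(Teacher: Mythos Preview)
Your proposal is correct and follows essentially the same three-zone strategy as the paper's proof, with the same choices of $B_1,B_2$ controlling the exponential tail errors, the same use of \eqref{alwxe} to kill the $(\partial_x w)^2(t,x_\varepsilon(t))$ terms, and the same observation that $\alpha\beta+\beta-1<0$ makes the Zone~III drift negligible. The only cosmetic difference is in Zone~II: to secure $w(t,x_{\varepsilon/3}(t)+1)\ge\varepsilon/6$ the paper invokes Lemma~\ref{apubl1}(ii) (namely $x_{\varepsilon/6}(t)-x_{\varepsilon/3}(t)\to+\infty$), whereas you argue directly via convexity of $w(t,\cdot)$ and the flattening \eqref{alwxe}; both routes are valid and yield the same bound.
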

\begin{proof} [Proof of Proposition \ref{aplbps}]
Before proving the proposition, we estimate $\frac{\partial v}{\partial t}$, $f(v)$ and $\mathcal{D}[v]$. For $\frac{\partial v}{\partial t}$, by the definition of $v$ and \eqref{alvt}, we obtain
\begin{equation}\label{alvt1}
\frac{\partial v}{\partial t}(t,x)\le\left\{
\begin{aligned}
&0, &x< x_\varepsilon(t),\\
&3\rho\frac{w(t,x)}{(1-\ln w(t,x))^\alpha},& x\ge x_\varepsilon(t).
\end{aligned}
\right.
\end{equation}
Recalling that $f(z)\ge \frac{rz}{(1-\ln z)^\alpha}(1-Kz)$ for $z\in(0,\xi_0)$, when $x< x_\varepsilon(t)$, we have
\[
f(v(t,x))\ge r \frac{v(t,x)}{(1-\ln v(t,x))^\alpha}(1-Kv(t,x))
= r (1-K\varepsilon)\frac{\varepsilon}{(1-\ln \varepsilon)^\alpha},
\]
thanks to $v(t,x)=\varepsilon$ for all $x< x_\varepsilon(t)$.
When $x\ge x_\varepsilon(t)$, it follows from $w(t,x)\le v(t,x)\le\varepsilon$ for $x\ge x_\varepsilon(t)$ that
\[
		f(v(t,x))\ge r (1-K \varepsilon)\frac{w(t,x)}{(1-\ln w(t,x))^\alpha}.
\]
As a result, we have
\begin{equation}\label{allbfv}
	f(v(t,x))\ge\left\{
	\begin{aligned}
&\frac{r(1-K\varepsilon) \varepsilon}{(1-\ln \varepsilon)^\alpha},&x<x_\varepsilon(t),\\
& \frac{r(1-K\varepsilon)w(t,x)}{(1-\ln w(t,x))^\alpha},&x\ge x_\varepsilon(t).
	\end{aligned}	
	\right.
\end{equation}
One may find that there is $B_0>1$, such that for all $z\ge B_0$, we have
\begin{equation}
    \label{aplbb0}
    z^{1-\beta}e^{-z^\beta}\le z^{-1}.
\end{equation}
To prove the proposition, we split into three space zones: $\left(-\infty,  x_{\varepsilon}(t)\right)$, $\left[ x_{\varepsilon}(t) , x_{\frac{\varepsilon}{3}}(t)+1\right]$ and $\left(x_{\frac{\varepsilon}{3}}(t)+1,+\infty\right)$. 
\medskip
\par\noindent
\# \textbf{Zone 1: $x\in \left(-\infty,  x_{\varepsilon}(t)\right)$.} 
In view of \eqref{allbl1}, let us choose $B_1:=\frac{2\mathcal{C}_1}{r}\frac{(1-\ln\varepsilon)^\alpha}{1-K\varepsilon}+B_0$. Then, by \eqref{aplbb0}, for all $x<x_\varepsilon(t)$, one gets 
\[
    \mathcal{D}[v]\ge-\frac{r}{2}\frac{(1-K\varepsilon)\varepsilon}{(1-\ln \varepsilon)^\alpha}-\frac{\mathcal{C}_2}{\varepsilon}\left(\frac{\partial w}{\partial x}\right)^2(t,x_\varepsilon(t)).
\]
In view of \eqref{alwxe}, there is a time $t_1>0$, depending on $\varepsilon$, such that for all $t\ge t_1$, we have
\[
    \left(\frac{\partial w}{\partial x}\right)^2(t,x_\varepsilon(t))\le \frac{r}{2\mathcal{C}_2}\frac{(1-K\varepsilon)\varepsilon^2}{(1-\ln\varepsilon)^\alpha},
\]
and then we conclude 
\[
    \mathcal{D}[v]\ge-\frac{r(1-K\varepsilon)\varepsilon}{(1-\ln \varepsilon)^\alpha}.
\]
As a result, by \eqref{alvdd} and \eqref{allbfv}, we arrive at 
\begin{equation}\label{allbi1}
    \frac{\partial v}{\partial t}(t,x)-\mathcal{D}[v](t,x)-f(v(t,x))\le 0\quad \text{for all }t\ge t_1\text{ and } x\le x_\varepsilon(t).
\end{equation}
\medskip
\par
\noindent
\# \textbf{Zone 2: $x\in\left[x_\varepsilon(t), x_{\frac{\varepsilon}{3}}(t)+1\right]$.}
We claim that there is a time $t^\#>0$, such that for all $t\ge t^\#$ and $x\le x_{\frac{\varepsilon}{3}}(t)+1$, we have
\[
    w(t,x)\ge \frac{\varepsilon}{6}.
\]
Indeed, by \eqref{apubl1}, there is a time $t^\#>0$ such that for $t\ge t^\#$,
\[
    x_{\frac{\varepsilon}{6}}(t)-x_{\frac{\varepsilon}{3}}(t)>1. 
\]
In view of \eqref{allbl2} and \eqref{aplbb0}, let us choose $B_2:=\frac{3\mathcal{C}_3}{\rho}  \left(1-\ln \frac{\varepsilon}{6}\right)^\alpha+B_0$ such that 
\[
    \mathcal{C}_3 \varepsilon B_2^{1-\beta}e^{-B_2^\beta} \le \mathcal{C}_3\varepsilon B_2^{-1}\le \frac{\frac{\varepsilon}{3}\rho}{\left(1-\ln \frac{\varepsilon}{6}\right)^\alpha}\le 2\rho\frac{w(t,x)}{(1-\ln w(t,x))^\alpha}\quad \text{ for all }t\ge t^\#\text{ and }x\le x_{\frac{\varepsilon}{3}}(t)+1.
\]
On the other hand, in view of \eqref{alwxe}, there is $t^*>0$ such that for $t\ge t^*$ we have 
\[
    \frac{\mathcal{C}_2}{\varepsilon}\left(\frac{\partial w}{\partial x}\right)^2(t,x_\varepsilon(t))\le\rho\frac{\frac{\varepsilon}{6}}{(1-\ln \frac{\varepsilon}{6})^\alpha}\le \rho \frac{w(t,x)}{(1-\ln w(t,x))^\alpha}\quad \text{for all }x\le x_{\frac{\varepsilon}{3}}(t)+1.
\]
Thus, by \eqref{allbl2}, for all $t\ge t_2:=\max\{t^\#,t^*\}$ and $x_\varepsilon(t)\le x\le x_{\frac{\varepsilon}{3}}(t)+1$ , we arrive at
\[
    \mathcal{D}[v]\ge -3\rho\frac{w(t,x)}{(1-\ln w(t,x))^\alpha}.
\]
As a result, by collecting \eqref{alvt1} and \eqref{allbfv}, for all $t\ge t_2$ and $x_\varepsilon(t)\le x\le x_{\frac{\varepsilon}{3}}(t)+1$, we achieve 
\begin{equation}\label{allbi2}
   \Big( \frac{\partial v}{\partial t}-\mathcal{D}[v]-f(v)\Big)(t,x)\le \frac{w(t,x)}{(1-\ln w(t,x))^\alpha}\Big( 3\rho+3\rho-r(1-K\varepsilon)\Big)\le 0,
\end{equation}
as soon as $\rho\le \frac{r}{12}$ and $\varepsilon\le \min\{\frac{1}{2K},\varepsilon_0\}$.
\medskip
\par
\noindent
\# \textbf{Zone 3: $x\in\left(x_{\frac{\varepsilon}{3}}(t)+1,+\infty\right)$.}
In view of \eqref{alvx} and \eqref{allbl3}, since $\alpha\beta+\beta-1< 0$, then
\[   (1+\varphi_0(x))^\alpha\varphi'_0(x)=x^{\alpha\beta+\beta-1}\left(1+\frac{1}{x^\beta}\right)^\alpha\to 0\quad \text{as }x\to +\infty,
\]
and it follows that there is a time $t_3>0$ such that for all $t\ge t_3$ and $x\ge x_{\frac{\varepsilon}{3}}(t)+1$, we have
\[
   \mathcal{C}_4(1+\varphi_0(x))^\alpha\varphi'_0(x) \le \rho.
\]
As a result, collecting \eqref{alvt1} and \eqref{allbfv}, for all $t\ge t_3$ and $x> x_{\frac{\varepsilon}{3}}(t)+1$, we achieve 
\begin{equation}\label{allbi3}
   \left(\frac{\partial v}{\partial t}-\mathcal{D}[v]-f(v)\right)(t,x)\le\frac{w(t,x)}{(1-\ln w(t,x))^\alpha}\Big(3\rho+3\rho- r(1-K\varepsilon)\Big)\le 0,
\end{equation}
as soon as $\rho\le\frac{r}{12}$ and $\varepsilon\le \underline\varepsilon:=\min\{\frac{1}{2K},\varepsilon_0\}$.
\par
Now, let us denote $\tilde t:=\max\{t_1,t_2,t_3\}$. Therefore, by collecting \eqref{allbi1}, \eqref{allbi2} and \eqref{allbi3}, for all $0<\rho\le\frac{r}{12}$ and $0<\varepsilon\le \underline\varepsilon$, the inequality \eqref{alprop} holds for all $t\ge \tilde t$ and $x\in\mathbb{R}$.
\end{proof}

\subsubsection{A flattening estimate}
 By Proposition \ref{aplbps}, the function $v$ is a sub-solution to \eqref{oeq1} for all $t\ge\tilde t$ and $x\in \mathbb{R}$. To validate the comparison principle, one would like to prove that there exist $t_1\ge 0$ and $R>0$ such that
 \[
  u(t_1,\cdot-R)\ge v(\tilde t,\cdot)\quad \text{on }\mathbb{R}.
 \]
 \par
To do so, we derive a flattening estimate for the solution to \eqref{oeq1}. Let $\underline u$ be the solution to the equation
  \begin{equation}\label{fu=0}
      \begin{cases}
         \frac{\partial \underline u}{\partial t}(t,x)-\mathcal{D}[\underline u](t,x)=0,& t>0,\ x\in\mathbb{R},\\
         \underline u(0,x)=u_0(x), &x\in\mathbb{R},
      \end{cases}
  \end{equation}
  where $u_0$ satisfies Hypothesis \ref{ic1}. Since $f\ge 0$, the comparison principle implies that $\underline u\le u$. As in \cite{bouin2023simple}, we have the following 
\begin{proposition}  \label{aplbf}
  Let $\underline u$ be the solution to \eqref{fu=0}. Then, for any positive constant $\kappa$, we have
    \[
       \lim_{x\to +\infty} e^{x^\beta}\underline u(t,x)\ge \kappa t\quad \text{ for all }t\in \mathbb{R}^+.
    \]
\end{proposition}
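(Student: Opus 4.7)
The plan is to transfer the sub-exponential lower bound on $J$ (Hypothesis \ref{ker}) into a polynomially-corrected lower tail for $\underline u(t,\cdot)$ at $+\infty$, leveraging that $u_0$ stays away from zero on a left half-line. By Hypothesis \ref{ic1}, fix $\delta \in (0,1/2)$ and $x_0 \in \R$ with $u_0 \ge \delta \mathds{1}_{(-\infty, x_0]}$. Since \eqref{fu=0} is linear and $\mathcal{D}$ is a Lévy-type generator, the comparison principle reduces the statement to bounding from below $\tilde u$, the solution of \eqref{fu=0} with initial datum $\tilde v_0 := \delta \mathds{1}_{(-\infty, x_0]}$.

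Denote $c_1 := \int_{|y|>1} J(y)\,dy$, which is finite by Hypothesis \ref{ker}. First, I would establish $\tilde u(t,x) \ge \delta e^{-Ct}$ on $\{x \le x_0 - L\}$ for some $L, C > 0$, by comparison with the sub-solution $\phi(t,x) := \delta e^{-Ct}\chi(x)$, where $\chi$ is a smooth non-increasing cutoff with $\chi \equiv 1$ on $(-\infty, x_0 - L]$ and $\chi \equiv 0$ on $[x_0, +\infty)$. For $L$ large, the local P.V.\ part of $\mathcal{D}[\chi]$ vanishes on $\{x \le x_0 - L\}$ and the tail part exceeds $-c_1 \chi$; a compactness argument on the transition region gives $\mathcal{D}[\chi] + C\chi \ge 0$ for $C$ large, making $\phi$ a sub-solution of \eqref{fu=0}. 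Since $\phi(0,\cdot) \le \tilde v_0$, comparison yields the claim.

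Next, splitting $\mathcal{D} = \mathcal{D}_{\le 1} + \mathcal{D}_{>1}$ and using that $\mathcal{D}_{\le 1} - c_1 I$ generates a sub-Markov semigroup (as the generator of a Lévy process with bounded small jumps, killed at rate $c_1$), the Duhamel formula for $x \ge x_0 + R_0$ yields
\[
\tilde u(t,x) \;\ge\; \int_0^t e^{-c_1(t-s)} \int_{|y|>1} J(y)\,\tilde u(s, x - y)\,dy\,ds.
\]
Restricting the inner integral to $y \ge x - x_0 + L$ and inserting the first-step lower bound, one gets
\[
\tilde u(t,x) \;\gtrsim\; \delta t \, e^{-(c_1+C)t} \int_{x - x_0 + L}^{+\infty} J(y)\,dy.
\]
By Hypothesis \ref{ker}, \eqref{J2} and L'H\^opital's rule, $\int_z^{+\infty} J(y)\,dy \gtrsim z^{1-\beta} e^{-z^\beta}$ as $z \to +\infty$, so that
\[
e^{x^\beta}\underline u(t,x) \;\gtrsim\; \delta t\,e^{-(c_1+C)t}\,(x - x_0)^{1-\beta}\,e^{x^\beta - (x - x_0 + L)^\beta}.
\]
Since $\beta \in (0,1)$, the exponential factor tends to $1$ and $(x - x_0)^{1-\beta} \to +\infty$ as $x \to +\infty$, whence $\liminf_{x\to +\infty} e^{x^\beta}\underline u(t,x) = +\infty$ for any fixed $t > 0$, which gives the claimed estimate for every $\kappa > 0$.

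The principal technical subtlety is the justification that $\mathcal{D}_{\le 1}[\tilde u]$ may be ``dropped'' in the Duhamel step, i.e., that the corresponding subordinated semigroup preserves nonnegativity; this rests on the Markovian structure of $\mathcal{D}_{\le 1}$ as a generator of a Lévy process with finite variance of small jumps (ensured by $\int_{|y|\le 1} y^2 J(y)\,dy \le 2\mathcal{J}_1$). Equivalently, one may bypass the PDE-based splitting via the probabilistic representation $\tilde u(t,x) = \delta\,\P(Y_t \ge x - x_0)$, where $(Y_t)_{t\ge 0}$ is the Lévy process with Lévy measure $J$, and apply a direct one-big-jump lower bound $\P(Y_t \ge z) \gtrsim t\,z^{1-\beta}e^{-z^\beta}$; this is the path taken in the cited \cite{bouin2023simple}.
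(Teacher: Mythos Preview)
Your approach differs substantially from the paper's. The paper never uses Duhamel or the L\'evy representation; instead it exhibits the explicit sub-solution $w(t,x)=\kappa t/(e^{x^\beta}+2\kappa t)$ on $\R^+$ (set to $\tfrac12$ on $\R^-$), verifies $\partial_t w-\mathcal{D}[w]\le 0$ by direct computation for $x\ge R_0+B$ with $B$ large, and then compares a translate of $\underline u$ with $Mw$ on that half-line, reading off $\lim_{x\to\infty}e^{x^\beta}\underline u(t,x)\ge M\kappa t$. Your one-big-jump route is more conceptual and reaches $\liminf_{x\to\infty}e^{x^\beta}\underline u(t,x)=+\infty$ in one shot (the prefactor $(x-x_0)^{1-\beta}\to\infty$ does all the work), whereas the paper carries the free parameter $\kappa$ through the entire construction.

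There is, however, a genuine gap in your Duhamel step. Writing $T_\tau=e^{\tau(\mathcal{D}_{\le1}-c_1)}$ and $G(s,x)=\int_{|y|>1}J(y)\,\tilde u(s,x-y)\,dy$, Duhamel gives only
\[
\tilde u(t,x)\;\ge\;\int_0^t \bigl(T_{t-s}G(s,\cdot)\bigr)(x)\,ds,
\]
not your displayed inequality with the bare factor $e^{-c_1(t-s)}$. The subtlety you flag is not the right one: positivity of $T_\tau$ (which is indeed clear) lets you drop the $T_t\tilde v_0$ term, but replacing $(T_\tau g)(x)$ by $e^{-c_1\tau}g(x)$ would require $(S_\tau g)(x)\ge g(x)$ for the small-jump Markov semigroup $S_\tau$, and the sub-Markov inequality $T_\tau 1\le1$ points the opposite way. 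A repair exists --- after restricting the $y$-integral and using Step~1 one is reduced to $(S_\tau H)(x)\gtrsim H(x)$ for $H(x)=\int_{x-x_0+L}^\infty J$, and since $J$ is eventually decreasing $H$ is convex for large $x$, so conditional Jensen with $\mathbb{E}[A_\tau\mid|A_\tau|\le K]=0$ (symmetry) and Chebyshev on $A_\tau$ deliver $S_\tau H(x)\ge\tfrac12 H(x)$ --- but this must be spelled out. The probabilistic one-big-jump bound you sketch at the end avoids all of this and is the clean path.

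Minor: Step~1 can be done in one line. By symmetry of the L\'evy process, $\tilde u(t,x)=\delta\,\P(Z_t\ge x-x_0)\ge\delta\,\P(Z_t\ge 0)=\tfrac{\delta}{2}$ for every $x\le x_0$ and every $t\ge0$; no cutoff construction and no exponential loss in $t$ are needed.
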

\begin{proof}[Proof of Proposition \ref{aplbf}]
    For all $t\in\mathbb{R}^+$, let us define 
    \[
         w(t,x):=
         \begin{cases}
             \frac{1}{2},&x\in\mathbb{R}\setminus \mathbb{R}^+,\\
             \frac{\kappa t}{e^{x^\beta}+2\kappa t}, &x\in\mathbb{R}^+,
         \end{cases}
    \]
    where $\kappa>0$. Some direct calculations show that for all $(t,x)\in \mathbb{R}^+\times \mathbb{R}^+$, 
    \[
        \frac{\partial w}{\partial t}(t,x)=\frac{\kappa e^{x^\beta}}{(e^{x^\beta}+2\kappa t)^2}>0,
    \]
    \[
    \frac{\partial w}{\partial x}(t,x)=-\frac{\beta\kappa t x^{\beta-1} e^{x^\beta}}{(e^{x^\beta}+2\kappa t)^2}<0,
    \]
    and
    \[
    \begin{aligned}   
     \frac{\partial^2 w}{\partial x^2}(t,x)
     = \frac{\beta \kappa t x^{2\beta-2}e^{x^\beta}}{(e^{x^\beta}+2\kappa t)^2}\left[\beta -\frac{4\beta\kappa t}{e^{x^\beta}+2\kappa t}+  (1-\beta) x^{-\beta}\right].
    \end{aligned}
    \]
    One may notice that, for all $(t,x)\in \mathbb{R}^+\times \mathbb{R}^+$, the function $w$ is increasing in $t$ and decreasing in $x$, and that, for $(t,x)\in \mathbb{R}^+\times \left[\ln^\frac{1}{\beta}(2\kappa t+1),+\infty\right)$, the function $w(t,\cdot)$ is convex.
    \par
    Let us estimate $\mathcal{D}[w]$. Let $B>1$ to be chosen later. In view of Hypothesis \ref{ker}, for all $x\ge R_0+B$, we write 
    \[
    \begin{aligned}
        \mathcal{D}[w](t,x)&=\int_{B}^{+\infty}J(z)\Big(w(t,x+z)-w(t,x)\Big)dz+\frac{1}{2}\int_{-B}^{B}J(z)\Big(w(t,x+z)+w(t,x-z)-2w(t,x)\Big)dz\\
        &\qquad\qquad +\int_{-\infty}^{-B}J(z)\Big(w(t,x+z)-w(t,x)\Big)dz\\
        &\ge -w(t,x)\int_{B}^{+\infty}J(z)dz+\frac{1}{2}\int_{-B}^{B}J(z)\Big(w(t,x+z)+w(t,x-z)-2w(t,x)\Big)dz\\
        &\qquad\qquad +\int_{-x}^{-B}J(z)\Big(w(t,x+z)-w(t,x)\Big)dz +\int_{-\infty}^{-x}J(z)\Big(w(t,x+z)-w(t,x)\Big)dz\\
        &\ge -\mathcal{J}_0w(t,x)\int_{B}^{+\infty}e^{-z^\beta}dz+ \frac{1}{2}\int_{-B}^{B}J(z)\Big(w(t,x+z)+w(t,x-z)-2w(t,x)\Big)dz\\
        &\qquad\qquad +\mathcal{J}_0^{-1}\left(\frac{1}{2}-w(t,x)\right)\int_x^{+\infty}e^{-z^\beta}dz.
    \end{aligned}
    \]
    For any $ \mathcal{C}>0$, we define $ t_\kappa:=\frac{2\mathcal{C}}{\kappa}$. Since, for all $t\in(0,t_\kappa)$, the function $w(t,\cdot)$ is convex on $\left[\ln^\frac{1}{\beta} (4\mathcal{C}+1),+\infty\right)$, then, letting $B\ge \ln^\frac{1}{\beta} (4\mathcal{C}+1)$,
    for all $(t,x)\in (0,t_\kappa)\times [R_0+B,+\infty)$, we get 
    \[
        \int_{-B}^{B}J(z)\Big(w(t,x+z)+w(t,x-z)-2w(t,x)\Big)dz= \int_{-B}^{B}\int_0^1\int_{-1}^1 z^2J(z)\tau \frac{\partial^2 w}{\partial x^2}(t,x+\sigma\tau z)d\sigma d\tau dz\ge 0.
    \]
    By using L'H\^opital's rule, one may find that
    \[
        \int_y^{+\infty}e^{-z^\beta}dz\asymp y^{1-\beta}e^{-y^\beta} \quad \text{for all }y> 0,
    \]
    and it follows that, for all $(t,x)\in (0,t_\kappa)\times [R_0+B,+\infty)$, 
    \[
    \begin{aligned}
        \mathcal{D}[w](t,x)&\ge -\mathcal{C}_1w(t,x)B^{1-\beta}e^{-B^\beta}+\mathcal{C}_2\Big(\frac{1}{2}-w(t,x)\Big)x^{1-\beta}e^{-x^\beta}\\
        &=-\mathcal{C}_1B^{1-\beta}e^{-B^\beta}\frac{\kappa t}{e^{x^\beta}+2\kappa t}+\frac{1}{2}\mathcal{C}_2\frac{x^{1-\beta}}{e^{x^\beta}+2\kappa t},
    \end{aligned}
    \]
    for some positive constants $\mathcal{C}_1$ and $\mathcal{C}_2$. Now, let us check, for all $(t,x)\in (0,t_\kappa)\times [R_0+B,+\infty)$, 
    \[
    \begin{aligned}
        \frac{\partial w}{\partial t}-\mathcal{D}[w]&=\frac{1}{e^{x^\beta}+2\kappa t}\left(\frac{\kappa e^{x^\beta}}{e^{x^\beta}+2\kappa t}+\mathcal{C}_1B^{1-\beta}e^{-B^\beta}\kappa t-\frac{1}{2}\mathcal{C}_2x^{1-\beta}\right)\\
        &\le \frac{1}{e^{x^\beta}+2\kappa t}\left(\kappa +\mathcal{C}_1B^{1-\beta}e^{-B^\beta}\kappa t-\frac{1}{2}\mathcal{C}_2B^{1-\beta}\right).
    \end{aligned}
    \]
    Notice that there is some $B_0>1$ such that for all $B\ge B_0$, we have
    \[
        B^{1-\beta}e^{-B^\beta}\le B^{-1}.
    \]
    We choose $B\ge B_c:=\left(\frac{2(\kappa+1)}{\mathcal{C}_2}\right)^{\frac{1}{1-\beta}}+2\mathcal{C}\mathcal{C}_1+B_0+\ln^\frac{1}{\beta} (4\mathcal{C}+1)$, so that, for all $t\in(0,t_\kappa)$,
    \[
    \begin{aligned}
        \kappa +\mathcal{C}_1B^{1-\beta}e^{-B^\beta}\kappa t-\frac{1}{2}\mathcal{C}_2B^{1-\beta}\le \kappa +2\mathcal{C}\mathcal{C}_1 B^{-1} -\frac{1}{2}\mathcal{C}_2B^{1-\beta}\le \kappa+1-\frac{1}{2}\mathcal{C}_2B^{1-\beta}\le 0.
    \end{aligned}
    \]
    Thus, we arrive at 
    \begin{equation} \label{aplbfll}
        \frac{\partial w}{\partial t}-\mathcal{D}[w]\le0\quad \text{for all }t\in(0,t_\kappa) \text{ and } x\in[R_0+B_c,+\infty).
    \end{equation}
\par 
In view of Hypothesis \ref{ic1}, denoting $M:=\liminf_{x\to -\infty}u_0(x)$, there is $b\in\mathbb{R}$ such that $u_0(x)\ge \frac{M}{2}$ for all $x\le b$. One may notice that $\underline u= \frac{M}{2}$ is a sub-solution to \eqref{fu=0} on $[0,\infty)\times(-\infty,b]$. Thus, the comparison principle implies that
    \[
   \label{allbin}
        \underline u(t,x)\ge \frac{M}{2}\quad \text{for all } t>0\text{ and } x\le b.
     \]
 Since $\underline u(t,\cdot)$ is decreasing for each $t>0$, by the definition of $w$, we get $\underline u(t,x-R_0-B_c+b)\ge \frac{M}{2}\ge M w(t,x)$ for $t>0$ and $x\le R_0+B_c$. Denoting $\tilde u(t,x)=\underline u(t,x-R_0-B_c+b)$, it follows that we have
\[
    \begin{cases}
         \frac{\partial \tilde u}{\partial t}(t,x)-\mathcal{D}[\tilde u](t,x)=0, &\text{for all }t\in(0, t_\kappa)\text{ and }x\in\mathbb{R},\\
        \tilde u(t,x)\ge M w(t,x), &\text{for all }t\in[0,t_\kappa]\text{ and }x\le R_0+B_c,\\
        \tilde u(0,x)> M w(0,x), &\text{for all }x\in\mathbb{R}.
    \end{cases}
\]
    Thus, combining \eqref{aplbfll} and using an adapted comparison principle from \cite[Theorem $2.2$]{bouin2023simple}, we get $\tilde u(t,x)\ge M w(t,x)$ for $t\in(0, t_\kappa)$ and $x\in\mathbb{R}$, and it follows from the definition of $t_\kappa$ that, for all $x\ge -R_0-B_c+b$,
    \[
        \underline u\left(\frac{t_\kappa}{2},x\right)\ge M w\left(\frac{t_\kappa}{2},x+R_0+B_c-b\right)=\frac{M\mathcal{C}}{e^{(x+R_0+B_c-b)^\beta}+2\mathcal{C}}.
    \]
    Thus, we have
    \[
        \lim_{x\to +\infty} e^{x^\beta} \underline u\left(\frac{t_\kappa}{2},x\right)\ge \lim_{x\to +\infty}\frac{M\mathcal{C}e^{x^\beta}}{e^{(x+R_0+B_c-b)^\beta}+2\mathcal{C}}=M\mathcal{C},
    \]
    or, equivalently, that, for all positive real number $\mathcal{C}$,
    \[
        \lim_{x\to +\infty} e^{x^\beta} \underline u\left(\frac{\mathcal{C}}{\kappa},x\right)\ge M\mathcal{C}.
    \]
    Therefore, we conclude that
    \[
        \lim_{x\to +\infty}e^{x^\beta}\underline u(t,x)\ge M\kappa t\quad \text{for all }t\in(0,+\infty).
    \]
    This completes the proof.
\end{proof}

\subsubsection{Proof of the lower bound in Theorem \ref{ei1}}\label{sslb}
Now, with Proposition \ref{aplbf}, it follows from the comparison principle that $\lim_{x\to+\infty}e^{x^\beta}u(t,x)\ge  t$ for all $t>0$. Then, there is $\xi_1>0$ such that, for all $x\ge \xi_1$, we have 
\[
    u(t,x)\ge  \frac{1}{2}t e^{-x^\beta}.
\]
In view of the definition of $v$, by \eqref{allbwu}, we have
\[
    v(t,x)\le 3 w(t,x)\le 3 e^{[\rho(\alpha+1)t]^\frac{1}{\alpha+1}}e^{-x^\beta}.
\]
Thus, for all $t\ge t_1':= 6e^{[\rho(\alpha+1)\tilde t]^\frac{1}{\alpha+1}}$, we have
\begin{equation}\label{aslbu1}
   u(t,x)\ge  \frac{1}{2}t e^{-x^\beta}\ge 3 e^{[\rho(\alpha+1)\tilde t]^\frac{1}{\alpha+1}} e^{-x^\beta}\ge v(\tilde t,x)\quad \text{for all } x\ge \xi_1.
\end{equation}
Now, since propagation in integro-differential equations with Allee effect occurs \cite{alfaro2017propagation}, it follows from the comparison principle that there are $t_1''\in \mathbb{R}^+$ and $\xi_2\in \mathbb{R}$ such that 
\[   
u(t,\xi_2)\ge \varepsilon \quad \text{for all }t\ge t_1'',
\]
and thus, since $u( t,\cdot)$ is non-increasing, one gets
\[
    u( t,x)\ge \varepsilon \quad \text{for all } t\ge t_1'' \text{ and } x\le \xi_2.
\]
It follows from the definition of $v$ that 
\[
    u(t,x)\ge\varepsilon\ge  v(\tilde t,x)\quad \text{for all }t\ge t_1''\text{ and }x\le \xi_2 .
\]
Thus, by \eqref{aslbu1} and denoting $t_1:= \max\{t_1',t_2''\}$, one obtains
\[
    u(t_1,x-|\xi_1-\xi_2|)\ge v(\tilde t,x)\quad \text{for all }x\in \mathbb{R}.
\]
As a result, the comparison principle implies 
\[
    u(t,x-|\xi_1-\xi_2|)\ge v(t+\tilde t-t_1,x)\quad \text{for all }t\ge t_1\text{ and }x\in\mathbb{R}.
\]
\par With the above inequality, let us give a lower bound when $\lambda$ is small, that is $\lambda\in(0,\varepsilon]$. Since $v(t,x_{\varepsilon}(t))=\varepsilon$, we have
 \begin{equation}\label{u4m}
u(t,x-|\xi_1-\xi_2|)\ge \varepsilon\quad \text{for all }t\ge t_1 \text{ and } x\le x_\varepsilon(t+\tilde t-t_1).
 \end{equation}
Therefore, by \eqref{aplbdx}, for any $0<\lambda\le\varepsilon$, there is a time $\underline T_1>t_1$ such that we have 
\[
X_\lambda(t)\ge x_\varepsilon(t+\tilde t-t_1)-|\xi_1-\xi_2|\ge \underline C_1t^{\frac{1}{\beta(\alpha+1)}}\quad \text{ for all }t \ge \underline T_1,
\]
where $\underline C_1:=\left[\frac{\rho(\alpha+1)}{2}\right]^\frac{1}{\beta(\alpha+1)}$. This gives a lower bound to the level set $E_\lambda(t)$ for $\lambda\in(0,\varepsilon]$.
\par 
 Now, we upgrade the previous bound to $\lambda\in(\varepsilon,1)$. Let $w$ be the solution to \eqref{oeq1} with the following initial data 
 \begin{equation}\label{w0}
w_0(x):=
\begin{cases}
\varepsilon, & x\le -1,\\
-\varepsilon x,&-1<x<0,\\
0, &x\ge 0  .
\end{cases}
 \end{equation}
 Since propagation occurs \cite{alfaro2017propagation}, the comparison principle implies that there exists a time $t_\lambda>0$ such that
\begin{equation}\label{pp}
	w(t_\lambda,x)>\lambda \quad\text{for any }x\le 0.
\end{equation}
On the other hand, it follows from \eqref{u4m} and \eqref{w0} that
\[
u(t,x-|\xi_1-\xi_2|)\ge w_0(x-x_{\varepsilon}(t+\tilde t-t_1)) \quad \text{for any } t\ge t_1 \text{ and }x\in \mathbb{R} .
\]
Thus, by the comparison principle, we have
\[
u(T+t,x-|\xi_1-\xi_2|)\ge w(T,x-x_{\varepsilon}(t+\tilde t-t_1)) \quad \text{ for any }\ T\ge 0, t\ge t_1 \text{ and } x\in\mathbb{R},.
\]
Then, by \eqref{pp} and letting $T=t_\lambda$, we obtain
\[
	u(t+t_\lambda,x-|\xi_1-\xi_2|)>\lambda \quad \text{for any } t\ge t_1 \text{ and } x\le x_{\varepsilon}(t+\tilde t-t_1).
\]
As a result, there exists a time $\underline T_{2}>\max\{t_\lambda, t_1\}$ and a constant $\underline C_2>0$ such that, for $t\ge \underline T_2$, we have
\[
X_\lambda(t)\ge x_{\varepsilon}(t+\tilde t-t_1-t_\lambda)-|\xi_1-\xi_2|\ge \underline C_2 t^{\frac{1}{\beta(\alpha+1)}},
\]
which gives the lower bound of the level set for any $\lambda\in(\varepsilon,1)$.  
Therefore, taking $\underline C:=\min\{\underline C_1,\underline C_2\}$ and $\underline T=\max\{\underline T_1,\underline T_2\}$, we get the lower bound in Theorem \ref{ei1} for all $t\ge \underline T$.

\section{Exponential propagation: proof of Theorem \ref{al}}\label{s4}

In this section, we prove the sharp exponential rate stated in Theorem \ref{al}. In the polynomial acceleration regime, the growth parameter $\rho$, used as a free parameter, does not change the  acceleration rate. In the regime of exponential  acceleration, it is not the case anymore: small perturbations in the growth rate have a significant impact on the estimates of the acceleration rate. In such a situation, using the sub- and super-solutions considered in the previous section would only provide rough estimates on the exponential acceleration rate.

 To obtain much sharper estimates, we shall construct much more precise sub- and super-solutions where the growth parameter $\rho=r$ is now fixed by the form of $f$. One way to get such precise sub- and super- solutions is to add a degree of freedom to the construction adding a logarithmic correction to the tails. Such an approach requires performing most of the previous calculations again with higher precision. 

\subsection{The upper bound}
Here, we prove the upper bound in Theorem \ref{al}. To do so, we construct an adequate super-solution. 
\subsubsection{Definition of the super-solution and preliminary computations}
\par Let us define
\[
	v_0(x):=\left\{
	\begin{aligned}
		&1,&x<L,\\
		&C_L\frac{\ln^{q} x}{x^{2s}},& x\ge L,
	\end{aligned}\right.
\]
where $q>0$, $C_L:=\frac{L^{2s}}{\ln^{q} L}$ and $L\ge \max\{e,e^{\frac{q}{2s}},a\}+1$ large enough, so that $v_0$ is decreasing, continuous and less than or equal to $1$.  Here, $a$ comes from Hypothesis \ref{ic1}, and thus, one may notice that 
\begin{equation}
    \label{epubv0u0}
    v_0\ge u_0.
\end{equation}
Let us  define 
\[
	\psi(t,x)=\exp{\left\{1-\left\{\left[1-\ln[(1+ \kappa t^{p})v_0(x)]\right]^{\alpha+1}-r(\alpha+1)t\right\}^{\frac{1}{\alpha+1}}\right\}},
\]		
where $p\ge0$ and $\kappa\ge e$ are to be determined. The domain of $\psi$ will be given later. For later use, we state several properties related to $v_0$ and $\psi$, the proofs of which can be found in Appendix \ref{appendix-exp-acc-upper}. We first obtain:
\begin{lemma}\label{epubl1}
   $ I.$
For $y\in(0,1)$, we have, denoting by $v_0^{-1}$ the inverse of $v_0$ on $[L,+\infty)$,
\begin{equation}\label{xysim}
  v_0^{-1}(y)\asymp \frac{\left(\ln L -\frac{1}{2s}\ln y\right)^\frac{q}{2s}}{y^\frac{1}{2s}},
\end{equation}
and 
\begin{equation}\label{esubv'vi}
    (v'\circ v^{-1})(y)\asymp- y^{1+\frac{1}{2s}}(\ln y^{-1})^{-\frac{q}{2s}}.
\end{equation}
    $II.$
For $t\ge t^\star:=\frac{1}{r(\alpha+1)}$, define 
\[
x_\Lambda(t):=v_0^{-1}\left\{(1+\kappa t^{p})^{-1}\exp\left\{1-\left[r(\alpha+1) t+(1-\ln \Lambda)^{\alpha+1}\right]^{\frac{1}{\alpha+1}}\right\}\right\} \quad \text{for any }\Lambda\in(0,2].
\]
Then, for all $t\ge t^\star$, we have 
\begin{equation}\label{esubxte}
    x_\Lambda(t)\asymp_\Lambda t^{\frac{1}{2s}\left(p+\frac{q}{\alpha+1}\right)}\exp\left\{\frac{1}{2s}[r(\alpha+1)t]^\frac{1}{\alpha+1}\right\},
\end{equation}
and  for any $0<\Lambda_1<\Lambda_2\le 2$,
    \begin{equation}\label{esubfl}
    x_{\Lambda_1}(t)-x_{\Lambda_2}(t)\asymp_{\Lambda_1,\Lambda_2} t^{-\frac{\alpha}{\alpha+1}+\frac{1}{2s}\left(\frac{q}{\alpha+1}+p\right)}e^{\frac{1}{2s}[r(\alpha+1)t]^\frac{1}{\alpha+1}}.
     \end{equation}
\end{lemma}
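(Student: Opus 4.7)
My plan is to prove the two parts in sequence, both by direct computation from the explicit formulas for $v_0$ and the definition of $x_\Lambda$.

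\textbf{Part I.} For the inversion \eqref{xysim}, I would start from $y = C_L \ln^q x / x^{2s}$ and rewrite it as $x = (C_L/y)^{1/(2s)} (\ln x)^{q/(2s)}$. Taking logarithms yields $\ln x = \frac{1}{2s}\ln(C_L/y) + \frac{q}{2s}\ln\ln x$, and a short bootstrap (first $\ln x \lesssim \ln(C_L/y)$, then plug back for the matching lower bound) gives $\ln x \asymp \ln(C_L/y)$ uniformly in $y \in (0,1]$. Consequently, $v_0^{-1}(y) \asymp (C_L/y)^{1/(2s)} (\ln(C_L/y))^{q/(2s)}$. Since $\ln(C_L/y) = 2s\ln L - q\ln\ln L - \ln y$, for $y \in (0,1)$ this quantity is comparable to $\ln L - \ln y/(2s)$, with constants depending only on $L, q, s$, yielding \eqref{xysim}. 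For \eqref{esubv'vi}, I would differentiate $v_0$ directly to obtain $v_0'(x) = C_L x^{-(2s+1)}(q\ln^{q-1} x - 2s\ln^q x) \asymp -v_0(x)/x$ for $x \ge L$, then substitute $x = v_0^{-1}(y)$ using \eqref{xysim}.

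\textbf{Part II, the main rate \eqref{esubxte}.} Setting $\phi_\Lambda(t) := [r(\alpha+1)t + (1-\ln\Lambda)^{\alpha+1}]^{1/(\alpha+1)}$, the argument of $v_0^{-1}$ in the definition of $x_\Lambda(t)$ is $y_\Lambda(t) := (1+\kappa t^p)^{-1} e^{1-\phi_\Lambda(t)}$. A binomial expansion
\[
\phi_\Lambda(t) = [r(\alpha+1)t]^{1/(\alpha+1)} + \tfrac{(1-\ln\Lambda)^{\alpha+1}}{(\alpha+1)[r(\alpha+1)t]^{\alpha/(\alpha+1)}} + O_\Lambda(t^{-(2\alpha+1)/(\alpha+1)})
\]
gives $-\ln y_\Lambda(t) = [r(\alpha+1)t]^{1/(\alpha+1)} + p\ln t + O_\Lambda(1)$, whence $y_\Lambda(t)^{-1/(2s)} \asymp_\Lambda t^{p/(2s)} \exp\{\tfrac{1}{2s}[r(\alpha+1)t]^{1/(\alpha+1)}\}$. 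Plugging into Part I and observing that $(-\ln y_\Lambda(t))^{q/(2s)} \asymp_\Lambda t^{q/(2s(\alpha+1))}$ delivers \eqref{esubxte}.

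\textbf{Part II, the difference \eqref{esubfl}.} I would apply the mean value theorem to write $x_{\Lambda_1}(t) - x_{\Lambda_2}(t) = (v_0^{-1})'(\xi)(y_{\Lambda_1}(t) - y_{\Lambda_2}(t))$ for some $\xi$ between $y_{\Lambda_1}(t)$ and $y_{\Lambda_2}(t)$. A second mean value argument, applied to $u \mapsto [r(\alpha+1)t + u]^{1/(\alpha+1)}$, yields $\phi_{\Lambda_1}(t) - \phi_{\Lambda_2}(t) \asymp_{\Lambda_1,\Lambda_2} t^{-\alpha/(\alpha+1)}$. Since this quantity tends to zero, $1 - e^{-u} \asymp u$ gives
\[
|y_{\Lambda_2}(t) - y_{\Lambda_1}(t)| \asymp_{\Lambda_1,\Lambda_2} t^{-p - \alpha/(\alpha+1)} \exp\{-[r(\alpha+1)t]^{1/(\alpha+1)}\}.
\]
Inverting the derivative formula from Part I gives $(v_0^{-1})'(y) \asymp -y^{-(1+1/(2s))}(\ln y^{-1})^{q/(2s)}$, which evaluated at $\xi$ is comparable to $t^{p(1+1/(2s)) + q/(2s(\alpha+1))} \exp\{(1+\tfrac{1}{2s})[r(\alpha+1)t]^{1/(\alpha+1)}\}$. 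Multiplying the two estimates, the exponential factors combine to $\exp\{\tfrac{1}{2s}[r(\alpha+1)t]^{1/(\alpha+1)}\}$ and the powers of $t$ sum to $-\alpha/(\alpha+1) + \tfrac{1}{2s}(q/(\alpha+1) + p)$, matching \eqref{esubfl}.

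\textbf{Main obstacle.} The computations are essentially mechanical, but two points demand care. First, the bootstrap inversion in Part I must yield genuinely two-sided bounds uniform in $y \in (0,1]$, not merely leading-order asymptotics, so I would verify explicitly that the remainder term in the iteration $\ln x = \tfrac{1}{2s}\ln(C_L/y) + \tfrac{q}{2s}\ln\ln x$ is absorbed into the constants. Second, for the difference, one must verify that the auxiliary point $\xi$ from the mean value theorem is comparable to both endpoints $y_{\Lambda_i}(t)$ with constants depending only on $\Lambda_1, \Lambda_2$; this follows because $y_\Lambda(t)$ depends on $\Lambda$ only through the additive term $(1-\ln\Lambda)^{\alpha+1}$ inside $\phi_\Lambda$, so that $y_{\Lambda_1}(t)/y_{\Lambda_2}(t)$ stays bounded above and below uniformly for $t \ge t^\star$.
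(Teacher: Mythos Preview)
Your proposal is correct and follows essentially the same approach as the paper. The only organizational difference is in the proof of \eqref{esubfl}: the paper computes $\partial x_\Lambda/\partial\Lambda$ directly via the chain rule (writing it as $\frac{\partial y_\Lambda/\partial\Lambda}{(v_0'\circ v_0^{-1})(y_\Lambda)}$) and applies a single mean value theorem in $\Lambda$, whereas you split this into two mean value applications (one on $v_0^{-1}$ as a function of $y$, one on $\phi_\Lambda$); these are equivalent reformulations of the same computation, and the endpoint-comparability check you flag is exactly the analogue of the paper's implicit use of $\asymp_\Lambda$ at the intermediate point $\xi\in(\Lambda_1,\Lambda_2)$.
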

Let us now summarize in the next two lemmas some basic properties for $\psi$ that we will constantly use.   
\begin{lemma}\label{esuble1} 
Let $\Lambda\in(0,2]$.
For $(t,x)\in [t^\star,\infty)\times [x_\Lambda(t),+\infty)$, the function $\psi$ is well defined,  non-decreasing in $t$ and non-increasing in $x$, and for any $t\in[t^\star,\infty)$, $\psi(t,\cdot)$ is convex and log-convex on $[ x_\Lambda(t),+\infty)$.
In addition, there exist $t_0>t^\star$, $\gamma_1>0$ and $\gamma_2>0$, such that for all $t\ge t_0$ and $x\ge x_\Lambda(t)$, we have
\begin{equation}\label{naemt}
\begin{aligned}
        \frac{\partial \psi}{\partial t}\ge\left(\gamma_1 t^{-\frac{1}{\alpha+1}}+r\right)\frac{\psi}{(1-\ln \psi)^\alpha},
        \end{aligned}
\end{equation}
and
\begin{equation}\label{naemx}
  -\gamma_2^{-1}\frac{\ln^\alpha x}{x}\frac{\psi}{(1-\ln \psi)^\alpha}\le   \frac{\partial\psi}{\partial x}\le  -\gamma_2 \frac{t^\frac{\alpha}{\alpha+1}}{x}\frac{\psi}{(1-\ln\psi)^\alpha}.
\end{equation}
 Moreover, there is some positive constant $\mathcal{C}_0$, such that for all $t\ge t^\star$ and $x\ge  x_\Lambda(t)$, we have 
\begin{equation}\label{epubwxxe}
    0\le \frac{\partial^2 \psi}{\partial x^2}(t,x)\le \frac{\mathcal{C}_0 }{x}\frac{\psi(t,x)}{(1-\ln\psi(t,x))^\alpha}.
\end{equation}

\end{lemma}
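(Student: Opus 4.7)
\medskip
\noindent
\textbf{Proof plan for Lemma \ref{esuble1}.}

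The plan is to reduce everything to two compact ingredients:  the ``profile'' $\widetilde{\varphi}(t,x):=1-\ln(1+\kappa t^{p})+\varphi_{0}(x)$, where $\varphi_{0}:=-\ln v_{0}$, and the identity $(1-\ln\psi)^{\alpha+1}=\widetilde{\varphi}^{\alpha+1}-r(\alpha+1)t$ coming straight from the definition of $\psi$. This identity has two immediate consequences I will use throughout: first, $\psi$ is well defined on $\{\widetilde{\varphi}^{\alpha+1}\ge r(\alpha+1)t\}$, and since $\widetilde{\varphi}(t,x_{\Lambda}(t))^{\alpha+1}=r(\alpha+1)t+(1-\ln\Lambda)^{\alpha+1}$ while $\varphi_{0}$ is strictly increasing on $[L,+\infty)$ (for $L$ large enough, using the explicit form of $v_{0}$), the function $\widetilde{\varphi}(t,\cdot)$ is increasing on $[x_{\Lambda}(t),+\infty)$, so $\psi$ is well defined and strictly decreasing in $x$ there; second, $(1-\ln\psi)^{\alpha+1}\ge r(\alpha+1)t$, giving the key lower bound $(1-\ln\psi)\ge [r(\alpha+1)t]^{1/(\alpha+1)}$ and, from $\widetilde{\varphi}^{\alpha+1}\ge r(\alpha+1)t$, the symmetric estimate $\widetilde{\varphi}\ge [r(\alpha+1)t]^{1/(\alpha+1)}$.

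Next, I will differentiate the identity above to obtain closed-form expressions. In $t$, this gives
\[
\frac{\partial\psi}{\partial t}=\frac{\psi}{(1-\ln\psi)^{\alpha}}\Bigl(r+\widetilde{\varphi}^{\alpha}\,\frac{\kappa p t^{p-1}}{1+\kappa t^{p}}\Bigr),
\]
which is positive (so $\psi$ is non-decreasing in $t$) and, combined with $\widetilde{\varphi}^{\alpha}\ge [r(\alpha+1)]^{\alpha/(\alpha+1)}t^{\alpha/(\alpha+1)}$ and $\kappa p t^{p-1}/(1+\kappa t^{p})\ge p/(2t)$ for $t\ge t_{0}$ large enough, yields \eqref{naemt} with an explicit $\gamma_{1}>0$. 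In $x$, I obtain
\[
\frac{\partial\psi}{\partial x}=-\,\frac{\psi\,\widetilde{\varphi}^{\alpha}\varphi_{0}'(x)}{(1-\ln\psi)^{\alpha}};
\]
feeding in $\varphi_{0}'(x)=\frac{2s}{x}-\frac{q}{x\ln x}\asymp\frac{1}{x}$ and the two-sided control $[r(\alpha+1)t]^{1/(\alpha+1)}\le\widetilde{\varphi}\le C\ln x$ (valid on $[x_{\Lambda}(t),+\infty)$ up to enlarging $t_{0}$) yields both inequalities of \eqref{naemx}.

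For convexity and log-convexity, I will differentiate the $x$-derivative of $\ln\psi=-\widetilde{\varphi}^{\alpha}\varphi_{0}'/(1-\ln\psi)^{\alpha}\cdot(1-\ln\psi)^{\alpha}/\widetilde{\varphi}^{\alpha}$ one more time; using the identity to eliminate $B:=r(\alpha+1)t$ via $B=\widetilde{\varphi}^{\alpha+1}-(1-\ln\psi)^{\alpha+1}$, a direct computation produces
\[
\frac{\partial^{2}\ln\psi}{\partial x^{2}}=\frac{\alpha\,\widetilde{\varphi}^{\alpha-1}(\varphi_{0}')^{2}B}{(1-\ln\psi)^{2\alpha+1}}-\frac{\widetilde{\varphi}^{\alpha}\varphi_{0}''(x)}{(1-\ln\psi)^{\alpha}}.
\]
Since $\varphi_{0}''(x)=-\frac{2s}{x^{2}}+\frac{q}{x^{2}\ln x}+\frac{q}{x^{2}\ln^{2}x}\le 0$ for $L$ large, both terms on the right are nonnegative, so $\ln\psi$ is convex in $x$; convexity of $\psi$ then follows from $\partial_{x}^{2}\psi=\psi[(\partial_{x}\ln\psi)^{2}+\partial_{x}^{2}\ln\psi]$, a sum of two nonnegative quantities. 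The upper bound \eqref{epubwxxe} is the main technical hurdle: I will estimate each of the two nonnegative pieces $(\partial_{x}\ln\psi)^{2}$ and $\partial_{x}^{2}\ln\psi$ separately, using $(\varphi_{0}')^{2}\le 4s^{2}/x^{2}$ and $|\varphi_{0}''|\le 2s/x^{2}$. The crux is to control the ratio $\widetilde{\varphi}^{\alpha+1}/(1-\ln\psi)^{\alpha+1}=1+B/(1-\ln\psi)^{\alpha+1}$: when $\psi$ is close to $\Lambda$ this ratio can be large, while for $x\to+\infty$ it tends to $1$, and in both regimes the products $\widetilde{\varphi}^{2\alpha}(\varphi_{0}')^{2}/(1-\ln\psi)^{2\alpha}$ and $\widetilde{\varphi}^{\alpha-1}(\varphi_{0}')^{2}B/(1-\ln\psi)^{2\alpha+1}$ should collapse to quantities of order $1/(x(1-\ln\psi)^{\alpha})$. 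This is the step that will require the most care; everything else is routine computation once the identity above is in place.
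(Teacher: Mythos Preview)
Your plan is essentially the paper's proof: introduce $\widetilde\varphi=1-\ln[(1+\kappa t^{p})v_{0}]$, compute the partial derivatives of $\psi$ through the identity $(1-\ln\psi)^{\alpha+1}=\widetilde\varphi^{\,\alpha+1}-r(\alpha+1)t$, and read off monotonicity, \eqref{naemt}, \eqref{naemx}, convexity and log-convexity from the signs and sizes of $\varphi_{0}'$, $\varphi_{0}''$ and $\widetilde\varphi$. Two comments.

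First, the claim ``$(1-\ln\psi)^{\alpha+1}\ge r(\alpha+1)t$'' is false: at $x=x_{\Lambda}(t)$ one has $(1-\ln\psi)^{\alpha+1}=(1-\ln\Lambda)^{\alpha+1}$, a constant independent of $t$. What the identity does give is $\widetilde\varphi^{\,\alpha+1}\ge r(\alpha+1)t$ (which you also state and actually use) and the crude but crucial lower bound $1-\ln\psi\ge 1-\ln\Lambda\ge 1-\ln 2>0$. You never use the wrong inequality downstream, but it should be removed.

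Second, the upper bound in \eqref{epubwxxe} is not the hurdle you make it out to be, and the paper's route avoids your ratio discussion entirely. The point is that $\widetilde\varphi\le 1-\ln v_{0}(x)\lesssim \ln x$ (the $-\ln(1+\kappa t^{p})$ term has the good sign) and $(1-\ln\psi)^{-1}\le(1-\ln 2)^{-1}$. Writing $\partial_{x}^{2}\psi$ as in the paper,
\[
\frac{\partial^{2}\psi}{\partial x^{2}}=\frac{\psi}{(1-\ln\psi)^{\alpha}}\Bigl\{\Bigl(\tfrac{v_{0}'}{v_{0}}\Bigr)^{2}\widetilde\varphi^{\,2\alpha}\,M+\widetilde\varphi^{\,\alpha}\Bigl(\tfrac{v_{0}'}{v_{0}}\Bigr)'\Bigr\},
\qquad
M:=(1-\ln\psi)^{-\alpha}+\alpha(1-\ln\psi)^{-(\alpha+1)}-\alpha\,\widetilde\varphi^{-(\alpha+1)},
\]
one has $0<M\le 2(1-\ln 2)^{-(\alpha+1)}$, while $(v_{0}'/v_{0})^{2}\widetilde\varphi^{\,2\alpha}\lesssim x^{-2}\ln^{2\alpha}x\le C/x$ and $(v_{0}'/v_{0})'\,\widetilde\varphi^{\,\alpha}\lesssim x^{-2}\ln^{\alpha}x\le C/x$. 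This gives \eqref{epubwxxe} directly; there is no need to track $\widetilde\varphi^{\,\alpha+1}/(1-\ln\psi)^{\alpha+1}$. In your decomposition $\partial_{x}^{2}\psi=\psi[(\partial_{x}\ln\psi)^{2}+\partial_{x}^{2}\ln\psi]$ the same two ingredients suffice: bound $B\le\widetilde\varphi^{\,\alpha+1}$ in your formula for $\partial_{x}^{2}\ln\psi$, then use $\widetilde\varphi\lesssim\ln x$ and $(1-\ln\psi)\ge 1-\ln 2$.
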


\begin{lemma}\label{lee2}
 There is a time $t^\dagger\ge t^\star$ such that, for all $z\in\left(\frac{1}{x},\min\left\{\frac{1}{2},\frac{x- x_1(t)}{x}\right\}\right)$ and $(t,x)\in [t^\dagger,\infty)\times [x_1(t)+1,+\infty)$, we have 
\[
\begin{aligned}
    \frac{1-\ln \psi (t,x(1-z))}{1-\ln \psi(t,x)}
    \ge 1-2(2s)^{\alpha+1}(\ln^\alpha x)z-C_\alpha(\ln^{2\alpha} x)z^2,
\end{aligned}
\]
and
\[
\begin{aligned}
   \frac{\psi(t,x(1-z))}{\psi(t,x)}
   \le \exp\left\{ 2(2s)^{\alpha+1}(\ln^\alpha x)z+C_\alpha(\ln^{2\alpha} x)z \right\},
\end{aligned}
\]
for some positive constant $C_\alpha:=4\alpha(\alpha+1)(2s)^{2\alpha+2}$.
\end{lemma}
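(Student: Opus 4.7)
The idea of the proof is to reduce the analysis of $\psi$ to that of the simpler auxiliary function $\Phi(t,x) := 1 - \ln[(1+\kappa t^p) v_0(x)]$, which for $x\ge L$ admits the explicit expression $\Phi(t,x) = C(t) + 2s\ln x - q\ln\ln x$ with $C(t) := 1 - \ln(1+\kappa t^p) - \ln C_L$, and then to Taylor expand in $z$. Setting $\phi(t,x) := 1 - \ln\psi(t,x)$, the definition of $\psi$ yields the identity $\phi(t,x)^{\alpha+1} = \Phi(t,x)^{\alpha+1} - r(\alpha+1)t$. Since the subtracted term depends only on $t$, one obtains the key relation
\[
\phi(t,x(1-z))^{\alpha+1} - \phi(t,x)^{\alpha+1} = \Phi(t,x(1-z))^{\alpha+1} - \Phi(t,x)^{\alpha+1}.
\]
A direct computation using $\ln(1-z) = -z + O(z^2)$ together with $\ln\ln(x(1-z)) = \ln\ln x + \ln(1+\ln(1-z)/\ln x)$ gives $\Delta(t,x,z) := \Phi(t,x(1-z)) - \Phi(t,x) = -2sz + O(z^2) + O(z/\ln x)$; in particular $|\Delta|\le 3sz$ once $t\ge t^\dagger$ is chosen large enough, since then $x\ge x_1(t)+1$ is large.

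Next, the Taylor expansion $(1+\eta)^{\alpha+1} = 1 + (\alpha+1)\eta + \tfrac{\alpha(\alpha+1)}{2}(1+\tilde\eta)^{\alpha-1}\eta^2$ is applied with $\eta = \Delta/\Phi(t,x)$, which is small since $\Phi(t,x)\ge[r(\alpha+1)t]^{1/(\alpha+1)}$ is large. This gives
\[
\Phi(t,x(1-z))^{\alpha+1} - \Phi(t,x)^{\alpha+1} = -2s(\alpha+1)\Phi(t,x)^\alpha z + R(t,x,z),
\]
with a controlled remainder $R$ collecting both the second-order Taylor contribution and the $O(z^2)$ parts of $\Delta$. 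Plugging this into the key relation and dividing by $\phi(t,x)^{\alpha+1}$ yields
\[
\biggl(\frac{\phi(t,x(1-z))}{\phi(t,x)}\biggr)^{\alpha+1} \ge 1 - 2s(\alpha+1)\,\frac{\Phi(t,x)^\alpha}{\phi(t,x)^{\alpha+1}}\,z - (\text{remainder in }z^2).
\]
Using the uniform upper bound $\Phi(t,x)\le 2s\ln x$ (which holds for $t\ge t^\dagger$ large, by the explicit form of $\Phi$ and the choice of $L$) together with $\phi(t,x)\ge 1$ on $[x_1(t),+\infty)$, the prefactor in front of $z$ is dominated by a multiple of $(2s)^{\alpha+1}\ln^\alpha x$. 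Combining with the elementary inequality $(1-y)^{1/(\alpha+1)}\ge 1-y$ valid for $y\in[0,1]$ and $\alpha\ge 0$ then delivers the first inequality of the lemma after adjusting the multiplicative constant. The second inequality follows from the identity $\psi(t,x(1-z))/\psi(t,x) = \exp(\phi(t,x) - \phi(t,x(1-z)))$ together with the first-order bound on $\phi(t,x) - \phi(t,x(1-z))$ established along the way and $\phi\ge 1$.

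The main subtlety is that the ratio $\Phi/\phi$ behaves very differently in different regimes: it is of order one when $x$ is much larger than $x_1(t)$ (so $\phi\approx\Phi$), but it grows like $2s\ln x$ when $x$ is close to $x_1(t)$, since there $\phi\approx 1$ while $\Phi\approx[r(\alpha+1)t]^{1/(\alpha+1)}\approx 2s\ln x_1(t)$. The stated estimate, while looking pessimistic when $x\gg x_1(t)$, is sharp precisely in this worst-case regime close to $x_1(t)$. The main computational care is therefore to track all remainders, coming from the second-order Taylor expansion of $(1+\eta)^{\alpha+1}$, from the subleading $q\ln\ln x$ correction in $\Phi$, and from taking the $(\alpha+1)$-th root, uniformly over the entire range $z\in(1/x, \min\{1/2,(x-x_1(t))/x\})$, so that all the second-order contributions can be absorbed into the single $C_\alpha\ln^{2\alpha}(x)z^2$ term.
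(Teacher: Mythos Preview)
Your approach is essentially the same as the paper's: introduce $\Phi(t,x)=1-\ln[(1+\kappa t^p)v_0(x)]$ and $\phi=1-\ln\psi$, exploit the cancellation $\phi(t,x(1-z))^{\alpha+1}-\phi(t,x)^{\alpha+1}=\Phi(t,x(1-z))^{\alpha+1}-\Phi(t,x)^{\alpha+1}$, expand, and close with $\Phi\le 2s\ln x$ and $\phi\ge 1$. The paper runs the same argument, only phrasing the two expansions as the elementary inequalities $(1-y)^{\alpha+1}\ge 1-(\alpha+1)y$ and $(1-y)^{1/(\alpha+1)}\ge 1-\tfrac{y}{\alpha+1}-\tfrac{\alpha}{\alpha+1}y^2$ (the latter proved in a footnote) rather than as Taylor expansions with remainders.

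One point to tighten: your final step uses $(1-y)^{1/(\alpha+1)}\ge 1-y$, which after tracking constants gives a linear coefficient $(\alpha+1)(2s)^{\alpha+1}\ln^\alpha x$ in front of $z$, not the stated $2(2s)^{\alpha+1}\ln^\alpha x$. The paper obtains the stated constant because the sharper root inequality produces a factor $\tfrac{1}{\alpha+1}$ that cancels the $(\alpha+1)$ from the inner expansion, and the factor $2$ then comes solely from $-\ln(1-z)\le 2z$ on $z\le\tfrac12$. For the downstream application (the estimate of $\mathcal{D}[\Psi]$) only the shape $c_1(\ln^\alpha x)z+c_2(\ln^{2\alpha}x)z^2$ matters, so your version would still suffice there; but to prove the lemma \emph{as stated}, replace $(1-y)^{1/(\alpha+1)}\ge 1-y$ by the two-term bound above. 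Your handling of the second inequality via $\psi(t,x(1-z))/\psi(t,x)=\exp(\phi(t,x)-\phi(t,x(1-z)))$ is exactly what the paper does; just be sure to extract the bound on the difference from the intermediate step (one power of $\phi$ cancels, leaving $\phi^\alpha$ and $\phi^{2\alpha+1}$ in the denominators), not from the final ratio inequality.
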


\par
Now, we are ready to properly define the super-solution to \eqref{oeq1} we need. Let us define, for $t\in [t^\star,\infty)$, 
\begin{equation}\label{um2}
	\Psi(t,x)=\left\{
	\begin{aligned}
		&1,&x<  x_1(t),\\
		&\psi(t,x),&x\ge  x_1(t).
	\end{aligned}\right.
\end{equation}
Unless otherwise specified, throughout the subsection later, we assume that $t\in[t^\star,\infty)$.
\par
We start by proving an estimate for $\mathcal{D}[\Psi]$.
\subsubsection{Estimation of \texorpdfstring{$\mathcal{D}[\Psi](t,\cdot)$ on $[x_1(t),+\infty)$}{D[Psi]}}
\begin{lemma}\label{epubed}
Denote $R(t):=\gamma_3 t^{-1}x_1(t)$ for some $\gamma_3>0$ and $\bar q:=(2s+1)(\alpha+1)-q$. We have the following estimate for $\mathcal{D}[\Psi]$.
\begin{itemize}
\item[I.]
     When $ x_1(t)\le x<  x_1(t)+R(t)$, there is a time $\tilde t_1\ge t^\star$ such that, for all $t\ge \tilde t_1$, we have
     \[
         \mathcal{D}[\Psi](t,x)\le  \mathcal{C}_1 e^{-\frac{1}{2s}[r(\alpha+1)t]^\frac{1}{\alpha+1}}\frac{\psi(t,x)}{(1-\ln \psi(t,x))^\alpha}-\mathcal{J}_3R(t)\frac{\partial \psi}{\partial x}(t, x_1(t)),
     \]
     for some positive constant $\mathcal{C}_1$, where $\mathcal{J}_3:=\int^{\infty}_{1}J(z)dz$.
     \item[II.] When $x\ge x_1(t)+R(t)$, there is a time $\tilde t_2>t^\star$ such that for all $t\ge \tilde t_2$ and $x\ge  x_1(t)+R(t)$, we have
\[
		\mathcal{D}[\Psi](t,x)\le\left(\mathcal{C}_3\frac{\ln^{\alpha+1}x}{x^{\min\{2s,1\}}}+ \mathcal{C}_4\frac{\ln^{\bar q}x}{x^{s}}+\mathcal{C}_5\frac{\kappa^{-1} t^{-p}}{(1-\ln \psi(t,x))^{2\alpha-q+1}}+\mathcal{C}_6 e^{-\frac{1}{2s}[r(\alpha+1)t]^\frac{1}{\alpha+1}}\right) \frac{\psi(t,x)}{(1-\ln \psi(t,x))^{\alpha}},
\]
for some positive constants $\mathcal{C}_3$, $\mathcal{C}_4$, $\mathcal{C}_5$ and $\mathcal{C}_6$. 
\end{itemize}
\end{lemma}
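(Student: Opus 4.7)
The plan is to estimate $\mathcal{D}[\Psi](t,x)$ by decomposing the integration domain according to the piecewise structure of $\Psi$ and the algebraic decay of $J$, and then to bound each resulting piece using either Taylor's theorem together with the second-derivative bound \eqref{epubwxxe}, or the profile-comparison estimates of Lemma~\ref{lee2}. Since $J$ is symmetric and $\Psi$ is non-increasing, I start from
\[
\mathcal{D}[\Psi](t,x) \le \tfrac12 \int_{|y|<1}\bigl(\Psi(t,x+y)+\Psi(t,x-y)-2\Psi(t,x)\bigr)J(y)\,dy + \int_{y\ge 1}\bigl(\Psi(t,x-y)-\Psi(t,x)\bigr)J(y)\,dy,
\]
having dropped the non-positive contribution coming from $y\le -1$. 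The singular symmetric integral will be handled uniformly across both regimes by Taylor's theorem; the right-tail integral is split further depending on whether the point $x-y$ falls in the cap region $(-\infty,x_1(t))$, where $\Psi\equiv 1$, or in the profile region $[x_1(t),\infty)$, where $\Psi=\psi$.

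For Part~I, I first invoke \eqref{esubxte} to choose $\tilde t_1\ge t^\star$ so that $R(t)\ge 1$ and $x\ge x_1(t)+1$ for all $t\ge \tilde t_1$; then $\Psi\equiv\psi$ on $[x-1,x+1]$, and Taylor combined with Hypothesis~\ref{ker} and \eqref{epubwxxe} yields a bound for the singular part of order $\mathcal{J}_1\mathcal{C}_0\,x_1(t)^{-1}\,\psi(t,x)/(1-\ln\psi(t,x))^\alpha$, which by \eqref{esubxte} absorbs into the $\mathcal{C}_1 e^{-[r(\alpha+1)t]^{1/(\alpha+1)}/(2s)}\psi/(1-\ln\psi)^\alpha$ contribution. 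I then split the right-tail integral at $y=x-x_1(t)$. On $[1,x-x_1(t)]$, the convexity of $\psi$ on $[x_1(t),\infty)$ from Lemma~\ref{esuble1} forces $|\partial_x\psi|$ to be non-increasing, so $\psi(t,x-y)-\psi(t,x)\le -y\,\partial_x\psi(t,x_1(t))$; integrating against $J$ and using $y\le x-x_1(t)\le R(t)$ together with $\int_1^{\infty}J(y)\,dy=\mathcal{J}_3$ gives a contribution bounded by $-R(t)\mathcal{J}_3\partial_x\psi(t,x_1(t))$. On $[x-x_1(t),\infty)$ one has $\Psi(t,x-y)=1$, so the integrand equals $(1-\psi(t,x))J(y)$; the convexity of $\psi$ and $\psi(t,x_1(t))=1$ again give $1-\psi(t,x)\le -R(t)\partial_x\psi(t,x_1(t))$, and a second factor of $\mathcal{J}_3$ yields the same type of bound. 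After collecting constants this produces exactly the form claimed in Part~I.

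For Part~II, I use a finer splitting of the right-tail integral into $[1,x/2]$, $[x/2,x-x_1(t)]$, and $[x-x_1(t),\infty)$ (the middle piece being understood as empty whenever $x-x_1(t)<x/2$, in which case it is merged into the left one). The singular part is again controlled by Taylor and \eqref{epubwxxe}, producing a contribution behaving like $x^{-1}\ln^\alpha x\cdot\psi/(1-\ln\psi)^\alpha$; together with the contribution coming from $[1,x/2]$, where one applies second-order Taylor in $\psi$ and integrates $\int_1^{x/2}y^2J(y)dy$ (which scales as a constant, $\ln x$, or $x^{2-2s}$ according to $2s>1$, $=1$, $<1$), this yields the $\mathcal{C}_3\ln^{\alpha+1}x/x^{\min\{2s,1\}}\cdot\psi/(1-\ln\psi)^\alpha$ term. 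On $[x/2,x-x_1(t)]$, where $\psi(t,x-y)$ may be orders of magnitude larger than $\psi(t,x)$, I rely on Lemma~\ref{lee2} to control $\psi(t,x-y)/\psi(t,x)$ by $\exp\{2(2s)^{\alpha+1}\ln^\alpha x\cdot z+C_\alpha\ln^{2\alpha}x\,z^2\}$ (with $z=1-(x-y)/x$), combine with $J(y)\le\mathcal{J}_0 y^{-(1+2s)}$, and carry out the change of variables $y=x(1-z)$; the bookkeeping of the explicit logarithmic factor $\ln^q$ in $v_0$ and of the $(1+\kappa t^p)$ correction in $\psi$ produces precisely the $\mathcal{C}_4\ln^{\bar q}x/x^{s}\cdot\psi/(1-\ln\psi)^\alpha$ contribution, with $\bar q=(2s+1)(\alpha+1)-q$. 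On $[x-x_1(t),\infty)$, the integrand reduces to $(1-\psi(t,x))J(y)$; the algebraic tail gives $\int_{x-x_1(t)}^{\infty}J(y)dy\asymp(x-x_1(t))^{-2s}\le R(t)^{-2s}$, and, after expressing $1-\psi(t,x)$ in terms of the perturbation $(1+\kappa t^p)$, this produces both the $\mathcal{C}_5\kappa^{-1}t^{-p}/(1-\ln\psi)^{2\alpha-q+1}$ term and the residual $\mathcal{C}_6 e^{-[r(\alpha+1)t]^{1/(\alpha+1)}/(2s)}$ term via \eqref{esubxte}.

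The main obstacle is the intermediate range $[x/2,x-x_1(t)]$ in Part~II. Neither the Taylor expansion (no longer accurate because $y\sim x$) nor the trivial bound $\Psi\le 1$ is sharp enough to capture the threshold behaviour; one has to trade the growth of $\psi(t,x-y)$ against the algebraic decay of $J$ very carefully, using Lemma~\ref{lee2} and exploiting the explicit powers of $\ln$ in $v_0$. It is precisely the bookkeeping of the exponent $q$ and the perturbation size $\kappa t^p$ in this middle range that forces the appearance of $\bar q=(2s+1)(\alpha+1)-q$ and the negative power $\kappa^{-1}t^{-p}$, and which will later constrain the admissible choices of $p$, $q$ and $\kappa$ in the super-solution verification.
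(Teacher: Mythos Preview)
Your Part~I is essentially correct; the paper's version is only slightly shorter (it writes $I_2\le\mathcal{J}_3\bigl(1-\psi(t,x_1(t)+R(t))\bigr)$ directly and applies convexity once, without splitting at $y=x-x_1(t)$).

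Part~II has a genuine gap: the splitting point $y=x/2$ is too coarse. By Lemma~\ref{lee2}, $\psi(t,x(1-z))/\psi(t,x)\le\exp\{C\ln^{2\alpha}x\cdot z^2\}$, and at $z=\tfrac12$ this is of order $e^{C\ln^{2\alpha}x}$, super-polynomial in $x$ whenever $\alpha>0$. Hence on $[1,x/2]$ a second-order Taylor remainder involves $\sup_{[x/2,x]}\partial_x^2\psi$, which via \eqref{epubwxxe} brings in $\psi(t,x/2)$ rather than $\psi(t,x)$, and you cannot recover the $\mathcal{C}_3$-term; on $[x/2,x-x_1(t)]$ the same blow-up prevents any direct use of Lemma~\ref{lee2} from yielding the $\mathcal{C}_4$-term. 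The paper instead splits at $z=B:=\gamma_4/\ln^{\alpha+1}x$. On $[1/x,B]$ the exponent in Lemma~\ref{lee2} is $O(\gamma_4/\ln x)\to0$, so the ratio stays bounded and first-order Taylor plus convexity gives the $\mathcal{C}_3$-term. On $[B,(x-x_1(t))/x]$ the paper exploits the \emph{log-convexity} of $\psi(t,\cdot)$ (Lemma~\ref{esuble1}): the map $z\mapsto -(2s{+}1)\ln(xz)+\ln\psi(t,x(1-z))$ is convex, hence bounded by its values at the endpoints; the smallness of $B$, combined with $\psi\ge v_0(x)=C_L\ln^q x/x^{2s}$, produces the $\mathcal{C}_4\ln^{\bar q}x/x^s$ contribution from the left endpoint and a $\mathcal{C}_5$-type contribution from the right one.

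A second gap concerns the far tail $y\ge x-x_1(t)$. The bound $(x-x_1(t))^{-2s}\le R(t)^{-2s}$ is independent of $x$ and cannot be absorbed by $\psi(t,x)$ for large $x$; and $1-\psi(t,x)$ is close to $1$ there, so it does not help. The paper obtains the needed \emph{pointwise} estimate $x-x_1(t)\ge v_0^{-1}\bigl((1+\kappa t^p)^{-1}\psi(t,x)\bigr)$ by showing that $V_t(x):=\{1-\ln[(1+\kappa t^p)v_0(x)]\}^{\alpha+1}-1$ is concave for large $x$, whence $V_t(x-x_1(t))\ge V_t(x)-V_t(x_1(t))=(1-\ln\psi)^{\alpha+1}-1$; then \eqref{xysim} gives $(x-x_1(t))^{-2s}\lesssim\kappa^{-1}t^{-p}\psi/(1-\ln\psi)^q$, which is the origin of the $\mathcal{C}_5$-term. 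The $\mathcal{C}_6$-term does not come from the far tail at all but from the singular piece $|y|<1$, exactly as in Part~I.
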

\begin{proof}
We divide the proof into two parts: $x_1(t)\le x< x_1(t)+R(t)$ and $x\ge x_1(t)+R(t)$.
\medskip
\par
\noindent
\# \textbf{Part I: $x_1(t)\le x< x_1(t)+R(t)$.}
   By the definition of $\mathcal{D}[\Psi]$, since $\Psi(t,\cdot)$ is non-increasing on $\mathbb{R}$, we write 
\[
    \begin{aligned}
        \mathcal{D}[\Psi]&\le P.V.\int_{|z|<1}J(z)\Big(\Psi(t,x+z)-\Psi(t,x)\Big)dz
        +\int_{z\le -1}J(z)\Big(\Psi(t,x+z)-\Psi(t,x)\Big)dz\\
        &:=I_1+I_2.
        \end{aligned}
 \]
 \par
\#\# \textbf{Estimating $I_1$.} 
In view of \eqref{esubfl}, there is a time $\tilde t_1\ge t^\star$ such that $x_1(t)-x_2(t)\ge 1$ for all $t\ge \tilde t_1$.
Since $\Psi(t,x)=1\le \psi(t,x)$ for $x_2(t)\le x\le  x_1(t)$ and $\Psi(t,x)=\psi(t,x)$ for $x\ge  x_1(t)$, it follows from Taylor's theorem that, for all $(t,x)\in [\tilde t_1,\infty)\times [x_1(t),+\infty)$,
    \[
    \begin{aligned}
        I_1&\le  \frac{1}{2}\int_{|z|<1}J(z)\Big(\psi(t,x+z)+\psi(t,x-z)-2\psi(t,x)\Big)dz\le \mathcal{J}_1\sup_{|B|\le 1}\frac{\partial^2 \psi}{\partial x^2}(t,x+B).
        \end{aligned}
    \]
    As a result, by \eqref{esubxte} and \eqref{epubwxxe}, for all $(t,x)\in [\tilde t_1,\infty)\times [x_1(t),+\infty)$, we arrive at
    \begin{equation}\label{esubl31}
        I_1\le \mathcal{J}_1\frac{\mathcal{C}_0}{x-1}\frac{\psi(t,x-1)}{(1-\ln \psi(t,x-1))^\alpha} \le \mathcal{C}_1 e^{-\frac{1}{2s}[r(\alpha+1)t]^\frac{1}{\alpha+1}}\frac{\psi(t,x)}{(1-\ln \psi(t,x))^\alpha},
    \end{equation}
    for some positive constant $\mathcal{C}_1$.
    \par\#\# \textbf{Let us estimate $I_2$.}
Since $\psi(t,\cdot)$ is non-increasing on $[x_1(t),+\infty)$ and $J\in L^1((-\infty,-1])$, then for $x\in [ x_1(t), x_1(t)+R(t))$, we have
    \[
    I_2\le \mathcal{J}_3[1-\psi(t, x_1(t)+R(t))],
    \]
        where $\mathcal{J}_3=\int^{\infty}_{1}J(z)dz$.
    By convexity of $\psi(t,\cdot)$, one may get 
    \[
    \psi(t, x_1(t)+R(t)\ge \psi(t, x_1(t))+R(t)\frac{\partial \psi}{\partial x}(t, x_1(t)),
    \]
    and thus, since $\psi(t, x_1(t))=1$, one arrives at
    \begin{equation}  \label{esubl33}
    I_2\le -\mathcal{J}_3R(t)\frac{\partial \psi}{\partial x}(t, x_1(t)).
    \end{equation}
    Therefore, collecting \eqref{esubl31}  and \eqref{esubl33}, for all $(t,x)\in [\tilde t_1,\infty)\times [x_1(t),x_1(t)+R(t))$, we achieve
    \[
    \mathcal{D}[\Psi](t,x)\le \mathcal{C}_1 e^{-\frac{1}{2s}[r(\alpha+1)t]^\frac{1}{\alpha+1}}\frac{\psi(t,x)}{(1-\ln \psi(t,x))^\alpha}-\mathcal{J}_3R(t)\frac{\partial \psi}{\partial x}(t, x_1(t)).
    \]
\medskip
\par
\noindent
    \# \textbf{Part II: $x\ge x_1(t)+R(t)$.} By the definition of $\mathcal{D}[\Psi]$, since $0<\Psi\le 1$ and $\Psi(t,\cdot)$ is non-increasing, we write
\[
	\begin{aligned}
		\mathcal{D}[\Psi]&\le \int_{-\infty}^{  x_1(t)-x}J(z)dz +\int_{ x_1(t)-x}^{-1} J(z)\Big(\psi(t,x+z)-\psi(t,x)\Big)dz
   \\
   &\qquad \qquad +P.V. \int_{-1}^{1} J(z)\Big(\psi(t,x+z)-\psi(t,x)\Big)dz:=II_1+II_2+II_3.
	\end{aligned}
\]
 \par
\#\# \textbf{Let us estimate $II_1$.} By \eqref{Je}, we have
\[
	II_1\le \int_{-\infty}^{ x_1(t)-x}J(z)dz\le \frac{\mathcal{J}_0}{2s}\frac{1}{[x- x_1(t)]^{2s}}.
\]
For any fixed $t\in [t^\star,+\infty)$, we denote $V_t(x):=\{1-\ln[ (1+\kappa t^p)v_0(x)]\}^{\alpha+1}-1$ for $x>L$. 
By some direct calculations, for all $x>L$, one may get 
\[
    V_t'(x)=-(\alpha+1)\{1-\ln[ (1+\kappa t^p) v_0(x)]\}^\alpha \frac{v_0'}{v_0}(x),
\]
and 
\[
    V_t''(x)= (\alpha+1)\{1-\ln[(1+\kappa  t^p)v_0(x)]\}^{\alpha-1}\left[\alpha \left(\frac{v_0'}{v_0}\right)^2(x)-\{1-\ln[(1+\kappa  t^p)v_0(x)]\} \left(\frac{v_0'}{v_0}\right)'(x)\right].
\]
Since $\frac{v_0'}{v_0}(x)=\left(-2s+\frac{q}{\ln x}\right)\frac{1}{x}< 0$ and $\left(\frac{v_0'}{v_0}\right)'(x)=\left(2s-\frac{q}{\ln x}-\frac{q}{\ln^2 x}\right)\frac{1}{x^2}> 0$ for $x$ large enough, there is $\tilde L\ge L$ such that, for all $x\ge \tilde L$, we have
\[
\begin{aligned}
   & \alpha \left(\frac{v_0'}{v_0}\right)^2(x)-\{1-\ln[(1+\kappa  t^p)v_0(x)]\} \left(\frac{v_0'}{v_0}\right)'(x)\\
    \le &\left[\alpha \left(-2s+\frac{q}{\ln x}\right)^2 -[r(\alpha+1)t]^\frac{1}{\alpha+1}\left(2s- \frac{q}{\ln x}-\frac{q}{\ln^2 x}\right)\right]\frac{1}{x^2}\le 0.
\end{aligned}
\]
 It follows that 
\[
    V_t'(x)> 0\quad\text{ and }\quad V_t''(x)\le 0\quad \text{for all }x\ge \tilde L.
\]
Since $R(t)$ and $x(t)$ go to infinity as $t$ to infinity, there is $t_1\ge t^\star$ such for all $t\ge t_1$, $R(t)$ and $x_1(t)$ are greater than $\tilde L$, and thus,  for all $t\ge t_1$ and $x\ge  x_1(t)+R(t)$, we get
\[
\begin{aligned}
    V_t(x- x_1(t))\ge V_t(x)-V_t( x_1(t))&=\{1-\ln[(1+\kappa t^p)v_0(x)]\}^{\alpha+1}-r(\alpha+1)t-1\\
    &=(1-\ln \psi)^{\alpha+1}-1\\
    &= \left\{1-\ln \left[(1+\kappa t^p)v_0 \left(v_0^{-1}\left(\left(1+\kappa t^{p}\right)^{-1}\psi\right)\right)\right]\right\}^{\alpha+1}-1,
\end{aligned}
\]
and then, since $V_t$ is strictly increasing on $[\tilde L,+\infty)$, by \eqref{xysim}, for all $t\ge t_1$ and $x\ge x_1(t)+R(t)$, we achieve
\begin{equation}\label{esubx-xt}
    x- x_1(t)\ge v_0^{-1}\left(\left(1+\kappa t^{p}\right)^{-1}\psi\right) \gtrsim\frac{\left\{\ln L-\frac{1}{2s}\ln[(1+\kappa t^p)^{-1}\psi]\right\}^{\frac{q}{2s}}}{[\kappa^{-1} t^{-p}\psi]^\frac{1}{2s}}.
\end{equation}
It follows that for all $t\ge t_1$ and $x\ge  x_1(t)+R(t)$,\footnote{One may notice from here that it is necessary to have the factor $\ln^q x$ in $v_0$. Indeed, if $v_0$ is defined without $\ln^q x$, then we get $I_1\le \mathcal{C}_2\kappa^{-1} t^{-p}\psi$ as an estimate of $I_1$, which is much larger than $f(\psi)$. This is not what we expect.}
\begin{equation}
    \label{esubi1}
   II_1\le \mathcal{C}_2\frac{\kappa^{-1}t^{-p}\psi}{\left\{2s\ln L-\ln[ (1+\kappa t^{p})^{-1}\psi]\right\}^q}\le \mathcal{C}_2 \frac{\kappa^{-1}t^{-p}\psi}{(1-\ln \psi)^q},
\end{equation}
for some $\mathcal{C}_2>0$.
\par
\#\#\textbf{ Let us estimate $II_2$.} 
We denote $B:= \frac{\gamma_4}{\ln^{\alpha+1} x}$ where $\gamma_4>0$ is small enough such that
\begin{equation}
    \label{gamma4}
    1-2(2s)^{\alpha+1}\gamma_4-C_\alpha \gamma_4^2\ge \frac{1}{2}.
\end{equation}
By taking $\gamma_4<\frac{1}{2}$, one may notice that $x\mapsto x-\frac{\gamma_4 x}{\ln^{\alpha+1}x}$ is increasing for all $x\ge x_1(t)+R(t)$, it follows that, for all $x\ge x_1(t)+R(t)$,
\[
x-\frac{\gamma_4 x}{\ln^{\alpha+1}x}\ge x_1(t)+R(t)- \gamma_4\frac{x_1(t)+R(t)}{\ln^{\alpha+1}(x_1(t)+R(t))}.
\]
Recalling $R(t)=\gamma_3 t^{-1} x_1(t)$ and using \eqref{esubxte}, one has $R(t)-\gamma_4\frac{x_1(t)+R(t)}{\ln^{\alpha+1}(x_1(t)+R(t))}\ge 0$ for all $t\ge t^\star$ by taking $\gamma_4$ small enough, and thus, one achieve 
\[
x- \frac{\gamma_4x}{\ln^{\alpha+1}x}\ge x_1(t).
\]
Therefore, there is a time $t_2>t^\star$ and constant $\gamma_4>0$ such that for all $t\ge t_2$ and $x\ge  x_1(t)+R(t)$, we have
\[
\frac{1}{x}\le \frac{\gamma_4}{\ln^{\alpha+1} x}\le\min\left\{\frac{1}{2}, \frac{x- x_1(t)}{x}\right\}.
\]
By using the change of variables $u=-\frac{z}{x}$, for all $t\ge t_2$ and $x\ge x_1(t)+R(t)$, we have
\[
\begin{aligned}
II_2&\le x\int^{\frac{x- x_1(t)}{x}}_{\frac{1}{x}}J(xu)\Big(\psi(t,x(1-u))-\psi(t,x)\Big) du\\
&\le x\int^{B}_{\frac{1}{x}}J(xu)\Big(\psi(t,x(1-u))-\psi(t,x)\Big) du+ x\int^{\frac{x- x_1(t)}{x}}_{B}J(xu)\psi(t,x(1-u)) du:=II_4+II_5.
\end{aligned}
\]

\par
\#\#\# \textbf{Let us estimate $II_4.$} By Taylor's theorem and the convexity of $\psi(t,\cdot)$ on $[ x_1(t),+\infty)$, $II_4$ can be written as 
\[
II_4=  -x^2\int^{B}_{\frac{1}{x}}\int_0^1 uJ(xu) \frac{\partial \psi}{\partial x}\Big(t,x(1-\tau u)\Big)d\tau du\le  -x^2\int^{B}_{\frac{1}{x}} uJ(xu) \frac{\partial \psi}{\partial x}\Big(t,x(1- u)\Big) du.
\]
Since $y\mapsto \frac{y}{(1-\ln y)^\alpha}$ is non-decreasing on $(0,1)$ and $\psi(t,\cdot)$ is non-increasing on $[x_1(t),+\infty)$, by \eqref{naemx},  for all $(t,x)\in [\max\{t_0,t_2\},\infty)\times [x_1(t),+\infty)$, we get 
\[
\begin{aligned}
II_4&\le -x^2 \int^{B}_{\frac{1}{x}} uJ(xu) \frac{\partial \psi}{\partial x}\Big(t,x(1- u)\Big)du\\
&\le \gamma_2^{-1}\mathcal{J}_0 x^{-2s} \int^{B}_{\frac{1}{x}}\frac{\ln^\alpha x(1-u)}{u^{2s}(1-u)}\frac{\psi(t,x(1-u))}{[1-\ln \psi(t,x(1-u))]^\alpha}du\\
&\le \gamma_2^{-1}\mathcal{J}_0 x^{-2s}(\ln^\alpha x)  (1-B)^{-1}\frac{\psi(t,x(1-B))}{[1-\ln \psi(t,x(1-B))]^\alpha}\int^{B}_{\frac{1}{x}}u^{-2s}du.
\end{aligned}
\]
On the one hand, by Lemma \ref{lee2}, recalling $B=\frac{\gamma_4}{\ln^{\alpha+1} x}$, we have
\[
\frac{\psi(t,x(1-B))}{[1-\ln \psi(t,x(1-B))]^\alpha}
\le \frac{\psi(t,x)}{[1-\ln \psi(t,x)]^\alpha} \frac{\exp\left\{ 2(2s)^{\alpha+1}\frac{\gamma_4}{\ln x}+C_\alpha \frac{\gamma_4^2}{\ln^2 x} \right\}}{\left[ 1-2(2s)^{\alpha+1}\frac{\gamma_4}{\ln x}-C_\alpha \frac{\gamma_4^2}{\ln^2 x}\right]^\alpha}.
\]
Thus, by \eqref{gamma4}, for all $t\ge \max\{t_0,t_2\}$ and $x\ge  x_1(t)+R(t)$, we obtain
\[
\frac{\psi(t,x(1-B))}{[1-\ln \psi(t,x(1-B))]^\alpha}
\lesssim \frac{\psi(t,x)}{[1-\ln \psi(t,x)]^\alpha}.
\]
On the other hand, we have
\[
\int_{\frac{1}{x}}^B u^{-2s}du = 
\begin{cases}
    \frac{1}{1-2s}\left(B^{1-2s}-\frac{1}{x^{1-2s}}\right) ,&0<s<\frac{1}{2},\\
    \frac{1}{2s-1}\left(x^{2s-1}-\frac{1}{B^{2s-1}}\right), &s>\frac{1}{2},\\
    \ln (Bx), &s=\frac{1}{2}.
\end{cases}
\]
It follows from $B=\frac{\gamma_4}{\ln^{\alpha +1}x }$ that 
\[
\begin{aligned}
(1-B)^{-1}x^{-2s}\ln^\alpha x\int_{\frac{1}{x}}^B u^{-2s}du &\lesssim 
\begin{cases}
   x^{-2s}\ln^{2s(\alpha+1)-1}x, &0<s<\frac{1}{2}, \\
   x^{-1}\ln^{\alpha} x, &s>\frac{1}{2},\\
    x^{-1}\ln^{\alpha+1} x, &s=\frac{1}{2},
\end{cases}
\\
&\le O(x^{-\min\{2s,1\}}\ln^{\alpha+1} x)\quad \text{as }x\to +\infty.
\end{aligned}
\]
As a result, for all $t\ge \max\{t_0,t_2\}$ and $x\ge  x_1(t)+R(t)$, we arrive at
\begin{equation}\label{esubi6}
    II_4\le \mathcal{C}_3\frac{\ln^{\alpha+1}x}{x^{\min\{2s,1\}}} \frac{\psi(t,x)}{[1-\ln \psi(t,x)]^\alpha},
\end{equation}
for some positive constant $\mathcal{C}_3$.
\par \#\#\# \textbf{Let us estimate $II_5$.} By \eqref{Je}, we have
\[
II_5\le\mathcal{J}_0 x\int^{\frac{x- x_1(t)}{x}}_{B}\exp\left\{-(2s+1)\ln (xu)+\ln\psi(t,x(1-u))\right\} du.
\]
Since $\psi(t,\cdot)$ is log-convex on $[ x_1(t),+\infty)$ by Lemma \ref{esuble1}, we have that, for any $x\in[ x_1(t)+R(t),+\infty)$, $u\mapsto -(2s+1)\ln (xu)+\ln\psi(t,x(1-u))$ is convex on $\left(\frac{1}{x},\frac{x- x_1(t)}{x}\right)$.
It follows that for any $u\in \left(B,\frac{x- x_1(t)}{x}\right)$,
\[
x\exp\left\{-(2s+1)\ln (xu)+\ln\psi(t,x(1-u))\right\}\le  \frac{x}{(xB)^{2s+1}}\psi(t,x(1-B))+ \frac{x}{(x- x_1(t))^{2s+1}}.
\]
\par  Let us estimate both terms in the r.h.s. of the above inequality.  On the one hand, by Lemma \ref{lee2} and $\psi(t,x)\ge v_0(x)=C_L\frac{\ln^q x}{x^{2s}}$, recalling $B=\frac{\gamma_4}{\ln^{\alpha +1}x }$, we get, for all $t$ large enough, say $t\ge t_3$, and $x\ge  x_1(t)+R(t)$,
\[
  \frac{x}{(xB)^{2s+1}}\psi(t,x(1-B))\lesssim  x^{-2s} \left(\ln ^{(\alpha+1)(2s+1)}x\right)\psi(t,x)\lesssim \frac{\ln^{\bar q}x}{x^{s}} \psi^{\frac{3}{2}}(t,x),
\]
where $\bar q:=(2s+1)(\alpha+1)-q$. As a result, since $y\mapsto y^\frac{1}{2}(1-\ln y)^\alpha$ is bounded on $(0,1)$, for all $t\ge t_3$ and $x\ge  x_1(t)+R(t)$, we have
\[
\frac{x}{(xB)^{2s+1}}\psi(t,x(1-B))\le \mathcal{C}_4\frac{\ln^{\bar q}x}{x^{s}} \frac{\psi(t,x)}{(1-\ln \psi(t,x))^\alpha},
\]
for some positive constant $\mathcal{C}_4$.
\par
On the other hand, recalling $R(t)=\gamma_3 t^{-1} x_1(t)$, it follows from \eqref{esubxte} that there is a positive constant $\gamma_5$ such that for all $t\ge t_3$, up to enlarging $t_3$ if necessary, we have 
\[
 x_1(t)+R(t)-\gamma_5 R(t)\ln^{\alpha+1} R(t)\le 0.
\]
One may notice that $x\mapsto x-\gamma_5(x- x_1(t))\ln^{\alpha+1} (x- x_1(t))$ is decreasing on $[ x_1(t)+R(t),+\infty)$ for all $t\ge t_3$, up to enlarging $t_3$ if necessary, and then for all $t\ge t_3$ and $x\ge  x_1(t)+R(t)$, 
\[
x-\gamma_5 (x- x_1(t))\ln^{\alpha+1} (x- x_1(t))\le  x_1(t)+R(t)-\gamma_5 R(t)\ln^{\alpha+1} R(t)\le 0.
\]
Thus, since $y\mapsto \frac{\ln^{\alpha+1}y}{y^{2s}}$ is decreasing for $y$ large enough, by \eqref{xysim} and  \eqref{esubx-xt}, for all $t \ge t_3$ and $x\ge  x_1(t)+R(t)$, up to enlarging $t_3$ if necessary,
\[
 \frac{x}{(x- x_1(t))^{2s+1}}\le\gamma_5  \frac{\ln^{\alpha+1} (x- x_1(t))}{(x- x_1(t))^{2s}}\le \mathcal{C}_5\frac{\kappa^{-1}t^{-p}\psi}{[1-\ln \psi(t,x)]^{q-\alpha-1}},
\]
for some positive constant $\mathcal{C}_5$. As a result, for all $t\ge t_3$ and $x\ge  x_1(t)+R(t)$, we arrive at 
\begin{equation}\label{esubi7}
    II_5\le \mathcal{C}_4\frac{\ln^{\bar q}x}{x^{s}} \frac{\psi(t,x)}{(1-\ln \psi(t,x))^\alpha}+\mathcal{C}_5\frac{\kappa^{-1}t^{-p}\psi}{[1-\ln \psi(t,x)]^{q-\alpha-1}}.
\end{equation}
\par
\#\#
\textbf{Let us estimate $II_3$.} 
In view of \eqref{esubl31}, we have 
\begin{equation}\label{esubi24}
    II_3\le \mathcal{C}_6 e^{-\frac{1}{2s}[r(\alpha+1)t]^\frac{1}{\alpha+1}}\frac{\psi(t,x)}{(1-\ln\psi(t,x))^\alpha}\quad \text{for all }(t,x)\in[\tilde t_1,\infty]\times[x_1(t)+R(t),+\infty),
\end{equation}
for some positive constant $\mathcal{C}_6$.
\par
Owing to \eqref{esubi1},  \eqref{esubi24}, \eqref{esubi6} and \eqref{esubi7}, for all $t\ge \tilde t_2:=\max\{t_0,t_1,t_2,t_3,\tilde t_1\}$ and $x\ge  x_1(t)+R(t)$, we have
\[
		\mathcal{D}[\Psi](t,x)\le \left(\mathcal{C}_3\frac{\ln^{\alpha+1}x}{x^{\min\{2s,1\}}}+ \mathcal{C}_4\frac{\ln^{\bar q}x}{x^{s}}+\mathcal{C}_5\frac{\kappa^{-1} t^{-p}}{(1-\ln \psi(t,x))^{2\alpha-q+1}}+\mathcal{C}_6 e^{-\frac{1}{2s}[r(\alpha+1)t]^\frac{1}{\alpha+1}}\right) \frac{\psi(t,x)}{(1-\ln \psi(t,x))^{\alpha}} .
\]

\end{proof}

\subsubsection{The function \texorpdfstring{$\Psi$}{Psi} is a super-solution to \texorpdfstring{\eqref{oeq1}}{1} at later times}
\begin{proposition}\label{epubpk}
Assume $p=\frac{1}{\alpha+1}$, $q=2\alpha+1$ and $\kappa=\bar\kappa:=\max\left\{e,\frac{2\mathcal{C}_5}{\gamma_1}\right\}$. There is a time $\bar t>t^\star$ such that for all $t\ge \bar t$ and $x> x_1(t)$, we have
    \[
	\frac{\partial \Psi}{\partial t}(t,x)-\mathcal{D}[\Psi](t,x)-f(\Psi(t,x))\ge0.
\] 
\end{proposition}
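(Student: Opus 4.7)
The plan is to combine the lower bound on $\partial_t\psi-f(\psi)$ inherited from Lemma~\ref{esuble1} and Hypothesis~\ref{fu} with the two upper bounds on $\mathcal{D}[\Psi]$ provided by Lemma~\ref{epubed}, then verify, by a simple calibration of $p$, $q$ and $\kappa$, that the margin absorbs every error term. For $x>x_1(t)$ one has $\Psi=\psi$; thus \eqref{naemt} together with $f(\psi)\le r\psi/(1-\ln\psi)^\alpha$ yields
\[
\frac{\partial \Psi}{\partial t}(t,x)-f(\Psi(t,x))\;\ge\;\gamma_1\, t^{-\frac{1}{\alpha+1}}\,\frac{\psi(t,x)}{(1-\ln\psi(t,x))^\alpha}.
\]
So it suffices to show that $\mathcal{D}[\Psi](t,x)$ is, for $t$ large, no larger than this margin. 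The proof then splits according to the two spatial zones of Lemma~\ref{epubed}.

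First I would handle Zone II, $x\ge x_1(t)+R(t)$, because this is where the parameter choices are pinned. With $q=2\alpha+1$ the exponent $2\alpha-q+1$ vanishes, so the third term in the bound of Lemma~\ref{epubed}(II) becomes $\mathcal{C}_5\kappa^{-1}t^{-p}$; with $p=\tfrac{1}{\alpha+1}$ it shares the same $t$-scaling as the margin, and taking $\kappa\ge 2\mathcal{C}_5/\gamma_1$ (hence $\bar\kappa=\max\{e,2\mathcal{C}_5/\gamma_1\}$) makes it at most $\tfrac{\gamma_1}{2}t^{-1/(\alpha+1)}\psi/(1-\ln\psi)^\alpha$. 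The three remaining contributions are super-polynomially small in $t$: indeed $\ln x\asymp t^{1/(\alpha+1)}$ on this zone by \eqref{esubxte}, so $\ln^{\alpha+1}x/x^{\min\{2s,1\}}$, $\ln^{\bar q}x/x^{s}$ and $e^{-\frac{1}{2s}[r(\alpha+1)t]^{1/(\alpha+1)}}$ all decay faster than $t^{-1/(\alpha+1)}$, and are easily absorbed for $t$ large.

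Zone I, $x_1(t)<x<x_1(t)+R(t)$, is the most delicate step and would be the main obstacle. The exponential term in Lemma~\ref{epubed}(I) is harmless as before; the issue is the boundary-derivative term $-\mathcal{J}_3 R(t)\,\psi_x(t,x_1(t))$. Using $\psi(t,x_1(t))=1$ in the lower bound of \eqref{naemx} gives $|\psi_x(t,x_1(t))|\le \gamma_2^{-1}\ln^\alpha x_1(t)/x_1(t)$; combining with $R(t)=\gamma_3 t^{-1}x_1(t)$ and $\ln x_1(t)\sim\frac{1}{2s}[r(\alpha+1)t]^{1/(\alpha+1)}$ one obtains
\[
\mathcal{J}_3 R(t)\,|\psi_x(t,x_1(t))|\;\lesssim\;\gamma_3\, t^{-\frac{1}{\alpha+1}},
\]
which is exactly of the critical order. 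To absorb this into the margin one needs a uniform lower bound on $\psi/(1-\ln\psi)^\alpha$ in Zone I. This comes from integrating the upper bound of \eqref{naemx} from $x_1(t)$: writing $u(x):=1-\ln\psi(t,x)$, one gets $u^\alpha u_x\le \gamma_2^{-1}\ln^\alpha x/x$, whence
\[
u(x)^{\alpha+1}\;\le\;1+\gamma_2^{-1}\bigl(\ln^{\alpha+1}x-\ln^{\alpha+1}x_1(t)\bigr)\;\le\;1+O\bigl(t^{-\frac{1}{\alpha+1}}\bigr)
\]
uniformly on Zone I, since $\ln(x/x_1(t))\le \ln(1+\gamma_3/t)\le \gamma_3/t$. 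Consequently $u\le 2$ (hence $\psi\ge e^{-1}$) for $t$ large, so $\psi/(1-\ln\psi)^\alpha\ge e^{-1}/2^\alpha$ on Zone I. It is then enough to pick $\gamma_3$ small (or equivalently $t$ large) so that $\mathcal{J}_3\gamma_3 t^{-1/(\alpha+1)}\le \tfrac{\gamma_1}{2}\cdot\tfrac{e^{-1}}{2^\alpha}\,t^{-1/(\alpha+1)}$, and Zone I is settled.

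Putting these estimates together, for all $t\ge \bar t:=\max\{t_0,\tilde t_1,\tilde t_2,\ldots\}$ (where the auxiliary times are fixed by the calibration above), one has $\partial_t\Psi-\mathcal{D}[\Psi]-f(\Psi)\ge 0$ on $\{x>x_1(t)\}$, as desired. The essential accounting is that $p=\tfrac{1}{\alpha+1}$ is forced by the $t^{-1/(\alpha+1)}$ scaling of the margin in \eqref{naemt}, $q=2\alpha+1$ kills the $(1-\ln\psi)$-dependence in the critical Zone-II term, and $\kappa=\bar\kappa$ beats the remaining constant $\mathcal{C}_5$.
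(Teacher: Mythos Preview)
Your proposal is correct and follows the paper's two-zone strategy with the same parameter calibration. The only genuine difference is how you obtain the uniform lower bound on $\psi/(1-\ln\psi)^\alpha$ in Zone~I: you integrate the upper bound of \eqref{naemx} from $x_1(t)$ to get $u(x)^{\alpha+1}\le 1+O(t^{-1/(\alpha+1)})$, whereas the paper compares level sets via \eqref{esubfl}, showing $x_{1/2}(t)\ge x_1(t)+R(t)$ for $t$ large (since $x_{1/2}(t)-x_1(t)\asymp t^{-\alpha/(\alpha+1)}x_1(t)\gg t^{-1}x_1(t)=R(t)/\gamma_3$), whence $\psi/(1-\ln\psi)^\alpha\ge \tfrac{1}{2(1+\ln 2)^\alpha}$ on Zone~I. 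Both routes are short and equivalent in strength; yours is slightly more self-contained (it does not invoke \eqref{esubfl}), while the paper's avoids the integration step.

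One small slip: the parenthetical ``(or equivalently $t$ large)'' is wrong. Since $\ln^\alpha x_1(t)/t^{\alpha/(\alpha+1)}$ converges to a nonzero constant, both sides of your inequality $\mathcal{J}_3\gamma_3 t^{-1/(\alpha+1)}\le \tfrac{\gamma_1}{2}\cdot\tfrac{e^{-1}}{2^\alpha}\,t^{-1/(\alpha+1)}$ scale identically in $t$; you genuinely need $\gamma_3$ small (which is legitimate, as $\gamma_3$ is a free parameter in Lemma~\ref{epubed}), not $t$ large.
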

\begin{proof}[Proof of Proposition \ref{epubpk}]
 In view of Hypothesis \ref{fu}, since $\Psi(t,\cdot)=\psi(t,\cdot)$ on $(x_1(t),+\infty)$, we obtain 
\begin{equation}
    \label{epubfl}
    f(\Psi(t,\cdot))\le \frac{r\psi(t,\cdot)}{(1-\ln \psi(t,\cdot))^\alpha}\quad \text{on }(x_1(t),+\infty).
\end{equation}
 We split the proof according to two space zones: $(x_1(t), x_1(t)+R(t))$ and $[ x_1(t)+R(t),+\infty)$.
 \medskip
 \par
 \noindent
\# \textbf{Zone 1: $x\in (x_1(t), x_1(t)+R(t))$.} 
By \eqref{naemt}, \eqref{naemx}, \eqref{epubfl} and part II in Lemma \ref{epubed}, for all $t\ge \tilde t_1$ and $x\in( x_1(t), x_1(t)+R(t))$, we arrive at 
\[
\begin{aligned}
& \frac{\partial \Psi}{\partial t}(t,x)-\mathcal{D}[\Psi](t,x)-f(\Psi(t,x))
\\
&\ge \gamma_1 t^{-\frac{1}{\alpha+1}}\frac{\psi(t,x)}{(1-\ln \psi(t,x))^\alpha}-\mathcal{C}_1 e^{-\frac{1}{2s}[r(\alpha+1)t]^\frac{1}{\alpha+1}}\frac{\psi(t,x)}{(1-\ln \psi(t,x))^\alpha}+\mathcal{J}_3R(t)\frac{\partial \psi}{\partial x}(t, x_1(t))\\
&\ge \left(\frac{1}{2}\gamma_1 t^{-\frac{1}{\alpha+1}}-\mathcal{C}_1 e^{-\frac{1}{2s}[r(\alpha+1)t]^\frac{1}{\alpha+1}}\right)\frac{\psi(t,x)}{(1-\ln \psi(t,x))^\alpha}+\frac{1}{2}\gamma_1 t^{-\frac{1}{\alpha+1}}\frac{\psi(t,x)}{(1-\ln \psi(t,x))^\alpha}-\gamma_2^{-1} \mathcal{J}_3R(t) \frac{\ln^\alpha  x_1(t)}{ x_1(t)}.
\end{aligned}
\]
We recall $R(t)=\gamma_3 t^{-1} x_1(t)$ for some $\gamma_3>0$ to be chosen later.
By \eqref{esubfl}, we get \[x_{\frac{1}{2}}(t)- x_1(t)\asymp t^{-\frac{\alpha}{\alpha+1}+\frac{1}{2s}\left(\frac{q}{\alpha+1}+p\right)}e^{\frac{1}{2s}[r(\alpha+1)t]^\frac{1}{\alpha+1}},\]
and it follows from \eqref{esubxte} that  there is $\bar t_1\ge \tilde t_1$ such that for all $t\ge \bar t_1$, we have
\[
x_{\frac{1}{2}}(t)\ge  x_1(t)+ R(t),
\]
It follows from the fact that $\psi(t,\cdot)$ is non-increasing on $(x_1(t),+\infty)$ that, for all $t\ge \bar t_1$ and $x\in (x_1(t), x_1(t)+R(t))$, 
\[
\frac{\psi(t,x)}{(1-\ln \psi(t,x))^\alpha}\ge \frac{\psi(t, x_1(t)+R(t))}{[1-\ln \psi(t, x_1(t)+R(t))]^\alpha}\ge \frac{\psi \left(t,x_{\frac{1}{2}}(t)\right)}{\left[1-\ln \psi \left(t,x_{\frac{1}{2}}(t)\right)\right]^\alpha}=\frac{1}{2(1+\ln 2)^\alpha}.
\]
Then, by \eqref{esubxte}, for all $t\ge \bar t_1$ and $x\in( x_1(t), x_1(t)+R(t))$, we achieve
\[
\frac{1}{2}\gamma_1 t^{-\frac{1}{\alpha+1}}\frac{\psi(t,x)}{(1-\ln \psi(t,x))^\alpha}-\gamma_2^{-1} \mathcal{J}_3 R(t)\frac{\ln^\alpha  x_1(t)}{ x_1(t)}\ge  t^{-\frac{1}{\alpha+1}}\left(\frac{\gamma_1}{4(1+\ln 2)^\alpha}-\mathcal{J}_3\gamma _2^{-1}\gamma_3\frac{\ln^\alpha  x_1(t)}{t^\frac{\alpha}{\alpha+1}}\right)\ge 0,
\]
by taking $\gamma_3$ small enough, say $\gamma_3\le \tilde \gamma_3$.
On the other hand, for $t\ge \bar t_1$ up to enlarging $\bar t_1$ if necessary, we have 
\[
\frac{1}{2}\gamma_1 t^{-\frac{1}{\alpha+1}}-\mathcal{C}_1 e^{-\frac{1}{2s}[r(\alpha+1)t]^\frac{1}{\alpha+1}}\ge 0.
\]
Therefore, for all $t\in[ \bar t_1,\infty)$ and $x\in( x_1(t), x_1(t)+R(t))$, we achieve
\begin{equation}
    \frac{\partial \Psi}{\partial t}(t,x)-\mathcal{D}[\Psi](t,x)-f(\Psi(t,x))\ge 0.
\end{equation}
\medskip
\par
\noindent
\# \textbf{Zone 2: $x\ge  x_1(t)+R(t)$.} Let us estimate $\mathcal{D}[\Psi]$ for $x\ge  x_1(t)+R(t)$. 
In view of \eqref{naemt} and Hypothesis \ref{fu}, therefore, we have, for all $t\ge \tilde t_2$ and $x\ge  x_1(t)+R(t)$,
\[
	\begin{aligned}
		&\frac{\partial \Psi}{\partial t}(t,x)-\mathcal{D}[\Psi](t,x)-f(\Psi(t,x))\\
		&\ge t^{-\frac{1}{\alpha+1}}\left\{\gamma_1- \mathcal{C}_5\kappa^{-1} t^{\frac{1}{\alpha+1}-p}(1-\ln\psi(t,x))^{2\alpha-q+1}+o_{t\to\infty}(1)\right\}\frac{\psi(t,x)}{(1-\ln \psi(t,x))^{\alpha}}.
	\end{aligned}
\]
To validate the above computations, we need $p\ge \frac{1}{\alpha+1}$, $q\ge2\alpha+1$ and $\kappa\ge \bar\kappa$. Thus, we can choose $p=\frac{1}{\alpha+1}$, $q=2\alpha+1$ and $\kappa=\bar\kappa$. 
As a result, for such $p$, $q$ and $\kappa$, there is a time $\bar t_2>\tilde t_2$ such that for all $t\ge \bar t_2$ and $x\ge  x_1(t)+R(t)$, we have
\[
	\frac{\partial \Psi}{\partial t}(t,x)-\mathcal{D}[\Psi](t,x)-f(\Psi(t,x))\ge0.
\]
\par 
Therefore, collecting the estimates in the two zones, by taking $\bar t:=\max\{\bar t_1,\bar t_2\}$, $p=\frac{1}{\alpha+1}$, $q=2\alpha+1$ and $\kappa=\bar\kappa$, for all $t\ge \bar t$ and $x> x_1(t)$, we achieve
\[
	\frac{\partial \Psi}{\partial t}(t,x)-\mathcal{D}[\Psi](t,x)-f(\Psi(t,x))\ge0.
 \]

\end{proof}

\subsubsection{Proof of the upper bound in Theorem \ref{al}}
	In view of Hypothesis \ref{ic1}, by \eqref{epubv0u0}, one has $u_0\le v_0\le\Psi(0,\cdot)\le \Psi(\bar t,\cdot)$. 
	Thus, by proposition \ref{epubpk} and the fact that $u(t,x)\le 1= \Psi(t+\bar t,x)$ for all $t>0$ and $x\le x_1(t+\bar t)$, the comparison principle yields $u(t,x)\le \Psi(t+\bar t,x)$ for all $(t,x)\in\mathbb{R}^+\times \mathbb{R}$. Thus, for any $\lambda\in(0,1)$, we have
	\[
		\lambda \le u(t,X_\lambda(t))\le \Psi(t+\bar t,X_\lambda(t)),\quad \forall t>0.
	\]
 Therefore, by \eqref{xysim}, there exists $T_\lambda>0$ such that, for $t>T_\lambda$, we have
	\[
 \begin{aligned}
		X_\lambda(t)&\le v^{-1}_0\left\{(1+\kappa (t+\bar t)^p)^{-1}\exp\left\{1-[(1-\ln \lambda)^{\alpha+1}+r(\alpha+1)(t+\bar t)]^{\frac{1}{\alpha+1}}\right\}\right\}\lesssim t^{\frac{1}{s}} e^{\frac{1}{2s} [r (\alpha+1)t]^{\frac{1}{\alpha+1}}}.
  \end{aligned}
	\]
The proof of the upper bound in Theorem \ref{al} is now complete.
\subsection{The lower bound}
To obtain a precise lower bound on the level sets of the solution to \eqref{oeq1}, we need to construct a subtle sub-solution. For equation \eqref{oeq1}, it turns out that it is not enough to construct a suitable sub-solution by using the solution of the simplified ordinary differential equation $\frac{\partial u}{\partial t}=\frac{ru}{(1-\ln u)^\alpha}$. Therefore, we should use the full ordinary differential equation $\frac{\partial u}{\partial t}=\frac{ru}{(1-\ln u)^\alpha}(1-u^{p})$ for $p\in(0,1)$. Unfortunately, its solution is not explicit, which causes some tedious difficulties in subsequent calculations.
\subsubsection{Definition of the sub-solution and basic computations}
Let us define
\[
	\varphi(t,x)=\exp{\left\{1-\left[\left(1+2s\ln x\right)^{\alpha+1}- r(\alpha+1)[t-h(t,x)]\right]^{\frac{1}{\alpha+1}}\right\}},
\]
where 
\[
h(t,x):=\kappa\frac{ t^{\frac{\alpha}{\alpha+1}}e^{p[r(\alpha+1)t]^\frac{1}{\alpha+1}}}{x^{2sp}},
\]
for some constants $\kappa\in(0,+\infty)$ and $p\in(0,1)$ to be chosen later. Here, we list some useful lemmas that we prove in Appendix \ref{appendix-exp-acc-low}. We first obtain:
\begin{lemma}\label{eslbl1}
Assume $\kappa \le\kappa_0:=\min \left\{\frac{[r(\alpha+1)]^\frac{\alpha}{\alpha+1}}{2^{\alpha+1} rpe^{2p}},\frac{1}{2e^{2p}[r(\alpha+1)]^\frac{1}{\alpha+1}},\frac{1}{4\tilde \gamma_1 e^{2p}},\frac{s[r(\alpha+1)]^\frac{\alpha}{\alpha+1}}{2^\alpha r\tilde \gamma_2}\right\}$, where $\tilde \gamma_1:= \frac{p}{\alpha+1}[r(\alpha+1)]^\frac{1}{\alpha+1}$ and  $\tilde \gamma_2:=2sp(2sp+1)e^{2p}$. Let $x_\Lambda(t)$ be the position of the level set of $\varphi$ of height $\Lambda\in(0,e]$, that is, $\varphi(t,x_\Lambda(t))=\Lambda$.
\begin{itemize}
    \item[(i).] For any $\Lambda\in(0,e)$, there is a time $\tilde t>0$ such that, for any $t\ge \tilde t$ and  $x\ge x_\Lambda(t)$, $\varphi(t,x)$ is well-defined, strictly increasing in $t$, and strictly decreasing and convex in $x$.
    \item[(ii).]  For any $\Lambda\in(0,e)$, the position $x_\Lambda(t)$ is  strictly decreasing in $\Lambda$  and  unique  such that $\varphi(t,x_\Lambda(t))=\Lambda$, and satisfies
\begin{equation}\label{eslbxtl}
x_\Lambda(t)> e^{-\frac{1}{s} }e^{\frac{1}{2s}\left[r(\alpha+1)t\right]^\frac{1}{\alpha+1}}.
\end{equation}
\end{itemize}

\end{lemma}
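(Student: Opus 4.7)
The plan is to introduce the auxiliary function $A(t,x) := (1+2s\ln x)^{\alpha+1} - r(\alpha+1)[t - h(t,x)]$ so that $\varphi = \exp\{1 - A^{1/(\alpha+1)}\}$, and then to read off every assertion of the lemma from the signs of $A$, $\partial_t A$, $\partial_x A$, and $\partial_{xx}A$. Throughout I write $\tau := [r(\alpha+1)t]^{1/(\alpha+1)}$ and $x_0(t) := e^{-1/s}e^{\tau/(2s)}$. The pivotal observation is that on the half-line $\{x \ge x_0(t)\}$ one has $x^{-2sp} \le e^{2p-p\tau}$, so that the apparently growing factor $e^{p\tau}$ appearing in $h$ is tamed to the harmless $\kappa e^{2p}$; consequently $h$, $\partial_t h$, $\partial_x h$ and $\partial_{xx}h$ all reduce to $\kappa$ times an explicit constant on this half-line, and the four entries defining $\kappa_0$ are calibrated so that each of the four sign checks is unambiguous.

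I would start with (ii). At $x = x_0(t)$ the identity $A(t,x_0(t)) = (\tau-1)^{\alpha+1} - \tau^{\alpha+1} + r(\alpha+1)\kappa e^{2p} t^{\alpha/(\alpha+1)}$, combined with the mean value bound $\tau^{\alpha+1} - (\tau-1)^{\alpha+1} \ge (\alpha+1)(\tau-1)^\alpha$, shows that $A(t,x_0(t)) < 0$ for $t$ large under the first two entries of $\kappa_0$. Independently, the factored form $A_x = \frac{2s(\alpha+1)}{x}\bigl[(1+2s\ln x)^\alpha - prh\bigr]$ together with $(1+2s\ln x)^\alpha \ge (\tau-1)^\alpha \ge \tau^\alpha/2^\alpha$ on $\{x \ge x_0(t)\}$ and the first entry of $\kappa_0$ yields $A_x > 0$ on the same half-line. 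Since $A(t,\cdot) \to +\infty$ as $x \to \infty$, the intermediate value theorem produces a unique $x^*(t) \in (x_0(t),\infty)$ with $A(t, x^*(t)) = 0$; on $(x^*(t),\infty)$ the function $\varphi$ is well-defined and strictly decreases from $e$ down to $0$, which directly delivers existence, uniqueness, strict monotonicity in $\Lambda$, and the lower bound $x_\Lambda(t) > x^*(t) > x_0(t)$.

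Claim (i) then reduces to routine differentiation on $\{x \ge x_\Lambda(t)\} \subset \{x \ge x_0(t)\}$. Strict decrease in $x$ is already $A_x > 0$. For strict increase in $t$, I would write $\partial_t\varphi = \frac{r\varphi}{(1-\ln\varphi)^\alpha}\bigl(1-\partial_t h\bigr)$ and bound $\partial_t h \le \kappa e^{2p}\bigl[\frac{\alpha}{\alpha+1}t^{-1/(\alpha+1)} + \tilde\gamma_1\bigr] < 1$ using the second and third entries of $\kappa_0$. For convexity I would compute $A_{xx} = \frac{2s(\alpha+1)}{x^2}\bigl[-(1+2s\ln x)^\alpha + 2s\alpha(1+2s\ln x)^{\alpha-1} + pr(2sp+1)h\bigr]$; the factorization $(1+2s\ln x)^{\alpha-1}\bigl(1+2s\ln x - 2s\alpha\bigr) \ge (\tau-1)^\alpha/2$ for $t$ large, combined with the fourth entry of $\kappa_0$ (which is precisely where $\tilde\gamma_2 = 2sp(2sp+1)e^{2p}$ enters), forces $A_{xx} \le 0$. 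Setting $\phi := A^{1/(\alpha+1)}-1$, one has $\phi_{xx} = \frac{A_{xx}}{(\alpha+1)A^{\alpha/(\alpha+1)}} - \frac{\alpha A_x^2}{(\alpha+1)^2 A^{(2\alpha+1)/(\alpha+1)}}$, whence $A_{xx} \le 0$ gives $\phi_{xx} \le 0$, and $\varphi_{xx} = \varphi(\phi_x^2 - \phi_{xx}) \ge 0$ follows at once.

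The main obstacle is not any individual estimate but the constant bookkeeping: each of the four items in $\kappa_0$ is tailored to exactly one of the sign checks above, and some care is required to see that the leading logarithmic factor $1 + 2s\ln x$, of size $\tau$ on $\{x \ge x_0(t)\}$, dominates the $h$-driven terms with the precise margin encoded in $\kappa_0$. Once the observation $x^{-2sp} \le e^{2p-p\tau}$ is isolated, everything else is a direct application of the chain rule, of the convexity of $y \mapsto y^{\alpha+1}$, and of the intermediate value theorem.
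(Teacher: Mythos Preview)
Your proof is correct and follows essentially the same route as the paper: both introduce the auxiliary function $A(t,x)=(1+2s\ln x)^{\alpha+1}-r(\alpha+1)[t-h(t,x)]$ (the paper calls it $\varphi_{log}$), exploit the key bound $x^{-2sp}\le e^{2p-p\tau}$ on $\{x\ge x_0(t)\}$ to tame $h$, and then check the signs of $A$, $A_x$, $A_t$ and $A_{xx}$ against the four entries of $\kappa_0$. The only cosmetic difference is that for convexity you argue via $A_{xx}\le 0\Rightarrow\phi_{xx}\le 0\Rightarrow\varphi_{xx}\ge 0$, whereas the paper expands $\varphi_{xx}$ directly and drops the nonnegative square term---but the remaining bracket in the paper's expression is precisely $-\tfrac{1}{\alpha+1}A_{xx}$, so the two computations coincide.
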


To make the above lemma valid, we fix $\kappa=\kappa_0$ in this subsection.
With the above lemma, an estimate of $\varphi$ can be derived. 
\begin{lemma}\label{varphietx}
Let $\Lambda\in(0,1)$. We have the following estimate for $\varphi$:
    \begin{equation}\label{eslbephi}
  \frac{1}{x^{2s}}\le \varphi(t,x)\le \frac{e^{[r(\alpha+1)t]^\frac{1}{\alpha+1}}}{x^{2s}}\quad \text{ for all }t> \tilde t \text{ and }x\ge x_\Lambda(t).
\end{equation}
 For all $t> \tilde t$ and $x\ge x_\Lambda(t) $, we have
\begin{equation}\label{eslbphit0}
    \frac{\partial \varphi}{\partial t}\le \frac{r\varphi}{(1-\ln \varphi)^\alpha}(1-\tilde \gamma_1 \kappa \varphi^p),
\end{equation}
where $\tilde \gamma_1= \frac{p}{\alpha+1}[r(\alpha+1)]^\frac{1}{\alpha+1}$, and if $\varphi\le\min\{ e^{\frac{1}{2s}-\alpha},e\}$, then
\begin{equation}\label{eslbphix0}
\frac{\partial \varphi}{\partial x}  \ge  -2s\varphi^{1+\frac{1}{2s}}.
\end{equation}
Moreover, we have
    \begin{equation}
     \label{eslbpx0}   
 \lim_{t\to \infty}   \frac{\partial \varphi}{\partial x}(t,x_\Lambda(t))=0 \quad \text{uniformly for }\Lambda\in(0,1) .
    \end{equation}

\end{lemma}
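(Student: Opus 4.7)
The plan is to work directly from the explicit formula $1 - \ln\varphi = A^{1/(\alpha+1)}$ with
\[
A(t,x) := (1+2s\ln x)^{\alpha+1} - r(\alpha+1)[t-h(t,x)],
\]
exploiting the elementary inequality $(a-b)^{1/(\alpha+1)} \geq a^{1/(\alpha+1)} - b^{1/(\alpha+1)}$ valid for $0 \leq b \leq a$ (since $1/(\alpha+1) \in (0,1]$). For the two-sided bound \eqref{eslbephi}, the lower bound $\varphi \geq x^{-2s}$ is equivalent to $A \leq (1+2s\ln x)^{\alpha+1}$, i.e.\ $t \geq h(t,x)$; this is guaranteed for $t > \tilde t$ and $x \geq x_\Lambda(t)$ thanks to Lemma~\ref{eslbl1}(ii) (which forces $x_\Lambda(t)$ to grow exponentially in $t^{1/(\alpha+1)}$) together with the smallness of $\kappa = \kappa_0$. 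The upper bound follows from the trivial inequality $A \geq (1+2s\ln x)^{\alpha+1} - r(\alpha+1)t$ combined with the subadditivity inequality above.

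For \eqref{eslbphit0}, differentiating in time gives
\[
\frac{\partial \varphi}{\partial t} = \frac{r\varphi}{(1-\ln\varphi)^\alpha}\left(1 - \frac{\partial h}{\partial t}\right),
\]
so it suffices to show $\partial_t h \geq \tilde\gamma_1 \kappa \varphi^p$. A direct computation identifies the dominant contribution to $\partial_t h$ as $\tilde\gamma_1 \kappa e^{p[r(\alpha+1)t]^{1/(\alpha+1)}}/x^{2sp}$ (the other term coming from the polynomial prefactor $t^{\alpha/(\alpha+1)}$ is non-negative and may simply be dropped), and the upper bound in \eqref{eslbephi} then yields $\varphi^p \leq e^{p[r(\alpha+1)t]^{1/(\alpha+1)}}/x^{2sp}$, closing the argument.

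For \eqref{eslbphix0}, differentiation in space produces
\[
\frac{\partial \varphi}{\partial x} = -\frac{2s\varphi\left[(1+2s\ln x)^\alpha - prh\right]}{x(1-\ln \varphi)^\alpha}.
\]
Setting $u := 1+2s\ln x$ and $\ell := 1-\ln\varphi$, one has $x = e^{(u-1)/(2s)}$ and $\varphi^{1/(2s)} = e^{(1-\ell)/(2s)}$, so after dropping the non-negative term $prh$ the claim reduces to the clean inequality $(u/\ell)^\alpha \leq e^{(u-\ell)/(2s)}$. Since $u \geq \ell$ and $(1+y)^\alpha \leq e^{\alpha y}$ for $y \geq 0$, this reduces further to a lower bound on $\ell$ of the form $\ell \geq 2s\alpha$, which is precisely what $\varphi \leq e^{1/(2s)-\alpha}$ is designed to provide (possibly after refining the elementary inequality or retaining part of the $-prh$ contribution in borderline ranges of $\alpha$ and $s$).

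Finally, for \eqref{eslbpx0}, substituting $\varphi(t,x_\Lambda(t)) = \Lambda$ in the formula for $\partial_x\varphi$ and dropping the $prh$ term gives
\[
\left|\frac{\partial\varphi}{\partial x}(t,x_\Lambda(t))\right| \leq \frac{2s\Lambda\,(1+2s\ln x_\Lambda(t))^\alpha}{x_\Lambda(t)(1-\ln \Lambda)^\alpha},
\]
and by Lemma~\ref{eslbl1}(ii), $x_\Lambda(t) \geq e^{-1/s}e^{[r(\alpha+1)t]^{1/(\alpha+1)}/(2s)}$ grows exponentially in $t^{1/(\alpha+1)}$ while $\ln x_\Lambda(t)$ grows only polynomially, so the right-hand side tends to zero uniformly in $\Lambda\in(0,1)$. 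The main technical obstacle throughout lies in \eqref{eslbphix0}: the constant $2s$ in the bound must be matched precisely, which forces a careful passage between the natural quantity $\ell = 1-\ln\varphi$ and the threshold $e^{1/(2s)-\alpha}$, and may require exploiting the retained $prh$ term in regimes where simply dropping it is too lossy.
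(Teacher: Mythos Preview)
Your approach is essentially the same as the paper's. The subadditivity inequality for \eqref{eslbephi}, the computation of $\partial_t h$ combined with the upper bound $\varphi^p \le e^{p[r(\alpha+1)t]^{1/(\alpha+1)}}/x^{2sp}$ for \eqref{eslbphit0}, and the exponential growth of $x_\Lambda(t)$ for \eqref{eslbpx0} are exactly what the paper does.

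Your hedging on \eqref{eslbphix0} is unnecessary. After dropping the positive $rph$-term, the paper observes that $y\mapsto y(1-2s\ln y)^\alpha$ is non-decreasing on $(0,e^{1/(2s)-\alpha}]$ and applies this with $y_1=1/x\le y_2=\varphi^{1/(2s)}$; this is precisely your inequality $(u/\ell)^\alpha \le e^{(u-\ell)/(2s)}$, rewritten as $F(u)\le F(\ell)$ for $F(z)=z^\alpha e^{-z/(2s)}$, which is decreasing for $z\ge 2s\alpha$. So both routes give the clean condition $\ell \ge 2s\alpha$, i.e.\ $\varphi \le e^{1-2s\alpha}$, with no need to retain the $rph$-term. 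The threshold $e^{1/(2s)-\alpha}$ printed in the lemma statement appears to be a slip for $e^{1-2s\alpha}$ (note that applying the paper's monotonicity requires $\varphi^{1/(2s)}\le e^{1/(2s)-\alpha}$, not $\varphi\le e^{1/(2s)-\alpha}$); this is harmless downstream since $\varepsilon$ is later taken much smaller anyway, but it explains the mismatch you sensed.
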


\begin{lemma}\label{eslbl2}
   For any $\Lambda\in(0,1)$, we have
    \[
    x_\Lambda(t)\asymp_\Lambda e^{\frac{1}{2s}[r(\alpha+1)t]^\frac{1}{\alpha+1}}\quad \text{for all }t\ge \tilde t,
\]
and for any $0<\Lambda_1<\Lambda_2 < 1$, we have
\[
x_{\Lambda_1}(t)-x_{\Lambda_2}(t)\to +\infty\quad \text{as } t\to \infty.
\]
\end{lemma}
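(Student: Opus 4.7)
The plan is to address the two claims in turn. For the first, I start from the defining equation $\varphi(t,x_\Lambda(t))=\Lambda$, isolate the argument of the outer exponential, and rearrange to obtain
\[
\bigl(1+2s\ln x_\Lambda(t)\bigr)^{\alpha+1} = (1-\ln\Lambda)^{\alpha+1} + r(\alpha+1)\bigl(t - h(t,x_\Lambda(t))\bigr).
\]
Since $h\ge 0$ by its explicit formula, the right-hand side is at most $(1-\ln\Lambda)^{\alpha+1} + r(\alpha+1)t$. The map $u\mapsto u^{1/(\alpha+1)}$ is concave on $[0,+\infty)$ with value zero at zero, hence subadditive, so
\[
1+2s\ln x_\Lambda(t)\le (1-\ln\Lambda)+[r(\alpha+1)t]^{1/(\alpha+1)},
\]
which gives $x_\Lambda(t)\le \Lambda^{-1/(2s)}\,\exp\!\bigl(\tfrac{1}{2s}[r(\alpha+1)t]^{1/(\alpha+1)}\bigr)$. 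Combined with the lower bound $x_\Lambda(t)>e^{-1/s}\exp\!\bigl(\tfrac{1}{2s}[r(\alpha+1)t]^{1/(\alpha+1)}\bigr)$ already provided by \eqref{eslbxtl} in Lemma~\ref{eslbl1}, this yields the first claim.

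For the second claim, introduce $y_i(t):=1+2s\ln x_{\Lambda_i}(t)$ and $T(t):=[r(\alpha+1)t]^{1/(\alpha+1)}$. The first claim together with \eqref{eslbxtl} gives a uniform-in-$t$ bound $|y_i(t)-T(t)|\le C_{\Lambda_i}$. Plugging $x_{\Lambda_i}(t)=\exp((y_i-1)/(2s))$ into the explicit formula for $h$ yields $h(t,x_{\Lambda_i}(t))=\kappa e^{p}\, t^{\alpha/(\alpha+1)}\, e^{-p(y_i-T)}$. Subtracting the two defining equations written in the form $y_i^{\alpha+1}=(1-\ln\Lambda_i)^{\alpha+1}+r(\alpha+1)t-r(\alpha+1)h(t,x_{\Lambda_i}(t))$ and applying the mean-value theorem twice (to $u\mapsto u^{\alpha+1}$ on $[y_2,y_1]$ and to $u\mapsto e^{-pu}$ between $y_1-T$ and $y_2-T$), I rewrite the identity as
\[
(y_1-y_2)\Bigl[(\alpha+1)\xi^\alpha - r(\alpha+1)\kappa p\, e^{p}\,e^{-p\eta}\, t^{\alpha/(\alpha+1)}\Bigr] = (1-\ln\Lambda_1)^{\alpha+1}-(1-\ln\Lambda_2)^{\alpha+1}=:c_{\Lambda_1,\Lambda_2}>0,
\]
for intermediate values $\xi\in(y_2,y_1)$ and $\eta$ between $y_1-T$ and $y_2-T$.

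The principal difficulty is the positivity of the bracket on the left and its correct order of magnitude. Using $\xi\ge y_2\ge T-1$ (itself from \eqref{eslbxtl}) and $\eta\ge -1$, one has $(\alpha+1)\xi^\alpha\ge \tfrac{1}{2}(\alpha+1)T^\alpha = \tfrac{1}{2}(\alpha+1)[r(\alpha+1)]^{\alpha/(\alpha+1)}\,t^{\alpha/(\alpha+1)}$ for $t$ large, and $e^{-p\eta}\le e^{p}$. The smallness constraints defining $\kappa_0$ in Lemma~\ref{eslbl1} are precisely calibrated so that $r(\alpha+1)\kappa p e^{2p}\le \tfrac{1}{4}(\alpha+1)[r(\alpha+1)]^{\alpha/(\alpha+1)}$, which ensures the $h$-contribution is at most half of $(\alpha+1)\xi^\alpha$ for $t$ large, so the bracket is bounded below by a positive constant times $t^{\alpha/(\alpha+1)}$. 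Thus $y_1(t)-y_2(t)\asymp_{\Lambda_1,\Lambda_2} t^{-\alpha/(\alpha+1)}$, and translating back gives
\[
x_{\Lambda_1}(t)-x_{\Lambda_2}(t) = x_{\Lambda_2}(t)\bigl(e^{(y_1-y_2)/(2s)}-1\bigr) \gtrsim_{\Lambda_1,\Lambda_2} \frac{e^{T(t)/(2s)}}{t^{\alpha/(\alpha+1)}}\longrightarrow +\infty,
\]
the exponential factor dominating the polynomial denominator.
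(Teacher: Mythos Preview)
Your argument for the first claim is essentially the paper's: both use $h\ge 0$ together with subadditivity of $u\mapsto u^{1/(\alpha+1)}$ to get the upper bound, and invoke \eqref{eslbxtl} for the lower one. For the second claim the paper proceeds differently: it writes $x_\Lambda(t)=\mathcal{X}(\Lambda,x_\Lambda(t))$ with $\mathcal{X}(x,y)=\exp\{-\tfrac{1}{2s}+\tfrac{1}{2s}[(1-\ln x)^{\alpha+1}+r(\alpha+1)(t-h(t,y))]^{1/(\alpha+1)}\}$, differentiates implicitly to obtain $\partial_\Lambda x_\Lambda=(\partial_x\mathcal{X})/(1-\partial_y\mathcal{X})$, bounds $\partial_y\mathcal{X}\le 2^{-\alpha}<1$ using the first constraint in $\kappa_0$, and then applies the mean-value theorem in $\Lambda$. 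Your route---subtracting the two defining equations and applying the MVT to $u^{\alpha+1}$ and $e^{-pu}$---is more elementary and avoids the implicit-function machinery; the role of $\kappa\le\kappa_0$ is the same in both (ensuring the ``correction'' from $h$ does not spoil invertibility, i.e.\ your bracket stays positive, or equivalently $\partial_y\mathcal{X}<1$). Two small points: your specific inequality $r(\alpha+1)\kappa p e^{2p}\le \tfrac14(\alpha+1)[r(\alpha+1)]^{\alpha/(\alpha+1)}$ only holds for $\alpha\ge 1$; what the first constraint in $\kappa_0$ actually gives is a bound by $2^{-(\alpha+1)}(\alpha+1)[r(\alpha+1)]^{\alpha/(\alpha+1)}$, which still suffices since $2^{-(\alpha+1)}<\tfrac12$. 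Also, the \emph{lower} bound $y_1-y_2\gtrsim t^{-\alpha/(\alpha+1)}$ that you need for the final display comes from the \emph{upper} bound on the bracket, namely $(\alpha+1)\xi^\alpha\le (\alpha+1)y_1^\alpha\lesssim T^\alpha$, which follows immediately from $|y_1-T|\le C_{\Lambda_1}$ but is not stated explicitly in your write-up.
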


\begin{lemma}\label{ge}
  For any $\varepsilon\in\left(0,\frac{2}{3}\right)$,  there exist a function $g_\varepsilon\in C^2:[0,1]\rightarrow[0,\varepsilon]$ such that  
    \[
    g_\varepsilon(y) = y \quad\text{for all } y\in \left[0, \frac{\varepsilon}{2}\right]\quad\text{and}\quad g_\varepsilon(y)=\varepsilon\quad\text{for all } y\in \left[\frac{3}{2}\varepsilon,1\right],
    \]
 and for all $y\in(0,1)$, 
\begin{equation}\label{nb}
g_\varepsilon(y)\le y,\quad g_\varepsilon'(y)\frac{y}{(1-\ln y)^\alpha}\le \frac{g_\varepsilon(y)}{(1-\ln g_\varepsilon(y))^\alpha},\quad  0 \le g'_\varepsilon(y)\le 1\quad\text{and}\quad  g_\varepsilon''(y)\ge -C_\varepsilon \quad \text{for some }C_\varepsilon>0. 
\end{equation}
\end{lemma}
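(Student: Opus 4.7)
The plan is to build $g_\varepsilon$ piecewise, prescribing the identity on $[0,\varepsilon/2]$ and the constant $\varepsilon$ on $[\tfrac{3}{2}\varepsilon, 1]$, and interpolating between them through an ODE tailored so that the nontrivial inequality holds automatically. Writing $\varphi(y) := y/(1-\ln y)^\alpha$, the central inequality rewrites as $g_\varepsilon'(y)\varphi(y) \le \varphi(g_\varepsilon(y))$; a short computation gives $\varphi'(y) = (1+\alpha-\ln y)/(1-\ln y)^{\alpha+1} > 0$ on $(0,1)$, so $\varphi$ is smooth and strictly increasing on $(0,1]$ and, crucially, bounded away from $0$ on the transition interval $[\varepsilon/2, \tfrac{3}{2}\varepsilon]$.

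Fix a smooth non-increasing cutoff $\eta \colon [0,1] \to [0,1]$ equal to $1$ on a neighbourhood of $[0,\varepsilon/2]$ and to $0$ on a neighbourhood of $[\tfrac{3}{2}\varepsilon, 1]$, and define $g_\varepsilon$ as the solution of
\[
g_\varepsilon'(y) = \eta(y)\, \frac{\varphi(g_\varepsilon(y))}{\varphi(y)}, \qquad g_\varepsilon(\varepsilon/2) = \varepsilon/2.
\]
On $[0,\varepsilon/2]$, since $\eta \equiv 1$ and $y\mapsto y$ solves this ODE with equality, uniqueness forces $g_\varepsilon(y)=y$; on $[\tfrac{3}{2}\varepsilon, 1]$, $\eta\equiv 0$ forces $g_\varepsilon$ to be constant. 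Smooth matching at the endpoints is automatic because $\eta$ is locally constant there, so $g_\varepsilon \in C^\infty([0,1])\subset C^2([0,1])$. To enforce $g_\varepsilon(\tfrac{3}{2}\varepsilon)=\varepsilon$, I would embed $\eta$ in a one-parameter family $\{\eta_\mu\}_{\mu\in[0,1]}$ continuously interpolating between a fast-decay cutoff, for which the terminal value of the flow is close to $\varepsilon/2$, and a slow-decay one, for which it is close to $\tfrac{3}{2}\varepsilon$. Continuous dependence of the ODE on $\mu$ and the intermediate value theorem then produce a $\mu_\star$ yielding exactly $g_\varepsilon(\tfrac{3}{2}\varepsilon) = \varepsilon \in (\varepsilon/2, \tfrac{3}{2}\varepsilon)$; this parameter-matching is the main technical obstacle, though it is essentially routine.

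The four pointwise conclusions then follow cheaply. The key inequality is immediate from the ODE: on the transition $g_\varepsilon'(y)\varphi(y) = \eta(y)\varphi(g_\varepsilon(y)) \le \varphi(g_\varepsilon(y))$, while outside it the two sides either coincide or the left-hand side vanishes. The bound $g_\varepsilon(y)\le y$ is propagated from $g_\varepsilon(\varepsilon/2)=\varepsilon/2$ by a barrier argument, using that, as long as $g_\varepsilon\le y$, monotonicity of $\varphi$ gives $\varphi(g_\varepsilon)/\varphi(y)\le 1$ and hence $g_\varepsilon'\le\eta\le 1$; the same estimate gives $0\le g_\varepsilon'\le 1$. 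Finally, $g_\varepsilon''$ is continuous on $[0,1]$ and identically zero near the gluing points, hence bounded, which supplies the lower bound $-C_\varepsilon$.
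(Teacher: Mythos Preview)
Your argument is correct. Both your construction and the paper's rest on the same underlying observation: writing $\varphi(z)=z/(1-\ln z)^\alpha$, the key inequality $g_\varepsilon'(y)\varphi(y)\le\varphi(g_\varepsilon(y))$ becomes automatic once one prescribes $\frac{d}{dy}\Phi(g_\varepsilon(y))$ (with $\Phi'=1/\varphi$) to be at most $1/\varphi(y)$, and then one only has to hit the correct terminal value. The implementations differ. The paper writes $g_\varepsilon$ in closed form,
\[
(1-\ln g_\varepsilon(y))^{\alpha+1}=(1-\ln y)^{\alpha+1}+P(y),
\]
and imposes $P\ge 0$, $P'\ge 0$ together with six boundary constraints on $P,P',P''$ at the two endpoints to achieve the $C^2$ match; in that setting $g_\varepsilon\le y$ comes from $P\ge0$, the central inequality from $P'\ge0$, and $g_\varepsilon'\ge0$ requires an additional (unstated) upper bound $P'(y)\le(\alpha+1)/\varphi(y)$. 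Your ODE formulation $g_\varepsilon'=\eta(y)\,\varphi(g_\varepsilon)/\varphi(y)$ is in fact the same relation with $\eta(y)=1-\tfrac{1}{\alpha+1}\varphi(y)P'(y)$, but packaging it this way makes $0\le g_\varepsilon'\le 1$ immediate from $0\le\eta\le1$, and replaces the explicit Hermite-type interpolation of $P$ by a shooting argument on a one-parameter family of cutoffs. The trade-off: the paper's route is more explicit, yours is conceptually cleaner and avoids checking the extra constraint on $P'$.

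Two minor points worth tightening in a write-up. First, the ODE is singular at $y=0$ since $\varphi(0)=0$, so invoking uniqueness on all of $[0,\varepsilon/2]$ is slightly imprecise; it suffices to use uniqueness on a two-sided neighbourhood of $\varepsilon/2$ (where $\varphi$ is bounded away from $0$) and then simply define $g_\varepsilon(y)=y$ on the remainder of $[0,\varepsilon/2]$. Second, the barrier argument for $g_\varepsilon\le y$ is cleanest via the integrating factor $\Phi$: since $\tfrac{d}{dy}\bigl[\Phi(g_\varepsilon(y))-\Phi(y)\bigr]=(\eta(y)-1)/\varphi(y)\le 0$ and the bracket vanishes at $\varepsilon/2$, monotonicity of $\Phi$ gives $g_\varepsilon\le y$ directly, bypassing the touching-point analysis.
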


\par
Fix any $\varepsilon\in\left(0,\varepsilon_0:=\min\{\frac{1}{4},\xi_0\}\right)$, where $\xi_0$ is defined in Hypothesis \ref{fu}, and let
\[
  p=\frac{1}{2}\min\left\{\frac{1}{2s},2s\right\}.
\] 
Let $g_\varepsilon$ be a function given by Lemma \ref{ge}. For all $t\ge \tilde t$, we then define 
\[
	\Phi(t,x):=\left\{
	\begin{aligned}
		&\varepsilon,& x<  x_{\frac{3}{2}\varepsilon}(t) ,\\
		&g_\varepsilon(\varphi(t,x)),& x\ge  x_{\frac{3}{2}\varepsilon}(t) .
	\end{aligned}
	\right.
\]
Observe that for any $t\ge \tilde t$, $\Phi(t,\cdot)$ is at least $C^2$ on $\mathbb{R}$.
It follows that for $x<  x_{\frac{3}{2}\varepsilon}(t) $, we have $\frac{\partial \Phi}{\partial t}=0$, $\frac{\partial \Phi}{\partial x}=0$ and $\frac{\partial^2 \Phi}{\partial x^2}=0$, while for $x\ge  x_{\frac{3}{2}\varepsilon}(t) $, by the definition of $g_\varepsilon$ and Lemma \ref{eslbl1}, for all $t\ge \tilde t$, we have
\begin{equation}\label{eslbPhit}
    \frac{\partial \Phi}{\partial t}=g_\varepsilon'(\varphi)\frac{\varphi}{(1-\ln \varphi)^\alpha}\left[r -r\frac{\partial h}{\partial t}(t,x)\right]\le \frac{r\varphi}{(1-\ln \varphi)^\alpha}(1-\tilde \gamma_1 \kappa \varphi^p),
\end{equation}
 \begin{equation}\label{Phix}
    \frac{\partial \Phi}{\partial x}=g_\varepsilon'(\varphi)\frac{\partial \varphi}{\partial x}\ge \frac{\partial \varphi}{\partial x},
\end{equation}
and 
\begin{equation}\label{Phixx}
\frac{\partial^2 \Phi}{\partial x^2}=g''_\varepsilon(\varphi)\left(\frac{\partial \varphi}{\partial x}\right)^2+g'_\varepsilon(\varphi)\varphi_{xx}\ge -C_\varepsilon\left(\frac{\partial \varphi}{\partial x}\right)^2.
\end{equation}
Unless otherwise specified, throughout the subsection, we assume that $t\in[\tilde t,\infty)$.
\subsubsection{Estimation of \texorpdfstring{$\mathcal{D}[\Phi](t,\cdot)$ on $\mathbb{R}$}{D[phi]}}
Now, let us estimate $\mathcal{D}[\Phi]$. We split into three space zones, as follows.

\begin{lemma}\label{eplbde}
Assume $\varepsilon\in(0,\varepsilon_0)$. For all $t\ge \tilde t$, we have the following estimate for $\mathcal{D}[\Phi]$.
\begin{itemize}
    \item[I.]  When $x\in \left(-\infty, x_{\frac{3}{2}\varepsilon}(t)\right) $: Given some quantity $B_1>1$, we have
\[
    \mathcal{D}[\Phi]\ge -\frac{\mathcal{J}_0}{2s} \frac{\varepsilon}{B_1^{2s}} -C_\varepsilon\left(\mathcal{J}_1+\mathcal{J}_0\int_{1}^{2B_1} y^{1-2s} dy\right)\left(\frac{\partial \varphi}{\partial x}\right)^2\left(t, x_{\frac{3}{2}\varepsilon}(t)\right).
\]
\item[II.] When $x\in\left[ x_{\frac{3}{2}\varepsilon}(t) ,x_{\frac{\varepsilon}{4}}(t)\right]$: Given some quantity $B_2>1$, we have
    \[
\mathcal{D}[\Phi](t,x)\ge -\frac{\mathcal{J}_0}{2s}\frac{\varepsilon }{B_2^{2s}}-C_\varepsilon\left(\mathcal{J}_1+\mathcal{J}_0\int_1^{B_2} y^{1-2s}dy\right)\left(\frac{\partial \varphi}{\partial x}\right)^2\left(t, x_{\frac{3}{2}\varepsilon}(t)\right).    \]
\item[III.] When $x\in \left(x_{\frac{\varepsilon}{4}}(t),+\infty\right):$ For $s\in\left(0,\frac{1}{2}\right]$, given some $B_3>1$, we have 
    \[
        \mathcal{D}[\Phi]\ge  -\frac{\mathcal{J}_0}{2s}\frac{\varepsilon}{B_3^{2s}}+\mathcal{J}_0\frac{\partial \varphi}{\partial x}(t,x)\int_{1}^{B_3}y^{-2s} dy,
    \]
while,  for $s\in\left(\frac{1}{2},+\infty\right)$, we have
    \[
    \mathcal{D}[\Phi]\ge  \frac{\mathcal{J}_0}{2s-1}\frac{\partial \varphi}{\partial x}(t,x).
\]
\end{itemize}
\end{lemma}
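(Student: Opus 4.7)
The plan is to split the analysis into the three announced space zones and, in each, to decompose $\mathcal{D}[\Phi]$ into a symmetrised principal-value piece on $\lvert y\rvert<1$, an intermediate annulus $1\le \lvert y\rvert \le B_i$, and a tail $\lvert y\rvert\ge B_i$. Two ingredients will do most of the work: the pointwise bound $\Phi_{xx}\ge -C_\varepsilon(\partial_x\varphi)^2$ from \eqref{Phixx}, which controls the concavity introduced by the cutoff $g_\varepsilon$ in Zones I--II, and the convexity of $\varphi(t,\cdot)$ on $[x_\Lambda(t),+\infty)$ provided by Lemma~\ref{eslbl1} for Zone III. Tails of $J$ will be handled via $J(y)\le \mathcal{J}_0 \lvert y\rvert^{-1-2s}$ (Hypothesis~\ref{ker}), and the near-field via $\int_{\lvert y\rvert\le 1} y^2 J(y)\,dy \le 2\mathcal{J}_1$.

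In Zones I and II, the $C^2$ regularity of $\Phi$ on $\mathbb{R}$ allows to write, for an appropriate $B_i$,
\[
\mathcal{D}[\Phi](t,x)=\tfrac{1}{2}\int_{\lvert y\rvert\le B_i} J(y)(\Phi(t,x+y)+\Phi(t,x-y)-2\Phi(t,x))\,dy+\int_{\lvert y\rvert>B_i} (\Phi(t,x-y)-\Phi(t,x))J(y)\,dy.
\]
The symmetric second difference is bounded below by $-C_\varepsilon (\partial_x\varphi)^2(t, x_{\frac{3}{2}\varepsilon}(t))\,y^2$ using \eqref{Phixx} together with the monotonicity of $\lvert \partial_x\varphi\rvert$ in $x$ (inherited from \eqref{eslbphix0} and the decay of $\varphi$), so integrating against $J$ over $\lvert y\rvert \le B_i$ produces the factor $\mathcal{J}_1+\mathcal{J}_0 \int_1^{B_i} y^{1-2s}\,dy$. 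For the tail, in Zone I only $y<-B_1$ contributes (since $\Phi\equiv \varepsilon$ to the left), and in Zone II the portion $y>B_2$ is non-negative because $\Phi$ is non-increasing; in both cases, the remaining one-sided integral is bounded below by $-\varepsilon\mathcal{J}_0\int_{B_i}^{+\infty} y^{-1-2s}\,dy = -\mathcal{J}_0 \varepsilon / (2sB_i^{2s})$.

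Zone III is conceptually different because there $\Phi=\varphi$ (since $\varphi\le\varepsilon/4\le\varepsilon/2$), and Lemma~\ref{eslbl1} gives convexity of $\Phi(t,\cdot)$ on a neighbourhood of $x$. The PV piece on $\lvert y\rvert<1$ is then $\ge 0$, the contribution from $y\ge 1$ is $\ge 0$ by monotonicity, and on $y\le -1$ the tangent-line inequality $\Phi(t,x-y)-\Phi(t,x)\ge -y\,\partial_x\varphi(t,x)$ combined with $\int_{-\infty}^{-1}(-y)J(y)\,dy\le \mathcal{J}_0\int_1^{+\infty} y^{-2s}\,dy$ yields the lower bound. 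The dichotomy in $s$ is precisely the question of whether this last integral converges: for $s>1/2$ it is finite and directly gives $\mathcal{J}_0(2s-1)^{-1}\partial_x\varphi$, while for $s\le 1/2$ one truncates at $B_3$, the annulus yielding $\mathcal{J}_0\partial_x\varphi\int_1^{B_3}y^{-2s}\,dy$ and the far tail yielding $-\mathcal{J}_0\varepsilon/(2sB_3^{2s})$ by the same one-sided estimate as in Zones I--II.

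The main subtleties I anticipate are twofold. The first is the sign-chasing in Zone III: the tangent-line bound is a lower bound on a negative quantity, and combining it with an upper bound on $\int(-y)J(y)\,dy$ flips the direction because $\partial_x\varphi<0$, so the ordering must be tracked methodically. The second is that the convexity argument on $\lvert y\rvert<1$ in Zone III technically requires $x-1\ge x_{\varepsilon/4}(t)$, which can be secured either by working with a slightly enlarged convexity range via Lemma~\ref{eslbl1} applied at some level $\Lambda<\varepsilon/4$, or by absorbing a boundary strip into a negligible error. Neither adjustment affects the structure of the stated bounds.
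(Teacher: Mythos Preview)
Your proposal is correct and follows essentially the same strategy as the paper: symmetrise near the origin, use the pointwise lower bound $\Phi_{xx}\ge -C_\varepsilon(\partial_x\varphi)^2$ together with the convexity of $\varphi$ to control the near/intermediate pieces, and estimate the one-sided tails via $J(y)\le \mathcal{J}_0|y|^{-1-2s}$.

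The one structural difference worth noting is in Zone~I. The paper does not symmetrise there; instead it exploits $\Phi\equiv\varepsilon$ on $(-\infty,x_{\frac{3}{2}\varepsilon}(t))$ to reduce $\mathcal{D}[\Phi]$ to the one-sided integral $\int_{x_{\frac{3}{2}\varepsilon}(t)-x}^{+\infty}J(y)(\Phi(t,x+y)-\varepsilon)\,dy$, and then splits into the sub-cases $x\le x_{\frac{3}{2}\varepsilon}(t)-B_1$ (where only a tail survives) and $x\in(x_{\frac{3}{2}\varepsilon}(t)-B_1,x_{\frac{3}{2}\varepsilon}(t))$ (where a double Taylor expansion on an interval of length at most $2B_1$ is applied). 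This second sub-case is the origin of the upper limit $2B_1$ in the stated integral $\int_1^{2B_1}y^{1-2s}\,dy$. Your uniform symmetric decomposition over $|y|\le B_1$ is cleaner and in fact produces the tighter factor $\int_1^{B_1}y^{1-2s}\,dy$, which of course still implies the stated bound. Also, the monotonicity of $|\partial_x\varphi|$ you invoke is really a consequence of the convexity of $\varphi(t,\cdot)$ from Lemma~\ref{eslbl1}, not of \eqref{eslbphix0}; and your remark about the $x-1\ge x_{\varepsilon/4}(t)$ issue in Zone~III is well taken---it is resolved by noting that $\Phi=\varphi$ already on $[x_{\varepsilon/2}(t),+\infty)$ and that $x_{\varepsilon/4}(t)-x_{\varepsilon/2}(t)\to+\infty$ by Lemma~\ref{eslbl2}.
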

\begin{proof}[Proof of Lemma \ref{eplbde}]
We divide the proof of the above lemma into three parts, corresponding to the three zones of positions: $x\in\left(-\infty,  x_{\frac{3}{2}\varepsilon}(t)\right) $, $x\in\left[ x_{\frac{3}{2}\varepsilon}(t) ,x_{\frac{\varepsilon}{4}}(t)\right]$ and $x\in \left(x_{\frac{\varepsilon}{4}}(t),+\infty\right)$.
\medskip
\par
\noindent
\#\textbf{Start with $I$: $x\in \left(-\infty,  x_{\frac{3}{2}\varepsilon}(t) \right)$.}
 We split the interval $\left(-\infty, x_{\frac{3}{2}\varepsilon}(t) \right)$ into two parts $\left(-\infty, x_{\frac{3}{2}\varepsilon}(t) -B_1\right]$ and $\left( x_{\frac{3}{2}\varepsilon}(t) -B_1, x_{\frac{3}{2}\varepsilon}(t) \right)$ with some quantity $B_1>1$ to be chosen later.
\par\#\#
\textbf{When  $x\in \left(-\infty, x_{\frac{3}{2}\varepsilon}(t) -B_1\right]$:} By the definition of $\mathcal{D}[\Phi]$ and \eqref{Je}, since $\Phi(t,x)=\varepsilon$ for $x\le  x_{\frac{3}{2}\varepsilon}(t) $, we get
\[
\begin{aligned}
    \mathcal{D}[\Phi]&= \int_{ x_{\frac{3}{2}\varepsilon}(t) -x}^{+\infty}J(y)\Big(\Phi(t,x+y)-\varepsilon\Big)dy\ge -\varepsilon\mathcal{J}_0\int_{ x_{\frac{3}{2}\varepsilon}(t) -x}^{+\infty}\frac{1}{y^{1+2s}}dy\ge -\frac{\varepsilon\mathcal{J}_0}{2s}\frac{1}{\left( x_{\frac{3}{2}\varepsilon}(t) -x\right)^{2s}}\ge- \frac{\varepsilon\mathcal{J}_0}{2s}\frac{1}{B_1^{2s}}.
\end{aligned}
\]
\par\#\#
\textbf{When $x\in \left( x_{\frac{3}{2}\varepsilon}(t) -B_1, x_{\frac{3}{2}\varepsilon}(t) \right)$: } By the definition of $\mathcal{D}[\Phi]$, we write
\[
    \mathcal{D}[\Phi]=\int_{ x_{\frac{3}{2}\varepsilon}(t) -x+B_1}^{+\infty}J(y)\Big(\Phi(t,x+y)-\Phi(t,x)\Big)dy+\int_{ x_{\frac{3}{2}\varepsilon}(t) -x}^{ x_{\frac{3}{2}\varepsilon}(t) -x+B_1}J(y)\Big(\Phi(t,x+y)-\Phi(t,x)\Big)dy:=I_1+I_2.
\]
For the integral $I_1$, since $\Phi(t,\cdot)=\varepsilon$ on $\left(-\infty, x_{\frac{3}{2}\varepsilon}(t)\right)$, by \eqref{Je}, we get
\[
    I_1\ge -\varepsilon\int_{ x_{\frac{3}{2}\varepsilon}(t) -x+B_1}^{+\infty}J(y)dy \ge -\varepsilon\mathcal{J}_0\int_{ x_{\frac{3}{2}\varepsilon}(t) -x+B_1}^{+\infty}\frac{1}{y^{1+2s}}dy\ge -\frac{\varepsilon\mathcal{J}_0}{2s} \frac{1}{B_1^{2s}}.
\]
For the integral $I_2$,  by Taylor's theorem, $I_2$ can be written as 
\[
    I_2=\int_{ x_{\frac{3}{2}\varepsilon}(t) -x}^{ x_{\frac{3}{2}\varepsilon}(t) -x+B_1}J(y)\Big(\Phi(t,x+y)-\Phi(t,x)\Big)dy=\int_{ x_{\frac{3}{2}\varepsilon}(t) -x}^{ x_{\frac{3}{2}\varepsilon}(t) -x+B_1}\int_0^1 yJ(y)\frac{\partial \Phi}{\partial x}(t,x+\tau y)d\tau dy.
\]
Since $\frac{\partial \Phi}{\partial x}(t,\cdot)\in C^1(\mathbb{R})$ and $\frac{\partial \Phi}{\partial x}(t,\cdot)=0$ on $\left(-\infty, x_{\frac{3}{2}\varepsilon}(t)\right)$, we get
\[
\begin{aligned}
   I_2&=\int_{ x_{\frac{3}{2}\varepsilon}(t) -x}^{ x_{\frac{3}{2}\varepsilon}(t) -x+B_1}\int_0^1 yJ(y)\left(\frac{\partial \Phi}{\partial x}(t,x+\tau y)-\frac{\partial \Phi}{\partial x}(t,x)\right)d\tau dy\\
   &=\int_{ x_{\frac{3}{2}\varepsilon}(t) -x}^{ x_{\frac{3}{2}\varepsilon}(t) -x+B_1}\int_0^1 \int_0^1 \tau y^2J(y)\frac{\partial^2 \Phi}{\partial x^2}(t,x+\sigma\tau y)d\sigma d\tau dy.
\end{aligned}
\]
It follows from $\frac{\partial^2 \Phi}{\partial x^2}(t,\cdot)=0$ on $\left(-\infty, x_{\frac{3}{2}\varepsilon}(t)\right)$ that 
\[
    I_2=\int_0^1 \int_0^1\int_{0}^{ x_{\frac{3}{2}\varepsilon}(t) -x+B_1}\tau y^2 J(y)\frac{\partial^2 \Phi}{\partial x^2}(t,x+\sigma\tau y)dyd\sigma d\tau.
\]
Since $ x_{\frac{3}{2}\varepsilon}(t) -x+B_1\le 2B_1$, by the convexity of $\varphi(t,\cdot)$, using \eqref{Phixx}, we get 
\[
\begin{aligned}
    I_2&\ge -C_\varepsilon\left(\frac{\partial \varphi}{\partial x}\right)^2\left(t, x_{\frac{3}{2}\varepsilon}(t)\right)\int_0^1 \int_0^1\int_{0}^{2B_1} J(y)y^2\tau dyd\sigma d\tau\\
    &\ge -C_\varepsilon\left(\frac{\partial \varphi}{\partial x}\right)^2\left(t, x_{\frac{3}{2}\varepsilon}(t)\right)\left(\int_{0}^{1} J(y)y^2dy+\int_{1}^{2B_1} J(y)y^2 dy\right)\\
    &\ge  -C_\varepsilon\left(\frac{\partial \varphi}{\partial x}\right)^2\left(t, x_{\frac{3}{2}\varepsilon}(t)\right)\left(\mathcal{J}_1+\mathcal{J}_0\int_{1}^{2B_1} y^{1-2s} dy\right),
    \end{aligned}
\]
where the last inequality uses \eqref{Je}.
As a result, for all $t\ge \tilde t$ and $x<  x_{\frac{3}{2}\varepsilon}(t)$,  in both cases we obtain 
\[
    \mathcal{D}[\Phi]\ge -\frac{\mathcal{J}_0}{2s} \frac{\varepsilon}{B_1^{2s}} -C_\varepsilon\left(\frac{\partial \varphi}{\partial x}\right)^2\left(t, x_{\frac{3}{2}\varepsilon}(t)\right)\left(\mathcal{J}_1+\mathcal{J}_0\int_{1}^{2B_1} y^{1-2s} dy\right).
\]

\medskip
 \par
 \noindent \#
 \textbf{Continue with $II$:}
$x\in\left[ x_{\frac{3}{2}\varepsilon}(t) ,x_{\frac{\varepsilon}{4}}(t)\right].$ Let us estimate $\mathcal{D}[\Phi]$ on $\left[x_{\frac{3}{2}\varepsilon}(t) ,x_{\frac{\varepsilon}{4}}(t)\right]$.
 Since $\Phi(t,\cdot)$ is non-increasing on $\mathbb{R}$, by \eqref{Phixx}, for some $B_2>1$, we get
\[
\begin{aligned}
    \mathcal{D}[\Phi]&=\frac{1}{2}\int_{|y|\le B_2 } J(y)\Big(\Phi(t,x+y)-2\Phi(t,x)+\Phi(t,x-y)\Big)dy+\int_{|y|> B_2 } J(y)\Big(\Phi(t,x+y)-\Phi(t,x)\Big)dy\\
    &\ge\frac{1}{2}\int_{|y|\le B_2 }\int_0^1\int_{0}^{1} J(y)y^2\tau\frac{\partial^2 \Phi}{\partial x^2}(t,x+\sigma\tau y) dy+\int_{y> B_2 } J(y)\Big(\Phi(t,x+y)-\Phi(t,x)\Big)dy\\
    &\ge -\frac{C_\varepsilon}{2}\sup_{\substack{|\xi|<B_2\\ x+\xi \ge x_{\frac{3}{2}\varepsilon(t)}}}\left(\frac{\partial \varphi}{\partial x}\right)^2(t,x+\xi)\int_{|y|\le B_2 }J(y)y^2 dy+\int_{y> B_2 } J(y)\Big(\Phi(t,x+y)-\Phi(t,x)\Big)dy:=II_1+II_2.
    \end{aligned}
\]
For $II_1$, by \eqref{Je} and the convexity of $\varphi(t,\cdot)$, we have
\[
\begin{aligned}
    II_1&=-\frac{C_\varepsilon}{2}\sup_{\substack{|\xi|<B_2\\ x+\xi \ge x_{\frac{3}{2}\varepsilon(t)}}}\left(\frac{\partial \varphi}{\partial x}\right)^2(t,x+\xi)\left(\int_{|y|\le 1 } J(y)y^2 dy+\int_{1<|y|\le B_2 } J(y)y^2dy\right)\\
    &=-C_\varepsilon\left(\mathcal{J}_1+\mathcal{J}_0\int_1^{B_2} y^{1-2s}dy\right)\left(\frac{\partial \varphi}{\partial x}\right)^2\left(t, x_{\frac{3}{2}\varepsilon}(t)\right),
    \end{aligned}
\]
while for $II_2$, since $0\le\Phi\le \varepsilon$, by \eqref{Je}, we have
\[
    II_2\ge-\Phi(t,x)\int_{y> B_2 } J(y)dy=-\frac{\mathcal{J}_0}{2s}\frac{\varepsilon }{B_2^{2s}}.
\]
Therefore, by collecting the estimates of $II_1$ and $II_2$, we achieve
\[
    \mathcal{D}[\Phi]\ge -\frac{\mathcal{J}_0}{2s}\frac{\varepsilon }{B_2^{2s}}-C_\varepsilon\left(\mathcal{J}_1+\mathcal{J}_0\int_1^{B_2} y^{1-2s}dy\right)\left(\frac{\partial \varphi}{\partial x}\right)^2\left(t, x_{\frac{3}{2}\varepsilon}(t)\right).
\]
\medskip
\par
\noindent \#
\textbf{Finally, focus on $III$:} $x\in \left(x_{\frac{\varepsilon}{4}}(t),+\infty\right).$
By the definition of $\mathcal{D}[\Phi]$, since $\Phi(t,\cdot)$ is non-increasing, we write
\[
\begin{aligned}
    \mathcal{D}[\Phi]&\ge   P.V. \int_{-1}^{1}J(y)\Big(\Phi(t,x+y)-\Phi(t,x)\Big)dy+\int_{1}^{+\infty}J(y)\Big(\Phi(t,x+y)-\Phi(t,x)\Big)dy:=III_1+III_2.
\end{aligned}
\]
By \eqref{Je} and the convexity of $\Phi(t,\cdot)$ on $\left(x_{\frac{\varepsilon}{4}}(t),+\infty\right)$, we get
\[
    III_1=\frac{1}{2}\int_{-1}^{1}\int_0^1\int_{0}^{1}J(y)y^2\tau\frac{\partial^2 \Phi}{\partial x^2}(t,x+\sigma\tau y)d\sigma d\tau dy\ge 0.
\]
To end the estimate of $\mathcal{D}[\Phi]$ for $x> x_{\frac{\varepsilon}{4}}(t)$, let us consider two cases: $s\in\left(0,\frac{1}{2}\right]$ and $s\in\left(\frac{1}{2},+\infty\right)$.
\par\#\#
\textbf{Case $1$: $s\in(0,\frac{1}{2}]$}.
Let us estimate $III_2$. For some $B_3>1$, we write
\[
    III_2=\int_{1}^{B_3}J(y)\Big(\Phi(t,x+y)-\Phi(t,x)\Big)dy+\int_{B_3}^{+\infty}J(y)\Big(\Phi(t,x+y)-\Phi(t,x)\Big)dy:=III_3+III_4.
\]
By \eqref{Phix}, the convexity of $\varphi(t,\cdot)$ and \eqref{Je}, we have
\[
\begin{aligned}
    III_3=\int_{1}^{B_3}\int_0^1J(y)y\frac{\partial \Phi}{\partial x}(t,x+\tau y)d\tau dy\ge\frac{\partial \varphi}{\partial x}(t,x)\int_{1}^{B_3}J(y)y dy
    &\ge\mathcal{J}_0\frac{\partial \varphi}{\partial x}(t,x)\int_{1}^{B_3}y^{-2s} dy.
\end{aligned}
\]
Since $0< \Phi(t,\cdot)< \varepsilon$ on $\left( x_{\frac{\varepsilon}{4}}(t),+\infty\right)$, it follows from \eqref{Je} that
\[
    III_4\ge -\mathcal{J}_0\Phi(t,x)\int_{B_3}^{+\infty}y^{-1-2s}dy\ge -\frac{\mathcal{J}_0}{2s}\frac{\varepsilon}{B_3^{2s}}.
\]
As a result, collecting the above estimates, for all $t\ge \tilde t$ and $x>x_\frac{\varepsilon}{4}(t)$, we have
\[
    \mathcal{D}[\Phi]\ge   -\frac{\mathcal{J}_0}{2s}\frac{\varepsilon}{B_3^{2s}}+\mathcal{J}_0\frac{\partial \varphi}{\partial x}(t,x)\int_{1}^{B_3}y^{-2s} dy.
\]

\par\#\#
\textbf{Case $2$: $s\in\left(\frac{1}{2},+\infty\right)$.} Let us estimate $III_2$. By \eqref{Phix}, the convexity of $\varphi(t,\cdot)$ and using the fundamental theorem of calculus, we get
\[
III_2=\int_{1}^{+\infty}\int_0^1J(y)y\frac{\partial \Phi}{\partial x}(t,x+\tau y)d\tau dy \ge \frac{\partial \varphi}{\partial x}(t,x)\int_{1}^{+\infty}J(y)y dy.  
\]
Since $\frac{\partial \varphi}{\partial x}(t,x)\le 0$, by \eqref{Je}, we have 
\[
    III_2\ge\frac{\mathcal{J}_0}{2s-1}\frac{\partial \varphi}{\partial x}(t,x).  
\]
As a result, in this case, collecting  the above estimates, for all $t\ge \tilde t$ and $x>x_\frac{\varepsilon}{4}(t)$, we have
\[
    \mathcal{D}[\Phi]\ge \frac{\mathcal{J}_0}{2s-1}\frac{\partial \varphi}{\partial x}(t,x).
\]

\end{proof}

\subsubsection{The function \texorpdfstring{$\Phi$}{Phi} is a sub-solution to \texorpdfstring{\eqref{oeq1}}{[1]} at later times}
In this subsubsection, we show that $\Phi$ is a sub-solution to equation \eqref{oeq1} for large times $t$. 

\begin{proposition}\label{eslbprop1}
There exists $\underline \varepsilon>0$ such that, for any $\varepsilon\in(0, \underline\varepsilon)$, there exists $\underline t(\varepsilon)$ such that 
    \[
   \frac{\partial \Phi}{\partial t}- \mathcal{D}[\Phi]-f(\Phi)\le 0\quad \text{for all }t\ge \underline t\text{ and } x\in\mathbb{R}.
\]
\end{proposition}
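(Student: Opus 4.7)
The plan is to split $\mathbb{R}$ into the three spatial zones of Lemma~\ref{eplbde} and verify the sub-solution inequality on each, absorbing all error terms either by choosing $\varepsilon$ small or $t$ large. In Zone~1, i.e.\ $x<x_{\frac{3}{2}\varepsilon}(t)$, one has $\Phi\equiv\varepsilon$ and $\partial_t\Phi=0$, while Hypothesis~\ref{fu} gives $f(\varepsilon)\ge r\varepsilon(1-K\varepsilon)/(1-\ln\varepsilon)^\alpha>0$ as soon as $\varepsilon<1/K$. I would first choose $B_1$ large (depending only on $\varepsilon$) so that $\mathcal{J}_0\varepsilon/(2sB_1^{2s})\le f(\varepsilon)/2$, and then invoke \eqref{eslbpx0} to get a time $\underline t_1(\varepsilon)$ after which $C_\varepsilon(\cdots)(\partial_x\varphi)^2(t,x_{\frac{3}{2}\varepsilon}(t))\le f(\varepsilon)/2$. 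Together with Lemma~\ref{eplbde}.I this yields $-\mathcal{D}[\Phi]\le f(\Phi)$ throughout Zone~1.

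In Zone~2, i.e.\ $x\in[x_{\frac{3}{2}\varepsilon}(t),x_{\frac{\varepsilon}{4}}(t)]$, the function $\Phi=g_\varepsilon(\varphi)$ takes values in $[\varepsilon/4,\varepsilon]$. Combining \eqref{eslbphit0} with the comparison $g_\varepsilon'(y)\,y/(1-\ln y)^\alpha\le g_\varepsilon(y)/(1-\ln g_\varepsilon(y))^\alpha$ from \eqref{nb}, together with the lower bound for $f$ from Hypothesis~\ref{fu}, gives
\[
\frac{\partial\Phi}{\partial t}-f(\Phi)\le \frac{r\Phi}{(1-\ln\Phi)^\alpha}\bigl(K\Phi-\tilde\gamma_1\kappa\varphi^p\bigr).
\]
Since $\Phi\le\varepsilon$ and $\varphi\ge\varepsilon/4$ while $p<1$, the bracket is bounded above by $K\varepsilon-\tilde\gamma_1\kappa\, 4^{-p}\varepsilon^p<0$ once $\varepsilon$ is small enough; because $y\mapsto y/(1-\ln y)^\alpha$ is increasing on $(0,1)$ and $\Phi\ge\varepsilon/4$, this upgrades to a uniform negative upper bound $-\eta_\varepsilon$ depending only on $\varepsilon$. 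Then Lemma~\ref{eplbde}.II is handled as in Zone~1: choose $B_2$ so that the $B_2$-error term is at most $\eta_\varepsilon/2$ and use \eqref{eslbpx0} to absorb the $(\partial_x\varphi)^2$-term for $t\ge\underline t_2(\varepsilon)$.

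In Zone~3, i.e.\ $x>x_{\frac{\varepsilon}{4}}(t)$, the bound $\varphi<\varepsilon/4<\varepsilon/2$ combined with Lemma~\ref{ge} gives $\Phi=\varphi$. From \eqref{eslbphit0} and Hypothesis~\ref{fu},
\[
\frac{\partial\varphi}{\partial t}-f(\varphi)\le\frac{r\varphi^{1+p}}{(1-\ln\varphi)^\alpha}\bigl(K\varphi^{1-p}-\tilde\gamma_1\kappa\bigr)\le-\frac{r\tilde\gamma_1\kappa}{2}\cdot\frac{\varphi^{1+p}}{(1-\ln\varphi)^\alpha}
\]
as soon as $\varepsilon$ is small enough that $K(\varepsilon/4)^{1-p}\le\tilde\gamma_1\kappa/2$. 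On the dispersal side, \eqref{eslbphix0} combined with Lemma~\ref{eplbde}.III gives $\mathcal{D}[\Phi]\ge -\frac{2s\mathcal{J}_0}{2s-1}\varphi^{1+1/(2s)}$ when $s>1/2$, while for $s\le 1/2$ I would take $B_3=\varphi^{-q}$ with $q\in\bigl(\tfrac{1+p}{2s},\tfrac{1/(2s)-p}{1-2s}\bigr)$, this interval being nonempty precisely because our choice $p=\tfrac{1}{2}\min\{1/(2s),2s\}$ enforces $p<2s$. In either case, both error terms in the lower bound for $\mathcal{D}[\Phi]$ are controlled by a constant multiple of $\varphi^{1+p}$, so that the sub-solution inequality reduces to $\varphi^{p-1/(2s)}\ge C(1-\ln\varphi)^\alpha$, which holds for $\varphi$ below some threshold $\varphi_0>0$ independent of $\varepsilon$, thanks to $p<1/(2s)$. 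Finally, taking $\underline\varepsilon$ as the minimum of the three smallness thresholds (with $\underline\varepsilon\le 4\varphi_0$ to ensure Zone~3 lies entirely below $\varphi_0$) and $\underline t(\varepsilon)=\max\{\tilde t,\underline t_1(\varepsilon),\underline t_2(\varepsilon)\}$ concludes the proof.

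The main obstacle is Zone~3 in the regime $s\le 1/2$: the heavy algebraic tail of $J$ makes the bound $\partial_x\varphi\ge -2s\varphi^{1+1/(2s)}$ tight, and the truncation error $\mathcal{J}_0\varepsilon/(2sB_3^{2s})$ and the singular integral $\mathcal{J}_0\partial_x\varphi\int_1^{B_3}y^{-2s}dy$ must be controlled simultaneously through a single choice of $B_3=\varphi^{-q}$. The condition $p<2s$ is exactly what guarantees that the required window for $q$ is nonempty, while the condition $p<1/(2s)$ closes the logarithmic gap between $\varphi^{1+p}/(1-\ln\varphi)^\alpha$ on the left and $\varphi^{1+p}$ on the right, because it forces $\varphi^{p-1/(2s)}$ to dominate every logarithmic correction as $\varphi\to 0$. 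Both conditions are satisfied precisely thanks to the specific choice $p=\tfrac{1}{2}\min\{1/(2s),2s\}$ made in the construction of $\varphi$.
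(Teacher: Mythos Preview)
Your approach is correct and follows the same three-zone decomposition as the paper, with essentially identical arguments in Zone~1 and Zone~3 for $s>\tfrac12$; your choices of a constant $B_2$ in Zone~2 and $B_3=\varphi^{-q}$ in Zone~3 for $s\le\tfrac12$ are minor (and arguably cleaner) variants of the paper's $x$-dependent $B_2$ and $B_3=\varepsilon(-\partial_x\varphi)^{-1}$.

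One point of exposition needs tightening: in Zone~3 for $s\le\tfrac12$, saying that both error terms are ``controlled by a constant multiple of $\varphi^{1+p}$'' is too weak to conclude anything, since the available gain is only $\varphi^{1+p}/(1-\ln\varphi)^\alpha$; and the subsequent claim that the inequality ``reduces to $\varphi^{p-1/(2s)}\ge C(1-\ln\varphi)^\alpha$'' is specific to the case $s>\tfrac12$. What your choice of $q$ strictly inside the open interval actually gives is that both errors are $\lesssim\varphi^{1+p+\delta}$ for some $\delta>0$ (with an extra $|\ln\varphi|$ at $s=\tfrac12$), and the reduction is then to $\varphi^{\delta}(1-\ln\varphi)^\alpha\le C'$, which holds for small $\varphi$. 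Also note that at $s=\tfrac12$ your upper endpoint $\tfrac{1/(2s)-p}{1-2s}$ is formally $+\infty$, so any $q>\tfrac{1+p}{2s}=\tfrac32$ works there.
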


\begin{proof}[Proof of Proposition \ref{eslbprop1}]
To prove the proposition, we split into three space zones: $\left(-\infty,  x_{\frac{3}{2}\varepsilon}(t)\right)$, $\left[ x_{\frac{3}{2}\varepsilon}(t) , x_{\frac{\varepsilon}{4}}(t)\right]$ and $\left(x_{\frac{\varepsilon}{4}}(t),+\infty\right)$. 
\medskip
\par
\noindent
\textbf{Zone 1:} $x\in \left(-\infty,  x_{\frac{3}{2}\varepsilon}(t)\right).$  In view of  the definition of $\Phi$ and Hypothesis \ref{fu},  for $x<  x_{\frac{3}{2}\varepsilon}(t) $, we have
$$\frac{\partial \Phi}{\partial t}=0\quad\text{and}\quad f(\Phi)\ge \frac{r\varepsilon}{(1-\ln \varepsilon)^\alpha}(1-K\varepsilon).$$
Now, by the part I of Lemma \ref{eplbde} and choosing $B_1=\left(\frac{2(1-\ln \varepsilon)^\alpha\mathcal{J}_0}{sr(1-K\varepsilon)}+1\right)^\frac{1}{2s}$, it follows that 
\[
    \mathcal{D}[\Phi]\ge -\frac{1}{4}\frac{r\varepsilon}{(1-\ln \varepsilon)^\alpha}(1-K\varepsilon) -C_\varepsilon\left(\frac{\partial \varphi}{\partial x}\right)^2\left(t, x_{\frac{3}{2}\varepsilon}(t)\right)\left(\mathcal{J}_1+\mathcal{J}_0\int_{1}^{2B_1} y^{1-2s} dy\right).
\]
In view of  \eqref{eslbpx0}, we have uniform convergence of $\left(\frac{\partial \varphi}{\partial x}\right)^2\left(t, x_{\frac{3}{2}\varepsilon}(t)\right)$ to $0$ as $t$ goes to infinity. It follows that there is a time $\underline t_1\ge \tilde t$ such that for all $t\ge \underline t_1$ and $x<  x_{\frac{3}{2}\varepsilon}(t)$, we have
\[
    C_\varepsilon\left(\frac{\partial \varphi}{\partial x}\right)^2\left(t, x_{\frac{3}{2}\varepsilon}(t)\right)\left(\mathcal{J}_1+\mathcal{J}_0\int_{1}^{2B_1} y^{1-2s} dy\right)\le-\frac{1}{4}\frac{r\varepsilon}{(1-\ln \varepsilon)^\alpha}(1-K\varepsilon).
\]
Thus, for all $t\ge \underline t_1$ and $x<  x_{\frac{3}{2}\varepsilon}(t)$, we obtain
\[
    \mathcal{D}[\Phi]\ge -\frac{1}{2}\frac{r\varepsilon}{(1-\ln \varepsilon)^\alpha}(1-K\varepsilon).
\]
Therefore, for all $t\ge \underline t_1$ and $x<  x_{\frac{3}{2}\varepsilon}(t)$, we achieve 
\begin{equation}\label{eplbp1}
\frac{\partial \Phi}{\partial t}-\mathcal{D}[\Phi]-f(\Phi)\le -\mathcal{D}[\Phi]-\frac{r\varepsilon}{(1-\ln \varepsilon)^\alpha}(1-K\varepsilon)\le 0.
\end{equation}
\medskip
\par 
\noindent
\textbf{Zone 2:} $x\in\left[ x_{\frac{3}{2}\varepsilon}(t) ,x_{\frac{\varepsilon}{4}}(t)\right].$
In view of part II of Lemma \ref{eplbde}, by \eqref{eslbephi} and taking $B_2:= x(1+2s\ln  x)^\frac{\alpha}{2s}+1$, for all $t\ge \tilde t$ and $ x_{\frac{3}{2}\varepsilon}(t) \le x\le x_{\frac{\varepsilon}{4}}(t)$, we have
\[
\begin{aligned}
\mathcal{D}[\Phi](t,x)&\ge -\frac{\mathcal{J}_0}{2s}\frac{\varepsilon \frac{1}{ x^{2s}}}{(1+2s\ln  x)^\alpha}-C_\varepsilon\left(\mathcal{J}_1+\mathcal{J}_0\int_1^{B_2} y^{1-2s}dy\right)\left(\frac{\partial \varphi}{\partial x}\right)^2\left(t, x_{\frac{3}{2}\varepsilon}(t)\right)\\
&\ge -\frac{\mathcal{J}_0}{2s} \frac{\varepsilon\varphi(t,x)}{(1-\ln\varphi(t,x))^\alpha}-C_\varepsilon\left(\mathcal{J}_1+\mathcal{J}_0\int_1^{B_2} y^{1-2s}dy\right)\left(\frac{\partial \varphi}{\partial x}\right)^2\left(t, x_{\frac{3}{2}\varepsilon}(t)\right).
\end{aligned}
\]
By \eqref{eslbpx0}, we have uniform convergence of $\left(\frac{\partial \varphi}{\partial x}\right)^2\left(t, x_{\frac{3}{2}\varepsilon}(t)\right)$ to $0$ as $t$ goes to infinity, and thus, for $ x_{\frac{3}{2}\varepsilon}(t) \le x\le x_{\frac{\varepsilon}{4}}(t)$, there is a time $t_1$ such that for all $t\ge t_1$, 
\[
\begin{aligned}
&C_\varepsilon\left(\mathcal{J}_1+\mathcal{J}_0\int_1^{B_2} y^{1-2s}dy\right)\left(\frac{\partial \varphi}{\partial x}\right)^2\left(t, x_{\frac{3}{2}\varepsilon}(t)\right)\le \frac{\varepsilon^2}{(1-\ln \varepsilon)^\alpha}.
\end{aligned}
\]
It follows from $\frac{\varepsilon}{4} \le\varphi\le \frac{3}{2}\varepsilon$ for all $ x_{\frac{3}{2}\varepsilon}(t) \le x\le x_{\frac{\varepsilon}{4}}(t)$ that 
\[
\frac{\varepsilon^2}{(1-\ln \varepsilon)^\alpha}\le \mathcal{C}_1\frac{\varphi^2}{(1-\ln\varphi)^\alpha},
\]
for some $\mathcal{C}_1>0$.
As a result, since $\Phi\ge \frac{\varepsilon}{4} \ge \frac{\varphi}{6} $ for all $ x_{\frac{3}{2}\varepsilon}(t) \le x\le x_{\frac{\varepsilon}{4}}(t)$, we achieve, for all $t\ge\underline t_1:=\max\{t_1,\tilde t\}$ and $x\in\left[ x_{\frac{3}{2}\varepsilon}(t) ,x_{\frac{\varepsilon}{4}}(t)\right]$,
\[
\mathcal{D}[\Phi](t,x)\ge -\left(\frac{2\mathcal{J}_0}{s}+\mathcal{C}_1\right)\frac{\varphi^2}{(1-\ln\varphi)^\alpha}\ge -r\mathcal{C}_2\frac{\Phi\varphi}{(1-\ln\Phi)^\alpha},
\]
for some $\mathcal{C}_2>0$.
\par
In view of Hypothesis \ref{fu}, since $0<\Phi\le \varepsilon\le \xi_0$ and $\Phi\le \varphi$ for any $x>  x_{\frac{3}{2}\varepsilon}(t) $, we have for all $x\in\left[ x_{\frac{3}{2}\varepsilon}(t) ,x_{\frac{\varepsilon}{4}}(t)\right]$,
\[
f(\Phi)\ge \frac{r\Phi}{(1-\ln \Phi)^\alpha}(1-K\varphi) .
\]
By \eqref{nb} and \eqref{eslbPhit}, we have
\[
\frac{\partial \Phi}{\partial t}=rg_\varepsilon'(\varphi)\frac{\varphi}{(1-\ln \varphi)^\alpha}\left(1 -\tilde \gamma_1 \kappa \varphi^p\right)
\le \frac{r\Phi}{(1-\ln \Phi)^\alpha}\left(1 -\tilde \gamma_1 \kappa \varphi^p\right)
\]
Therefore, we have, for all $t\ge\underline t_1$ and $x\in\left[ x_{\frac{3}{2}\varepsilon}(t) ,x_{\frac{\varepsilon}{4}}(t)\right]$,
\begin{equation}
\label{eplbp2}
\begin{aligned}
\frac{\partial \Phi}{\partial t}-\mathcal{D}[\Phi]-f(\Phi)
\le& -\frac{r\Phi}{(1-\ln \Phi)^\alpha}\left(\tilde \gamma_1 \kappa -\left(\frac{3}{2}\right)^\frac{1}{2}(K+\mathcal{C}_2)\varepsilon^\frac{1}{2}\right)\varphi^\frac{1}{2}\le 0,
\end{aligned}
\end{equation}
as long as $\varepsilon\le \tilde\varepsilon:=\frac{2}{3}\left(\frac{\tilde\gamma_1\kappa}{K+\mathcal{C}_2}\right)^2$.
\medskip
\par\noindent
\textbf{Zone 3:} $x\in\left( x_{\frac{\varepsilon}{4}}(t),+\infty\right).$ Let us show that $\Phi$ is a sub-solution for all $x> x_{\frac{\varepsilon}{4}}(t)$.
  In view of Hypothesis \ref{fu}, since $\Phi=\varphi$  for all $x> x_{\frac{\varepsilon}{4}}(t)$,  we have, for all $t\ge \tilde t$ and $x> x_{\frac{\varepsilon}{4}}(t)$,
\[
f(\Phi)\ge \frac{r\varphi}{(1-\ln \varphi)^\alpha}(1-K\varphi).
\]
\par\#
\textbf{Case 1: $s>\frac{1}{2}$.}
In this case, $p=\frac{1}{2}\min\left\{\frac{1}{2s},2s\right\}=\frac{1}{4s}$.
In view of part III of Lemma \ref{eplbde}, by \eqref{eslbphix0} and letting $\varepsilon<e^{\frac{1}{2s}-\alpha}$,  we have for all $t\ge \tilde t$ and $x> x_{\frac{\varepsilon}{4}}(t)$,
   \[
    \mathcal{D}[\Phi]\ge  \frac{\mathcal{J}_0}{2s-1}\frac{\partial \varphi}{\partial x}(t,x)\ge  -\frac{2s\mathcal{J}_0}{2s-1}\varphi^{1+\frac{1}{2s}}.
\]
Since $y\mapsto y^\frac{1}{4s}(1-\ln y)^\alpha$ is non-decreasing for all $y\in\left(0,e^{1-4s\alpha}\right)$ and $0<\varphi< \varepsilon$ for all $x> x_{\frac{\varepsilon}{4}}(t)$, by \eqref{eslbPhit} and letting $\varepsilon< \min\left\{e^{\frac{1}{2s}-\alpha},e^{1-4s\alpha},\xi_0\right\}$, we obtain for all $t\ge \tilde t$ and $x> x_{\frac{\varepsilon}{4}}(t)$,
\[
\begin{aligned}
    \frac{\partial \Phi}{\partial t}-\mathcal{D}[\Phi]-f(\Phi)\le& \frac{\varphi^{1+\frac{1}{4s}}}{(1-\ln \varphi)^\alpha}\left\{-r\tilde \gamma_1 \kappa +\mathcal{J}_0\frac{2s}{2s-1}\varepsilon^{\frac{1}{4s}}(1-\ln\varepsilon)^\alpha+rK\varepsilon^{1-\frac{1}{4s}}\right\}.
    \end{aligned}
\]
Notice that $\mathcal{J}_0\frac{2s}{2s-1}\varepsilon^{\frac{1}{4s}}(1-\ln\varepsilon)^\alpha+ rK\varepsilon^{1-\frac{1}{4s}}$ goes to $0$ as $\varepsilon$ goes to $0^+$. Therefore, there is $\bar\varepsilon_1\in\left(0, \min\left\{e^{1-4s\alpha},e^{\frac{1}{2s}-\alpha},\xi_0\right\}\right)$ such that for any $\varepsilon\le \bar \varepsilon_1$, we have
\[
\frac{\partial \Phi}{\partial t}-\mathcal{D}[\Phi]-f(\Phi)\le 0\quad \text{ for all }  t\ge\tilde t\text{ and }x> x_{\frac{\varepsilon}{4}}(t).
\]
\par\#
\textbf{Case 2: $s=\frac{1}{2}$.}
By the definition of $p$, one may notice $p=\frac{1}{2}\min\left\{\frac{1}{2s},2s\right\}=\frac{1}{2}$. In view of \eqref{eslbphix0}, by letting $\varepsilon\le e^{\frac{1}{2s}-\alpha}$, we have
\[
\frac{\partial \varphi}{\partial x}\ge -\varphi^2 \ge -\varepsilon^2\quad  \text{for all } t\ge \tilde t \text{ and } x> x_{\frac{\varepsilon}{4}}(t).
\]
In view of part III of Lemma \ref{eplbde}, since the map $y\mapsto y(\ln\varepsilon-\ln y)$ is non-decreasing for $y\in\left(0, \frac{\varepsilon}{e}\right)$, by letting $\varepsilon\le \min\left\{e^{-1}, e^{\frac{1}{2s}-\alpha}\right\}$ and taking $B_3=\varepsilon\left(-\frac{\partial \varphi}{\partial x}\right)^{-1} \left[\ln \varepsilon-\ln \left(-\frac{\partial \varphi}{\partial x}\right)\right]^{-1} $, we get 
\[
  \mathcal{D}[\Phi]\ge  -\mathcal{J}_0\frac{\varepsilon}{B_3}+\mathcal{J}_0\frac{\partial \varphi}{\partial x}(t,x)\ln B_3 \ge 2 \mathcal{J}_0\frac{\partial \varphi}{\partial x}\left[\ln \varepsilon-\ln \left(-\frac{\partial \varphi}{\partial x}\right)\right]\ge  -2\mathcal{J}_0 \varphi^2 (\ln \varepsilon-2\ln \varphi).
\]
Hence, by using \eqref{eslbPhit}, since $y\mapsto y^\frac{1}{2}(1-2\ln y)^{\alpha+1}$ is non-decreasing for all $y\in\left(0,e^{-\frac{3}{2}-2\alpha}\right)$  and $0<\varphi< \varepsilon$ for all $x> x_{\frac{\varepsilon}{4}}(t)$, we arrive at, for any $0<\varepsilon\le  \min\left\{e^{-1}, e^{\frac{1}{2s}-\alpha},e^{-\frac{3}{2}-2\alpha} \right\}$,
\[
\begin{aligned}
\frac{\partial \Phi}{\partial t}-\mathcal{D}[\Phi]-f(\Phi)
\le& \frac{\varphi^\frac{3}{2}}{(1-\ln \varphi)^\alpha}\left[-r\tilde \gamma_1\kappa  +2\mathcal{J}_0\varphi^\frac{1}{2}(\ln \varepsilon-2\ln\varphi)^{\alpha+1}+rK\varphi^\frac{1}{2}\right]\\
\le &  -\frac{\varphi^{\frac{3}{2}}}{(1-\ln \varphi)^\alpha}\left(r\kappa \tilde \gamma_1 -2\mathcal{J}_0 \varepsilon^\frac{1}{2}\lvert\ln \varepsilon\rvert^{\alpha+1}-rK\varepsilon^{\frac{1}{2}}\right).
\end{aligned}
\]
It therefore follows from the fact that $2\mathcal{J}_0 \varepsilon^\frac{1}{2}\lvert\ln \varepsilon\rvert^{\alpha+1}+rK\varepsilon^{\frac{1}{2}}$ goes to $0$ as $\varepsilon$ goes to $0^+$ that there is $\bar\varepsilon_2\in\left(0, \min\left\{ e^{-1},e^{-\frac{1}{2}-\alpha},\xi_0\right\}\right)$ such that for all $0<\varepsilon\le \bar\varepsilon_2$, we have
\[
\frac{\partial \Phi}{\partial t}-\mathcal{D}[\Phi]-f(\Phi)\le 0\quad \text{ for all }t\ge\tilde t \text{ and }x\ge x_{\frac{\varepsilon}{4}}(t).
\]
\par\#
\textbf{Case 3: $s<\frac{1}{2}$.}
In this case, $p=\frac{1}{2}\min\left\{\frac{1}{2s},2s\right\}=s$. In view of part III of Lemma \ref{eplbde}, by using \eqref{eslbphix0}, taking $B_3=\varepsilon\left(-\frac{\partial \varphi}{\partial x}\right)^{-1}>1$ and letting $\varepsilon\le e^{\frac{1}{2s}-\alpha}$, we have for all $t\ge \tilde t$ and $x> x_{\frac{\varepsilon}{4}}(t)$,
\[
\begin{aligned}
  \mathcal{D}[\Phi]\ge  -\frac{\mathcal{J}_0}{2s}\frac{\varepsilon}{B_3^{2s}}+\frac{1}{1-2s}\mathcal{J}_0\frac{\partial \varphi}{\partial x}(t,x)B_3^{1-2s}
  &=- (2s)^{-2s}\mathcal{C}_3\varepsilon^{1-2s}\left(-\frac{\partial \varphi}{\partial x}\right)^{2s}\ge-\mathcal{C}_3\varepsilon^{1-2s}\varphi^{2s+1},
\end{aligned}
\]
for some positive constant $\mathcal{C}_3$.
Thus, since $y\mapsto y^{s}(1-\ln y)^{\alpha}$ is non-decreasing for all $y\in \left(0,e^{1-\frac{\alpha}{s}}\right)$  and $0<\varphi< \varepsilon$ for all $x> x_{\frac{\varepsilon}{4}}(t)$, by letting $\varepsilon\le \min\left\{e^{\frac{1}{2s}-\alpha}, e^{1-\frac{\alpha}{s}},\xi_0\right\}$, we arrive at for all $t\ge \tilde t$ and $x> x_{\frac{\varepsilon}{4}}(t)$,
\[
\begin{aligned}
\frac{\partial \Phi}{\partial t}-\mathcal{D}[\Phi]-f(\Phi)
\le& \frac{\varphi^{1+s}}{(1-\ln \varphi)^\alpha}\left[-r\tilde \gamma_1\kappa +\mathcal{C}_3\varepsilon ^{1-2s}\varphi^{s}(1-\ln \varphi)^\alpha+rK\varphi^{1-s}\right]\\
\le &  -\frac{\varphi^{1+s}}{(1-\ln \varphi)^\alpha}\left(r\kappa \tilde \gamma_1 -\mathcal{C}_3 \varepsilon^{1-s}(1-\ln \varepsilon)^\alpha-rK\varepsilon^{1-s}\right) .
\end{aligned}
\]
It therefore follows from $\mathcal{C}_3\varepsilon^{1-s}(1-\ln \varepsilon)^\alpha-rK\varepsilon^{1-s}\to 0$ as $\varepsilon\to 0^+$ that there is $\bar\varepsilon_3\in\left(0, \min\left\{e^{\frac{1}{2s}-\alpha}, e^{1-\frac{\alpha}{s}},\xi_0\right\}\right)$, for any $0<\varepsilon\le \bar\varepsilon_3$,
\[
\frac{\partial \Phi}{\partial t}-\mathcal{D}[\Phi]-f(\Phi)\le 0\quad \text{ for all } t\ge\tilde t\text{ and } x\ge x_{\frac{\varepsilon}{4}}(t).
\]
\par 
Let $\bar \varepsilon:=\min \{\bar\varepsilon_1,\bar\varepsilon_2,\bar\varepsilon_3\}$. As a result, collecting results in three cases, for any $0<\varepsilon\le \bar\varepsilon$, we have
\begin{equation}
\label{eplbp3}
\frac{\partial \Phi}{\partial t}-\mathcal{D}[\Phi]-f(\Phi)\le 0\quad \text{ for all } t\ge\tilde t\text{ and } x\ge x_{\frac{\varepsilon}{4}}(t).
\end{equation}

Therefore, combining \eqref{eplbp1}, \eqref{eplbp2} and \eqref{eplbp3}, by letting $\underline t:=\max\{\underline t_1,\underline t_2, \tilde t\}$ and $\varepsilon^*:=\min\{\tilde \varepsilon,\bar \varepsilon\}$, we conclude for any $\varepsilon\in(0,\varepsilon^*)$, 
 $\Phi$ is a sub-solution for all $t\ge\underline t$ and  $x\in\mathbb{R}$.

\end{proof}

\subsubsection{Proof of the lower bound in Theorem \ref{al}}

To get the lower bound on the level set $E_\lambda(t)$ using the sub-solution $\Phi$, we only need to find two times $T_1$ and $T_2$ and a position $R^\star$, such that 
\begin{equation} \label{eslbclaim}
u(T_1,x-R^\star)\ge \Phi(T_2,x) \quad \text{for all }x\in\mathbb{R}.
\end{equation}
If so, by using the parabolic comparison principle, we can achieve 
\[
u(t+T_1,x-R^\star)\ge \Phi(t+T_2,x) \quad \text{for all }t>0\text{ and } x\in\mathbb{R}.
\]
It follows from the definition of $\Phi$ that 
\[
u(t+T_1,x-R^\star)\ge \varepsilon\quad \text{for all }t>0 \text{ and }x\le x_{\frac{3}{2}\varepsilon}(t+T_2).
\]
 Thus, for a small height $\lambda\in(0,\varepsilon)$, by Lemma \ref{eslbl2}, there is a time $T_\lambda>0$ such that, for all $t\ge T_\lambda$,
 \[
 X_\lambda(t)\ge x_{\frac{3}{2}\varepsilon}(t+T_2-T_1)-R^\star\gtrsim e^{\frac{1}{2s}[r(\alpha+1)t]^\frac{1}{\alpha+1}}.
 \]
 Meanwhile, by using the argument in subsubsection  \ref{sslb}, for all $\lambda\in(0,1)$, we obtain
 \[
 X_\lambda(t)\gtrsim e^{\frac{1}{2s}[r(\alpha+1)t]^\frac{1}{\alpha+1}}\quad \text{for } t \text{ large enough}.
 \]
 \par 
 To complete this step, we need to proof the remaining claim \eqref{eslbclaim}. The comparison principle implies that $u(t,x)\ge \underline u(t,x)$, where $\underline u(t,x)$ is given by
\begin{equation}\label{eslble1}
\begin{cases}
   \frac{\partial \underline u}{\partial t}(t,x)=\mathcal{D}[\underline u](t,x), &t>0,\ x\in\mathbb{R},\\
    \underline u(0,x)= u_0(x), &x\in\mathbb{R}.
\end{cases}
\end{equation}

\begin{lemma}[\cite{bouin2023simple}]
Assume that the kernel $J$ and the initial data $u_0$ satisfy Hypothesis \ref{ker} and Hypothesis \ref{ic1}, respectively.
   Then, there exists a constant $\Gamma>0$, such that the solution to \eqref{eslble1} has the following asymptotic flattening behaviour at infinity:
    \[
        \forall t\in(0,+\infty), \quad \lim_{x\to +\infty} x^{2s}\underline u(t,x)\ge \Gamma t.
    \]
\end{lemma}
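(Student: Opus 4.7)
The plan is to adapt the strategy used in Proposition \ref{aplbf} to the algebraic setting, by building an explicit sub-solution whose tail matches the decay rate of $J$. For $\kappa>0$ to be determined small, define
\[
w(t,x):=\begin{cases} \tfrac{1}{2}, & x\le 0,\\[4pt] \dfrac{\kappa t}{(1+x)^{2s}+2\kappa t}, & x>0. \end{cases}
\]
This function is smooth on $\{x>0\}$, non-increasing in $x$, non-decreasing in $t$, and $w(t,x)\sim \kappa t\, x^{-2s}$ as $x\to+\infty$. First I would compute $\partial_t w$, $\partial_x w$ and $\partial_{xx} w$ explicitly, and identify an interval of the form $x\ge X(\kappa t)$ on which $w(t,\cdot)$ is convex (needed to handle the symmetric near-field part of $\mathcal{D}[w]$). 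One checks in particular that $\partial_t w\le \kappa/((1+x)^{2s}+2\kappa t)$, so on a time horizon $t\in[0,t_\kappa]$ with $t_\kappa:=2\mathcal{C}/\kappa$ for an arbitrary $\mathcal{C}>0$, this derivative is $\lesssim \kappa/(1+x)^{2s}$ once $x$ is large enough.

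Next I would estimate $\mathcal{D}[w](t,x)$ for $(t,x)\in[0,t_\kappa]\times [R_0+B,+\infty)$, splitting the integral as
\[
\mathcal{D}[w]=\int_{B}^{+\infty}J(y)\bigl(w(t,x+y)-w(t,x)\bigr)dy+\frac{1}{2}\int_{-B}^{B}J(y)\bigl(w(t,x+y)+w(t,x-y)-2w(t,x)\bigr)dy
\]
\[
+\int_{-x}^{-B}J(y)\bigl(w(t,x+y)-w(t,x)\bigr)dy+\int_{-\infty}^{-x}J(y)\bigl(w(t,x+y)-w(t,x)\bigr)dy.
\]
The symmetric central piece is non-negative by convexity. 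The outer-right and outer-left-but-bounded pieces are negative; by Hypothesis \ref{ker}, they are bounded below by $-\mathcal{J}_0 w(t,x)\int_B^\infty y^{-(1+2s)}dy\asymp -w(t,x)B^{-2s}$, which is a higher-order correction in $B$. The crucial positive contribution comes from the long backward jumps landing in $\{x\le 0\}$:
\[
\int_{-\infty}^{-x}J(y)\bigl(w(t,x+y)-w(t,x)\bigr)dy\;\ge\;\mathcal{J}_0^{-1}\Bigl(\tfrac{1}{2}-w(t,x)\Bigr)\int_{x}^{+\infty}\!\!z^{-(1+2s)}\,dz\;\asymp\; x^{-2s}.
\]
Choosing $B$ large (depending on $\mathcal{C}$) and then $\kappa$ small enough, this term dominates $\partial_t w$ and all error terms, so that $\partial_t w-\mathcal{D}[w]\le 0$ for all $(t,x)\in (0,t_\kappa)\times [R_0+B_c,+\infty)$ for some $B_c>1$.

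With this differential inequality in hand, the end of the argument follows verbatim the one in Proposition \ref{aplbf}. Using Hypothesis \ref{ic1}, let $M:=\liminf_{x\to-\infty}u_0(x)>0$ and pick $b\in\mathbb R$ with $u_0\ge M/2$ on $(-\infty,b]$. Since $u_0\ge M/2\ge Mw(t,x)$ on $(-\infty,b-R_0-B_c]$ uniformly in $t$ (noting $w\le 1/2$), the shifted function $\tilde u(t,x):=\underline u(t,x-R_0-B_c+b)$ dominates $Mw$ on the lateral boundary and at $t=0$; the adapted comparison principle of \cite[Theorem 2.2]{bouin2023simple} yields $\tilde u(t,x)\ge M w(t,x)$ on $(0,t_\kappa)\times\mathbb R$. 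Setting $t=\mathcal{C}/\kappa$ and letting $x\to+\infty$ gives $\lim_{x\to+\infty}x^{2s}\underline u(\mathcal{C}/\kappa,x)\ge M\mathcal{C}$; equivalently, $\lim_{x\to+\infty}x^{2s}\underline u(t,x)\ge M\kappa t$ for every $t>0$, which is the claim with $\Gamma=M\kappa$.

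The main obstacle is the delicate balancing in the estimate of $\mathcal{D}[w]$: in contrast to the sub-exponential case of Proposition \ref{aplbf}, here the positive gain from the long backward jumps and the target rate $\partial_t w$ scale with the same power $x^{-2s}$, so the smallness of $\kappa$ (versus an explicit constant depending only on $\mathcal{J}_0$ and $s$) is what actually produces the inequality — there is no exponential separation to absorb loose constants. Convexity of $w(t,\cdot)$ on the region of interest, and the smallness of $B^{-2s}$, must both be carefully quantified in terms of $\mathcal{C}$ and $\kappa$, but this is exactly the template of Proposition \ref{aplbf} carried over to algebraic tails.
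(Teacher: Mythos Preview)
The paper does not give its own proof of this lemma; it is quoted from \cite{bouin2023simple}. Your adaptation of the argument of Proposition~\ref{aplbf} to the algebraic setting is the natural route and is essentially correct.

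Two small points are worth tightening. First, the piece $\int_{-x}^{-B}J(y)\bigl(w(t,x+y)-w(t,x)\bigr)dy$ is \emph{non-negative} (since $w(t,\cdot)$ is non-increasing), not negative as you wrote; you may simply drop it, exactly as the paper does in the proof of Proposition~\ref{aplbf}. Second, the order of quantifiers matters for the conclusion: the smallness condition on $\kappa$ comes solely from the balance between $\partial_t w\lesssim \kappa(1+x)^{-2s}$ and the long-jump gain $\gtrsim \mathcal{J}_0^{-1}x^{-2s}$, and is therefore a constraint of the form $\kappa\le c(\mathcal{J}_0,s)$, \emph{independent of} $\mathcal{C}$. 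Fix such a $\kappa$ once and for all; then, for each $\mathcal{C}>0$, choose $B=B(\mathcal{C})$ large enough to absorb the error $-w(t,x)B^{-2s}$ on $[0,t_\kappa]$ and to guarantee convexity of $w(t,\cdot)$ on $[B,\infty)$. Only then is $\Gamma=M\kappa$ a genuine constant, as the statement requires. Your closing paragraph identifies precisely this feature (no exponential separation, same power on both sides), so this is only a matter of phrasing the dependencies cleanly.
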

\par 

By the above lemma, for some $T_1>\frac{1}{\Gamma}e^{[r(\alpha+1)\underline t]^\frac{1}{\alpha+1}}$, there is $\xi_1>1$ such that for all $x\ge \xi_1$, we have $u(T_1,x)\ge \underline u(T_1,x)\ge \Gamma\frac{T_1}{x^{2s}}$. 
In view of the definition of $\Phi$, by \eqref{eslbephi}, denoting $T_2=\underline t$, we have for all $x\ge \xi_1$,
\[
\Phi(T_2,x)\le \frac{e^{[r(\alpha+1)T_2]^\frac{1}{\alpha+1}}}{x^{2s}}\le \Gamma\frac{T_1}{x^{2s}}\le u(T_1,x).
\]
On the other hand,  since propagation for Allee effect occurs (see \cite{alfaro2017propagation}), the comparison implies that there is $\xi_2$ such that $u(t,\xi_2)\ge \varepsilon$, then, since $u(t,\cdot)$ is non-increasing, we have
\[
u(t,x)\ge \Phi(T_2,x)\quad\text{for all }t>0 \text{ and } x\le \xi_2.
\]
As a result, by taking $R^\star=|\xi_1-\xi_2|$, we achieve
\[
u(T_1,x-R^\star)\ge \Phi(T_2,x)\quad\text{for all } x\in\mathbb{R}.
\]

\section{A numerical method based on an IMEX scheme}\label{s5}
In this section, we present an efficient numerical method based on Implicit-explicit scheme to get approximations to the singular integro-differential equation to \eqref{oeq1}. It is worth mentioning that the combination of non-local operators and accelerated propagation makes the numerical approximation very difficult and costly.
\par
To do so, we truncate the real line to work on a bounded interval $\Omega:=[0,L]$ for some $L>0$ and impose an exterior boundary condition
\[
    g(x)=\left\{
    \begin{aligned}
        &1, &x< 0,\\
        &0, &x> L.
    \end{aligned}\right.
\]
In other words, we consider the solution to the following initial value problem with Dirichlet boundary conditions, on a finite time interval $[0,T]$, for some $T>0$:
\begin{equation}\label{oeq2}
\begin{cases}
    \frac{\partial u}{\partial t}(t,x)-\mathcal{D}[u](t,x)-f(u(t,x))=0, &t\in(0,T),\ x\in\Omega,\\
u(t,x)= g(x), &t\in(0,T),\ x\in\mathbb{R}\setminus\Omega,\\
   u(0,x)=u_0(x), &x\in \Omega.
\end{cases}
\end{equation}
\subsection{Discretisation of the singular integral operator \texorpdfstring{$\mathcal{D}[\cdot]$}{D[.]}}\label{5.1}
Let us explain how we discretise the singular integral operator $\mathcal{D}[\cdot]$ under Hypothesis \ref{ker}. We start from
\[
    \mathcal{D}[u](x)=\int_{\mathbb{R}^+}\Big(u(x-y)+u(x+y)-2u(x)\Big)J(y)dy,
\]
that is obtained using the symmetry of $J$.
Without loss of generality, we assume that the closure of the interval $\Omega$ belongs to $\mathbb{R}^+$ and contains the singular point of the kernel dispersal $J$.  
The integral operator $\mathcal{D}[\cdot]$ involves a singular integrand, which is typically handled by decomposing it into a singular part and a smooth part, allowing standard quadrature methods to be applied. This strategy has been applied to both convolution (see \cite{tian2013analysis}) and fractional Laplace operators (see \cite{duo2018novel,huang2014numerical,minden2020simple,bouin2021sharp}). Inspired by these works,
we write, for $x\in \Omega$,
\begin{align*}
   \mathcal{D}[u](x)&=\int_\Omega \frac{u(x-y)+u(x+y)-2u(x)}{y^\gamma}y^\gamma J(y)dy+\int_{\mathbb{R}^+\setminus \Omega}\Big(u(x-y)+u(x+y)-2u(x)\Big)J(y)dy\\
   &:=\mathcal{D}_1[u](x)+\mathcal{D}_2[u](x),
\end{align*}
where the parameter $\gamma$ satisfies the following condition
\begin{equation}\label{necon}
    \gamma\in [0,2]\quad\text{and}\quad y\mapsto y^\gamma J(y)\in L^1(\Omega).
\end{equation}
\par
For example, if $J$ belongs to $L^1(\mathbb{R})$, then $\mathcal{D}[u]$ is the convolution operator $J*u-u$ with an integrable kernel. In this case, any $\gamma \in[0,2]$ fits. One may notice that if one takes $J(y)=\frac{C_{1,s}}{|y|^{1+2s}}$ with some $s\in(0,1)$ and $C_{1,s}=\frac{2^{2s}\Gamma\left(\frac{1+2s}{2}\right)}{\pi^\frac{1}{2}\Gamma(1-s)}$, the operator $-\mathcal{D}[u]$ is the fractional Laplace operator
\[\left(-\triangle \right)^{s}u(x)=C_{1,s} P.V. \int_{\mathbb{R}}\frac{u(x)-u(y)}{|x-y|^{1+2s}}dy,\] 
and the condition \eqref{necon} becomes $\gamma\in(2s,2].$
Moreover, if $J$ satisfies Hypothesis \ref{ker}, then $\gamma=2$ is the only left choice.
\par 
To approximate $\mathcal{D}_1[u]$ on $\Omega$, assuming $u$ is $C^2(\mathbb{R})$, for $x\in \Omega$, we define 
\[
  U_\gamma(x,y)=\frac{u(x-y)+u(x+y)-2u(x)}{y^\gamma}  \quad \text{for all }y\neq 0,
\]
and 
\[
U_\gamma(x,0)=
\begin{cases}
 0, &\gamma \neq2,\\
 u''(x), &\gamma =2.
\end{cases}
\]
The function $U(x,\cdot)$ belongs to $C(\mathbb{R})$, since
\begin{equation}
 \label{nml}   
\lim_{y\to 0^+}U_\gamma(x,y)=u''(x)\lim_{y\to 0^+} y^{2-\gamma}=U_\gamma(x,0),
\end{equation}
using the L'H\^opital's rule.
For some positive integer $N$, let $\{y_n\}_{n=0,1,\dots,N}$ be a partition of $\Omega$ such that $y_0<y_1<\cdots <y_{N-1}<y_N$ where $y_0$ and $y_N$ are the endpoints of $\Omega$.
Then, denoting $I_n:=[y_{n-1},y_n]$ and $\T^\gamma_n:=\int_{I_n}y^\gamma J(y)dy$ for $n\in \{1,\dots, N\}$, we split $\mathcal{D}_1[u]$ into $N$ parts:
\[
     \mathcal{D}_1[u](x)  =\int_\Omega U_\gamma(x,y) y^\gamma J(y)dy
    =\sum_{n=1}^N \int_{I_n}U_\gamma(x,y)y^\gamma J(y)dy.
\]
To help readability, we denote by $\T^\gamma_n:=\int_{I_n}y^\gamma J(y)dy$ for $n\in \{1,\dots, N\}$. On the one hand, for $n\in\{2,3,\dots,N\}$, by using the weighted trapezoidal rule, we obtain
\[
\int_{I_n}U_\gamma(x,y)y^\gamma J(y)dy=\frac{\T^\gamma_n}{2}[U_\gamma(x,y_n)+U_\gamma(x,y_{n-1})].
\]
On the other hand, for $n=1$, since this part contains the singular integral, we apply \eqref{nml} and the weighted trapezoidal rule to obtain 
\[
\int_{I_1}U_\gamma(x,y)y^\gamma J(y)dy=\frac{\T^\gamma_1}{2}[U_\gamma(x,y_1)+U_\gamma(x,0)].
\]
As a result, the numerical approximation of $\mathcal{D}_2[u]$ on $\Omega$ is given by:
\begin{equation}\label{nmd1}
   \mathcal{D}_1[u](x)= \sum_{n=1}^N \frac{\T^\gamma_n}{2}[U_\gamma(x,y_n)+U_\gamma(x,y_{n-1})].
\end{equation}
\par 
Notice that for all $x\in\Omega$, if $y$ belongs to $\mathbb{R}^+\setminus \Omega$, then $x\pm y\in \mathbb{R}\setminus \Omega$. Thus, by use the boundary condition $g$, the numerical approximation of $\mathcal{D}_2[u]$ on $\Omega$ can be written as
\begin{equation}\label{nmd2}
    \mathcal{D}_2[u](x) =\int_{\mathbb{R}^+\setminus \Omega}\Big(g(x-y)+g(x+y)\Big)J(y)dy-2u(x)\int_{\mathbb{R}^+\setminus \Omega}J(y)dy=(1-2u(x))\int_{\mathbb{R}^+\setminus \Omega}J(y)dy.
\end{equation}
Therefore, collecting \eqref{nmd1} and \eqref{nmd2}, we achieve the numerical approximation of the singular integral operator $\mathcal{D}[\cdot]$ as, for all $x\in\Omega$,
\begin{equation}\label{dD}
    \mathcal{D}[u](x) \approx \sum_{n=1}^N \frac{\T^\gamma_n}{2}[U_\gamma(x,y_n)+U_\gamma(x,y_{n-1})]+(1-2u(x))\int_{\mathbb{R}^+\setminus \Omega}J(y)dy.
\end{equation}
In practical applications, the integral $\int_{\mathbb{R}^+\setminus \Omega}J(y)dy$
 can be achieved either through straightforward analytical calculations or by taking advantage of available software packages. For example, the \texttt{quad} function from the \texttt{scipy.integrate} library in Python can be used for numerical integration.
\subsection{Toeplitz equation}
 In this subsection, we explain the discretisation of equation \eqref{oeq2}.  More precisely, we use finite differences in space and an IMEX scheme in time. The diffusion term $\mathcal{D}[\cdot]$ is dealt implicitly, while the reaction term $f$ is integrated explicitly. 
 \par
We work on the time interval $[0,T]$ and the space $\Omega:=[0,L]$.
We use a uniform Cartesian grid $\{(t_i, x_j)=(i\T t, j\T x)|(i,j)\in\mathbb{N}\times \mathbb{Z}\}$\footnote{In this paper, we use $\mathbb{N}$ to denote the set of non-negative integers, i.e., \{0,1,2,3,\dots\}.}, with the time step $\T t=\frac{T}{M}$ and the space step $\T x=\frac{L}{N}$ for some positive integers $M$ and $N$.
Some notations should be introduced:
\[
u_{i,j}=u(t_i, x_j),\quad \mathcal{D}_{i,j}=\mathcal{D}[u](t_i, x_j)\quad \text{and}\quad f_{i,j}=f(u_{i,j}).
\]
  Denote $y_n:=n\T x$ with $n\in \mathbb{Z}$ and $\T^\gamma_n:=\int_{y_{n-1}}^{y_n}y^\gamma J(y)dy$ for $n\in \{1,\dots, N\}$. In view of \eqref{dD}, if $x_j\in\Omega=[0,L]$ and $y_n\in(L,+\infty)$, then $x_j- y_n\in (-\infty,0)$ and $x_j+y_n\in(L,+\infty)$, whence by the extended boundary value, for all $i\in\mathbb{Z}^+$,
\[
    u_{i,j}=1\quad \text{for }j<0\quad \text{and}\quad u_{i,j}=0 \quad \text{for }j>N.
\]
Thus, in view of \eqref{dD}, denoting 
\[
U^i_{j,n}=\frac{u_{i,j-n}+u_{i,j+n}-2u_{i,j}}{(n\T x)^\gamma}\quad \text{for }n\neq 0,
\]
and 
\[
U^i_{j,0}=
\begin{cases}
    0, &\gamma\neq 2,\\
    \frac{u_{i,j-1}+u_{i,j+1}-2u_{i,j}}{(\T x)^2}, &\gamma =2,
\end{cases}
\]
the approximate formula of $\mathcal{D}_{i,j}$ can be written as 
\[
    \mathcal{D}_{i,j} \approx \sum_{n=1}^N \frac{\T^\gamma_n}{2}[U^i_{j,n}+U^i_{j,n-1}]-2\mathcal{J}_L u_{i,j}+\mathcal{J}_L,
\]
where $\mathcal{J}_L:=\int_L^{+\infty}J(y)dy$.
Here, we use the central difference approximation for $u''(x)$ to determine the value of $U^i_{j,0}$ when $\gamma=2$, which equals to $U^i_{j,1}$.
\par
Furthermore, by using the method of finite differences, the approximation of the first equation in \eqref{oeq2} is:
 \[
     \frac{u_{i+1,j}-u_{i,j}}{\T t}-\mathcal{D}_{i+1,j}=f_{i,j}\quad \text{for each }(i,j)\in \{0,1,\dots,M-1\}\times\{0,1,\dots,N\}.
 \]
 Rearranging the above discretized equation, we get
 \[
     u_{i+1,j}-\T t \mathcal{D}_{i+1,j}=u_{i,j}+\T t f_{i,j},
 \]
which is, in matrix form,
\begin{equation}\label{ts}
   (\mathcal{A}_{i+1}+\mathcal{B}(\gamma) )X_{i+1}=\mathcal{E}_i+\mathcal{F}(\gamma),
\end{equation}
where $X_{i}=(u_{i,0},u_{i,1},\dots,u_{i,N})^T$.
Using $E_{N+1}$ to represent an $(N+1)$-dimensional identity matrix, the matrixes $\mathcal{A}_{i+1}$ and $\mathcal{B}(\gamma)$ are given by
\[
\resizebox{\textwidth}{!}{$
\left(1+2\T t \mathcal{J}_L\right)E_{N+1}-\T t
       \begin{pmatrix}
        &-\frac{\T^\gamma_1}{(\T x)^\gamma}-\sum_{n=2}^N\T^\gamma_n\left[\frac{1}{(n\T x)^\gamma}+\frac{1}{((n-1)\T x)^\gamma}\right] &\frac{\T^\gamma_1+\T^\gamma_2}{2(\T x)^\gamma} &\dots & \frac{\T^\gamma_N+\T^\gamma_{N+1}}{2(N\T x)^\gamma}\\
        &\frac{\T^\gamma_1+\T^\gamma_2}{2(\T x)^\gamma}&-\frac{\T^\gamma_1}{(\T x)^\gamma}-\sum_{n=2}^N \T^\gamma_n\left[\frac{1}{(n\T x)^\gamma}+\frac{1}{((n-1)\T x)^\gamma}\right]&\dots  &\frac{\T^\gamma_{N-1}+\T^\gamma_{N}}{2((N-1)\T x)^\gamma}\\
        &\vdots &\vdots &\ddots   &\vdots\\
     & \frac{\T^\gamma_N+\T^\gamma_{N+1}}{2(N\T x)^\gamma}&  \frac{\T^\gamma_{N-1}+\T^\gamma_N}{2((N-1)\T x)^\gamma} & \dots &-\frac{\T^\gamma_1}{(\T x)^\gamma}-\sum_{n=2}^N\T^\gamma_n\left[\frac{1}{(n\T x)^\gamma}+\frac{1}{((n-1)\T x)^\gamma}\right]
    \end{pmatrix}
    $},
\]
 and 
\[
\quad \mathcal{B}(\gamma)=0_{(N+1)\times(N+1)}\quad \text{for }\gamma \neq 2,\quad
\mathcal{B}(2)=\frac{\T t\T _1^\gamma}{(\T x)^2}
\begin{pmatrix}
   & 1 &-\frac{1}{2} &  & & \\
   & -\frac{1}{2}   &1 &\ddots &  &\\
   & &  \ddots &  \ddots &\ddots & \\
& &   & \ddots &1 &-\frac{1}{2} \\
  & &   &   &-\frac{1}{2} &1 \\
\end{pmatrix}
,
\]
where $\T^\gamma_n=\int_{y_{n-1}}^{y_n}y^\gamma J(y)dy$ with $y_n=n\T x$.
Denoting $\T^\gamma_{N+1}=0$, $F_i=(f_{i,0},f_{i,1},\dots,f_{i,N})^T$ and $e_{N+1}={\underbrace{(1,1,\dots,1)}_{N+1 \ \text{ elements}}}^T$, the vectors $\mathcal{E}_i$ and $\mathcal{F}(\gamma)$ are given by
\[
    \mathcal{E}_i:= X_i+
    \T t F_i+\T t\mathcal{J}_L e_{N+1}+\T t
    \begin{pmatrix}
        &\sum_{n=1}^{N}\frac{\T^\gamma_n+\T^\gamma_{n+1}}{2(n\T x)^\gamma}\\
        &\sum_{n=2}^{N}\frac{\T^\gamma_n+\T^\gamma_{n+1}}{2(n\T x)^\gamma}\\
        &\vdots\\
        &\sum_{n=N}^{N}\frac{\T^\gamma_n+\T^\gamma_{n+1}}{2(n\T x)^\gamma}\\
        &0\\
    \end{pmatrix},
\]
and
\[
\mathcal{F}(\gamma)=0_{(N+1)\times 1}\quad \text{for }\gamma \neq 2,\quad \mathcal{F}(2)=  \left(\frac{\T t\T _1^\gamma }{2(\T x)^2},0,\dots,0\right)^T_{N+1}.
\]
One may observe that since the matrix $\mathcal{A}_{i+1}+\mathcal{B}(\gamma)$ is a Toeplitz matrix, \eqref{ts} is a Toeplitz system. See \cite[Chapter 5.2.6]{hayes1996statistical} for a method of resolution of such systems.
\subsection{Numerical experiments}
In this subsection, we shall make some numerical experiments to illustrate our previous theoretical results. The influence of the tail of the dispersal kernel $J$ on the rate of propagation is illustrated for some fixed $\alpha$ in Figures \ref{fig:subexp0.2}, \ref{fig:subexp0.4}, \ref{fig:algebraic0.5} and \ref{fig:algebraic1.0}. We consider sub-exponential and algebraic kernels to illustrate our main results.
\par
As the previous discussion about \eqref{necon} in Section \ref{5.1}, under Hypothesis \ref{ker}, the splitting parameter $\gamma$ should be chosen to be equal to $2$. 
In the following simulations, we always assume that $L>1$, $z^2J(z)=1$ for $z\in[-1,1]$, so that $J$ satisfies $\int_{|y|\le 1}y^2J(y)dy=2$, and that the initial data $u_0:=\mathds{1}_{x\le 0}$, and that $f(u)=\frac{u(1-u)}{(1-\ln u)^\alpha}$.
\subsubsection{Sub-exponential kernel}
To illustrate theoretical results given in Theorems \ref{lin1} and \ref{ei1} for a sub-exponential kernel, we take $J(|z|)=e^{1-|z|^\beta}$ with $\beta>0$ for $|z|\ge 1$. Thus, recalling $\T^\gamma_n=\int_{y_{n-1}}^{y_n}y^\gamma J(y)dy$ with $y_n=n\T x$ for $n\in \{1,\dots, N\}$, we have
\[
\T^2_n =\begin{cases}
    \int_{y_{n-1}}^{y_n} y^2 e^{1-y^\beta}dy, &\text{for }\left[\frac{N}{L}\right]+1\le n\le N ,\\
 \T x,& \text{for }1\le n\le \left[\frac{N}{L}\right],
\end{cases}
\]
where the integral $\int_{y_{n-1}}^{y_n} y^2 e^{1-y^\beta}dy$ can be numerically evaluated using the \texttt{quad} function from the \texttt{scipy.integrate} library in Python.
\par 
For this type of dispersal kernel $J$, Figures \ref{fig:subexp0.2} and \ref{fig:subexp0.4} demonstrate that acceleration occurs when $\beta$ is small, and the time range over which this phenomenon can be observed becomes shorter as $\beta$ decreases.
This is consistent with our theoretical results, that is, $X_\lambda(t)\asymp t^\frac{1}{\beta(\alpha+1)}$ increases as $\beta$ decreases. Additionally, the width of the solution grows as time increases when acceleration occurs: it is the flattening effect \cite{flatteningeffect,bouin2021sharp,BOUIN2024113557}.
When $\beta$ is large enough, as shown in Figure \ref{fig:algebraic0.5}(c) and Figure \ref{fig:algebraic1.0}(c), the solution propagates at a finite rate.

\begin{figure}[H]
\centering 
    \begin{subfigure}[b]{0.25\textwidth} 
        \includegraphics[width=\textwidth]{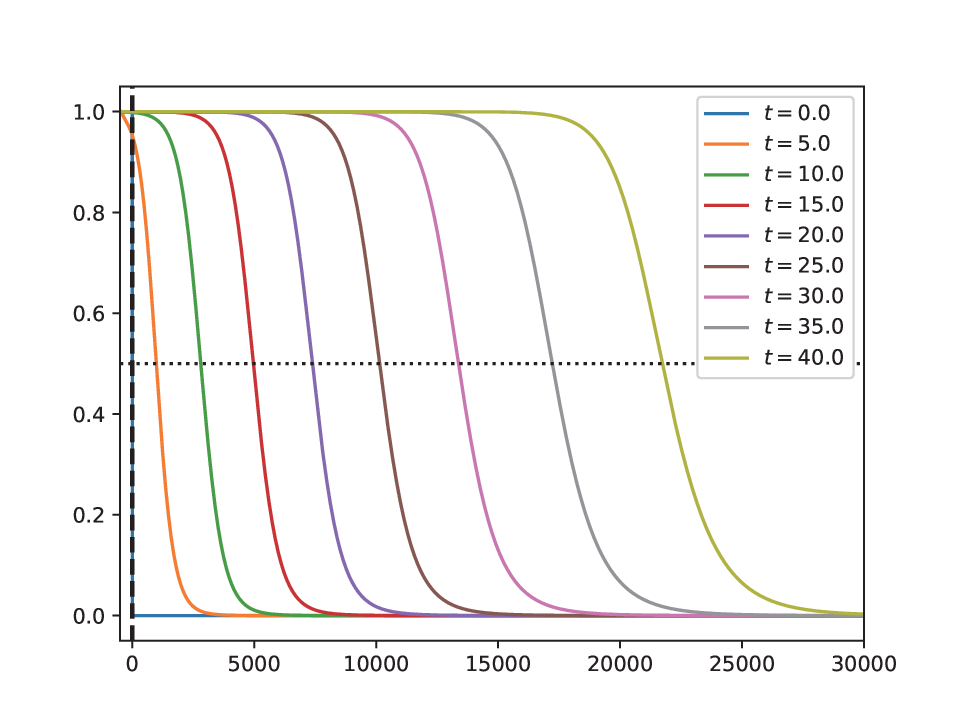} \caption{$\beta=0.35$} 
    \end{subfigure} 
    \quad
    \begin{subfigure}[b]{0.25\textwidth}  
        \includegraphics[width=\textwidth]{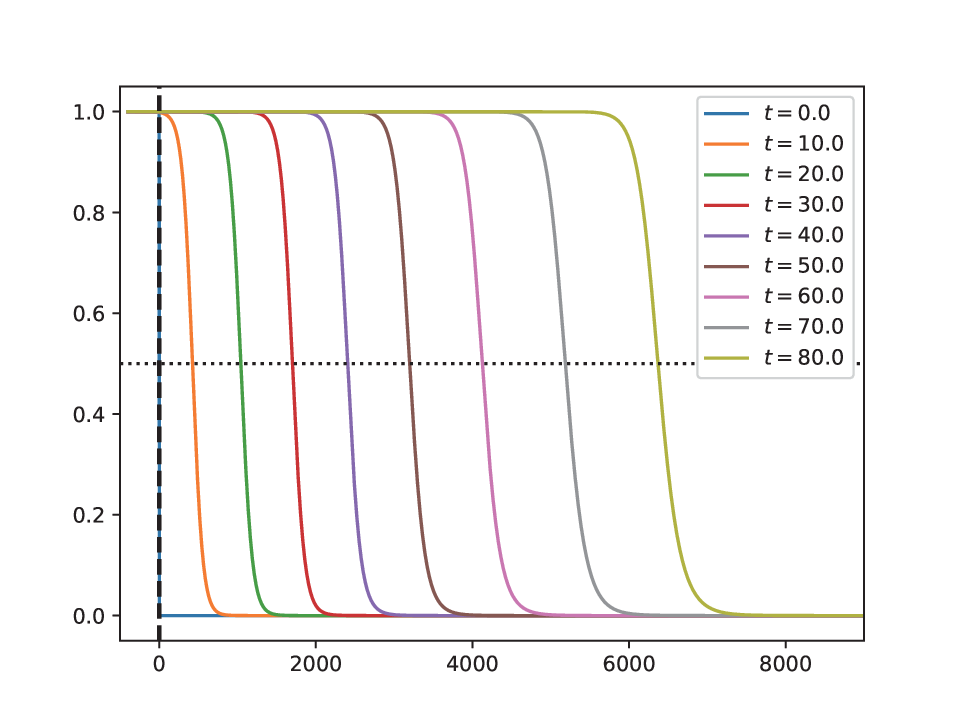}\caption{$\beta=0.45$} 
    \end{subfigure} 
     \quad
    \begin{subfigure}[b]{0.25\textwidth}  
        \includegraphics[width=\textwidth]{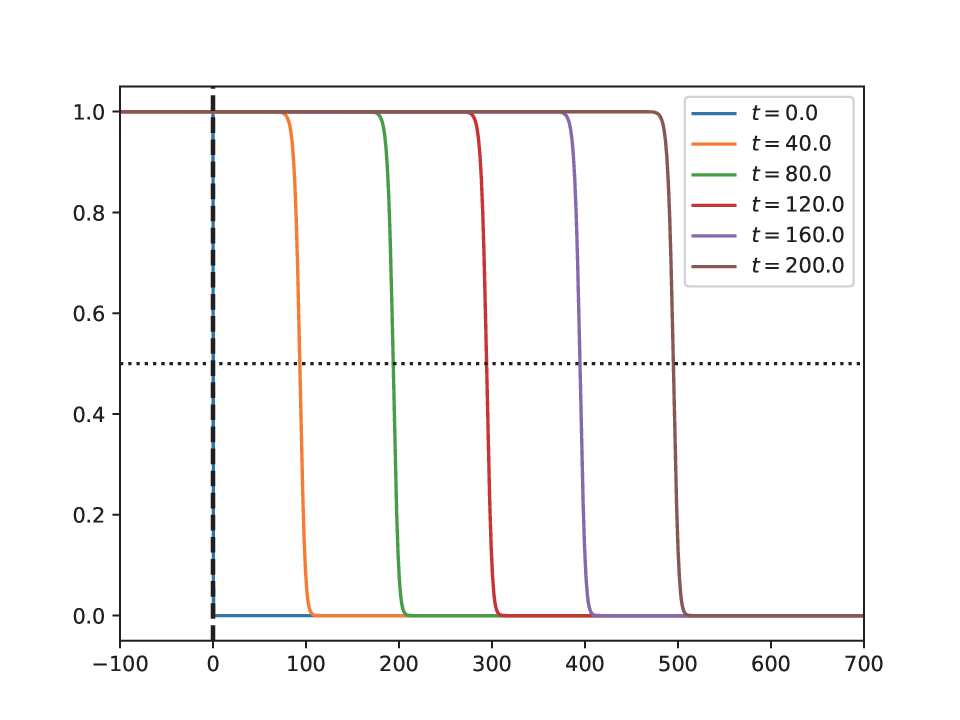} \caption{$\beta=2$}  
    \end{subfigure} 
\caption{ \small Numerical approximations of the solution to \eqref{oeq1} with the dispersal kernel $J(z)\sim e^{-|z|^\beta}$ at different times for $\alpha=0.2$ and different values of $\beta$. The threshold for acceleration is $\beta=\frac{1}{\alpha+1}=\frac{5}{6}$.}
\label{fig:subexp0.2}
\end{figure} 

\begin{figure}[H]
\centering 
    \begin{subfigure}[b]{0.25\textwidth} 
        \includegraphics[width=\textwidth]{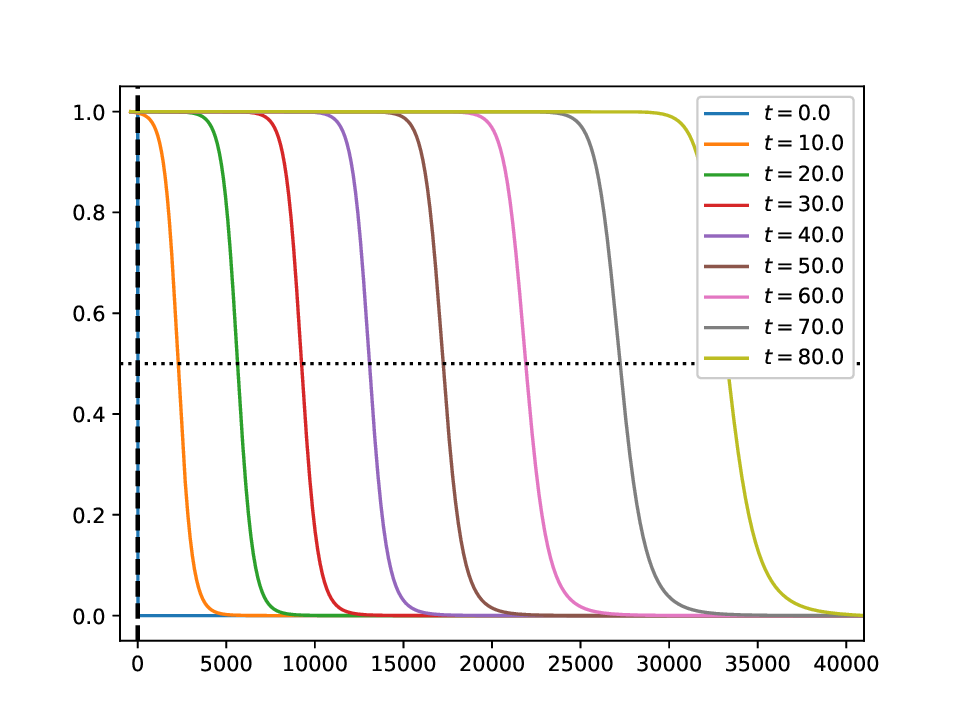} \caption{$\beta=0.35$} 
    \end{subfigure} 
    \quad
    \begin{subfigure}[b]{0.25\textwidth}  
        \includegraphics[width=\textwidth]{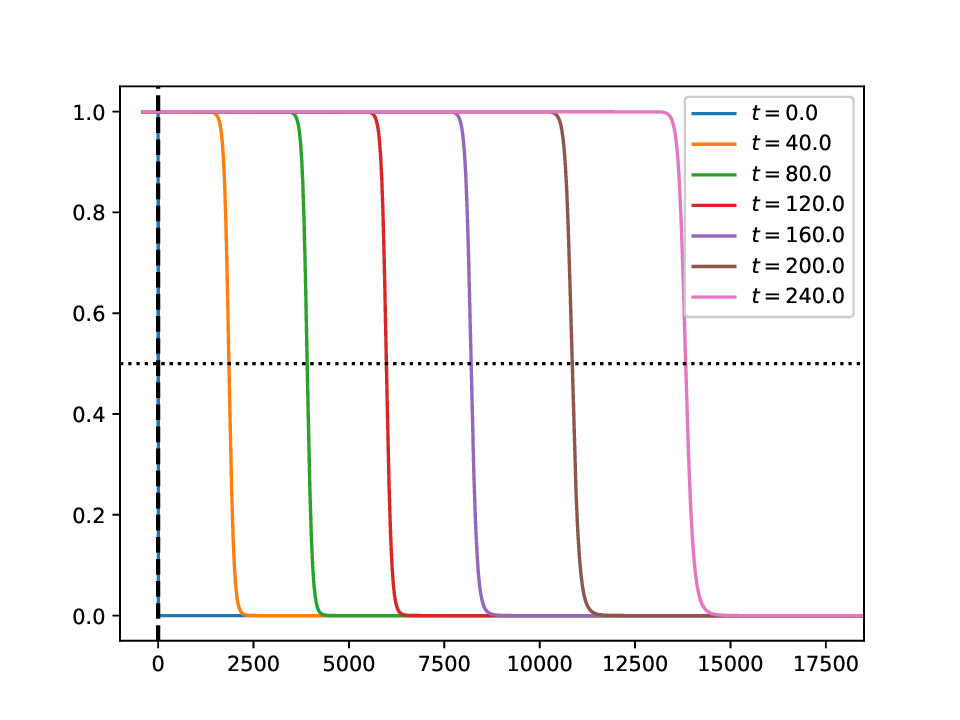}\caption{$\beta=0.45$} 
    \end{subfigure} 
     \quad
    \begin{subfigure}[b]{0.25\textwidth}  
        \includegraphics[width=\textwidth]{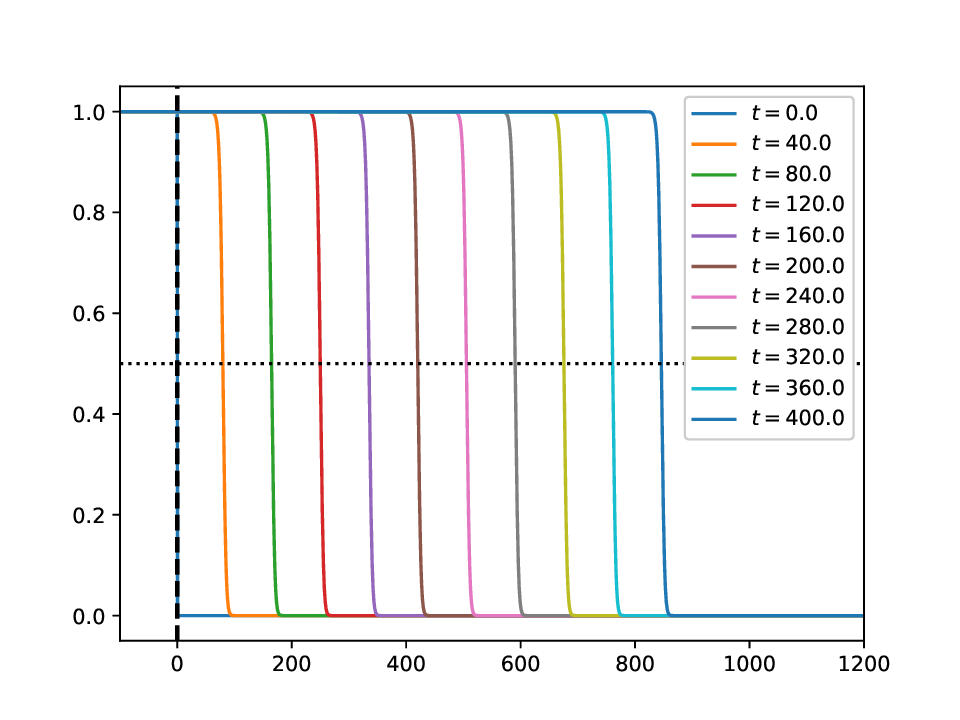} \caption{$\beta=2$}  
    \end{subfigure} 
\caption{\small Numerical approximations of the solution to \eqref{oeq1} with the dispersal kernel $J(z)\sim e^{-|z|^\beta}$ at different times for $\alpha=0.4$ and different values of $\beta$. The threshold for acceleration is $\beta=\frac{1}{\alpha+1}=\frac{5}{7}$.}
\label{fig:subexp0.4}
\end{figure} 

\subsubsection{Algebraic kernel}
To illustrate theoretical results in Theorem \ref{al} for algebraic kernel, we take $J(|z|)=\frac{1}{|z|^{1+2s}}$ with $s> 0$ for $|z|\ge 1$. One may observe that for $\left[\frac{N}{L}\right]+1\le n\le N$, $$\T^{2}_n=\int_{y_{n-1}}^{y_n} y^{1-2s}dy= 
\begin{cases}
    \frac{(\T x)^{2-2s}}{2-2s}\left(n^{2-2s}-(n-1)^{2-2s}\right), &\text{as } s\neq 1, \\
    \ln \frac{n}{n-1},&\text{as } s= 1,
\end{cases}
$$
while, for $1\le n\le \left[\frac{N}{L}\right]$, 
$$\T^2_n= y_n-y_{n-1}=\T x.$$ The constant $\mathcal{J}_L$ can be written explicitly as 
\[
\mathcal{J}_L=\int_L^{+\infty}\frac{1}{y^{1+2s}}dy=\frac{L^{2s}}{2s}.
\]
\par 
For this type of dispersal kernel $J$, Figures \ref{fig:algebraic0.5} and \ref{fig:algebraic1.0} show that acceleration always occurs, and that decreasing $s$ leads to an increase in the propagation speed. These observations are consistent with our theoretical findings, which show that $\ln X_\lambda(t)\sim_{t\to\infty} \frac{[r(\alpha+1)t]^\frac{1}{\alpha+1}}{2s}$ increases as $s$ decreases. We can also observe the flattening effect, which becomes more pronounced as $s$ decreases. Therefore, the tail of the dispersal kernel $J$ plays a key role in the propagation of the solution to \eqref{oeq1}, and as the tail becomes heavier, the propagation accelerates.

\begin{figure}[H] 
\centering 
    \begin{subfigure}[b]{0.25\textwidth} 
        \includegraphics[width=\textwidth]{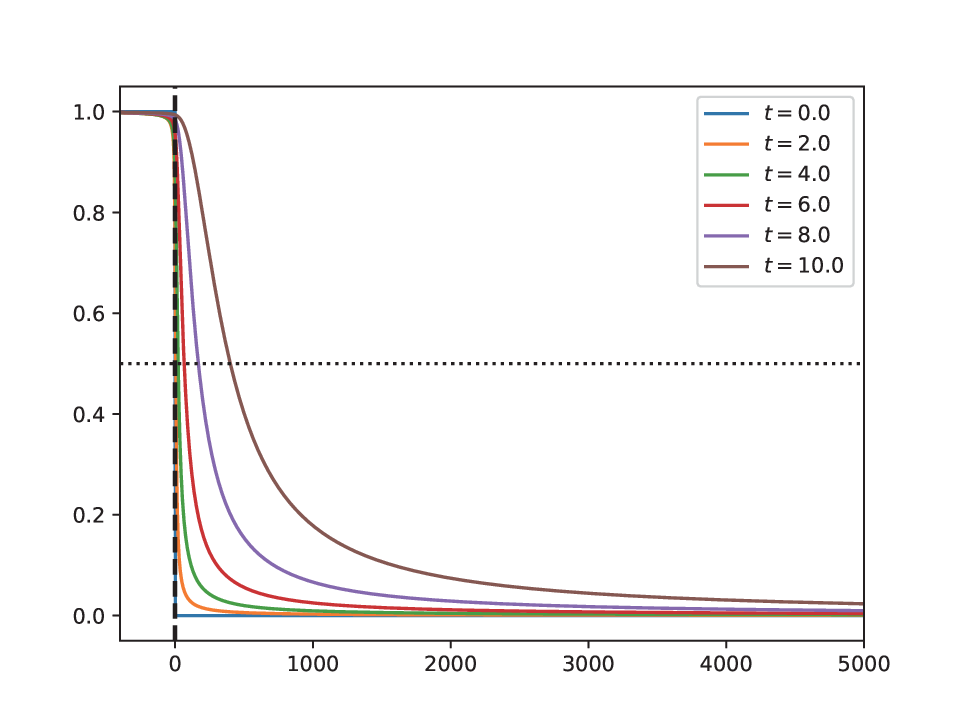} \caption{$s=0.5$} 
    \end{subfigure} 
    \quad
    \begin{subfigure}[b]{0.25\textwidth}  
        \includegraphics[width=\textwidth]{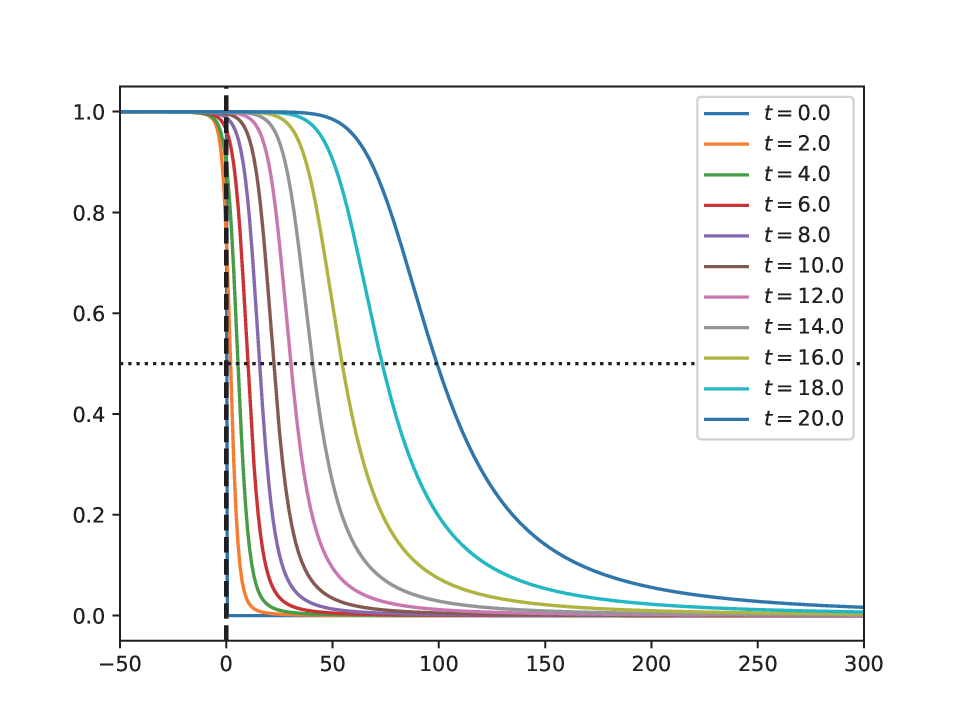}\caption{$s=1$} 
    \end{subfigure}  
     \quad
    \begin{subfigure}[b]{0.25\textwidth}  
        \includegraphics[width=\textwidth]{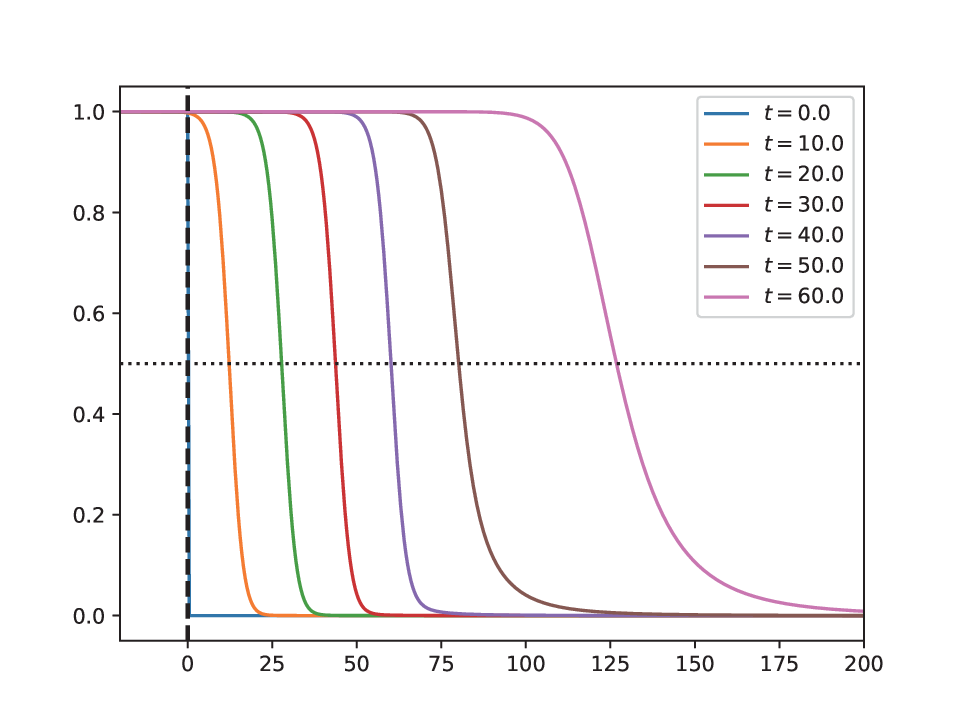} \caption{$s=2$}  
    \end{subfigure} 
\caption{\small Numerical approximations of the Cauchy problem of \eqref{oeq1} with the dispersal kernel $J(z)=\frac{1}{z^{1+2s}}$ at different times for $\alpha=0.5$ and different values of $s$. }
\label{fig:algebraic0.5}
\end{figure} 

\begin{figure}[H] 
\centering 
    \begin{subfigure}[b]{0.25\textwidth} 
        \includegraphics[width=\textwidth]{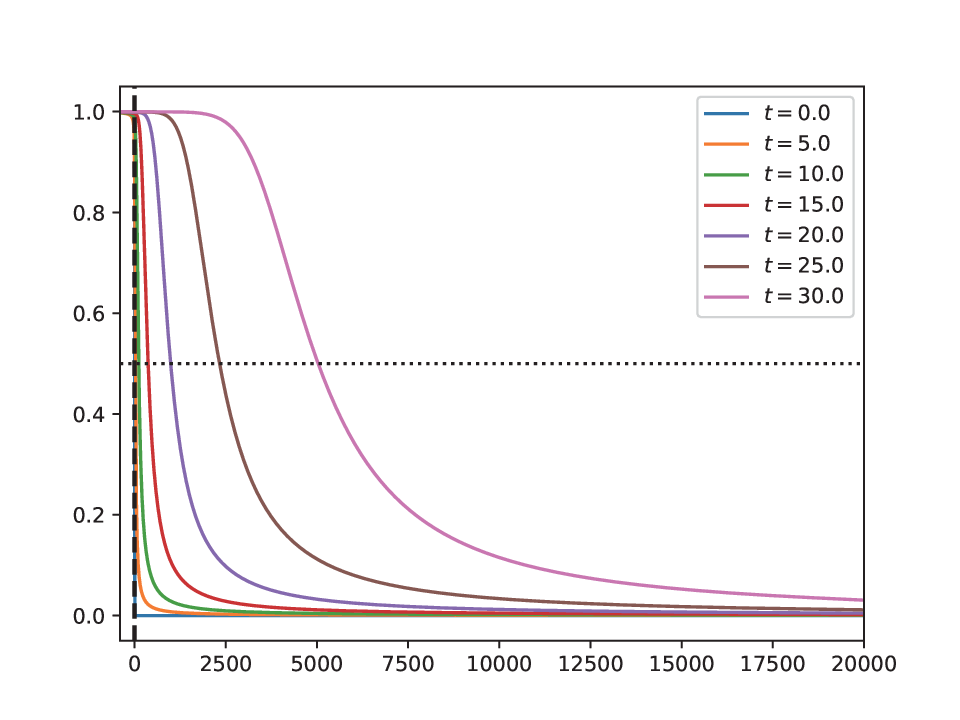} \caption{$s=0.5$} 
    \end{subfigure} 
    \quad
    \begin{subfigure}[b]{0.25\textwidth}  
        \includegraphics[width=\textwidth]{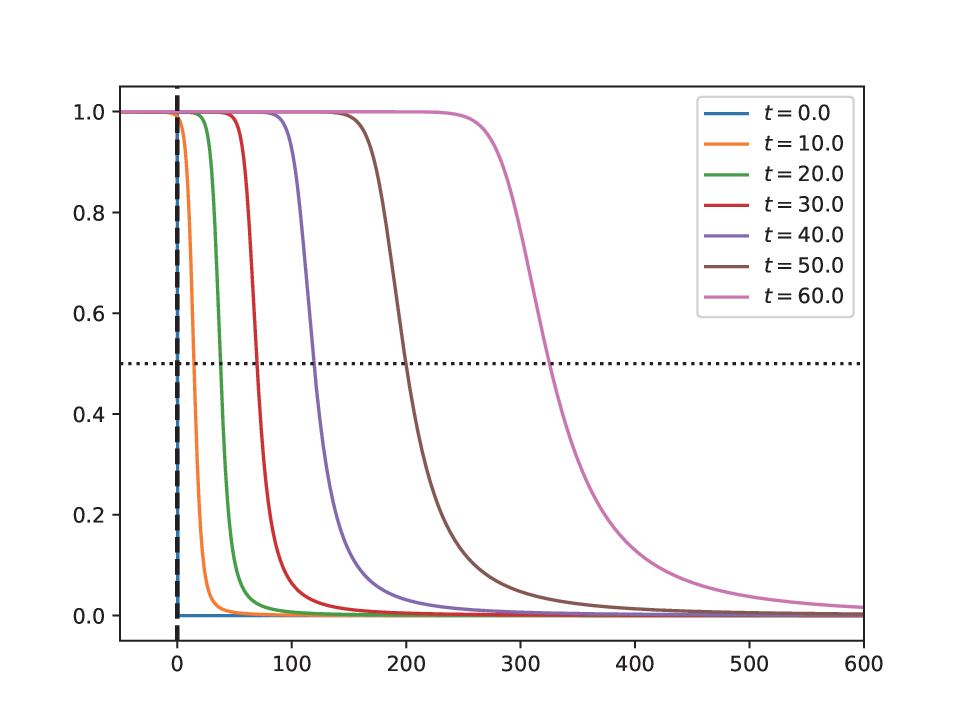}\caption{$s=1$} 
    \end{subfigure} 
     \quad
    \begin{subfigure}[b]{0.25\textwidth}  
        \includegraphics[width=\textwidth]{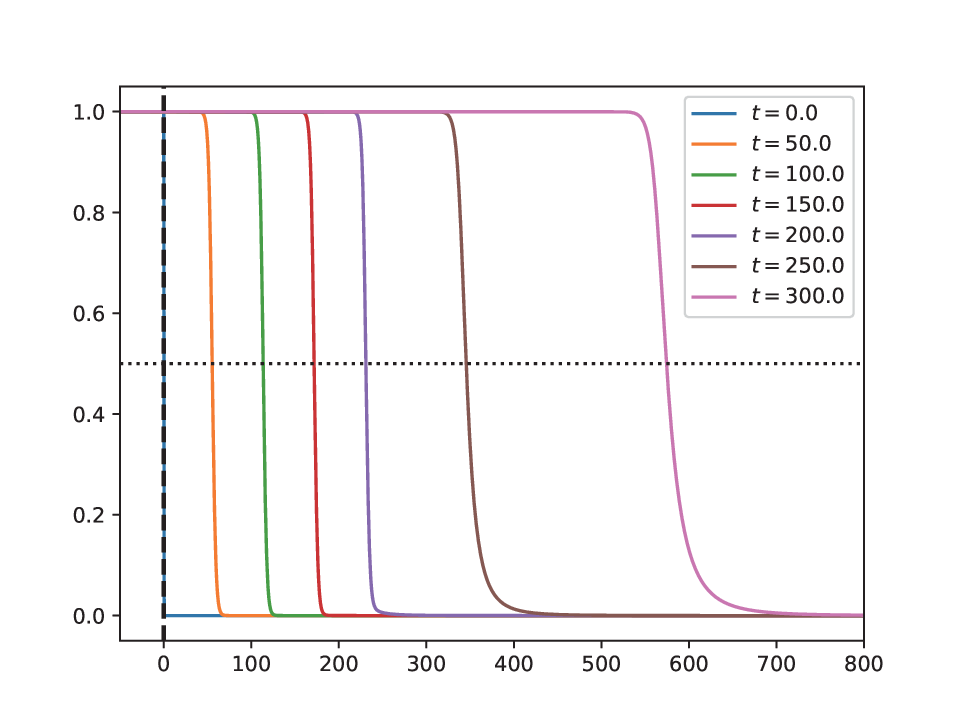} \caption{$s=2$}  
    \end{subfigure} 
\caption{\small Numerical approximations of the solution to \eqref{oeq1} with the dispersal kernel $J(z)=\frac{1}{z^{1+2s}}$ at different times for $\alpha=1$ and different values of $s$. }
\label{fig:algebraic1.0}
\end{figure} 

 \appendix

 \section{Technical proofs: case of an algebraic acceleration}\label{appendix-alg-acc}
 In this appendix, we collect all the technical proofs related to the construction of super- and sub-solution we are using in \Cref{s3}.  
 \subsection{The upper bound}\label{appendix-alg-acc-upper}
 Let us start with the proof of Lemma \ref{apub2l},

\begin{proof} [Proof of Lemma \ref{apub2l}]
A direct calculation shows that for $x> L$,
\begin{equation}\label{av0'}
    v_0'(x)= C_L(-\beta + px^{-\beta} )x^{p+\beta-1}e^{-x^\beta}\asymp -x^{p+\beta-1}e^{-x^\beta}.
\end{equation}
When $y=v_0(x)$, we have for all $x\ge L$,
\[
    1\le \frac{x^\beta}{\ln \frac{C_L}{y}}=\frac{x^\beta}{x^\beta-p\ln x}\to 1\quad \text{as }x\to +\infty,
\]
and 
\[
   \frac{p}{\beta} \le\frac{x^\beta-\ln \frac{C_L}{y}}{\ln\ln \frac{e^{L^\beta}}{y}}=\frac{p\ln x}{\beta\ln x+\ln\left(1-\frac{p\ln x-p\ln L}{x^\beta}\right)}\to \frac{p}{\beta} \quad \text{as }x\to +\infty.
\]
As a result, we achieve for any $y\in(0,1)$,
\begin{equation*}
 \left[\ln \frac{C_L}{y}+\frac{p}{\beta}\ln\ln\frac{e^{L^\beta}}{y} \right]^\frac{1}{\beta} \le v_0^{-1}(y) \le \left[\ln \frac{C_L}{y}+\left(\frac{p}{\beta}+\gamma\right)\ln\ln \frac{e^{L^\beta}}{y}\right],
\end{equation*}
for some positive constant $\gamma>0$.
Therefore, by the above estimate and \eqref{av0'}, for all $y\in(0,1)$, we get
\[
   (v_0'\circ v_0^{-1})(y)\gtrsim -\lvert\ln y\rvert^{\frac{\beta-1}{\beta}}y.
\]
This proves part $(i)$ in Lemma \ref{apub2l}.
\par
Since for all $t\in [t^\star,\infty)$ and $\Lambda\in(0,2]$, 
\begin{equation}\label{apube1} 
0<\exp\left\{1-[\rho (\alpha+1)t+(1-\ln \Lambda)^{\alpha+1}]^\frac{1}{\alpha+1} \right\}< 1,
\end{equation}
 then by the definition of $x_\Lambda$ and \eqref{aiv0}, we have
\[
  x_\Lambda(t)\asymp_\Lambda t^\frac{1}{\beta(1+\alpha)} \quad \text{for all }t\in [t^\star,\infty) \text{ and }\Lambda\in (0,e].
\]
 By the definition of $x_\Lambda$ and the chain rule, we get 
\[
   \frac{\partial x_\Lambda(t)}{\partial \Lambda} =\frac{\frac{(1-\ln \Lambda)^\alpha}{\Lambda}[\rho(\alpha+1)t+(1-\ln \Lambda)^{\alpha+1}]^{-\frac{\alpha}{\alpha+1}}\exp\Big[1-\Big(\rho(\alpha+1)t+(1-\ln \Lambda)^{\alpha+1}\Big)^{\frac{1}{\alpha+1}}\Big]}{(v_0'\circ v_0^{-1})\Big\{\exp\Big[1-\Big(\rho(\alpha+1)t+(1-\ln \Lambda)^{\alpha+1}\Big)^{\frac{1}{\alpha+1}}\Big] \Big\}}.
\]
It follows from \eqref{alubvv} that 
\[
   \frac{\partial x_\Lambda(t)}{\partial \Lambda} \lesssim - t^{\frac{1}{\beta(\alpha+1)}-1}\quad \text{for all }t\in [t^\star,\infty) \text{ and }\Lambda\in (0,2].
\]
Therefore, by the mean value theorem, for any $0<\Lambda_1< \Lambda_2\le 2$, there exists $\xi\in(\Lambda_1,\Lambda_2)$ such that, for all $t\in [t^\star,\infty)$,
\[
   x_{\Lambda_1}(t)-x_{\Lambda_2}(t)= -(\Lambda_2-\Lambda_1)\frac{\partial  x_\Lambda(t)}{\partial \Lambda} \vline_{\Lambda=\xi} \gtrsim_{\Lambda_1,\Lambda_2}t^{\frac{1}{\beta(\alpha+1)}-1}.
\]
This proves part $(ii)$ in Lemma \ref{apub2l} and ends the proof.
\end{proof}
Next, we prove Lemma \ref{lewxx}

\begin{proof}[Proof of Lemma \ref{lewxx}]
By the definitions of $w$ and $x_\Lambda$, for any $\Lambda\in(0,2]$ and $(t,x)\in[t^\star,\infty)\times [x_\Lambda(t),+\infty)$, we have
\[
(1-\ln v_0(x))^{\alpha+1}-\rho (\alpha+1)t\ge (1-\ln v_0(x_\Lambda(t)))^{\alpha+1}-\rho (\alpha+1)t =(1-\ln \Lambda)^{\alpha+1}\ge 0,
\]
and it follows that, for any $\Lambda\in(0,2]$, the function $w$ is well-defined on  $[t^\star,\infty)\times [x_\Lambda(t),+\infty)$.
\par
By \eqref{apube1} and the definition of $x_\Lambda$, it is easy to get that $x_\Lambda(t)\ge L$ for any $t\in[t^\star,\infty)$. 
Let us denote $\varphi_0=-\ln v_0$. By some direct calculations, for all $(t,x)\in[t^\star,\infty)\times [x_\Lambda(t),+\infty)$, we get  
\begin{equation}\label{wx}
    \frac{\partial w}{\partial x}=-\frac{w}{(1-\ln w)^\alpha}\varphi_0'(1+\varphi_0)^\alpha,
\end{equation}
and 
\begin{equation}\label{wxx}
\begin{aligned}
		\frac{\partial^2 w}{\partial x^2}&=\frac{w}{(1-\ln w)^{\alpha}}\Big\{(\varphi_0')^2 (1+\varphi_0)^{2\alpha}\left[(1-\ln w)^{-\alpha}
		+\alpha (1-\ln w)^{-(\alpha+1)}-\alpha(1+\varphi_0)^{-(\alpha+1)} \right]\\
  &\qquad \qquad\qquad -\varphi_0''(1+\varphi_0)^\alpha \Big\}.
\end{aligned}
\end{equation}
Since for $x>L$, $\varphi_0'=(\beta-p x^{-\beta})x^{\beta-1}\ge 0$, we get that, 
for any $t\in[t^\star,\infty)$, $w(t,\cdot)$ is non-increasing on $[x_\Lambda(t),+\infty)$, and it follows that
\[
    w(t,x)\le w(t,x_\Lambda(t))=\Lambda\le 2
    \quad\text{for all }(t,x)\in[t^\star,\infty)\times [x_\Lambda(t),+\infty).
\]
Since $\left[(1-\ln v_0(x))^{\alpha+1}-\rho(\alpha+1)t\right]^\frac{1}{\alpha+1}\le 1-\ln v_0(x)$, it follows from the definition of $w$ that
\[
w(t,x) \ge v_0(x)\quad\text{for all }(t,x)\in[t^\star,\infty)\times [x_\Lambda(t),+\infty).
\]
Thus, we get
\begin{equation}\label{lewxxe1}
    0<(1+\varphi_0)^{-(\alpha+1)}\le (1-\ln w)^{-(\alpha+1)}\le \ln^{-(\alpha+1)}\left(\frac{e}{2}\right).
\end{equation}
It follows from $0<w\le 2$ for all $(t,x)\in[t^\star,\infty)\times [x_\Lambda(t),+\infty)$ that, for any $(t,x)\in[t^\star,\infty)\times [x_\Lambda(t),+\infty)$, one has 
\[
    0<(1-\ln w)^{-\alpha}+\alpha(1-\ln w)^{-(\alpha+1)}-\alpha(1+\varphi_0)^{-(\alpha+1)}<2\ln^{-(\alpha+1)}\left(\frac{e}{2}\right).
\] 
Since for $x\ge L$, $\varphi(x)=-\ln v_0(x)= x^\beta-p\ln x-\ln C_L$ and $0<\beta<\frac{1}{\alpha+1}$, we get 
\[
    0\le (\varphi_0')^2(1+\varphi_0)^{2\alpha}=\left(\beta x^{-(1-\beta)}-px^{-1}\right)^2\left(1-\ln C_L+x^\beta-p\ln x\right)^{2\alpha}\le \frac{\mathcal{C}_0}{4\ln^{-(\alpha+1)}\left(\frac{e}{2}\right)},
\]
and 
\begin{equation}\label{lewxxe2}
    0\le -\varphi_0''(1+\varphi_0)=\left(\beta(1-\beta)x^{-(2-\beta)}-px^{-2}\right)\left(1-\ln C_L+x^\beta-p\ln x\right)^\alpha\le \frac{\mathcal{C}_0}{2},
\end{equation}
 for some positive constant $\mathcal{C}_0$.
Therefore, we conclude that, for any $t\in[t^\star,\infty)$, $w(t,\cdot)$ is convex on $[x_\Lambda(t),+\infty)$, and  there is some positive constant $\mathcal{C}_0$ such that for $(t,x)\in[t^\star,\infty)\times [x_\Lambda(t),+\infty)$,
\[
    0\le \frac{\partial^2 w}{\partial x^2}(t,x)\le \mathcal{C}_0\frac{w(t,x)}{(1-\ln w(t,x))^\alpha}.
\]
\par 
      To complete this lemma, for $t\ge t^\star$ and $x\ge x_\Lambda(t)$, we shall prove that $\frac{\partial^2 \ln w(t,x)}{\partial x^2}\ge 0$, that is, 
   \[
   \frac{1}{w^2}\left[w\frac{\partial^2 w}{\partial x^2}-\left(\frac{\partial w}{\partial x}\right)^2\right]\ge 0.
   \]
    It follows from \eqref{wx} and \eqref{wxx} that 
    \[
   w\frac{\partial^2 w}{\partial x^2}-\left(\frac{\partial w}{\partial x}\right)^2= \frac{w^2}{(1-\ln w)^{\alpha}}\Bigg\{\alpha(\varphi_0')^2 (1+\varphi_0)^{2\alpha}\left[ (1-\ln w)^{-(\alpha+1)}-(1+\varphi_0)^{-(\alpha+1)} \right]-\varphi_0''(1+\varphi_0)^\alpha \Bigg\}.
    \]
    Therefore, by \eqref{lewxxe1} and \eqref{lewxxe2}, we achieve $\frac{\partial^2 \ln w}{\partial x^2}\ge 0$ for $(t,x)\in[t^\star,\infty)\times [x_\Lambda(t),+\infty)$.
\end{proof}

Last, we provide a proof for Lemma \ref{asuble1}:
\begin{proof}[Proof of Lemma \ref{asuble1}]
Unless otherwise specified, throughout the proof of Lemma \ref{asuble1}, we assume that $(t,x)\in[t^\star,\infty)\times[x_1(t)+1,+\infty)$.
    By the definition of $w$, for all $z\in\left(\frac{1}{x},1-\frac{x_1(t)}{x}\right)$, we write
\[
    \begin{aligned}
    \frac{1-\ln w(t,x(1-z))}{1-\ln w(t,x)}
    =\left\{1-\frac{(1-\ln v_0(x))^{\alpha+1}}{(1-\ln w(t,x))^{\alpha+1}}\left[ 1-\left(1-\frac{x^\beta-x^\beta(1-z)^\beta+p\ln (1-z)}{1-\ln v_0(x)}\right)^{\alpha+1}\right]\right\}^\frac{1}{\alpha+1}.
    \end{aligned}
\]
Since $z\mapsto x^\beta-x^\beta(1-z)^\beta+p\ln(1-z)$ is increasing on $\left(\frac{1}{x},1-\frac{x_1(t)}{x}\right)$, we obtain, for all $z\in\left(\frac{1}{x},1-\frac{x_1(t)}{x}\right)$, 
\begin{equation}\label{asuble2}
    0\le \frac{x^\beta-x^\beta(1-z)^\beta+p\ln (1-z)}{1-\ln v_0(x)}\le  \frac{x^\beta-p\ln x-x_1^\beta(t)+p\ln x_1(t)}{1-\ln v_0(x)}=1-\frac{1-\ln v_0(x_1(t))}{1-\ln v_0(x)}\le 1,
\end{equation}
it follows that
\[
\begin{aligned}
   & 0< \frac{(1-\ln v_0(x))^{\alpha+1}}{(1-\ln w(t,x))^{\alpha+1}}\left[ 1-\left(1-\frac{x^\beta-x^\beta(1-z)^\beta+p\ln (1-z)}{1-\ln v_0(x)}\right)^{\alpha+1}\right]\\
    &\le \frac{[(1-\ln v_0(x))^{\alpha+1}-\rho(\alpha+1)t]-[(1-\ln v_0(x_1(t)))^{\alpha+1}-\rho(\alpha+1)t]}{(1-\ln w(t,x))^{\alpha+1}}\\
    &=1-\frac{1}{(1-\ln w(t,x))^{\alpha+1}}\le 1.
\end{aligned}
\]
Since $\beta\in(0,1)$ and $\frac{1}{\alpha+1}\in(0,1)$,  by using the inequality $(1-y)^n\ge 1-ny-(1-n)y^2$  for all $y\in[0,1]$ and some $n\in(0,1)$\footnote{Let us give a quick proof. For $n\in(0,1)$, denote $h(y):=(1-y)^n-1+ny +(1-n) y^2$ on $[0,1]$. By a direct calculation, $h'(y)=-n(1-y)^{n-1}+n+2(1-n)y$ and $h''(y)=-n(1-n)(1-y)^{-(2-n)}+2(1-n)$. Observe that, for $h''$, there is a unique root $y_1:=1-\left(\frac{n}{2}\right)^\frac{1}{2-n}\in (0,1)$, and that $h''> 0$ on $\left(0,y_1\right)$ and $h''< 0$ on $\left(y_1,1\right)$, that is, $h'$ is strictly increasing on $\left(0,y_1\right)$ and strictly decreasing on $\left(y_1,1\right)$. It follows from the facts that $h'(0)=0$, $h'\left(\frac{1}{2}\right)=-n2^{1-n}+2>0$ and $h'(y)\to -\infty$ as $y\to 1^-$ that there is a unique root $y_2\in(0,1)$ such that $h'(y_2)=0$, and thus, $h'> 0$ on $(0,y_2)$ and $h'<0$ on $(y_2,1)$, that is, $h$ is strictly increasing on $\left(0,y_2\right)$ and strictly decreasing on $\left(y_2,1\right)$. Therefore, we have $h(y)\ge \min\{h(0),h(1)\}=0$, that is, $(1-y)^n-1+ny +(1-n) y^2\ge 0$ on $[0,1]$. 
 Moreover, since $ny+(1-n)y^2\le y$ for all $n\in(0,1)$ and $y\in[0,1]$, we have $(1-y)^n\ge 1-y$ for all $y\in[0,1]$. \label{twoineq}}, we obtain
\[
\begin{aligned}
     \frac{1-\ln w(t,x(1-z))}{1-\ln w(t,x)}
     &\ge 1-\frac{1}{\alpha+1}\frac{(1-\ln v_0(x))^{\alpha+1}}{(1-\ln w(t,x))^{\alpha+1}}\left[ 1-\left(1-\frac{x^\beta-x^\beta(1-z)^\beta+p\ln (1-z)}{1-\ln v_0(x)}\right)^{\alpha+1}\right]\\
     &\ -\frac{\alpha}{\alpha+1}\left\{\frac{(1-\ln v_0(x))^{\alpha+1}}{(1-\ln w(t,x))^{\alpha+1}}\left[ 1-\left(1-\frac{x^\beta-x^\beta(1-z)^\beta+p\ln (1-z)}{1-\ln v_0(x)}\right)^{\alpha+1}\right]\right\}^2.
\end{aligned}
\]
In view of the definitions of $x_\Lambda$ and $C_L$, one may notice that $x_1(t)\ge L\ge \left(\frac{e}{C_L}\right)^\frac{1}{p}$, and thus, 
\begin{equation}
    \label{1lv0}
1-\ln v_0(x)\le x^\beta\quad \text{for all } (t,x)\in[t^\star,\infty)\times[x_1(t)+1,+\infty).
\end{equation}
 By using \eqref{asuble2} and the inequality $(1-y)^{\alpha+1}\ge 1-(\alpha+1)y$ for $y\le 1$, we have
\[
\begin{aligned}
	 0\le  1-\left(1-\frac{x^\beta-x^\beta(1-z)^\beta+p\ln (1-z)}{1-\ln v_0(x)}\right)^{\alpha+1}
 &\le (\alpha+1)\frac{x^\beta\left(1-(1-z)^\beta\right)+p\ln (1-z)}{1-\ln v_0(x)}\\
 &\le (\alpha+1) \frac{x^\beta}{1-\ln v_0(x)}\left(1-(1-z)^\beta\right),
\end{aligned}
\]
and thus, by \eqref{1lv0} and using the inequalities $(1-y)^n\ge 1-ny-(1-n)y^2$  and $(1-y)^n\ge 1-y$ for all $y\in[0,1]$ and some $n\in(0,1)$ (see footnote \ref{twoineq}), we achieve, for all $z\in\left(\frac{1}{x},1-\frac{x_1(t)}{x}\right)$,
\[
\begin{aligned}
\frac{1-\ln w(t,x(1-z))}{1-\ln w(t,x)}
        &\ge 1- \frac{x^{\alpha\beta+\beta}}{(1-\ln w(t,x))^{\alpha+1}} \left(1-(1-z)^\beta\right)  -\frac{\alpha(\alpha+1)x^{2\alpha\beta+2\beta}}{(1-\ln w(t,x))^{2\alpha+2}} \left(1-(1-z)^\beta\right)^2 \\
   &\ge 1-\frac{\beta x^{\alpha\beta+\beta}}{(1-\ln w(t,x))^{\alpha+1}}  z-  C_{\alpha,\beta} \frac{x^{2\alpha\beta+2\beta}}{(1-\ln w(t,x))^{2\alpha+2}}z^2,
\end{aligned}
\]
where $ C_{\alpha,\beta} :=1-\beta+\alpha(\alpha+1)>0$.
 Therefore, for all $z\in\left(\frac{1}{x},1-\frac{x_1(t)}{x}\right)$, we have
\[
\begin{aligned}
    \frac{w(t,x(1-z))}{w(t,x)}&=\exp\left\{(1-\ln w(t,x))\left(1-\frac{1-\ln w(t,x(1-z))}{1-\ln w(t,x)}\right) \right\} \\
    &\le\exp\left\{ \frac{ x^{\alpha\beta+\beta}}{(1-\ln w(t,x))^{\alpha}}   \left(1-(1-z)^\beta\right)+\alpha(\alpha+1)\frac{x^{2\alpha\beta+2\beta}}{(1-\ln w(t,x))^{2\alpha+1}} \left(1-(1-z)^\beta\right)^2\right\}\\
    &\le\exp\left\{ \frac{\beta x^{\alpha\beta+\beta}}{(1-\ln w(t,x))^{\alpha}}  z+ C_{\alpha,\beta} \frac{x^{2\alpha\beta+2\beta}}{(1-\ln w(t,x))^{2\alpha+1}}z^2\right\}.
\end{aligned}
\]
This completes the proof.
\end{proof}

\subsection{The lower bound}\label{appendix-alg-acc-low}
In this subsection, we proof a technical Lemma, Lemma \ref{apubl1}.
\begin{proof}[Proof of Lemma \ref{apubl1}]
\# \textbf{Let us prove (i) and $(ii)$.}
 By the definition of $x_\Lambda$, for any $\Lambda\in\left(0,e^{-1}\right)$,
 \[
     x_\Lambda(t)\ge \ln^\frac{1}{\beta}\frac{1}{\Lambda}>1\quad \text{for all }t\ge 0,
 \]  
 and for any $\Lambda\in(0,1)$,
 \begin{equation}\label{aplbxtl}    
 x_\Lambda(t)\ge Y(t):=\left\{-1+[\rho(\alpha+1)t]^\frac{1}{\alpha+1}\right\}^\frac{1}{\beta}\quad \text{for all }t\ge \frac{1}{\rho(\alpha+1)}.
 \end{equation}
 By the definition of $x_\Lambda$, one has
 \[
 \frac{\partial x_\Lambda(t)}{\partial \Lambda}\asymp_\Lambda -t^{\frac{1}{\beta(\alpha+1)}-1},
 \]
 and thus, by the mean value theorem, for any $0<\Lambda_1<\Lambda_2<1$, there is $\xi\in(\Lambda_1,\Lambda_2)$ such that 
 \[
 x_{\Lambda_1}(t)-x_{\Lambda_2}(t)= - (\Lambda_2-\Lambda_1)\frac{\partial x_\Lambda(t)}{\partial \Lambda}\vline_{\Lambda=\xi}\asymp t^{\frac{1}{\beta(\alpha+1)}-1}\to +\infty\quad \text{as }t\to +\infty,
 \]
  since $\alpha\beta+\beta-1<0$.
 \par \# \textbf{Let us prove (iii).} In view of the definitions of $w$ and $x_\Lambda$, it is easy to get that, for any $\Lambda\in(0,1)$, $w$ is well-defined on $\mathbb{R}^+\times [x_\Lambda(t),+\infty)$. 
 Let us denote $\varphi_0=-\ln v_0$,  by some direct calculations, for all $t>0$ and $x\ge x_\Lambda(t)$, we get 
\begin{equation}\label{alwx}
    \frac{\partial w}{\partial x}=-\frac{w}{(1-\ln w)^\alpha}\varphi_0'(1+\varphi_0)^\alpha,
\end{equation}
and 
\begin{equation}\label{alwxx}
		\frac{\partial^2 w}{\partial x^2}=\frac{w}{(1-\ln w)^{\alpha}}\Big\{(\varphi_0')^2 (1+\varphi_0)^{2\alpha}\Big((1-\ln w)^{-\alpha}
		+\alpha (1-\ln w)^{-(\alpha+1)}-\alpha(1+\varphi_0)^{-(\alpha+1)} \Big)-\varphi_0''(1+\varphi_0)^\alpha \Big\}.
\end{equation}
We then observe that for all $t\ge 0$, $w(t,\cdot)$ is non-increasing on $[x_\Lambda(t),+\infty)$, and that 
\begin{equation*}
\begin{aligned}
\left|\frac{\partial w}{\partial x}(t,x_\Lambda(t))\right|
     &=\frac{\Lambda}{(1-\ln\Lambda)^\alpha}x_\Lambda^{\alpha\beta+\beta-1}(t)\left(1+\frac{1}{x_\Lambda^\beta(t)}\right)^\alpha\\
    &\le Y^{\alpha\beta+\beta-1}(t)\left(1+\frac{1}{Y^\beta(t)}\right)^\alpha \to 0\quad \text{as }t\to+\infty,
\end{aligned}
\end{equation*}
by \eqref{aplbxtl} and $\alpha\beta+\beta-1<0$.
Similarly to Lemma \ref{lewxx}, one may see that
\[
    \frac{\partial^2 w}{\partial x^2}(t,x)\ge 0\quad \text{for all }t>0 \text{ and }x\ge x_\Lambda(t).
\]
\par \# \textbf{Let us prove (iv).}
By the definition of $w$, for all $t>0$ and $x\ge x_\Lambda(t)$, one may find 
\begin{equation*}
    w(t,x)= v_0(x)\exp\left\{1-\ln v_0(x)-[(1-\ln v_0(x))^{\alpha+1}-\rho(\alpha+1)t]^\frac{1}{\alpha+1}\right\}\le e^{-x^\beta+[\rho(\alpha+1)t]^\frac{1}{\alpha+1}}.
\end{equation*}
\end{proof}

 \section{Technical proofs: case of an exponential acceleration}\label{appendix-exp-acc}
 In this appendix, we collect all the technical proofs related to the construction of super- and sub-solution we are using in \Cref{s4}.
  \subsection{The upper bound}\label{appendix-exp-acc-upper}
 Let us start with the proof of Lemma \ref{epubl1},
\begin{proof}[Proof of Lemma \ref{epubl1}]
The proof is very similar to Lemma \ref{apub2l}, so we streamline the arguments.
\medskip
\par\noindent
\# \textbf{Proof of Part $I.$}
By a direct calculation, one gets for $x> L$,
\begin{equation}\label{ev0'}
    v_0'(x)=C_L(-2s+q \ln^{-1} x)\frac{\ln^{q} x}{x^{2s+1}}\asymp -\frac{\ln^{q} x}{x^{2s+1}}.
\end{equation}
When $y= v_0(x)$, we have
\[
\begin{aligned}
     \frac{v_0^{-1}(y)}{C_L^\frac{1}{2s} y^{-\frac{1}{2s}}\left(\ln L -\frac{1}{2s}\ln y\right)^\frac{q}{2s}}&=\frac{x}{C_L^\frac{1}{2s}(C_Lx^{-2s}\ln^{q} x)^{-\frac{1}{2s}}\left(\ln L-\frac{1}{2s}\ln (C_Lx^{-2s}\ln^{q} x)\right)^\frac{q}{2s} }\\
     &= \left(1-\frac{q}{2s}\frac{\ln\ln x-\ln\ln L}{\ln x}\right)^{-\frac{q}{2s}}\to 1 \quad \text{as }x\to +\infty, 
    \end{aligned}
\]
and notice that $\left(1-\frac{q}{2s}\frac{\ln\ln x-\ln\ln L}{\ln x}\right)^{-\frac{q}{2s}}\ge 1$ for all $x\ge L$. This implies \eqref{xysim}.
\par Collecting \eqref{xysim} and \eqref{ev0'}, since $x\mapsto -\frac{\ln^{q} x}{x^{2s+1}}$ is increasing for $x> L$, one may obtain for $y\in(0,1)$,
\[
    (v'\circ v^{-1})(y)\asymp - y^{1+\frac{1}{2s}}(\ln y^{-1})^{-\frac{q}{2s}},
\]
which gives \eqref{esubv'vi}.
\medskip
\par
\noindent
\# \textbf{Proof of Part $II$.}
 Since for all $t\in[t^\star,+\infty)$ and $\Lambda \in(0,2]$, 
\begin{equation}\label{epube1}   
   0< (1+\kappa t^{p})^{-1}\exp\left\{1-\left[r(\alpha+1) t+(1-\ln \Lambda)^{\alpha+1}\right]^{\frac{1}{\alpha+1}}\right\}< 1,
\end{equation}
then by \eqref{xysim} and the definition of $x_\Lambda$, for any $t\ge t^\star$, one may get  
\[
    x_\Lambda(t)\asymp_\Lambda t^{\frac{1}{2s}\left(p+\frac{q}{\alpha+1}\right)}\exp\left\{\frac{1}{2s}[r(\alpha+1)t]^\frac{1}{\alpha+1}\right\}.
\]

\par
In view of \eqref{esubv'vi} and the definition of $x_\Lambda$, for all $t\ge t^\star$, we get 
\[
\begin{aligned}
  \frac{\partial x_\Lambda(t)}{\partial \Lambda}&=\frac{(1-\ln \Lambda)^\alpha \frac{1}{\Lambda}(1+\kappa t^p)^{-1}\left[r(\alpha+1)t+(1-\ln \Lambda)^{\alpha+1}\right]^{-\frac{\alpha}{\alpha+1}}\exp\left\{1-\left[r(\alpha+1) t+(1-\ln \Lambda)^{\alpha+1}\right]^{\frac{1}{\alpha+1}}\right\}}{v_0'\circ v_0^{-1}\left\{(1+\kappa t^p)^{-1}\exp\left\{1-\left[r(\alpha+1) t+(1-\ln \Lambda )^{\alpha+1}\right]^{\frac{1}{\alpha+1}}\right\}\right\}}\\
   &\asymp_\Lambda - t^{-\frac{\alpha}{\alpha+1}+\frac{1}{2s}\left(p+\frac{q}{\alpha+1}\right)} e^{\frac{1}{2s}[r(\alpha+1)t]^\frac{1}{\alpha+1}}.
   \end{aligned} 
\]
Therefore, by the mean value theorem, for any $0<\Lambda_1<\Lambda_2\le 2$, there is $\xi\in (\Lambda_1,\Lambda_2)$ such that, for all $t\ge t^\star$,
\[
x_{\Lambda_1}(t)-x_{\Lambda_2}(t)=-(\Lambda_2-\Lambda_1) \frac{\partial x_\Lambda(t)}{\partial \Lambda}\vline_{\Lambda=\xi}\asymp_{\Lambda_1,\Lambda_2} t^{-\frac{\alpha}{\alpha+1}+\frac{1}{2s}\left(\frac{q}{\alpha+1}+p\right)}e^{\frac{1}{2s}[r(\alpha+1)t]^\frac{1}{\alpha+1}}.
\]
This completes the proof.
\end{proof}

 Next, we prove Lemma \ref{esuble1}.
\begin{proof}[Proof of Lemma \ref{esuble1}]
By the definitions of $x_\Lambda$ and $w$, for all $\Lambda\in(0,2]$ and $t\ge t^\star$, we have
\[
\left[1-\ln[(1+ \kappa t^{p})v_0(x)]\right]^{\alpha+1}-r(\alpha+1)t\ge \left[1-\ln[(1+ \kappa t^{p})v_0(x_\Lambda(t))]\right]^{\alpha+1}-r(\alpha+1)t=(1-\ln \Lambda)^{\alpha+1}> 0,
\]
and thus, we have that $\psi$ is well defined for all $x\ge x_\Lambda(t)$ and $\Lambda \in(0,2]$.
\par
  By the definition of $\psi$ and some direct calculations, we have 
\[
\begin{aligned}
        \frac{\partial \psi}{\partial t}
        =\frac{\psi}{(1-\ln \psi)^\alpha}\left\{\left\{1-\ln [\left(1+\kappa t^p\right)v_0]\right\}^\alpha \frac{p \kappa t^{p-1}}{1+\kappa t^p}+r\right\},
        \end{aligned}
\]
\begin{equation}\label{esubpsix}
\begin{aligned}
    \frac{\partial\psi}{\partial x}=\frac{\psi}{(1-\ln\psi)^\alpha}\left\{1-\ln [\left(1+\kappa t^p\right)v_0]\right\}^\alpha\frac{v_0'}{v_0},
    \end{aligned}
\end{equation}
and 
\begin{equation}\label{esubpsixx}
\begin{aligned}
    \frac{\partial^2 \psi}{\partial x^2}&=\frac{\psi}{(1-\ln\psi )^\alpha}\Bigg\{\left(\frac{v_0'}{v_0} \right)^2 \{1-\ln [(1+\kappa t^p)v_0]\}^{2\alpha}\Big\{(1-\ln \psi)^{-\alpha}+\alpha(1-\ln \psi)^{-(\alpha+1)}   \\
    &\qquad\qquad\qquad\quad-\alpha[1-\ln [(1+\kappa t^p) v_0]]^{-(\alpha+1)}\Big\} +[1-\ln [(1+\kappa t^p)v_0]]^\alpha\left(\frac{v_0'}{v_0}\right)'\Bigg\}.
\end{aligned}
\end{equation}
By \eqref{epube1} and the definition of $x_\Lambda$, for any $t\ge t^\star$, one may get  
\[
    x_\Lambda(t)>L.
\]
In view of the definition of  $v_0$, for all $x> L$, one gets
\begin{equation}\label{epubv0'v0}
    \frac{v_0'}{v_0}=\left(-2s+\frac{q}{\ln x}\right)\frac{1}{x}<0,
\end{equation}
and thus, one may notice that $\psi(t,x)$ is increasing in $t$ and decreasing in $x$ for all $t\ge t^\star$ and $x\ge x_\Lambda(t)$.
In addition, by \eqref{epubv0'v0}, for all $x\ge  x_\Lambda(t)$, there exist $t_0>t^\star$, $\gamma_1>0$ and $\gamma_2>0$, such that for all $t\ge t_0$, we have\footnote{By the definition of $x_\Lambda$, we have $1-\ln [\left(1+\kappa t^p\right) v_0( x_\Lambda(t))]=[(1-\ln \Lambda)^{\alpha+1}+r(\alpha+1)t]^\frac{1}{\alpha+1}\gtrsim t^\frac{1}{\alpha+1}$.}
\[
\begin{aligned}
        \frac{\partial \psi}{\partial t}\ge\left(\gamma_1 t^{-\frac{1}{\alpha+1}}+r\right)\frac{\psi}{(1-\ln \psi)^\alpha},
        \end{aligned}
\]
and
\[
\begin{aligned}
  -\gamma_2^{-1}\frac{\ln^\alpha x}{x}\frac{\psi}{(1-\ln \psi)^\alpha}\le   \frac{\partial\psi}{\partial x}&\le \left[r(\alpha+1)t\right]^\frac{\alpha}{\alpha+1}\left(-2s+\frac{q}{\ln x}\right)\frac{1}{x}\frac{\psi}{(1-\ln\psi)^\alpha}\le -\gamma_2 \frac{t^\frac{\alpha}{\alpha+1}}{x}\frac{\psi}{(1-\ln\psi)^\alpha}.
    \end{aligned}
\]
\par
  In view of the definition of $\psi$, we obtain
\begin{equation}\label{esubl1e}
    0<\{1-\ln [(1+\kappa t^p)v_0]\}^{-(\alpha+1)}\le (1-\ln \psi)^{-(\alpha+1)}\le \ln^{-(\alpha+1)}\left(\frac{e}{2}\right)  \quad \text{for all }t\ge t^\star \text{ and }  x\ge  x_\Lambda(t).
\end{equation}
It follows from $0<\psi\le 2$ for all $t\ge t^\star$ and $x\ge x_\Lambda(t)$ that, for any $t\ge t^\star$ and $x\ge x_\Lambda(t)$, we obtain 
\[
    0<(1-\ln \psi)^{-\alpha}+\alpha(1-\ln \psi)^{-(\alpha+1)}-\alpha\{1-\ln[(1+ \kappa t^p) v_0]\}^{-(\alpha+1)}<2\ln^{-(\alpha+1)}\left(\frac{e}{2}\right) .
\] 
Since $\frac{v_0'}{v_0}=\left(-2s+\frac{q}{\ln x}\right)\frac{1}{x}$ and $\left(\frac{v_0'}{v_0}\right)'=\left(2s- \frac{q}{\ln x}-\frac{q}{\ln^2 x}\right)\frac{1}{x^2}$, there is some positive constant $\mathcal{C}_0$  such that for all $t>0$ and $x\ge x_1(t)$,
\[
    0\le \left(\frac{v_0'}{v_0}\right)^2\{1-\ln[(1+ \kappa t^p) v_0]\}^{2\alpha}\le \left(-2s+\frac{q}{\ln x}\right)^2\frac{1}{x^2}\left(1-\ln \left(C_L\frac{\ln^q x}{x^{2s}}\right)\right)^{2\alpha}\le \frac{\mathcal{C}_0}{4\ln^{-(\alpha+1)}\left(\frac{e}{2}\right)}\frac{1}{x}
\]
and 
\[
    0\le \left(\frac{v_0'}{v_0}\right)'\left\{1-\ln[\left(1+\kappa t^p \right)v_0]\right\}^\alpha\le \left(2s-\frac{q}{\ln x}-\frac{q}{\ln^2 x}\right)\frac{1}{x^2}\left(1-\ln\left( C_L\frac{\ln^q x}{x^{2s}}\right)\right)^\alpha\le \frac{\mathcal{C}_0}{2}\frac{1}{x}
\]
Therefore, by \eqref{esubpsixx}, we conclude that there is some constant $\mathcal{C}_0$ such that for $t\ge t^\star $ and $x\ge x_\Lambda(t)$,
\[
    0\le \frac{\partial^2 \psi}{\partial x^2}(t,x)\le \frac{\mathcal{C}_0}{x}\frac{\psi(t,x)}{(1-\ln \psi(t,x))^\alpha}.
\]
\par 
Now, let us show that, for all $t\ge t^\star$, $\psi(t,\cdot)$ is log-convex on $[ x_\Lambda(t),+\infty)$. By a direct calculation, one may have
\[
\frac{\partial^2 \ln \psi}{\partial x^2}=\frac{1}{\psi^2}\left[\psi\frac{\partial^2 \psi}{\partial x^2}-\left(\frac{\partial \psi}{\partial x}\right)^2\right].
\]
It follows from \eqref{esubpsix}, \eqref{esubpsixx} and \eqref{esubl1e}  that for all $x\ge x_\Lambda(t)$,
\[
\begin{aligned}
\psi \frac{\partial^2 \psi}{\partial x^2}-\left(\frac{\partial \psi}{\partial x}\right)^2&=\frac{\psi^2}{(1-\ln\psi )^\alpha}\Bigg\{\left(\frac{v_0'}{v_0} \right)^2 \{1-\ln [(1+\kappa t^p)v_0]\}^{2\alpha}\Big\{\alpha(1-\ln \psi)^{-(\alpha+1)}   \\
    &\qquad\qquad\qquad\quad-\alpha[1-\ln [(1+\kappa t^p) v_0]]^{-(\alpha+1)}\Big\} +[1-\ln [(1+\kappa t^p)v_0]]^\alpha\left(\frac{v_0'}{v_0}\right)'\Bigg\}\ge 0.
\end{aligned}
\]
Therefore, for all $t\ge t^\star$, we achieve $\frac{\partial^2 \ln \psi}{\partial x^2}(t,x)\ge 0$ for all $x\ge  x_\Lambda(t)$, that is, $\psi(t,\cdot)$ is log-convex on $[x_\Lambda(t),+\infty)$.

\end{proof}

Last, we proof our last technical Lemma, Lemma \ref{lee2}.
\begin{proof}[Proof of Lemma \ref{lee2}]
Unless otherwise specified, throughout the proof of Lemma \ref{lee2}, we assume that $(t,x)\in[t^\star,\infty)\times[x_1(t)+1,+\infty)$.
 By the definition of $\psi$, for all $z\in\left(\frac{1}{x},\frac{x- x_1(t)}{x}\right)$, we have
 \[
		\begin{aligned}
			&\frac{1-\ln \psi (t,x(1-z))}{1-\ln \psi(t,x)}\\
			=&\left[1-\frac{[1-\ln[(1+\kappa t^p) v_0(x)]]^{\alpha+1}}{(1-\ln \psi(t,x))^{\alpha+1}}\left(1-\left(1-\frac{-2s\ln(1-z)+q\ln\left(1+\frac{\ln(1-z)}{\ln x}\right)}{1-\ln(1+\kappa t^p) v_0(x)}\right)^{\alpha+1}\right)\right]^\frac{1}{\alpha+1}.
		\end{aligned}
 \]
 Since $z\mapsto 2s\ln(1-z)-q\ln\left(1+\frac{\ln(1-z)}{\ln x}\right)$ is non-increasing on $\left(\frac{1}{x},1-\frac{ x_1(t)}{x}\right)$, we have, for all $z\in\left(\frac{1}{x},1-\frac{ x_1(t)}{x}\right)$,
\begin{equation}\label{esubf1}
\begin{aligned}
0\le\frac{-2s\ln(1-z)+q\ln\left(1+\frac{\ln(1-z)}{\ln x}\right)}{1-\ln[(1+\kappa t^p) v_0(x)]}&\le \frac{-2s\ln\frac{ x_1(t)}{x}+q\ln\left(1+\frac{\ln\frac{ x_1(t)}{x}}{\ln x}\right)}{1-\ln[(1+\kappa t^p) v_0(x)]}\\
&=1-\frac{1-\ln [(1+\kappa t^p)v_0( x_1(t))]}{1-\ln [(1+\kappa t^p)v_0(x)]}\le 1,
\end{aligned}
\end{equation}
and thus, by the definition of $ x_1$ and $\psi$, we get
\[
\begin{aligned}
    &0\le \frac{[1-\ln[(1+\kappa t^p) v_0(x)]]^{\alpha+1}}{(1-\ln \psi(t,x))^{\alpha+1}}\left(1-\left(1+\frac{2s\ln(1-z)-q\ln\left(1+\frac{\ln(1-z)}{\ln x}\right)}{1-\ln[(1+\kappa t^p) v_0(x)]}\right)^{\alpha+1}\right)\\
    &\le \frac{\left[[1-\ln [(1+\kappa t^p)v_0(x)]]^{\alpha+1}-r(\alpha+1)t\right]-\left[[1-\ln [(1+\kappa t^p)v_0( x_1(t))]]^{\alpha+1}-r(\alpha+1)t\right]}{(1-\ln \psi(t,x))^{\alpha+1}}\\
    &=1-\frac{1}{(1-\ln \psi(t,x))^{\alpha+1}}\le 1.
\end{aligned}
\]
It follows from the inequality $(1-y)^n\ge 1-ny -(1-n)y^2$ for all $y\in[0,1]$ and some $n\in(0,1)$ (see footnote \ref{twoineq}) that 
\[
\begin{aligned}
   & \frac{1-\ln \psi (t,x(1-z))}{1-\ln \psi(t,x)}\\
    &\ge 1-\frac{1}{\alpha+1}\frac{[1-\ln[(1+\kappa t^p) v_0(x)]]^{\alpha+1}}{(1-\ln \psi(t,x))^{\alpha+1}}\left(1-\left(1-\frac{-2s\ln(1-z)+q\ln\left(1+\frac{\ln(1-z)}{\ln x}\right)}{1-\ln[(1+\kappa t^p) v_0(x)]}\right)^{\alpha+1}\right)\\
    & \quad -\frac{\alpha}{\alpha+1}\left\{\frac{[1-\ln[(1+\kappa t^p) v_0(x)]]^{\alpha+1}}{(1-\ln \psi(t,x))^{\alpha+1}}\left(1-\left(1-\frac{-2s\ln(1-z)+q\ln\left(1+\frac{\ln(1-z)}{\ln x}\right)}{1-\ln[(1+\kappa t^p) v_0(x)]}\right)^{\alpha+1}\right)\right\}^2.
\end{aligned}
\]
By \eqref{esubf1} and the inequality $(1-y)^{\alpha+1}\ge 1-(\alpha+1)y$ for all $y\le 1$, we have
\[
\begin{aligned}
& \frac{[1-\ln[(1+\kappa t^p) v_0(x)]]^{\alpha+1}}{(1-\ln \psi(t,x))^{\alpha+1}}\left(1-\left(1-\frac{-2s\ln(1-z)+q\ln\left(1+\frac{\ln(1-z)}{\ln x}\right)}{1-\ln[(1+\kappa t^p) v_0(x)]}\right)^{\alpha+1}\right)\\
&\le (\alpha+1)\frac{[1-\ln[(1+\kappa t^p) v_0(x)]]^{\alpha}}{(1-\ln \psi(t,x))^{\alpha+1}}\left[-2s\ln(1-z)+q\ln\left(1+\frac{\ln(1-z)}{\ln x}\right)\right]\\
&\le (\alpha+1)\frac{[1-\ln[(1+\kappa t^p) v_0(x)]]^{\alpha}}{(1-\ln \psi(t,x))^{\alpha+1}}\left[-2s\ln(1-z)\right].
\end{aligned}
\]
In view of the definition of $v_0$, there is a time $t^\dagger$ such that for all $(t,x)\in [t^\dagger,\infty)\times [x_1(t)+1,+\infty)$, we have
\[
1-\ln [(1+\kappa t^p)v_0(x)]\le 2s \ln x.
\]
Thus, 
for all $z\in\left(\frac{1}{x},1-\frac{ x_1(t)}{x}\right)$ and  $(t,x)\in [t^\dagger,\infty)\times [x_1(t)+1,+\infty)$, we arrive at 
\[
\begin{aligned}
   &  \frac{1-\ln \psi (t,x(1-z))}{1-\ln \psi(t,x)}\\
    &\ge 1-\frac{[1-\ln[(1+\kappa t^p) v_0(x)]]^{\alpha}}{(1-\ln \psi(t,x))^{\alpha+1}}\left[-2s\ln(1-z)+q\ln\left(1+\frac{\ln(1-z)}{\ln x}\right)\right]\\
    & \quad -\alpha(\alpha+1)\left\{\frac{[1-\ln[(1+\kappa t^p) v_0(x)]]^{\alpha}}{(1-\ln \psi(t,x))^{\alpha+1}}\left[-2s\ln(1-z)+q\ln\left(1+\frac{\ln(1-z)}{\ln x}\right)\right]\right\}^2\\
    &\ge 1-\frac{[1-\ln[(1+\kappa t^p) v_0(x)]]^{\alpha}}{(1-\ln \psi(t,x))^{\alpha+1}}\left[-2s\ln(1-z)\right] -\alpha(\alpha+1)\frac{[1-\ln[(1+\kappa t^p) v_0(x)]]^{2\alpha}}{(1-\ln \psi(t,x))^{2\alpha+2}}\left[-2s\ln(1-z)\right]^2\\
    &\ge 1+(2s)^{\alpha+1}(\ln^\alpha x)\ln (1-z)-C_\alpha(\ln^{2\alpha} x)\ln^2(1-z),
\end{aligned}
\]
where $C_\alpha=\alpha(\alpha+1)(2s)^{2\alpha+2}$.
Therefore, for all $z\in\left(\frac{1}{x},1-\frac{ x_1(t)}{x}\right)$ and $(t,x)\in [t^\dagger,\infty)\times [x_1(t)+1,+\infty)$, we achieve
\[
\begin{aligned}
   &\frac{\psi(t,x(1-z))}{\psi(t,x)}\\
&=\exp\left\{(1-\ln \psi(t,x))\left(1-\frac{1-\ln \psi(t,x(1-z))}{1-\ln \psi(t,x)}\right)\right\} \\
    &\le \exp\left\{ \frac{[1-\ln[(1+\kappa t^p) v_0(x)]]^{\alpha}}{(1-\ln \psi(t,x))^{\alpha}}\left[-2s\ln(1-z)\right] +\alpha(\alpha+1)\frac{[1-\ln[(1+\kappa t^p) v_0(x)]]^{2\alpha}}{(1-\ln \psi(t,x))^{2\alpha+1}}\left[-2s\ln(1-z)\right]^2\right\}\\
    &\le \exp\left\{ -(2s)^{\alpha+1}(\ln^\alpha x)\ln (1-z)+C_\alpha (\ln^{2\alpha} x)\ln^2(1-z) \right\}.
\end{aligned}
\]
	This completes the proof.
\end{proof}

\subsection{The lower bound}\label{appendix-exp-acc-low}

Firstly, we prove Lemma \ref{eslbl1}.
\begin{proof}[Proof of Lemma \ref{eslbl1}]
Denote $Y_0(t):=e^{-\frac{1}{s} }e^{\frac{1}{2s}\left[r(\alpha+1)t\right]^\frac{1}{\alpha+1}}$ for all $t>0$ and $t_0:=\frac{2^{\alpha+1}}{r(\alpha+1)}$. Let us define 
$$\varphi_{log}(t,x):=(1+2s\ln x)^{\alpha+1}-r(\alpha+1)[t-h(t,x)],$$
for all $(t,x)\in [t_0,+\infty)\times [Y_0(t),+\infty)$, satisfying $\varphi= \exp\{1-\varphi_{log}^\frac{1}{\alpha+1}\}$. 
By some direct calculations, since $\kappa \le\frac{[r(\alpha+1)]^\frac{\alpha}{\alpha+1}}{2^{\alpha+1} rpe^{2p}} $,  for all $(t,x)\in [t_0,+\infty)\times [Y_0(t),+\infty)$, we have
\[
\begin{aligned}
\frac{\partial \varphi_{log}}{\partial x}
&=  2s (\alpha+1)\frac{1}{x}\left[(1+2s\ln x)^\alpha-rp \kappa \frac{ t^{\frac{\alpha}{\alpha+1}}e^{p[r(\alpha+1)t]^\frac{1}{\alpha+1}}}{x^{2sp}}\right]\\
&\ge 2s (\alpha+1)\left\{\left[[r(\alpha+1)]^\frac{1}{\alpha+1}-t^{-\frac{1}{\alpha+1}}\right]^\alpha-rp e^{2p}\kappa \right\}\frac{t^\frac{\alpha}{\alpha+1}}{x}\\
&\ge 2s (\alpha+1)\left\{2^{-\alpha}[r(\alpha+1)]^\frac{\alpha}{\alpha+1}-rp e^{2p}\kappa \right\}\frac{t^\frac{\alpha}{\alpha+1}}{x}> 0.
\end{aligned}
\]
  As a result, for all fixed $t\ge t_0$, $\varphi_{log}(t,\cdot)$ is strictly increasing on $[Y_0(t),+\infty]$. One may notice that for any $t\ge t_0$, since $\kappa \le\frac{1}{2e^{2p}[r(\alpha+1)]^\frac{1}{\alpha+1}} $,
 \[
 \begin{aligned}
 \varphi_{log}(t,Y_0(t))&= [r(\alpha+1)t]\left\{\left[1-[r(\alpha+1)t]^{-\frac{1}{\alpha+1}}\right]^{\alpha+1}-1+e^{2p}\kappa t^{-\frac{1}{\alpha+1}} \right\}\\
 &\le  [r(\alpha+1)]\left\{-[r(\alpha+1)]^{-\frac{1}{\alpha+1}}+e^{2p}\kappa \right\}t^{\frac{\alpha}{\alpha+1}}<0,
 \end{aligned}
 \]
 and, denoting $Y_1(t):=e^{-\frac{1}{2s}+\frac{1}{2s}[r(\alpha+1)t]^\frac{1}{\alpha+1}}$,
 \[
 \varphi_{log}(t,Y_1(t))= r(\alpha+1)h(t,Y_1(t))>0.
 \]
Thus, for all $t\ge t_0$, there exists a unique $Y(t)\in (Y_0(t),Y_1(t))$ such that $\varphi_{log}(t,Y(t))=0$ and $\varphi_{log}(t,x)>0$ for all $x\in(Y(t),+\infty)$. It therefore follows from the definition of $\varphi$ that, for all $t\ge t_0$, $\varphi(t,x)$ is well-defined if only if $x\in [Y(t),+\infty)$ and $\varphi(t,\cdot)$ is strictly decreasing on $[Y(t),+\infty)$. Thus, by the definition of $x_\Lambda$, one may have $\varphi_{log}(t,x_e(t))=0$, which implies $x_e(t)=Y(t)$. Therefore, it follows from the fact that, for all $t\ge t_0$, $\varphi(t,\cdot)$ is strictly decreasing on $[x_e(t),+\infty)$ that for any $\Lambda\in(0,e)$ and all $t\ge t_0$, the position $x_\Lambda(t)$ is decreasing in $\Lambda$ unique such that $\varphi(t,x_\Lambda(t))=\Lambda$ and 
\[
x_\Lambda(t)\ge x_e(t)> Y_0(t)= e^{-\frac{1}{s} }e^{\frac{1}{2s}\left[r(\alpha+1)t\right]^\frac{1}{\alpha+1}},
\]
which gives \eqref{eslbxtl}.
\par
    Let us show that $\varphi(t,x)$ is strictly increasing in $t$ for all $t$ large enough and $x\ge x_\Lambda(t)$.  By the definition of $h$, for $x\ge x_\Lambda(t)$, by \eqref{eslbxtl}, there is $\tilde t_1\ge t_0$ such that for all $t\ge \tilde t_1$, we have
\[
    \frac{\partial h}{\partial t}=  \kappa\frac{ e^{p[r(\alpha+1)t]^\frac{1}{\alpha+1}}}{x^{2sp}}\left(\tilde \gamma_1+\frac{\alpha}{\alpha+1}t^{-\frac{1}{\alpha+1}} \right)\le  \kappa \frac{e^{p[r(\alpha+1)t]^\frac{1}{\alpha+1}}}{x_\Lambda^{2sp}(t)}\left(\tilde \gamma_1 +\frac{\alpha}{\alpha+1}t^{-\frac{1}{\alpha+1}}\right)
\le 2e^{2p}\tilde \gamma_1 \kappa,
\]
where $\tilde \gamma_1:= \frac{p}{\alpha+1}[r(\alpha+1)]^\frac{1}{\alpha+1}$. 
As a result, since $\kappa \le \frac{1}{4\tilde \gamma_1 e^{2p}}$, for all $t\ge \tilde t_1$ and $x\ge  x_\Lambda(t)$, we get
\[
\frac{\partial \varphi}{\partial t}=\frac{r\varphi}{(1-\ln \varphi)^\alpha}\left[1 -\frac{\partial h}{\partial t}\right]\ge r\left(1 -2e^{2p}\tilde \gamma_1\kappa   \right)\frac{\varphi}{(1-\ln \varphi)^\alpha}> 0.
\]
\par Let us now show that, for $t$ large enough, $\varphi(t,\cdot)$ is convex on $[x_\Lambda(t),+\infty)$. By the definition of $h$, we have
\[
    \frac{\partial h}{\partial x}= -2sp\kappa \frac{t^\frac{\alpha}{\alpha+1}e^{p[r(\alpha+1)t]^\frac{1}{\alpha+1}}}{x^{2sp+1}},
\]
and thus, for any $(t,x)\in [t_0,\infty)\times[x_\Lambda(t),+\infty)$, by \eqref{eslbxtl},  we have $$\frac{\partial^2 h}{\partial x^2}=2sp(2sp+1)\kappa\frac{ t^{\frac{\alpha}{\alpha+1}}e^{p[r(\alpha+1)t]^\frac{1}{\alpha+1}}}{ x^{2sp+2}}\le \tilde \gamma_2 \kappa \frac{t^{\frac{\alpha}{\alpha+1}}}{ x^{2}},$$
where $\tilde \gamma_2:=2sp(2sp+1)e^{2p}$.
 Some direct calculations show that 
\begin{equation}\label{eslbphix}
\frac{\partial \varphi}{\partial x}=\frac{\varphi}{(1-\ln \varphi)^\alpha}\left[-2s\frac{(1+2s\ln x)^\alpha}{x}-r\frac{\partial h}{\partial x}\right],
\end{equation}
and
\begin{equation}
    \label{eslbphixx}
\begin{aligned}
\frac{\partial^2 \varphi}{\partial x^2}&=\frac{\varphi}{(1-\ln \varphi)^\alpha}\Bigg\{\left[2s\frac{(1+2s\ln x)^\alpha}{x}+r\frac{\partial h}{\partial x}\right]^2 \left[(1-\ln \varphi)^{-\alpha}+\alpha (1-\ln\varphi)^{-1}\right]\\
&\qquad+\left[2s\frac{(1+2s\ln  x)^\alpha }{ x^2}\left(1-2s\alpha (1+2s\ln  x)^{-1}\right)-r\frac{\partial^2 h}{\partial x^2}\right]\Bigg\}.
\end{aligned}
\end{equation}
By substituting $\frac{\partial h}{\partial x}$ and $\frac{\partial^2 h}{\partial x^2}$  into  \eqref{eslbphixx}, for any $\Lambda\in(0,e)$ there is $\tilde t_2\ge t_0$ such that for all $t\ge \tilde t_2$ and  $x\ge x_\Lambda(t)$,
\[
\begin{aligned}
\frac{\partial^2 \varphi}{\partial x^2}&\ge\frac{\varphi}{(1-\ln \varphi)^\alpha}\Bigg\{2s\frac{(1+2s\ln  x)^\alpha }{ x^2}\left[1-2s\alpha (1+2s\ln  x)^{-1}\right]- r\tilde \gamma_2 \kappa \frac{t^{\frac{\alpha}{\alpha+1}}}{ x^{2}} \Bigg\}\\
&\ge\frac{t^{\frac{\alpha}{\alpha+1}}}{ x^{2}} \frac{\varphi}{(1-\ln \varphi)^\alpha}\Bigg\{2s\frac{\left[-1+[r(\alpha+1)t]^\frac{1}{\alpha+1} \right]^\alpha}{t^\frac{\alpha}{\alpha+1}}\left[1-2s\alpha \left[-1+[r(\alpha+1)t]^\frac{1}{\alpha+1}\right]^{-1}\right]- r\tilde \gamma_2 \kappa \Bigg\} \ge 0,
\end{aligned}
\]
since $\kappa\le \frac{s[r(\alpha+1)]^\frac{\alpha}{\alpha+1}}{2^\alpha r\tilde \gamma_2}$.
\par
Therefore, by taking $ \tilde t:=\max\{t_0,\tilde t_1,\tilde t_2\}$, for any $\Lambda\in(0,e)$, we conclude that for any $t\ge \tilde t$ and  $x\ge x_\Lambda(t)$, $\varphi(t,x)$ is well-defined, strictly increasing in $t$, and strictly decreasing and convex in $x$.

\end{proof}
Next, we prove Lemma \ref{varphietx}.
\begin{proof}[Proof of Lemma \ref{varphietx}]
By the inequality $a^q+b^q\ge (a+b)^q$ for any $a>0$, $b>0$ and $q\in(0,1)$, one may obtain
\[
\left[\left(1+2s\ln x\right)^{\alpha+1}- r(\alpha+1)t\right]^{\frac{1}{\alpha+1}}\ge 1+2s\ln x-[r(\alpha+1)t]^\frac{1}{\alpha+1}.
\]
Thus, by the definition of $\varphi$, one has 
\[
\varphi(t,x)\le \frac{e^{[r(\alpha+1)t]^\frac{1}{\alpha+1}}}{x^{2s}}\quad \text{ for all }t> \tilde t\text{ and }x\ge x_\Lambda(t).
\]
On the other hand, one may notice that $t-h(t,x)\ge 0$ for all $t\ge \tilde t$ and $x\ge x_\Lambda(t)$, and it follows from the definition of $\varphi$ that
\[
  \varphi(t,x)\ge \frac{1}{x^{2s}} \quad \text{ for all }t> \tilde t\text{ and }x\ge x_\Lambda(t).
\]
\par In view of the definition of $\varphi$, some direct calculations show that 
\[
\frac{\partial \varphi}{\partial t}=\frac{r\varphi}{(1-\ln \varphi)^\alpha}\left[1-\frac{\partial h}{\partial t}\right]\le \frac{r\varphi}{(1-\ln \varphi)^\alpha}\left[1-\tilde \gamma_1 \kappa \frac{e^{p[r(\alpha+1)t]^\frac{1}{\alpha+1}}}{x^{2sp}}\right],
\]
where $\tilde \gamma_1= \frac{p}{\alpha+1}[r(\alpha+1)]^\frac{1}{\alpha+1}$. 
Thus, by \eqref{eslbephi}, one may notice that
\[
    \frac{\partial \varphi}{\partial t}\le \frac{r\varphi}{(1-\ln \varphi)^\alpha}(1-\tilde \gamma_1 \kappa \varphi^p),
\]
and by \eqref{eslbphix}, since $y\mapsto y(1-2s\ln y)^\alpha$ is non-decreasing for $0<y\le e^{\frac{1}{2s}-\alpha}$, one may have for $\varphi< \min\{e^{\frac{1}{2s}-\alpha},e\}$,
\[
\frac{\partial \varphi}{\partial x}  \ge -2s\frac{\varphi}{(1-\ln \varphi)^\alpha}\frac{(1+2s\ln x)^\alpha}{x}\ge -2s\varphi^{1+\frac{1}{2s}}.
\]

\par
By \eqref{eslbxtl}, \eqref{eslbphix} and the definition of $x_\Lambda(t)$, for any $\Lambda\in(0,1)$, we have
\[
\begin{aligned}
    0\ge\frac{\partial \varphi}{\partial x}(t,x_\Lambda(t))&\ge -2s\frac{\Lambda}{(1-\ln \Lambda)^\alpha}\frac{\left[1+2s\ln  x_\Lambda(t)\right]^\alpha}{ x_\Lambda(t)}\\
    &\ge -2s e^\frac{1}{s} [r(\alpha+1)t]^\frac{\alpha}{\alpha+1}e^{-\frac{1}{2s}[r(\alpha+1)t]^\frac{1}{\alpha+1}} \to 0 \quad \text{as } t\to \infty,
    \end{aligned}
\]
and thus, 
\[
    \lim_{t\to \infty}\frac{\partial \varphi}{\partial x}(t,x_\Lambda(t))= 0 \quad \text{uniformly for }\Lambda\in(0,1).
\]

\end{proof}
Now, we prove Lemma \ref{eslbl2}.
\begin{proof}[Proof of Lemma \ref{eslbl2}]
The lower bound of $x_\Lambda(t)$ has already been given by \eqref{eslbxtl}. Let us now show the upper bound of $x_\Lambda(t)$. 
Since 
\[
 \left(1+2s\ln  x\right)^{\alpha+1}- r(\alpha+1)[t-h(t,x)]\ge \ln^{\alpha+1} \left(x^{2s}\right) -r(\alpha+1)t,
\]
by the inequality $(a+b)^q\le a^q+b^q$ for some $a>0$, $b>0$ and $q\in(0, 1)$,  we have
\[
\begin{aligned}
\varphi\left(t,\bar C_\Lambda e^{\frac{1}{2s}[r(\alpha+1)t]^\frac{1}{\alpha+1}}\right)
&\le \exp\left\{1-\left[\ln^{\alpha+1} \left(\bar C_\Lambda^{2s} e^{[r(\alpha+1)t]^\frac{1}{\alpha+1}}\right) -r(\alpha+1)t\right]^\frac{1}{\alpha+1}\right\}\\
&\le \frac{e}{\bar C_\Lambda^{2s}}\le \Lambda=\varphi(t,x_\Lambda(t)),
\end{aligned}
\]
as long as $\bar C_\Lambda\ge \left(\frac{e}{\Lambda}\right)^\frac{1}{2s}$, and then, since $\varphi(t,\cdot)$ is decreasing on $[x_\Lambda(t),+\infty)$ for all $t\ge \tilde t$, we obtain for any $\bar C_\Lambda\ge \left(\frac{e}{\Lambda}\right)^\frac{1}{2s}$,
\[
x_\Lambda(t)\le\bar C_\Lambda e^{\frac{1}{2s}[r(\alpha+1)t]^\frac{1}{\alpha+1}},
\]
which completes the estimate of $x_\Lambda$.
\par
Let us denote
\[
\mathcal{X}(x,y ):=\exp\left\{-\frac{1}{2s}+\frac{1}{2s}\left[(1-\ln x)^{\alpha+1}+r(\alpha+1)\left[t-h(t,y)\right]\right]^\frac{1}{\alpha+1}\right\}.
\]
Some direct calculations show that 
\[
\frac{\partial \mathcal{X}}{\partial x}= -\frac{(1-\ln x)^\alpha}{2sx}\frac{\mathcal{X}}{(1+2s\ln \mathcal{X})^\alpha},
\]
and 
\[
\frac{\partial \mathcal{X}}{\partial y}= -\frac{r}{2s}\frac{\partial h}{\partial y}\frac{\mathcal{X}}{(1+2s\ln \mathcal{X})^\alpha}.
\]
For any $\Lambda\in(0,1)$, notice that 
\[
x_\Lambda(t)=\mathcal{X}(\Lambda,x_\Lambda(t) ).
\]
Then, by the definition of $x_\Lambda$, we have
\[
\frac{\partial x_\Lambda(t)}{\partial \Lambda}= \frac{\partial \mathcal{X}}{\partial x}(\Lambda, x_\Lambda(t))+\frac{\partial x_\Lambda(t)}{\partial \Lambda}\frac{\partial \mathcal{X}}{\partial y}(\Lambda, x_\Lambda(t)).
\]
By \eqref{eslbxtl}, for large time $t$, say $t\ge \tilde t_3$, we get
\[
\frac{\partial \mathcal{X}}{\partial x}(\Lambda,x_\Lambda(t) )= -\frac{(1-\ln \Lambda)^\alpha}{2s\Lambda}\frac{ x_\Lambda(t)}{(1+2s\ln  x_\Lambda(t))^\alpha}\lesssim_\Lambda-t^{-\frac{\alpha}{\alpha+1}}e^{\frac{1}{2s}[r(\alpha+1)t]^\frac{1}{\alpha+1}},
\]
and since $\kappa=\kappa_0\le \frac{[r(\alpha+1)]^\frac{\alpha}{\alpha+1}}{2^{\alpha+1} rpe^{2p}}$,
\[
\begin{aligned}
 \frac{\partial \mathcal{X}}{\partial y}(\Lambda,x_\Lambda(t) )&= r p \kappa\frac{t^\frac{\alpha}{\alpha+1}e^{p[r(\alpha+1)t]^\frac{1}{\alpha+1}}}{ x_\Lambda(t)^{2sp}}\frac{1}{(1+2s\ln  x_\Lambda(t))^\alpha}\le  2rp e^{2p}[r(\alpha+1)]^{-\frac{\alpha}{\alpha+1}}\kappa \le \frac{1}{2^\alpha}<1.
 \end{aligned}
\]
Thus, for all $t\ge \tilde t_3$, we get
\[
\frac{\partial x_\Lambda(t)}{\partial \Lambda}=\frac{\frac{\partial \mathcal{X}}{\partial x}(\Lambda, x_\Lambda(t))}{1-\frac{\partial \mathcal{X}}{\partial y}(\Lambda, x_\Lambda(t))}
=\frac{ -\frac{(1-\ln \Lambda)^\alpha}{2s\Lambda}\frac{x_\Lambda(t)}{(1+2s\ln x_\Lambda(t))^\alpha}}{1-\frac{\partial \mathcal{X}}{\partial y}(\Lambda, x_\Lambda(t))}\lesssim_\Lambda -t^{-\frac{\alpha}{\alpha+1}}e^{\frac{1}{2s}[r(\alpha+1)t]^\frac{1}{\alpha+1}}.
\]
Therefore, for any $0<\Lambda_1<\Lambda_2<1$, by the definition of $x_\Lambda$ and the mean value theorem, there is $\xi\in(\Lambda_1,\Lambda_2)$ such that
\[
x_{\Lambda_1}(t)-x_{\Lambda_2}(t)=(\Lambda_1-\Lambda_2)\frac{\partial x_\Lambda(t)}{\partial \Lambda}\vline_{\Lambda=\xi}\gtrsim_{\Lambda_1,\Lambda_2,\xi} t^{-\frac{\alpha}{\alpha+1}}e^{\frac{1}{2s}[r(\alpha+1)t]^\frac{1}{\alpha+1}}\to +\infty,
\]
as $t\to \infty$, which completes the proof of the lemma.

\end{proof}

Lastly, we prove Lemma \ref{ge}.
\begin{proof}[Proof of Lemma \ref{ge}]
Let 
\[
    g_\varepsilon(y):=
    \begin{cases}
        y, &y\in \left[0, \frac{\varepsilon}{2}\right],\\
        \exp\left\{1-[(1-\ln y)^{\alpha+1}+P(y)]^\frac{1}{\alpha+1}\right\},& y\in\left(\frac{\varepsilon}{2},\frac{3}{2}\varepsilon\right),\\
         \varepsilon, &y\in \left[\frac{3}{2}\varepsilon,1\right],
    \end{cases}
\]
satisfying
\[
g_\varepsilon'(y)=
    \begin{cases}
        1, &y\in \left[0, \frac{\varepsilon}{2}\right],\\
        \frac{g_\varepsilon}{(1-\ln g_\varepsilon)^\alpha}\frac{(1-\ln y)^\alpha}{y}\left(1-\frac{1}{\alpha+1}\frac{y}{(1-\ln y)^\alpha}P'(y)\right),& y\in\left(\frac{\varepsilon}{2},\frac{3}{2}\varepsilon\right),\\
         0, &y\in \left[\frac{3}{2}\varepsilon,1\right],
    \end{cases}
\]
where we here select a function $P\in C^2\left(\left[\frac{\varepsilon}{2},\frac{3}{2}\varepsilon\right]\right) $ with $P\ge 0$ and $P'\ge 0$ on $\left[\frac{\varepsilon}{2},\frac{3}{2}\varepsilon\right]$ such that
\[
\begin{aligned} 
&P\left(\frac{\varepsilon}{2}\right)=0,\quad P\left(\frac{3}{2}\varepsilon\right)= (1-\ln\varepsilon)^{\alpha+1}-\left(1-\ln\left(\frac{3}{2}\varepsilon\right)\right)^{\alpha+1},\\
&P'\left(\frac{\varepsilon}{2}\right)=0 \quad \text{and}\quad P'\left(\frac{3}{2}\varepsilon\right)=(\alpha+1)\frac{\left(1-\ln\left(
\frac{3}{2}\varepsilon\right)\right)^\alpha}{\frac{3}{2}\varepsilon}.
\end{aligned}
\]
Thus, by the choice of $P$, one may notice that 
\[
g_\varepsilon\left(\frac{\varepsilon}{2}\right)=\frac{\varepsilon}{2},\quad  g_\varepsilon\left(\frac{3}{2}\varepsilon\right)=\varepsilon,\quad
g_\varepsilon'\left(\frac{\varepsilon}{2}\right)=1\quad\text{and} \quad g_\varepsilon'\left(\frac{3}{2}\varepsilon\right)=0,
\]
and then, one gets $g_\varepsilon\in C^1([0,1])$. Also, by choosing appropriate value for $P''\left(\frac{\varepsilon}{2}\right)$ and $P''\left(\frac{3}{2}\varepsilon\right)$ such that $g''\left(\frac{\varepsilon}{2}\right)=0$ and $g''\left(\frac{3}{2}\varepsilon\right)=0$. As a result, under the choice of $P$, one has $g_\varepsilon\in C^2([0,1])$.
\par
One may then get that, for all $y\in(0,1)$,
\[
g_\varepsilon(y)\le y,\quad g_\varepsilon'(y)\frac{y}{(1-\ln y)^\alpha}\le \frac{g_\varepsilon(y)}{(1-\ln g_\varepsilon(y))^\alpha}
\quad \text{and}\quad 0 \le g'_\varepsilon(y)\le 1.
\]
Moreover, since $g''_\varepsilon\in C([0,1])$, there is some constant $C_\varepsilon$, depending on $\varepsilon$, such that
 \[
     g_\varepsilon'' \ge -C_\varepsilon \quad \text{for some }C_\varepsilon>0.  
\]
\end{proof}
\section*{Acknowledgment}
X. Zhang was supported by the Fundamental Research Funds for the Central Universities of Central South University(No.2024ZZTS0118).
		\bibliographystyle{plain}
		\bibliography{ref}
		\addcontentsline{toc}{section}{References} 
		
	\end{document}